\pgfplotsset{compat=1.10}
\pgfplotsset{soldot/.style={color=black,only marks,mark=*}} \pgfplotsset{holdot/.style={color=black,fill=white,only marks,mark=*}}
\newtheorem{thm}{Theorem}[section]
\newtheorem{lem}[thm]{Lemma}
\newtheorem{cor}[thm]{Corollary}
\theoremstyle{definition}
\newtheorem{defn}[thm]{Definition}
\theoremstyle{remark}
\newtheorem{remark}[thm]{Remark}
\newtheorem{example}[thm]{Example}
\newtheorem{examples}[thm]{Examples}
\newtheorem{prob}[thm]{Problem}
\newtheorem{quest2}[subsection]{Question}
\numberwithin{equation}{section}
\numberwithin{figure}{section}
 \newcommand{\R}{{\mathbb R}}
 \newcommand{\C}{{\mathbb C}}
\newcommand{\HH}{{\mathbb H}} 
\newcommand{\sph}{{\mathbb S}}
\newcommand{\psd}{{\mathcal P}}
 \newcommand{\Cont}{{\mathcal C}}
\newcommand{\Ff}{{\EuScript F}}
\newcommand{\Pp}{{\EuScript P}}
\newcommand{\Ss}{{\EuScript S}}
\newcommand{\Tt}{{\EuScript T}}
\newcommand{\Ee}{{\EuScript E}}
\newcommand{\Bb}{{\EuScript B}}
\newcommand{\Cc}{{\EuScript C}}
\newcommand{\pol}{{\EuScript K}}
\newcommand{\Qq}{{\EuScript Q}}
\newcommand{\Dd}{{\EuScript D}}
\newcommand{\Hh}{{\EuScript H}}
\newcommand{\Ll}{{\EuScript L}}
\newcommand{\Rr}{{\EuScript R}}
\newcommand{\Ii}{{\EuScript I}}
\newcommand{\im}{\operatorname{im}}
\newcommand{\Int}{\operatorname{Int}}
\newcommand{\cl}{\operatorname{Cl}}
\newcommand{\dist}{\operatorname{dist}}
\newcommand{\id}{\operatorname{id}}
\newcommand{\zar}{\operatorname{zar}}
\newcommand{\x}{{\tt x}} \newcommand{\y}{{\tt y}} 
\newcommand{\z}{{\tt z}} \renewcommand{\t}{{\tt t}}
\newcommand{\veps}{\varepsilon}
\newcommand{\ol}{\overline}
\newcommand{\sector}[1]{\Dd^e_{#1}}
\newcommand{\seg}[1]{\Ss^e_{#1}}
\newcommand{\tri}[1]{\Tt^e_{#1}}
\newcommand{\hsector}[1]{\Dd^h_{#1}}
\newcommand{\htri}[1]{\Tt^h_{#1}}
\newcommand{\hseg}[1]{\Ss^h_{#1}}
\newcommand{\pseg}[1]{\Ss^p_{#1}}
\newcommand{\ptri}[1]{\Tt^p_{#1}}
\newcommand{\sstar}[1]{{\rm St}(#1)}
\newcommand{\xmark}{\text{\ding{56}}}
\begin{document}

\title[On polynomial images of a closed ball]{On polynomial images of a closed ball}

\author{Jos\'e F. Fernando}
\address{Departamento de \'Algebra, Geometr\'\i a y Topolog\'\i a, Facultad de Ciencias Matem\'aticas, Universidad Complutense de Madrid, Plaza de Ciencias 3, 28040 MADRID (SPAIN)}
\email{josefer@mat.ucm.es}


\author{Carlos Ueno}
\address{Departamento de Matem\'aticas, CEAD Profesor F\'elix P\'erez Parrilla, C/ Dr. Garc\'\i a Castrillo 22, 35005 Las Palmas de Gran Canaria (SPAIN)}
\email{cuenjac@gmail.com}

\date{04/04/2022}
\subjclass[2010]{Primary: 14P10, 52B99; Secondary: 52A20, 41A10, 26D15, 13P25.}
\keywords{Closed ball, polynomial maps and images, polynomial paths inside semialgebraic sets, $n$-dimensional bricks, PL semialgebraic sets, semialgebraic sets connected by analytic paths.}

\begin{abstract}
In this work we approach the problem of determining which (compact) semialgebraic subsets of $\R^n$ are images under polynomial maps $f:\R^m\to\R^n$ of the closed unit ball $\ol{\Bb}_m$ centered at the origin of some Euclidean space $\R^m$ and that of estimating (when possible) which is the smallest $m$ with this property. Contrary to what happens with the images of $\R^m$ under polynomial maps, it is quite straightforward to provide basic examples of semialgebraic sets that are polynomial images of the closed unit ball. For instance, simplices, cylinders, hypercubes, elliptic, parabolic or hyperbolic segments (of dimension $n$) are polynomial images of the closed unit ball in $\R^n$. 

The previous examples (and other basic ones proposed in the article) provide a large family of `$n$-bricks' and we find necessary and sufficient conditions to guarantee that a finite union of `$n$-bricks' is again a polynomial image of the closed unit ball either of dimension $n$ or $n+1$. In this direction, we prove: {\em A finite union $\Ss$ of $n$-dimensional convex polyhedra is the image of the $n$-dimensional closed unit ball $\ol{\Bb}_n$ if and only if $\Ss$ is connected by analytic paths}.

The previous result can be generalized using the `$n$-bricks' mentioned before and we show: {\em If $\Ss_1,\ldots,\Ss_\ell\subset\R^n$ are `$n$-bricks', the union $\Ss:=\bigcup_{i=1}^\ell\Ss_i$ is the image of the closed unit ball $\ol{\Bb}_{n+1}$ of $\R^{n+1}$ under a polynomial map $f:\R^{n+1}\to\R^n$ if and only if $\Ss$ is connected by analytic paths}. 
\end{abstract}

\dedicatory{Dedicated to Prof. J.M. Gamboa on the occasion of his 65th birthday}

\maketitle

\section{Introduction}\label{s1}

A map $f:=(f_1,\ldots,f_n):\R^m\to\R^n$ is \em polynomial \em if its components $f_k\in\R[\x_1,\ldots,\x_m]$ are polynomials. Analogously, $f$ is \em regular \em if its components can be represented as quotients $f_k:=\frac{g_k}{h_k}$ of two polynomials $g_k,h_k\in\R[\x_1,\ldots,\x_m]$ such that $h_k$ never vanishes on $\R^m$. A subset $\Ss\subset\R^n$ is \em semialgebraic \em when it has a description by a finite boolean combination of polynomial equalities and inequalities. The category of semialgebraic sets is closed under basic boolean operations but also under usual topological operations: taking closures (denoted by $\cl(\cdot)$), interiors (denoted by $\Int(\cdot)$), connected components, etc. If $\Ss\subset\R^m$ and $\Tt\subset\R^n$ are semialgebraic sets, a map $f:\Ss\to\Tt$ is \em semialgebraic \em if its graph is a semialgebraic set. By Tarski-Seidenberg's principle \cite[\S1.4]{bcr} the image of a semialgebraic map (and in particular of a polynomial or a regular map) is a semialgebraic set. 

In \cite{kps} the authors develop a computational study of images under polynomial maps $f:\R^3\to\R^2$ (and the corresponding convex hulls) of compact (principal) semialgebraic subsets $\{h\geq0\}\subset\R^3$, where $h\in\R[\x_1,\x_2,\x_3]$. This includes for example the case of a $3$-dimensional closed unit ball $\ol{\Bb}_3$ centered at the origin. In \cite[\S5.Prob.1]{kps} it is proposed the following concrete problem:

\begin{prob}\label{probl2}
Let $\Pp$ be an arbitrary (compact) convex polygon in $\R^2$. Construct explicit polynomials $f$ and $g$ in $\R[{\tt u},{\tt v},{\tt w}]$ such that $\Pp=(f,g)(\ol{\Bb}_3)$.
\end{prob}

Sturmfels suggested us during the 2018 Santal\'o Conference (presented by him) in Universidad Complutense de Madrid to confront the previous problem taking into account our knowledge in the subject of polynomial images of affine spaces. This suggestion was the starting point of the present article, where we make an extended study of the $n$-dimensional semialgebraic subsets of $\R^n$ that are images under a polynomial map $f:\R^m\to\R^n$ of the $m$-dimensional closed unit ball $\ol{\Bb}_m$ for some $m\geq n$. We will be mainly concerned with the cases $m=n$ and $m=n+1$, and the first main result is Theorem \ref{main1-i}, which solves Problem \ref{probl2} (as a particular case) in its natural generalization to arbitrary dimension. Recall that a semialgebraic set $\Ss\subset\R^n$ is \em connected by analytic paths \em if for each pair of points $p_1,p_2\in\Ss$ there exists an analytic path $\alpha:[0,1]\to\Ss$ such that $\alpha(0)=p_1$ and $\alpha(1)=p_2$. Contrary to what happens when dealing with continuous paths, even if $p_1$ and $p_2$ are connected by an analytic path, and $p_2$ and $p_3$ are also so connected, these may not imply that $p_1$ and $p_3$ are connected by an analytic path. We borrow the following enlightening example from \cite[Ex.7.12]{f2}.

\begin{example}
There exist (path) connected semialgebraic sets that are not connected by analytic paths. Let $
\Ss:=\{(4\x^2-\y^2)(4\y^2-\x^2)\geq0,\y\geq0\}\subset\R^2$, which is a (path) connected semialgebraic set, see Figure \ref{fig11}. We claim: {\em The semialgebraic set $\Ss$ is not connected by analytic paths.} 

\begin{figure}[!ht]
\begin{center}
\begin{tikzpicture}[scale=0.75]

\draw[fill=gray!60,opacity=0.75,dashed,draw] (4.5,1) -- (0.5,3) arc (153.43494882292201:116.56505117707798:4.47213595499958cm) -- (4.5,1) -- (8.5,3) arc (26.56505117707799:63.43494882292202:4.47213595499958cm) -- (4.5,1);

\draw[->,thick=1.5pt] (4.5,1) -- (0.5,3);
\draw[->,thick=1.5pt] (4.5,1) -- (6.5,5);
\draw[->,thick=1.5pt] (4.5,1) -- (8.5,3);
\draw[->,thick=1.5pt] (4.5,1) -- (2.5,5);

\draw[->] (4.5,0) -- (4.5,6);
\draw[->] (0,1) -- (9,1);

\draw[fill=black] (4.5,1) circle (0.75mm);
\draw[fill=black] (5.5,2) circle (0.75mm);
\draw[fill=black] (3.5,2) circle (0.75mm);

\draw (6.1,2.4) node{\footnotesize$(1,1)$};
\draw (2.9,2.4) node{\footnotesize$(-1,1)$};

\draw (2,4.25) node{\small$\Cc_1$};
\draw (7,4.25) node{\small$\Cc_2$};
\draw (5,5) node{\small$\Ss$};

\end{tikzpicture}
\end{center}
\caption{Semialgebraic set $\Ss:=\{(4\x^2-\y^2)(4\y^2-\x^2)\geq0,\y\geq0\}\subset\R^2$\label{fig11}}
\end{figure}
\end{example}
\begin{proof}
Pick the points $p_1:=(-1,1),p_2:=(1,1)\in\Ss$ and assume that there exists an analytic path $\alpha:[0,1]\to\Ss$ such that $\alpha(0)=p_1$ and $\alpha(1)=p_2$. Consider the closed semialgebraic sets $\Cc_1:=\Ss\cap\{\x\leq0\}$ and $\Cc_2:=\Ss\cap\{\x\geq0\}$, which satisfy $\Ss=\Cc_1\cup\Cc_2$. Both $\Cc_1$ and $\Cc_2$ are convex, so they are connected by analytic paths (in fact, they are connected by segments) and $\Cc_1\cap\Cc_2=\{(0,0)\}$. Define $\Cc_i^*:=\{\lambda w:\ w\in\Cc_i,\ \lambda\in\R\}$ for $i=1,2$. Note that $\Ss\cap\{\x<0\}=\Cc_1\setminus\{(0,0)\}$ and $\Ss\cap\{\x>0\}=\Cc_2\setminus\{(0,0)\}$ are pairwise disjoint open subsets of $\Ss$. We have $0\in\alpha^{-1}(\Cc_1\setminus\{(0,0)\})$ and $1\in\alpha^{-1}(\Cc_2\setminus\{(0,0)\})$, so $t_0:=\inf(\alpha^{-1}(\Cc_2\setminus\{(0,0)\}))>0$. As $\alpha$ is a (non-constant) analytic path, $t_0\in\cl(\alpha^{-1}(\Cc_1\setminus\{(0,0)\}))\cap\cl(\alpha^{-1}(\Cc_2\setminus\{(0,0)\}))$ and $\alpha(t_0)=(0,0)$. As $\alpha^{-1}(\Cc_1\setminus\{(0,0)\})$ and $\alpha^{-1}(\Cc_2\setminus\{(0,0)\})$ are pairwise disjoint open subsets of $[0,1]$, there exists $\veps>0$ such that 
$$
\alpha((t_0-\veps,t_0))\subset\Cc_1\setminus\{(0,0)\}\quad\text{and}\quad\alpha((t_0,t_0+\veps))\subset\Cc_2\setminus\{(0,0)\}.
$$
The tangent direction to $\im(\alpha|_{(t_0-\veps,t_0+\veps)})$ at $\alpha(t_0)=(0,0)$ is the line generated by the vector 
$$
w=\lim_{t\to t_0}\frac{\alpha(t)-\alpha(t_0)}{(t-t_0)^k}=\begin{cases}
\lim_{t\to t_0^{+}}\frac{\alpha(t)-(0,0)}{(t-t_0)^k}\in\Cc_1^*\setminus\{(0,0)\},\\
\lim_{t\to t_0^{-}}\frac{\alpha(t)-(0,0)}{(t-t_0)^k}\in\Cc_2^*\setminus\{(0,0)\},
\end{cases}
$$
where $k$ is the multiplicity of $t_0$ as a root of $\|\alpha\|$. This is a contradiction (because $\Cc_1^*\cap\Cc_2^*=\{(0,0)\}$), so $\Ss$ is not connected by analytic paths.
\end{proof}

\begin{thm}\label{main1-i}
Let $\Ss\subset\R^n$ be the union of a finite family of $n$-dimensional convex (compact) polyhedra. The following assertions are equivalent:
\begin{itemize}
\item[(i)] $\Ss$ is connected by analytic paths.
\item[(ii)] There exists a polynomial map $f:\R^{n+1}\to\R^n$ such that $f(\ol{\Bb}_{n+1})=\Ss$.
\end{itemize}
\end{thm}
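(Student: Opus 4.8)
The implication (ii) $\Rightarrow$ (i) is immediate and does not use the polyhedral structure: $\ol{\Bb}_{n+1}$ is convex, so any two of its points are joined by a segment, which is an analytic path, and composing such a segment with the polynomial (hence analytic) map $f$ produces an analytic path inside $\Ss=f(\ol{\Bb}_{n+1})$ joining the two prescribed points.

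For (i) $\Rightarrow$ (ii) I would begin with a reduction. Triangulating each polyhedron, we may assume each $\Ss_i$ is an $n$-simplex (this changes neither $\Ss$ nor the hypothesis). Next I would characterize the hypothesis combinatorially: let $G$ be the graph on $\{1,\dots,\ell\}$ with an edge $\{i,j\}$ whenever $\dim(\Ss_i\cap\Ss_j)\geq1$; then $\Ss$ is connected by analytic paths if and only if $G$ is connected. If $G$ is connected, each edge set $\Ss_i\cap\Ss_j$ contains a nondegenerate segment and one passes from $\Ss_i$ to $\Ss_j$ through it by concatenating segments inside the convex pieces and smoothing the corners there; if $G$ is disconnected, partition $\{1,\dots,\ell\}$ into two nonempty parts with no joining edge, observe that two simplices whose tangent cones at a common point $p$ share a ray must meet in a set of dimension $\geq1$ (a simplex coincides near $p$ with $p$ plus its tangent cone), hence across the two parts the relevant tangent cones meet only at the crossing points, and the tangent-direction argument of the example above forbids an analytic path crossing between the parts. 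Granting this, fix a spanning tree of $G$ and re-index so that $\Ss_k\cap(\Ss_1\cup\dots\cup\Ss_{k-1})$ has dimension $\geq1$ for every $k\geq2$, picking a nondegenerate segment $\sigma_k$ inside it.

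The heart of the proof is then an inductive gluing. I would use the basic brick fact that every $n$-simplex — more generally, every $n$-dimensional compact convex polyhedron — is a polynomial image of $\ol{\Bb}_n$, hence of $\ol{\Bb}_{n+1}$ via the surjective polynomial projection $\ol{\Bb}_{n+1}\to\ol{\Bb}_n$, together with the flexibility (obtained by precomposing with polynomial self-maps of the ball) to carry a prescribed face or segment of the ball onto a prescribed segment of the polyhedron. The inductive step is a gluing lemma: \emph{if $A\subseteq\R^n$ is a polynomial image of $\ol{\Bb}_{n+1}$ and $B\subseteq\R^n$ is an $n$-dimensional compact convex polyhedron with $A\cap B$ containing a nondegenerate segment $\sigma$, then $A\cup B$ is a polynomial image of $\ol{\Bb}_{n+1}$.} I would prove this by realizing $A\cup B$ as a polynomial image of the cylinder $\ol{\Bb}_n\times[-1,1]$ — itself an $n$-brick, hence a polynomial image of $\ol{\Bb}_{n+1}$ — through a single polynomial map that parametrizes $A$ on the slices near one base, parametrizes $B$ on the slices near the other base, and degenerates onto $\sigma$ on the slices in between; the freedom to put the boundary data of each piece on $\sigma$, together with bump-type factors such as powers of $1-t^2$ in the cylinder coordinate, keeps every slice's image inside $A\cup B$ while forcing the total image to be exactly $A\cup B$. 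Applying the lemma with $A=\Ss_1\cup\dots\cup\Ss_{k-1}$, $B=\Ss_k$ and $\sigma=\sigma_k$, and iterating for $k=2,\dots,\ell$, produces a polynomial map $f\colon\R^{n+1}\to\R^n$ with $f(\ol{\Bb}_{n+1})=\Ss$.

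The main obstacle is exactly this gluing construction. Since a polynomial agreeing on a nonempty open set with a prescribed expression agrees with it everywhere, $f$ cannot be defined piecewise as a parametrization of $A$ on one part of the ball and of $B$ on another: both pieces must issue from a single global polynomial formula, and it is the \emph{segment} $\sigma$ — not merely a common point — that makes such a formula possible. This dovetails with the easy implication: a union of convex polyhedra meeting only at points with disagreeing tangent cones is not connected by analytic paths, hence cannot be a polynomial image of a ball at all, so the segment is genuinely needed. The extra coordinate (source $\ol{\Bb}_{n+1}$ instead of $\ol{\Bb}_n$) is the slack that makes the global formula tractable; sharpening the source to $\ol{\Bb}_n$, which is in fact possible for convex polyhedra, requires a substantially more delicate argument treated separately.
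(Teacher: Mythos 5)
Your reduction of hypothesis (i) to the combinatorial condition ``$G$ is connected, where $\{i,j\}$ is an edge iff $\dim(\Ss_i\cap\Ss_j)\geq1$'' is false, and this breaks the proof. Take $T_1:=\{0\leq\x\leq1,\ |\y|\leq\x\}$ and $T_2:=-T_1$ in $\R^2$: these are $2$-dimensional compact convex polyhedra with $T_1\cap T_2=\{(0,0)\}$, so your graph has no edge, yet $T_1\cup T_2$ is connected by analytic paths (the arc $t\mapsto(t,t^2)$ crosses from $T_2$ to $T_1$ through the origin, and any point of $T_1$ is joined to any point of $T_2$ by a path $t\mapsto(x(t),x(t)g(t))$ with $x,g$ affine and $|g|\leq1$). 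The error in your ``only if'' argument is that the tangent-direction obstruction of the introductory example needs the \emph{two-sided} cones $\Cc_i^*:=\{\lambda w:\ w\in\Cc_i,\ \lambda\in\R\}$ to meet only at the origin, and $\Cc_1\cap\Cc_2=\{0\}$ does not imply $\Cc_1^*\cap\Cc_2^*=\{0\}$ (in the example above $\Cc_1^*=\Cc_2^*$). Consequently there are sets $\Ss$ satisfying (i) for which no re-indexing with $\dim(\Ss_k\cap(\Ss_1\cup\cdots\cup\Ss_{k-1}))\geq1$ exists, and your construction never reaches them, even though the theorem correctly asserts they are images of $\ol{\Bb}_{n+1}$. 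What (i) actually yields is weaker, and it is exactly what the paper extracts (Lemma \ref{bridges} and Corollary \ref{graph}): a \emph{bridge}, that is, a single polynomial arc passing from $\Int(\Ss_i)$ to $\Int(\Ss_j)$ through one common boundary point. A nondegenerate common segment is neither available in general nor needed; your closing assertion that ``the segment is genuinely needed'' is precisely the false point.

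Second, the gluing lemma that you yourself identify as the heart of the proof is only asserted, not proved, and the sketch does not survive scrutiny: after the first inductive step $A$ is a non-convex polynomial image of $\ol{\Bb}_{n+1}$ (not of $\ol{\Bb}_n$), so single slices $\ol{\Bb}_n\times\{t\}$ cannot parametrize it; weighting two parametrizations by bump factors in $t$ produces cross terms that you give no reason to believe stay inside $A\cup B$; and you would also need to control, at every stage, which part of the ball lands on $\sigma_k$. The paper avoids pairwise gluing altogether: it chains the open simplices by bridges, applies the Smart Curve Selection Lemma (Lemma \ref{smart}, built on Weierstrass approximation combined with Hermite interpolation) to produce $n+1$ polynomial paths $\alpha_0,\ldots,\alpha_n$ that \emph{simultaneously} sit at the vertices of $\sigma_k$ at time $t_k$ while staying inside $\Ss$ in between, and then the single global formula $F(x,t)=(1-\sum_i x_i)\alpha_0(t)+\sum_i x_i\alpha_i(t)$ on the prism $\Delta_n\times[0,1]$ (a polynomial image of $\ol{\Bb}_{n+1}$ by Corollary \ref{tetra1}) has image exactly $\Ss$ by convexity of each simplex. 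Your easy implication (ii) $\Rightarrow$ (i) is fine.
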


Squeezing the arguments used to prove Theorem \ref{main1-i} (and increasing the complexity of the involved constructions) we can go further and prove the following sharp result.

\begin{thm}\label{main1-i2}
Let $\Ss\subset\R^n$ be the union of a finite family of $n$-dimensional convex (compact) polyhedra. The following assertions are equivalent:
\begin{itemize}
\item[(i)] $\Ss$ is connected by analytic paths.
\item[(ii)] There exists a polynomial map $f:\R^n\to\R^n$ such that $f(\ol{\Bb}_n)=\Ss$.
\end{itemize}
\end{thm}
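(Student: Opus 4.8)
For (i) $\Rightarrow$ (ii) $\Leftarrow$ this is the genuine content, so let me dispose of the easy direction first. The implication (ii) $\Rightarrow$ (i) needs no new idea and is identical to its counterpart in Theorem~\ref{main1-i}: the ball $\ol{\Bb}_n$ is convex, hence connected by analytic (even linear) paths, and the composite of such a path with the polynomial map $f$ is again an analytic path, so $\Ss=f(\ol{\Bb}_n)$ is connected by analytic paths. Thus the whole task is to reproduce the conclusion of Theorem~\ref{main1-i} while lowering the dimension of the source ball from $n+1$ to $n$.

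For (i) $\Rightarrow$ (ii) the plan is to keep the two preliminary layers of the proof of Theorem~\ref{main1-i} and rebuild only its core. First, a combinatorial reduction: triangulate the given family of convex polyhedra into finitely many $n$-simplices $\Delta_1,\dots,\Delta_M$ with $\Ss=\bigcup_j\Delta_j$, fine enough that the analytic-path-connectivity relation among the original pieces is faithfully reflected by the refined simplices. Form the graph $G$ on $\{\Delta_1,\dots,\Delta_M\}$ with an edge between $\Delta_j$ and $\Delta_k$ whenever they share a facet, or meet at a single vertex $v$ whose tangent cones ``face each other'' (some line through $v$ meets $\Delta_j$ and $\Delta_k$ on opposite sides of $v$ near $v$); any other kind of contact is obstructed exactly by the tangent-line computation in the Example above, so, as in the proof of Theorem~\ref{main1-i}, connectedness of $\Ss$ by analytic paths forces $G$ to be connected. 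Choosing a spanning tree of $G$ and a closed walk along it yields an ordering $\Delta_{j_1},\dots,\Delta_{j_N}$ (with repetitions) of all the simplices in which consecutive terms are $G$-adjacent, and it suffices to realise such a ``chain'' as $f(\ol{\Bb}_n)$; the case $N=1$ is already among the basic bricks of the paper.

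The core construction, now deprived of the auxiliary variable, is the new part. Where Theorem~\ref{main1-i} sweeps the chain using the extra coordinate of $\ol{\Bb}_{n+1}$ as a parameter, here I would fix a polynomial curve $\gamma:[-1,1]\to\R^n$ whose image runs through the interiors of $\Delta_{j_1},\dots,\Delta_{j_N}$ in order and crosses each $G$-contact correctly (transversally through a facet, straight through a vertex along a cone-facing line), realise $\gamma$ as the restriction of a polynomial map $f_0:\R^n\to\R^n$ to a diameter of $\ol{\Bb}_n$, and then thicken: on $\ol{\Bb}_n=\{\x_1^2+\cdots+\x_n^2\le1\}$ let $f_0$ displace a point from $\gamma(\x_1)$ by a polynomial amount in $(\x_1,\dots,\x_n)$ vanishing at $\x_1=\pm1$, arranged so that for each fixed $\x_1=t$ the slice image is a full-dimensional polytope inside the simplex currently visited. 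Since polynomial ``widths'' such as $1-\x_1^2$ and their affine images behave like parabolic, not circular, profiles, no single slice can be expected to equal an entire $\Delta_{j_k}$; as with the basic examples in the paper this is repaired by subdividing each simplex further and letting $\gamma$ wiggle back and forth so that the slice images over an interval of $t$-values exhaust $\Delta_{j_k}$, everything glued to the neighbouring blocks along the $G$-contacts. A final affine adjustment of $f_0$ then gives $f(\ol{\Bb}_n)=\Ss$.

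I expect the main obstacle to be exactly this core step: producing, with polynomials in only $n$ variables, a map whose image is \emph{precisely} the chain --- no spurious points and no missing ones --- while simultaneously imposing the prescribed crossing geometry at every $G$-contact. The facet contacts are the benign case; the delicate ones are the vertex contacts, where the map must degenerate to first order along a line and one must check that the composite does not accidentally ``open up'' a convex corner and so destroy analytic-path-connectedness, precisely the phenomenon in the Example. Controlling the image set globally, rather than merely near the curve $\gamma$, is what makes the constructions heavier than in Theorem~\ref{main1-i}, where the extra coordinate absorbs this bookkeeping.
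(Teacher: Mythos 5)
The easy direction and the combinatorial reduction are fine: your direct argument for (ii) $\Rightarrow$ (i) (push segments of $\ol{\Bb}_n$ forward by $f$) is correct and even simpler than the paper's, and your graph-of-simplices reduction is essentially the paper's Lemma \ref{bridges}/Corollary \ref{graph}, except that the right abstraction for an edge is the existence of a polynomial \emph{bridge} between the two interiors (which is what analytic-path-connectedness actually yields, via Lemma \ref{freedom}), rather than an ad hoc classification of facet versus ``facing-cone'' vertex contacts.

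The genuine gap is the core step, exactly where you predicted it: you never produce the map. The ``thicken a polynomial curve so that each slice over $\x_1=t$ is a full-dimensional polytope, then wiggle until the slices exhaust each simplex'' plan gives no mechanism for either inclusion. Slices of a polynomial image of $\ol{\Bb}_n$ are not polytopes, and, more seriously, there is no argument forcing the union of slices to cover a prescribed simplex exactly without spilling into $\R^n\setminus\Ss$. The paper's resolution is structurally different: it parametrizes each convex polyhedron $\pol$ as the image of the prism $\Delta_{n-1}\times[0,1]$ (itself a polynomial image of $\ol{\Bb}_n$ by Corollary \ref{tetra1}) under $(\lambda,t)\mapsto\sum_{k=1}^n\lambda_k\beta_k(t)$, where $\beta_1,\ldots,\beta_n$ are $n$ polynomial loops based at a common interior point $p$ that visit, in synchrony, the vertices $v_{i1},\ldots,v_{in}$ of the simplices of a triangulation of $\partial\pol$ while staying in carefully chosen subpolyhedra $\pol_{ik}$. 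Containment in $\pol$ is then automatic from convexity, and \emph{surjectivity} onto each cone $\widehat\sigma_i$ is proved by a topological degree argument (Lemma \ref{simplex}): a missed interior point would give a retraction making a degree-one map of $\partial(\Delta_{n-1}\times[0,1])$ extend over the prism, a contradiction. Finally, a $\Cont^\nu$-stability statement (Lemma \ref{clue}(1)) guarantees that these properties survive the polynomial approximation needed to concatenate the loops across the bridges between consecutive polyhedra. Without substitutes for the degree argument and the perturbation stability, your outline cannot be completed as written.
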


We included an independent proof of Theorem \ref{main1-i} because it is enlightening to show the techniques we develop in this article and it is less demanding from the point of view of complexity than the one of Theorem \ref{main1-i2}. In Figure \ref{fig12} we present two examples of polygons to illustrate Theorems \ref{main1-i} and \ref{main1-i2}: the first one is not connected by analytic paths, whereas the second one is connected by analytic paths.

\begin{center}
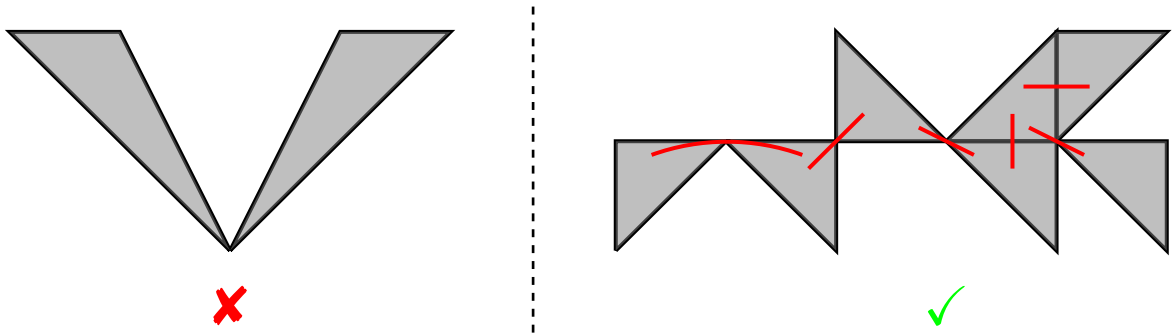
\begin{figure}[ht]
\begin{tikzpicture}[scale=1.45]
\draw[line width=1.5pt] (-0.5,0) -- (-2.5,2) -- (-1.5,2) -- (-0.5,0); 
\draw[line width=1.5pt] (-0.5,0) -- (1.5,2) -- (0.5,2) -- (-0.5,0); 

\draw[dashed,line width=1pt] (2.25,-0.75) -- (2.25,2.25);

\draw[line width=1.5pt] (3,0) -- (3,1) -- (4,1) -- (3,0);
\draw[line width=1.5pt] (4,1) -- (5,1) -- (5,0) -- (4,1);
\draw[line width=1.5pt] (5,1) -- (5,2) -- (6,1) -- (5,1);
\draw[line width=1.5pt] (6,1) -- (7,1) -- (7,2) -- (6,1);
\draw[line width=1.5pt] (6,1) -- (7,1) -- (7,0) -- (6,1);
\draw[line width=1.5pt] (7,1) -- (7,2) -- (8,2) -- (7,1);
\draw[line width=1.5pt] (7,1) -- (8,1) -- (8,0) -- (7,1);

\draw[fill=gray!100,opacity=0.5,draw=none] (-0.5,0) -- (-2.5,2) -- (-1.5,2) -- (-0.5,0); 
\draw[fill=gray!100,opacity=0.5,draw=none] (-0.5,0) -- (1.5,2) -- (0.5,2) -- (-0.5,0);

\draw[fill=gray!100,opacity=0.5,draw=none] (3,0) -- (3,1) -- (4,1) -- (3,0);
\draw[fill=gray!100,opacity=0.5,draw=none] (4,1) -- (5,1) -- (5,0) -- (4,1);
\draw[fill=gray!100,opacity=0.5,draw=none] (5,1) -- (5,2) -- (6,1) -- (5,1);
\draw[fill=gray!100,opacity=0.5,draw=none] (6,1) -- (7,1) -- (7,2) -- (6,1);
\draw[fill=gray!100,opacity=0.5,draw=none] (6,1) -- (7,1) -- (7,0) -- (6,1);
\draw[fill=gray!100,opacity=0.5,draw=none] (7,1) -- (7,2) -- (8,2) -- (7,1);
\draw[fill=gray!100,opacity=0.5,draw=none] (7,1) -- (8,1) -- (8,0) -- (7,1);

\draw (6,-0.5) node{\huge\color{green}$\checkmark$};
\draw (-0.5,-0.5) node{\huge\color{red}$\xmark$};

\draw[color=red,line width=1.5pt] (3.325,0.875) arc (110:70:2cm);
\draw[color=red,line width=1.5pt] (4.75,0.75) -- (5.25,1.25);
\draw[color=red,line width=1.5pt] (5.75,1.125) -- (6.25,0.875);
\draw[color=red,line width=1.5pt] (6.6,0.75) -- (6.6,1.25);
\draw[color=red,line width=1.5pt] (6.7,1.5) -- (7.3,1.5);
\draw[color=red,line width=1.5pt] (6.75,1.125) -- (7.25,0.875);

\end{tikzpicture}
\caption{Illustration of Theorems \ref{main1-i} and \ref{main1-i2}\label{fig12}}
\end{figure}
\end{center}

It seems natural to make abstraction of the techniques developed in the proof of Theorem \ref{main1-i} and use them to provide further examples of polynomial images of a closed unit ball. We will be concerned about the representativity as polynomial images of $(m+1)$-dimensional closed balls of finite unions of polynomial images $\Ss_k\subset\R^n$ of $m$-dimensional closed balls. As one can expect, we need to ask some mild additional conditions concerning the semialgebraic sets $\Ss_k$. Let ${\mathcal C}^0([0,1])$ denote the ring of continuous functions on the interval $[0,1]$.

\begin{defn}\label{mbrickd}
We say that an $n$-dimensional semialgebraic set $\Ss\subset\R^n$ is an \em $m$-brick \em if there exists a homotopy $H:=(H_1,\ldots,H_n):[0,1]\times\ol{\Bb}_m\to\Ss$ such that:
\begin{itemize}
\item[(i)] Each $H_i$ is the restriction to $[0,1]\times\ol{\Bb}_m$ of a polynomial of the ring ${\mathcal C}^0([0,1])[\x_1,\ldots,\x_m]$.
\item[(ii)] $H(\{0\}\times\ol{\Bb}_m)=\Ss$ and $H(1,\cdot)$ is a constant map.
\item[(iii)] $H(\{t\}\times\ol{\Bb}_m)\subset\Int(\Ss)$ for each $t\in(0,1)$.
\end{itemize}
\end{defn}
Roughly speaking, we have a family of (restrictions of) polynomial maps $H_\lambda:\ol{\Bb}_m\to\Ss$ that deforms $\Ss$ to a point $p\in\Ss$ and the intermediate sets are contained in $\Int(\Ss)$. With this definition in mind, we state the following result. Figures \ref{fig2} and \ref{fig3} illustrates some applications of Theorem \ref{main2-i}.
\begin{center}
\begin{figure}[ht]
\includegraphics[width=15cm]{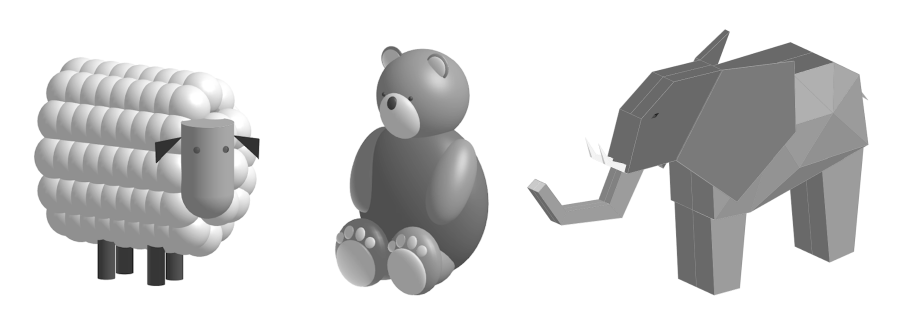}
\caption{Examples of polynomial images of the $4$-dimensional closed unit ball (application of Theorem \ref{main2-i}): little sheep (Sch\"afchen), Teddy bear, elephant.\label{fig2}}
\end{figure}
\end{center}
\begin{thm}[Sch\"afchen's theorem]\label{main2-i}
Let $\Ss\subset\R^n$ be the union of a finite family of $m$-bricks. The following assertions are equivalent:
\begin{itemize}
\item[(i)] $\Ss$ is connected by analytic paths.
\item[(ii)] There exists a polynomial map $f:\R^{m+1}\to\R^n$ such that $f(\ol{\Bb}_{m+1})=\Ss$.
\end{itemize}
\end{thm}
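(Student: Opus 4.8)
The plan is to follow the same strategy that (by the author's own remark) underlies Theorem \ref{main1-i}, and to observe that the only properties of convex polyhedra actually used there are precisely the ones abstracted in the definition of an $m$-brick. First I would prove the implication (ii) $\Rightarrow$ (i), which is the easy direction: the closed ball $\ol{\Bb}_{m+1}$ is connected by analytic paths (any two points are joined by a straight segment, or a great-circle arc), and the image of an analytic path under a polynomial map is again an analytic path; hence $f(\ol{\Bb}_{m+1})=\Ss$ is connected by analytic paths. This needs no brick structure at all.

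For the substantive implication (i) $\Rightarrow$ (ii), write $\Ss=\bigcup_{i=1}^\ell\Ss_i$ with each $\Ss_i$ an $m$-brick, and let $H^{(i)}:[0,1]\times\ol{\Bb}_m\to\Ss_i$ be the associated homotopy contracting $\Ss_i$ to a point $p_i\in\Ss_i$ through sets that lie in $\Int(\Ss_i)$ for $t\in(0,1)$; in particular $\Ss_i=H^{(i)}(0,\cdot)(\ol{\Bb}_m)$ exhibits $\Ss_i$ itself as a polynomial image of $\ol{\Bb}_m$. Since $\Ss$ is connected by analytic paths, I can choose, for each consecutive pair in a suitable ordering (or along a spanning tree of the adjacency graph of the bricks), an analytic path inside $\Ss$ joining $p_i$ to $p_{i+1}$; such a path can be reparametrized so as to be (the restriction of) a polynomial path $\gamma_i:[0,1]\to\Ss$. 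The idea is now to glue: one builds a single polynomial map on $\ol{\Bb}_{m+1}=\{\z^2+\|\x\|^2\le 1\}$ by using the extra variable $\z$ to run a "time" parameter that, as $\z$ sweeps the interval $[-1,1]$, successively (a) expands the point $p_1$ out to the full brick $\Ss_1$ via $H^{(1)}$, (b) contracts $\Ss_1$ back to $p_1$, (c) travels along $\gamma_1$ from $p_1$ to $p_2$ while staying inside $\Ss$, (d) expands $p_2$ to $\Ss_2$, and so on; each of the $2\ell-1$ stages is parametrized by a subinterval of $[-1,1]$, and the fibre of $\z$ over each stage is a closed $m$-ball of varying radius on which the relevant polynomial map of that stage is applied. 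The conditions (i)--(iii) in Definition \ref{mbrickd} are exactly what makes the stages match up continuously and keep every intermediate image inside $\Ss$ (the intermediate sets land in $\Int(\Ss_i)\subset\Int(\Ss)$, except at the endpoints $\z=\pm1$ of each stage where one recovers either a point of the tree-path or the full brick); polynomiality of $H^{(i)}$ in $\x$ with ${\mathcal C}^0([0,1])$ coefficients, together with a Weierstrass/Stone-type polynomial approximation of the continuous coefficient functions (controlled so as not to leave $\Ss$, using the strict interiority in (iii) as a buffer), upgrades the whole construction to an honest polynomial map $\R^{m+1}\to\R^n$.

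The main obstacle I expect is precisely the gluing-and-approximation bookkeeping: one must (1) fit the $2\ell-1$ stages into the single algebraic constraint $\z^2+\|\x\|^2\le1$ so that the $\z$-fibres are genuinely closed balls $\ol{\Bb}_m$ of the right (polynomially varying) radii and the transition values of $\z$ are handled without creating spurious image points, (2) ensure the resulting piecewise description is actually given by a single polynomial — this is where one replaces the naive piecewise formula by a clever polynomial interpolation in $\z$, and where the continuous coefficients of the $H^{(i)}$ must be approximated uniformly while keeping the image inside $\Ss$, which is possible exactly because condition (iii) gives room (a positive distance to $\Bd(\Ss)$) on the open stages, and (3) check surjectivity onto all of $\Ss$ — each brick $\Ss_i$ is fully covered at the stage-endpoint $\z$-value where $H^{(i)}(0,\cdot)$ acts, and nothing outside $\Ss$ is ever produced. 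None of these steps is conceptually new relative to Theorem \ref{main1-i}; the point of the theorem is that convex polyhedra can be replaced by the abstract class of $m$-bricks, and indeed the author's preceding discussion amounts to noting that convex polyhedra are $m$-bricks (with $m=n$, even $m=1$ after a preliminary contraction), so that Theorem \ref{main1-i} is the case of Theorem \ref{main2-i} where every $\Ss_i$ is a convex polyhedron, with $m+1=n+1$.
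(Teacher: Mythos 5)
Your direction (ii) $\Rightarrow$ (i) is correct and even more elementary than the paper's (which routes through a Nash surjection $\R^{m+1}\to\ol{\Bb}_{m+1}$): given $p,q\in\Ss$, push a segment joining preimages through $f$. The architecture of your (i) $\Rightarrow$ (ii) — expand/contract each brick via its homotopy, travel between base points along bridges, let the extra variable play the role of time, finish with a surjection of $\ol{\Bb}_{m+1}$ onto a cylinder — is also the right one. But the step you dismiss as ``gluing-and-approximation bookkeeping'' is the mathematical core of the proof, and your proposal has two genuine gaps there. First, an analytic path inside $\Ss$ joining $p_i$ to $p_{i+1}$ cannot be ``reparametrized so as to be a polynomial path'': producing a \emph{polynomial} path that stays inside $\Ss$ while passing through the bridge base points $q_i$ (which typically lie on $\partial\Ss$) is exactly the content of the Smart polynomial curve lemma (Lemma \ref{smart}), whose proof requires simultaneous $\Cont^\nu$-approximation and Hermite-type interpolation with Taylor matching to high order at the critical parameters, plus a Rolle-type argument (Lemma \ref{clue}) to certify the approximant does not leave the prescribed basic open sets. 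Second, and more fundamentally, a Weierstrass approximation of the continuous coefficients ``using the strict interiority in (iii) as a buffer'' cannot work as stated: at the parameter values $t_k$ where the image must be the full closed brick $\Ss_k$ there is no buffer at all, and the distance from $H^{(k)}(\tau,\cdot)(\ol{\Bb}_m)$ to $\partial\Ss_k$ degenerates as $\tau\to 0$; moreover the condition to be preserved, namely $\Phi(t,\cdot)(\ol{\Bb}_m)\subset\Int(\Ss_k)$, is uniform in $x$, and a fibrewise estimate does not control it.

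The paper's resolution, which is the key idea missing from your proposal, is to lift the whole problem to the finite-dimensional affine space $\psd_d(\R^m,\R^n)$ of polynomial maps of degree $\leq d$, where $d$ bounds the degrees of all the brick homotopies. One shows (Lemma \ref{basico}) that $\Omega_{\Ss_k}:=\{F:\ F(\ol{\Bb}_m)\subset\Int(\Ss_k)\}$ is a connected open semialgebraic subset of this space and that the map $F_k:=H^{(k)}(0,\cdot)$ with $F_k(\ol{\Bb}_m)=\Ss_k$ lies in $\cl(\Omega_{\Ss_k})$ — this adherence is precisely where Definition \ref{mbrickd} is used. The bridges between $\Int(\Ss_k)$ and $\Int(\Ss_{k+1})$ lift to bridges of constant maps between $\Omega_{\Ss_k}$ and $\Omega_{\Ss_{k+1}}$ (Corollary \ref{freedom2}), and Lemma \ref{smart} applied \emph{in the coefficient space} yields a genuinely polynomial path $\phi:\R\to\psd_d(\R^m,\R^n)$ hitting each $F_k$ and each constant map $F_{q_k}$ while staying in $\bigcup_k\Omega_{\Ss_k}$ in between; then $\Phi(t,x):=\phi(t)(x)$ is a single polynomial map on $[0,1]\times\ol{\Bb}_m$ with image $\Ss$, and Lemma \ref{imagebc} converts the cylinder into $\ol{\Bb}_{m+1}$. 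Until you either reproduce this lift or supply an equally quantitative substitute for the approximation-with-interpolation step, the proof is incomplete.
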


\begin{examples}[$m$-bricks]\label{bricks}
In Section \ref{s2} we present many examples of $m$-bricks for some $m\geq1$. We will prove: \em if $\Ss\subset\R^n$ is a convex $n$-dimensional semialgebraic set and it is the image of the closed unit ball $\ol{\Bb}_m$, then it is an $m$-brick\em. Having this in mind, we find large families of $m$-bricks:
\begin{itemize}
\item[(i)] $n$-dimensional ellipsoids ($\Ee:=\{\frac{\x_1^2}{a_1^2}+\cdots+\frac{\x_n^2}{a_n^2}\leq1\}$ where $a_i>0$) are $n$-bricks.
\item[(ii)] $n$-dimensional simplices (Lemma \ref{tetra}), $n$-dimensional hypercubes (Corollary \ref{bq}) and more generally $n$-dimensional products of closed unit balls (Corollary \ref{pb}) are $n$-bricks.
\item[(iii)] The finite product of $m_i$-bricks is an $m$-brick where $m:=\sum_im_i$ (Corollary \ref{pbr}).
\item[(iv)] If $f:\R^m\to\R^n$ is a polynomial map such that its image is not contained in a hyperplane, the convex hull of $f(\ol{\Bb}_m)$ is an $(m(n+1)+n)$-brick (Corollary \ref{convexhull}). In particular, if $m=1$, the convex hull of $f([-1,1])$ is a $(2n+1)$-brick. 
\item[(v)] If $k_1,\ldots,k_n\geq1$, the image of $\ol{\Bb}_n$ under $f:\R^n\to\R^n,\ (x_1,\ldots,x_n)\mapsto(x_1^{k_1},\ldots,x_n^{k_n})$ is an $n$-brick (Corollary \ref{ss}). We call this $n$-bricks \em spherical stars\em.
\item[(vi)] Truncated $n$-dimensional cones are $n$-bricks (Corollary \ref{trun}). 
\item[(vii)] $n$-dimensional parabolic segments are $n$-bricks (Lemma \ref{parseg1}), whereas more general $2$-dimensional parabolic segments (Lemma \ref{parab}) are $2$-bricks.
\item[(viii)] $n$-dimensional elliptic sectors and segments are $n$-bricks (Theorem \ref{circdef}). 
\item[(ix)] $n$-dimensional hyperbolic sectors and segments are $n$-bricks (Theorem \ref{hiperdef}). 
\end{itemize}
\end{examples} 

\begin{center}
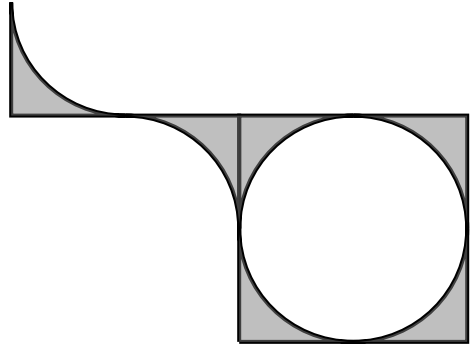
\begin{figure}[ht]
\begin{tikzpicture}[scale=0.75]

\draw[line width=1.5pt] (0,0) -- (2,0) arc (270:180:2cm) -- (0,0);
\draw[line width=1.5pt] (2,0) -- (4,0) -- (4,2) arc (360:270:2cm);
\draw[line width=1.5pt] (4,2) -- (4,4) -- (2,4) arc (90:0:2cm);
\draw[line width=1.5pt] (2,4) -- (0,4) -- (0,2) arc (180:90:2cm);
\draw[line width=1.5pt] (0,2) -- (0,4) -- (-2,4) arc (90:0:2cm);
\draw[line width=1.5pt] (-2,4) -- (-4,4) -- (-4,6) arc (180:270:2cm);

\draw[fill=gray!100,opacity=0.5,draw=none] (0,0) -- (2,0) arc (270:180:2cm) -- (0,0);
\draw[fill=gray!100,opacity=0.5,draw=none] (2,0) -- (4,0) -- (4,2) arc (360:270:2cm);
\draw[fill=gray!100,opacity=0.5,draw=none] (4,2) -- (4,4) -- (2,4) arc (90:0:2cm);
\draw[fill=gray!100,opacity=0.5,draw=none] (2,4) -- (0,4) -- (0,2) arc (180:90:2cm);
\draw[fill=gray!100,opacity=0.5,draw=none] (0,2) -- (0,4) -- (-2,4) arc (90:0:2cm);
\draw[fill=gray!100,opacity=0.5,draw=none] (-2,4) -- (-4,4) -- (-4,6) arc (180:270:2cm);
\end{tikzpicture}
\caption{Example of application of Theorem \ref{main2-i}\label{fig3}}
\end{figure}
\end{center}

By Theorem \ref{main2-i} it is possible to construct many polynomial images of the closed unit ball combining the previous $m$-bricks (Figures \ref{fig2} and \ref{fig3}). 

\subsection{State of the art}

The problem of characterizing the polynomial images of closed unit balls is related to the following one concerning polynomial images of affine spaces:

\begin{prob}\label{probl}
Characterize which (semialgebraic) subsets $\Ss\subset\R^n$ are polynomial or regular images of $\R^m$. 
\end{prob}

The first proposal for studying this problem and related ones, like the famous `quadrant problem' \cite{fg1}, goes back to \cite{g} (see also \cite[\S3.IV, p.69]{ei}). Other types of maps (like Nash, continuous rational, etc.) have been already considered to represent semialgebraic sets as images of affine spaces \cite{f2,ffqu}. A complete solution to Problem \ref{probl} seems far, but we have developed significant progresses:

\vspace*{1mm}
\noindent{\em General properties.} We have found conditions \cite{f1,fg2,fu1,u1} that a semialgebraic subset must satisfy to be either a polynomial or regular image of $\R^m$. The most remarkable one states that the set of points at infinity of a polynomial image of $\R^m$ is connected \cite{fu1}. The $1$-dimensional case was described in \cite{f1}. In \cite{ffqu} we proved the equality between the family of regular images of $\R^2$ and the family of continuous rational images of $\R^2$. 

\vspace*{1mm}
\noindent{\em Nash images.} 
In \cite{f2} it is provided a full characterization of the semialgebraic subsets $\Ss\subset\R^m$ that are Nash images of $\R^n$: \em A $d$-dimensional semialgebraic set $\Ss\subset\R^n$ is the image of $\R^d$ under a Nash map $f:\R^d\to\R^n$ if and only if $\Ss$ is connected by analytic paths\em. Parallel to the elaboration of the current work, Carbone--Fernando have obtained a full characterization of the Nash images of the closed unit ball \cite{cf}: \em A compact $d$-dimensional semialgebraic set $\Ss\subset\R^n$ is the image of the closed unit ball $\ol{\Bb}_d$ under a Nash map $f:\R^d\to\R^n$ if and only if $\Ss$ is connected by analytic paths\em. As a straightforward consequence, we show there: {\em a $d$-dimensional compact semialgebraic set $\Ss\subset\R^n$ connected by analytic paths is the image of any $d$-dimensional semialgebraic set $\Tt\subset\R^m$ under a suitable Nash map $f:\R^m\to\R^n$}, whereas {\em a general $d$-dimensional semialgebraic set $\Ss\subset\R^n$ connected by analytic paths is the image of any non-compact $d$-dimensional semialgebraic set $\Tt\subset\R^m$ under a suitable Nash map $f:\R^m\to\R^n$}.

\vspace*{1mm}
\noindent{\em Representation of semialgebraic sets as polynomial or regular images of $\R^n$.} We have proposed constructions to represent as either polynomial or regular images of $\R^n$ semialgebraic sets that can be described by linear equalities and inequalities. In \cite{f1,fg1,fgu1,fgu2,fgu3,fgu4,fu2,fu3,fu4,fu5,u2} we have analyzed the cases of convex polyhedra and their interiors, together with their respective complements and we have provided a full answer \cite[Table 1]{fu5}.

\vspace*{1mm}
\noindent{\em Optimization of the `open quadrant problem'.} 
It is difficult to determine which is the minimum degree for a polynomial map that has a given semialgebraic set as image. We have tried to find the least degree of the components of a polynomial map $f:\R^2\to\R^2$ whose image is the open quadrant $\Qq:=\{\x>0,\y>0\}$. We have shown that it is bounded above by 16 (see \cite{fg1,fgu2,fu4}) and we know that it can be lowered down until $8$, which seems to be the sharpest bound. 

\subsection{Alternative models}\label{am}

The image of a polynomial map $f:\R^m\to\R^n$ is either a point or an unbounded semialgebraic set. Thus, it is not posible to compare the polynomial images of closed unit balls with the polynomial images of affine spaces. However, we can compare polynomial images of closed unit balls with regular images of affine spaces and we will see that the first family is a subfamily of the second (Corollary \ref{images}). Of course, such inclusion is the maximum one can aspire because regular images of affine spaces contain polynomial images of affine spaces.

We have chosen the closed unit ball to represent semialgebraic sets as polynomial images because of its good properties. The closed unit ball is a compact manifold with boundary and its boundary is a homogeneous manifold. There exist other spaces like an $n$-sphere (which is a compact manifold without boundary) to represent as polynomial images semialgebraic sets. We claim: \em There exists no polynomial map from an $m$-dimensional closed unit ball onto the $n$-sphere $\sph^n:=\{\x_1^2+\cdots+\x_{n+1}^2=1\}$\em. 

If $f:=(f_1,\ldots,f_{n+1}):\R^m\to\R^{n+1}$ is a polynomial map such that $f(\ol{\Bb}_m)=\sph^n$, then $f_1^2+\cdots+f_{n+1}^2=1$ on the open ball $\Bb_m$. Using the Taylor's expansion of $f_1^2+\cdots+f_{n+1}^2$ at the origin, we deduce that $f_1^2+\cdots+f_{n+1}^2=1$ on $\R^m$, so $\deg(f_1),\ldots,\deg(f_{n+1})\leq0$, that is, $f_1,\ldots,f_{n+1}$ are constant polynomials, which is a contradiction. 

An $n$-sphere projects onto an $n$-dimensional closed unit ball. This means that family of polynomial images of the $n$-dimensional closed unit ball is a subfamily of the family of polynomial images of the $n$-sphere. However, if we consider regular images instead of polynomial images, both families coincide (Corollary \ref{images2}).

\subsection{Related problems and open questions}

The effective representation of a semialgebraic subset $\Ss\subset\R^n$ as a polynomial or a regular image of a closed unit ball $\ol{\Bb}_m$ may help the handling of classical problems in Real Geometry by reducing them to its study in $\ol{\Bb}_m$. 

\noindent{\em Positivstellens\"atze}. A widespread studied problem is the algebraic characterization of those polynomial or regular functions $g:\R^n\to\R$ that are either strictly positive or positive semidefinite on a semialgebraic set $\Ss\subset\R^n$. When $\Ss$ is a basic closed semialgebraic set, these problems were solved in \cite{s} (see also \cite[Cor.4.4.3]{bcr}). For convex (compact) polyhedra we refer the reader to \cite{h}, where stronger Positivstellens\"atze are obtained, specially for strictly positive polynomials. The obtained certificate of positiveness is the best possible one. 

Let $f:\R^m\to\R^n$ be a polynomial map and denote $\Ss:=f(\ol{\Bb}_m)$. Note that $g$ is strictly positive (respectively positive semidefinite) on $\Ss$ if and only if $g\circ f$ is strictly positive (respectively positive semidefinite) on $\ol{\Bb}_m$ and both questions are decidable by \cite{s}. This provides an algebraic characterization of positiveness for polynomial and regular functions on semialgebraic sets that are polynomial images of $\ol{\Bb}_m$. As a consequence of Theorem \ref{main1-i}, we deduce a Positivstellensatz certificate for finite unions (connected by analytic paths) of $n$-dimensional convex (compact) polyhedra. By Theorem \ref{main2-i} the same happens with finite unions (connected by analytic paths) of $n$-dimensional convex semialgebraic sets $\Ss$ that are polynomial images of the closed unit ball. 

\noindent{\em Optimization}. Suppose that $f:\R^m\to\R^n$ is either a polynomial or a regular map and let $\Ss:=f(\ol{\Bb}_m)$. Then the optimization of a given regular or polynomial function $g:\Ss\to\R$ is equivalent to the optimization of the composition $g\circ f$ on $\ol{\Bb}_m$. In this way one can strongly simplify contour conditions, but one has to evaluate whether the increase of the complexity (concentrated mainly in one variable, see the proofs of Theorems \ref{main1-i}, \ref{main1-i2} and \ref{main2-i}) of the composition $g\circ f$ is preferable to the existence of contour conditions.

Alternatively, let $\Tt\subset\R^n$ be a compact semialgebraic set and let $h:\Tt\to\R$ be a continuous semialgebraic function. Compact semialgebraic sets are triangulable and by \cite[Thm.9.4.1]{bcr} also continuous semialgebraic functions on compact semialgebraic sets can be `triangulated'. Thus, a continuous semialgebraic function on a compact semialgebraic set could be assumed (up to a suitable triangulation) as a continuous function on a finite simplicial complex that is affine on each simplex of the complex. Optimization problems for this type of functions are `straightforwardly' approached. 

The usual algorithms to triangulate a compact semialgebraic set $\Ss\subset\R^n$ (and continuous semialgebraic maps) \cite[Ch.9]{bcr} are based on the use of cylindrical decompositions, which have doubly exponential complexity in the number $n$ of variables involved in describing $\Ss$. More precisely, its complexity is in general $(\ell d)^{O(1)^n}$ where $O(1)$ represents a constant, $\ell$ is a bound on the number of polynomials need to describe $\Ss$ and $d$ is a bound on the degrees of a family of polynomials describing $\Ss$, see \cite[Ch.11]{bpr}. If $\Ss$ has piecewise linear boundary, the complexity of cylindrical decomposition is $\ell^{O(1)^n}$, which is still doubly exponential in the number $n$ of variables involved in describing $\Ss$. 

It would be interesting to compare both complexities and decide which procedure is more effective.

\noindent{\it Questions regarding complexity.} 
The algorithms developed in Theorems \ref{main1-i}, \ref{main1-i2} and \ref{main2-i} are constructive. Natural questions arise when considering the issue of complexity and we refer the reader to \cite[Rem.4.8]{f3} for some estimations concerning the piecewise linear case: 

\begin{quest2}\label{q1}
Fix $m$ to be either $n$ or $n+1$. {\em Which is the minimum degree of a polynomial map $f:\R^m\to\R^n$ such that $f(\ol{\Bb}_m)$ is a prescribed finite union $\Ss$ (connected by analytic paths) of $n$-dimensional convex polyhedra?}
\end{quest2}

\begin{quest2}\label{q3}
Fix $m\geq1$ and let $\Ss_i$ be an $m$-brick for $i=1,\ldots,r$ such that $\Ss=\bigcup_{i=1}^r\Ss_i$ is connected by analytic paths. {\em Which is the minimum degree of a polynomial map $f:\R^{m+1}\to\R^n$ such that $f(\ol{\Bb}_{m+1})=\Ss$?}
\end{quest2}

\subsection*{Structure of the article}

The article is organized as follows. Main basic examples of polynomial maps of the closed unit ball are collected in Section \ref{s2}. Some of these examples will be useful for our subsequent constructions. In Section \ref{s3} we present a `Smart curve selection lemma', which is the key for the main results of this work (see also \cite{f3}). In Section \ref{s4} we prove Theorems \ref{main1-i} and \ref{main1-i2}, whereas in Section \ref{s5} we prove Theorem \ref{main2-i}. The article ends with three appendices. In the first one we approach the questions concerning alternative models for regular images proposed in \S\ref{am}. In the second one we provide proofs to some of the results in Section \ref{s2}. We have postponed them in order to make the exposition smoother. In the third appendix we present an explicit example concerning Theorem \ref{main1-i}.

\subsection*{Acknowledgements}

The authors are deeply indebted with Prof. Safey el Din for very helpful and enlightening comments and inspiring talks during the preparation of this work. The authors would also like to point out that some of the figures included in this article have been made using Geogebra.

\section{Relevant families of bricks}\label{s2}

In this section we approach the key examples of $m$-bricks already presented in Examples \ref{bricks} and some additional ones. Before that we prove the following preliminary result, which shows that convex polynomial images of an $m$-dimensional closed unit ball are $m$-bricks. A set $\Ss\subset\R^n$ is \em strictly radially convex (with respect to a point $p\in\Int(\Ss)$) \em if for each ray $\ell$ with origin at $p$, the intersection $\ell\cap\Ss$ is a segment whose relative interior is contained in $\Int(\Ss)$. Convex sets are particular examples of strictly radially convex sets (with respect to any of its interior points \cite[Lem.11.2.4]{ber1}).

\begin{lem}\label{propbrick}
Let $\Ss\subset\R^n$ be a strictly radially convex set with respect to a point $p\in\Int(\Ss)$ that is in addition a polynomial image of the closed unit ball $\ol{\Bb}_m$. Then $\Ss$ is an $m$-brick.
\end{lem}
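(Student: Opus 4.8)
The plan is to produce the homotopy $H$ required by Definition~\ref{mbrickd} as a composition of two ingredients: first a polynomial surjection $g\colon\ol{\Bb}_m\to\Ss$ (given by hypothesis), and then a ``radial contraction towards $p$'' applied in the target. Without loss of generality translate so that $p=0$. Define, for $t\in[0,1]$, the map $H(t,x):=(1-t)\,g(x)$. Clearly each component $H_i(t,x)=(1-t)g_i(x)$ lies in $\Cont^0([0,1])[\x_1,\dots,\x_m]$ (indeed in $\R[\t][\x_1,\dots,\x_m]$), so condition~(i) holds; and $H(0,\cdot)=g$ has image $\Ss$ while $H(1,\cdot)\equiv 0$ is constant, giving~(ii). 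The content of the lemma is condition~(iii): $H(\{t\}\times\ol{\Bb}_m)=(1-t)\Ss\subset\Int(\Ss)$ for every $t\in(0,1)$.

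The key step is therefore to show that strict radial convexity with respect to $0$ forces $(1-t)\Ss\subset\Int(\Ss)$ for $t\in(0,1)$, i.e. $\lambda\Ss\subset\Int(\Ss)$ for every $\lambda\in[0,1)$. Fix such a $\lambda$ and a point $q\in\Ss$; I must show $\lambda q\in\Int(\Ss)$. If $q=0$ this is immediate since $0=p\in\Int(\Ss)$, so assume $q\neq0$ and let $\ell$ be the ray from $0$ through $q$. By strict radial convexity, $\ell\cap\Ss$ is a segment $[0,r_q\,v]$ (with $v$ the unit vector along $\ell$) whose relative interior $\{s v: 0<s<r_q\}$ lies in $\Int(\Ss)$; since $q\in\ell\cap\Ss$ we have $q=s_q v$ with $0<s_q\le r_q$, hence $\lambda q=(\lambda s_q)v$ with $0\le \lambda s_q<s_q\le r_q$, so $\lambda q$ is in the relative interior of this segment and thus $\lambda q\in\Int(\Ss)$ (the case $\lambda=0$ again being covered by $0=p\in\Int(\Ss)$). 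This is the only place where the radial-convexity hypothesis is genuinely used, and by the remark preceding the lemma it applies in particular when $\Ss$ is convex.

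Two minor points remain to be checked to make $H$ a bona fide object of the stated type. First, $H$ must be valued in $\Ss$: for $t=0$ this is the surjectivity of $g$, and for $t\in(0,1]$ we have just shown $H(\{t\}\times\ol{\Bb}_m)\subset\Int(\Ss)\subset\Ss$. Second, $H$ is continuous on $[0,1]\times\ol{\Bb}_m$ as a polynomial expression, so it is indeed a homotopy in the sense of Definition~\ref{mbrickd}. Finally, $\Ss$ is $n$-dimensional: it is a polynomial image of $\ol{\Bb}_m$ with nonempty interior by hypothesis, so $\dim\Ss=n$. Collecting these observations, $H$ witnesses that $\Ss$ is an $m$-brick.

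I do not expect any serious obstacle here; the ``hard part,'' such as it is, is simply recognizing that the naive linear contraction $H(t,x)=(1-t)g(x)$ already does the job once $p$ is placed at the origin, and that strict radial convexity is exactly the hypothesis that keeps the intermediate images inside $\Int(\Ss)$. One should take a little care that the degenerate cases ($q=0$, $\lambda=0$, and $t=1$) are handled, since $0$ lands on the boundary-avoiding point $p$ rather than being produced by the ``relative interior of a segment'' argument.
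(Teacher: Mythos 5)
Your proof is correct and is essentially the paper's: the paper uses the homotopy $H(t,x)=tp+(1-t)F(x)$, which is exactly your $(1-t)g(x)$ after translating $p$ to the origin. The only difference is that you spell out the verification of condition (iii) via strict radial convexity, which the paper leaves implicit.
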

\begin{proof}
Let $F:\R^m\to\R^n$ be a polynomial map such that $F(\ol{\Bb}_m)=\Ss$. The continuous semialgebraic map $H:[0,1]\times\ol{\Bb}_m\to\Ss,\ (t,x)\mapsto tp+(1-t)F(x)$ satisfies the conditions in Definition \ref{mbrickd}, so $\Ss$ is an $m$-brick, as required.
\end{proof}

As an immediate application, one shows that $n$-ellipsoids are $n$-bricks. In general a convex semialgebraic set $\Ss\subset\R^n$ is an $m$-brick if and only if it is the image of $\ol{\Bb}_m$ under a polynomial map $f:\R^m\to\R^n$.

If $\Ss\subset\R^n$, denote by $\partial\Ss:=\cl(\Ss)\setminus\Int(\Ss)$ the {\em boundary of $\Ss$}. The reader can produce other examples of $m$-bricks as an application of the following result.

\begin{lem}\label{propbrick2}
Let $F:\R^m\to\R^n$ be a polynomial map and let $\Ss:=F(\ol{\Bb}_m)$. Suppose $F^{-1}(\partial\Ss)\cap\ol{\Bb}_m\subset\partial\ol{\Bb}_m$. Then $\Ss$ is an $m$-brick. 
\end{lem}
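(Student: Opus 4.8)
The goal is to upgrade Lemma \ref{propbrick} by replacing the strong geometric hypothesis ``strictly radially convex'' with the weaker, map-theoretic hypothesis $F^{-1}(\partial\Ss)\cap\ol{\Bb}_m\subset\partial\ol{\Bb}_m$. The plan is to keep the same basic strategy as in Lemma \ref{propbrick}: fix a point $p\in\Int(\Ss)$, fix a preimage $q\in\ol{\Bb}_m$ of $p$, and build a homotopy $H:[0,1]\times\ol{\Bb}_m\to\Ss$ that at time $t=0$ is $F$, at time $t=1$ is the constant map $p$, and for $t\in(0,1)$ takes values in $\Int(\Ss)$. The natural candidate is to deform the \emph{source}: contract $\ol{\Bb}_m$ towards the point $q$ by the affine homotopy $(t,x)\mapsto (1-t)x+tq$ and then apply $F$, i.e.
\begin{equation*}
H(t,x):=F\big((1-t)x+tq\big).
\end{equation*}
Since $F\in\R[\x_1,\dots,\x_m]^n$, substituting $(1-t)x+tq$ produces components in $\mathcal{C}^0([0,1])[\x_1,\dots,\x_m]$ (in fact polynomials in $t$ as well), so condition (i) of Definition \ref{mbrickd} holds; $H(0,\cdot)=F$ so $H(\{0\}\times\ol{\Bb}_m)=\Ss$, and $H(1,\cdot)\equiv F(q)=p$ is constant, giving (ii). The whole content is condition (iii).

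\textbf{Verifying condition (iii).} I must show $H(\{t\}\times\ol{\Bb}_m)\subset\Int(\Ss)$ for each $t\in(0,1)$. The key observation is that for $t\in(0,1)$ and any $x\in\ol{\Bb}_m$, the point $y:=(1-t)x+tq$ lies in the \emph{open} ball $\Bb_m$: it is a proper convex combination of a point of $\ol{\Bb}_m$ and the point $q$, and since $q\in\Int(\Bb_m)$ — wait, $q$ need only lie in $\ol{\Bb}_m$; I should choose $q$ with some care. If $p\in\Int(\Ss)$ then I may choose $q\in F^{-1}(p)\cap\ol{\Bb}_m$; by the hypothesis $F^{-1}(\partial\Ss)\cap\ol{\Bb}_m\subset\partial\ol{\Bb}_m$, taking contrapositive, $F^{-1}(\Int(\Ss))\cap\ol{\Bb}_m\subset\Bb_m$, hence $q\in\Bb_m$ automatically. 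Then for $t\in(0,1)$ the point $y=(1-t)x+tq$ is a convex combination with $q$ in the open ball and weight $t>0$ on $q$, so $y\in\Bb_m$ (the open ball is ``star-shaped towards its interior'' in the sharp sense that $(1-t)\ol{\Bb}_m+t\{q\}\subset\Bb_m$ for $t\in(0,1]$, which is immediate from $\|(1-t)x+tq\|\le (1-t)\|x\|+t\|q\|<1$ when $\|q\|<1$ and $t>0$, and $\le 1$ with equality only if... a one-line triangle-inequality check). Thus $H(t,x)=F(y)$ with $y\in\Bb_m\subset\ol{\Bb}_m$, so $F(y)\in\Ss$. It remains to rule out $F(y)\in\partial\Ss$: if $F(y)\in\partial\Ss$ then $y\in F^{-1}(\partial\Ss)\cap\ol{\Bb}_m\subset\partial\ol{\Bb}_m$, contradicting $y\in\Bb_m$. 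Hence $F(y)\in\Ss\setminus\partial\Ss=\Int(\Ss)$, which is exactly (iii).

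\textbf{Expected main obstacle.} The argument above is essentially forced, so the only genuinely delicate points are bookkeeping ones that I would want to state cleanly: first, that $\Ss=F(\ol{\Bb}_m)$ is indeed $n$-dimensional and has nonempty interior so that picking $p\in\Int(\Ss)$ is legitimate — this should follow because otherwise $F^{-1}(\partial\Ss)\cap\ol{\Bb}_m=\ol{\Bb}_m\not\subset\partial\ol{\Bb}_m$ would contradict the hypothesis (unless $m=0$, a trivial case), so the hypothesis itself guarantees $\Int(\Ss)\neq\varnothing$; second, the precise inclusion $(1-t)\ol{\Bb}_m+t\{q\}\subset\Bb_m$ for $q\in\Bb_m$ and $t\in(0,1]$, which is the triangle-inequality computation $\|(1-t)x+tq\|\le(1-t)\|x\|+t\|q\|\le(1-t)+t\|q\|<1$. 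Neither of these is hard; the real ``trick'' is simply recognizing that the hypothesis is precisely what is needed to run the source-contraction homotopy, i.e. that contracting the domain keeps us in the open ball and the map-theoretic condition then keeps us in the interior of the target.
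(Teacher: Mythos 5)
Your proof is correct and follows essentially the same route as the paper: the paper's entire proof is the one-line source-contraction homotopy $H(t,x):=F((1-t)x)$, i.e.\ your construction with $q$ taken to be the origin (which suffices, since Definition \ref{mbrickd} only requires the time-one constant to lie in $\Ss$, not in $\Int(\Ss)$, so no careful choice of $q$ is needed). Your verification of condition (iii) via the contrapositive of the hypothesis is exactly the intended argument.
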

\begin{proof}
The continuous semialgebraic map $H:[0,1]\times\ol{\Bb}_m\to\Ss,\ (t,x)\mapsto F((1-t)x)$ satisfies the conditions in Definition \ref{mbrickd}, so $\Ss$ is an $m$-brick, as required.
\end{proof}

The following result lighten the conditions to be an $m$-brick and show that what we need is to be able to deform polynomially $\Ss$ (as a polynomial image of $\ol{\Bb}_m$) to put it inside $\Int(\Ss)$, see also Lemma \ref{basico}.

\begin{lem}[Characterization of $m$-bricks]
Let $\Ss\subset\R^n$ be a semialgebraic set. The following assertions are equivalent:
\begin{itemize}
\item[(i)] $\Ss$ is an $m$-brick.
\item[(ii)] There exists a map $H:=(H_1,\ldots,H_n):[0,1]\times\ol{\Bb}_m\to\R^n$ and $\veps>0$ such that:
\begin{itemize}
\item[$\bullet$] $H_i\in{\mathcal C}^0([0,1])[\x_1,\ldots,\x_m]$ for $i=1,\ldots,n$.
\item[$\bullet$] $H(\{0\}\times\ol{\Bb}_m)=\Ss$ and $H(\{t\}\times\ol{\Bb}_m)\subset\Int(\Ss)$ for each $t\in(0,\veps)$.
\end{itemize}
\end{itemize}
\end{lem}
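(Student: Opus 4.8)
The plan is to show (i) $\Rightarrow$ (ii) trivially and concentrate on (ii) $\Rightarrow$ (i). The only nontrivial gap between the two conditions is that Definition \ref{mbrickd} also demands that $H(1,\cdot)$ be a constant map and that the inclusion $H(\{t\}\times\ol{\Bb}_m)\subset\Int(\Ss)$ hold for \emph{all} $t\in(0,1)$, whereas (ii) only gives the inclusion on a small interval $(0,\veps)$ and says nothing about the behaviour near $t=1$. So the task is essentially a reparametrization-and-contraction argument: having pushed $\Ss$ slightly into its own interior, we must finish the job of contracting it to a point while staying inside $\Int(\Ss)$.

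First I would fix $t_1\in(0,\veps)$ and set $\Tt:=H(\{t_1\}\times\ol{\Bb}_m)\subset\Int(\Ss)$, which is itself a polynomial image of $\ol{\Bb}_m$ via the polynomial map $G:=H(t_1,\cdot)$ (here I use that $H_i\in{\mathcal C}^0([0,1])[\x_1,\dots,\x_m]$, so specializing $t=t_1$ yields genuine polynomials in $\x_1,\dots,\x_m$). Since $\Tt$ is a compact subset of the open set $\Int(\Ss)$ and $\Int(\Ss)$ is an $n$-dimensional semialgebraic set, one can join $\Tt$ by a straight-line homotopy to a fixed point $p\in\Int(\Ss)$; but one must check this segment homotopy stays inside $\Int(\Ss)$. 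This is where I would either invoke convexity of a suitable neighbourhood or, more robustly, use that $\Int(\Ss)$ is path connected by polynomial paths: pick $p\in\Int(\Ss)$, and for the contraction use the map $(s,x)\mapsto (1-s)G(x)+s\,p$ only after first translating so that a ball around $p$ of radius $\delta$ lies in $\Int(\Ss)$, then scaling $G-p$ down by a factor that forces the image into that ball before sliding to $p$. Concretely: choose $\mu>0$ with $\|G(x)-p\|\le \mu$ for all $x\in\ol{\Bb}_m$, and $\delta>0$ with the closed ball $\ol{B}(p,\delta)\subset\Int(\Ss)$; then $x\mapsto p+\tfrac{\delta}{2\mu}(G(x)-p)$ maps $\ol{\Bb}_m$ into $\Int(\Ss)$, and the straight homotopy from this map to the constant $p$ stays in $\ol{B}(p,\delta)\subset\Int(\Ss)$ by convexity of the ball.

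Next I would assemble the full homotopy $\widetilde H:[0,1]\times\ol{\Bb}_m\to\Ss$ by concatenating three pieces with a piecewise-linear reparametrization of the $t$-variable: on $[0,1/3]$ run $H$ from $0$ to $t_1$ (so $\widetilde H(0,\cdot)=H(0,\cdot)$ has image $\Ss$, and all intermediate images lie in $\Int(\Ss)$ because $t_1<\veps$); on $[1/3,2/3]$ run the contraction of $G$ to the rescaled map $p+\tfrac{\delta}{2\mu}(G-p)$; on $[2/3,1]$ run the straight homotopy into $\ol{B}(p,\delta)$ and down to the constant map $p$. Each piece has components in ${\mathcal C}^0([0,1])[\x_1,\dots,\x_m]$: the reparametrizations $t\mapsto 3t$, $t\mapsto 3t-1$, $t\mapsto 3t-2$ are continuous (indeed affine) in $t$, and substituting a continuous function of $t$ into an element of ${\mathcal C}^0([0,1])[\x_1,\dots,\x_m]$ again lands in that ring; the interpolation coefficients $(1-s)$, $s$, $\tfrac{\delta}{2\mu}$ are constants or continuous in $t$, so no $\x$-degree is introduced. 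Gluing continuous functions that agree at the breakpoints $t=1/3,2/3$ keeps everything in ${\mathcal C}^0([0,1])[\x_1,\dots,\x_m]$. Thus $\widetilde H$ satisfies (i)--(iii) of Definition \ref{mbrickd}, so $\Ss$ is an $m$-brick.

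The main obstacle I anticipate is verifying the interior condition $\widetilde H(\{t\}\times\ol{\Bb}_m)\subset\Int(\Ss)$ uniformly for all $t\in(0,1)$ on the middle (contraction) segment: one needs that the straight-line homotopy between $G$ and its scaled-down version never leaves $\Int(\Ss)$. If $\Int(\Ss)$ is not convex, the naive segment $(1-s)G(x)+s\big(p+\tfrac{\delta}{2\mu}(G(x)-p)\big)=p+\big((1-s)+s\tfrac{\delta}{2\mu}\big)(G(x)-p)$ is in fact a \emph{radial} contraction toward $p$: the coefficient of $G(x)-p$ decreases monotonically from $1$ to $\tfrac{\delta}{2\mu}<1$, so every intermediate point lies on the segment from $G(x)\in\Int(\Ss)$ to a point of $\ol{B}(p,\delta)$. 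Hence this step is fine provided $\Ss$ is star-shaped, or strictly radially convex, with respect to $p$ --- but for a \emph{general} semialgebraic $\Ss$ it is not, and the honest fix is to not scale linearly but instead reuse the hypothesis more cleverly: apply the given $H$ again but \emph{to the set $\Tt$}, i.e. exploit that $\Tt\subset\Int(\Ss)$ and iterate, or appeal to a ``polynomial shrinking'' lemma such as the forthcoming Lemma \ref{basico}. I would flag this as the point requiring care and, if a clean general argument is unavailable, state the proof modulo the fact that a compact polynomial image of $\ol{\Bb}_m$ sitting inside an open semialgebraic set $U$ can be polynomially contracted to a point within $U$ --- which is exactly the content that Section \ref{s2}'s later lemmas are designed to supply.
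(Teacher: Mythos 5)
There is a genuine gap, and you have correctly located it yourself: your middle step contracts the image $G(\ol{\Bb}_m)$ by a straight-line (radial) homotopy in the \emph{target} space $\R^n$, which only stays inside $\Int(\Ss)$ when $\Int(\Ss)$ is star-shaped with respect to $p$. For a general semialgebraic $\Ss$ this fails, and the "fact" you propose to assume --- that a compact polynomial image of $\ol{\Bb}_m$ lying inside an open set $U$ can be polynomially contracted to a point within $U$ --- is exactly the step that has to be proved; deferring it to later lemmas is circular in spirit, since (for instance) the proof of connectedness of $\Omega_\Ss$ in Lemma \ref{basico} uses the very same device that is needed here.

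The missing idea is to contract the \emph{domain} rather than the image. Fix $t_1:=\tfrac{\veps}{2}$ and set $G:=H(t_1,\cdot)$, so $G(\ol{\Bb}_m)\subset\Int(\Ss)$. For $s\in[0,1]$ consider $x\mapsto G((1-s)x)$: its image is $G((1-s)\ol{\Bb}_m)\subset G(\ol{\Bb}_m)\subset\Int(\Ss)$ for every $s$, with no convexity hypothesis whatsoever, and at $s=1$ it is the constant map $G(0)$. Concatenating $H|_{[0,\veps/2]}$ with this family (reparametrized affinely in $t$, e.g.\ $H'(t,x):=H\bigl(\tfrac{\veps}{2},\tfrac{2}{2-\veps}(1-t)x\bigr)$ for $t\in[\tfrac{\veps}{2},1]$) produces the desired homotopy; substituting $\lambda(t)x_i$ for $x_i$ keeps each component in ${\mathcal C}^0([0,1])[\x_1,\ldots,\x_m]$, and the two pieces agree at $t=\tfrac{\veps}{2}$. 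This is the paper's one-line argument, and it removes the obstacle entirely rather than working around it; the rest of your write-up (the trivial direction, the gluing and the verification of the coefficient ring) is fine.
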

\begin{proof}
The implication (i) $\Longrightarrow$ (ii) is clear. Let us prove the converse implication. We modify $H$ to have a homotopy $H:=(H_1,\ldots,H_n):[0,1]\times\ol{\Bb}_m\to\R^n$ that satisfies the conditions in Definition \ref{mbrickd}. Consider the homotopy
$$
H':[0,1]\times\ol{\Bb}_m\to\Ss,\ (t,x)\mapsto\begin{cases}
H(t,x)&\text{if $t\in[0,\frac{\veps}{2}]$,}\\
H(\frac{\veps}{2},(\frac{2}{2-\veps})(1-t)x)&\text{if $t\in[\frac{\veps}{2},1]$.}
\end{cases}
$$
The reader can check that $H'$ satisfies all the conditions in Definition \ref{mbrickd}.
\end{proof}

Using the fact that $m$-bricks are compact, one deduces the following.

\begin{cor}\label{homeo}
Let $\Ss\subset\R^n$ be an $m$-brick and let $F:\R^n\to\R^n$ be a polynomial map such that $F|_{\Ss}$ is injective. Then $F(\Ss)$ is an $m$-brick.
\end{cor}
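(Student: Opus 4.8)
The statement to prove is Corollary \ref{homeo}: if $\Ss\subset\R^n$ is an $m$-brick and $F:\R^n\to\R^n$ is a polynomial map with $F|_\Ss$ injective, then $F(\Ss)$ is again an $m$-brick. The natural strategy is to take the homotopy $H:[0,1]\times\ol{\Bb}_m\to\Ss$ witnessing that $\Ss$ is an $m$-brick (Definition \ref{mbrickd}) and compose it with $F$ to obtain a candidate homotopy $G:=F\circ H:[0,1]\times\ol{\Bb}_m\to F(\Ss)$, and then verify the three defining conditions. Condition (i) is immediate: each component $G_i=F_i(H_1,\dots,H_n)$ is a polynomial expression in the $H_j$, hence still lies in ${\mathcal C}^0([0,1])[\x_1,\dots,\x_m]$ since that ring is closed under the ring operations. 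Condition (ii) splits in two: $G(\{0\}\times\ol{\Bb}_m)=F(H(\{0\}\times\ol{\Bb}_m))=F(\Ss)$, and $G(1,\cdot)=F(H(1,\cdot))$ is the composition of a constant map with $F$, hence constant.

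The delicate point — the one I expect to be the main obstacle — is condition (iii): one must show $G(\{t\}\times\ol{\Bb}_m)\subset\Int(F(\Ss))$ for $t\in(0,1)$. We know $H(\{t\}\times\ol{\Bb}_m)\subset\Int(\Ss)$, so it would suffice to know that $F$ maps $\Int(\Ss)$ into $\Int(F(\Ss))$. This is where the hypotheses "$\Ss$ is an $m$-brick" (hence compact) and "$F|_\Ss$ injective" are used, as the sentence preceding the corollary indicates. The plan is: since $\Ss$ is compact and $F|_\Ss:\Ss\to F(\Ss)$ is a continuous bijection from a compact space onto its (Hausdorff) image, $F|_\Ss$ is a homeomorphism. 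Now $\Ss$ and $F(\Ss)$ are both $n$-dimensional (here I would note that $F(\Ss)$ has the same dimension as $\Ss$, which is $n$ since $\Ss\subset\R^n$ contains an $m$-brick which is $n$-dimensional by definition; injectivity of $F|_\Ss$ is compatible with this). By invariance of domain — or rather its semialgebraic analogue, since $\Int(\Ss)$ is an $n$-dimensional semialgebraic manifold open in $\R^n$ and $F|_{\Int(\Ss)}$ is a homeomorphism onto its image — the set $F(\Int(\Ss))$ is open in $\R^n$. Hence $F(\Int(\Ss))\subset\Int(F(\Ss))$, and combining with $H(\{t\}\times\ol{\Bb}_m)\subset\Int(\Ss)$ gives $G(\{t\}\times\ol{\Bb}_m)\subset\Int(F(\Ss))$ for $t\in(0,1)$.

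Let me make the invariance-of-domain step precise, as it is really the crux. We have the homeomorphism $\phi:=F|_\Ss:\Ss\to F(\Ss)$. It restricts to a homeomorphism $\Int(\Ss)\to\phi(\Int(\Ss))$. Since $\Int(\Ss)$ is open in $\R^n$, Brouwer's invariance of domain tells us $\phi(\Int(\Ss))$ is open in $\R^n$; as $\phi(\Int(\Ss))\subset F(\Ss)$, we get $\phi(\Int(\Ss))\subset\Int(F(\Ss))$. (Alternatively one can invoke the semialgebraic version: a continuous semialgebraic injection between semialgebraic sets of the same dimension $n$, with source open in $\R^n$, has image open in $\R^n$ — this follows from \cite[Ch.9]{bcr} on dimension and the local structure of semialgebraic maps.) Then for every $t\in(0,1)$,
$$
G(\{t\}\times\ol{\Bb}_m)=\phi(H(\{t\}\times\ol{\Bb}_m))\subset\phi(\Int(\Ss))\subset\Int(F(\Ss)),
$$
which is exactly condition (iii). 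Together with conditions (i) and (ii) verified above, $G$ witnesses that $F(\Ss)$ is an $m$-brick, completing the proof. The only subtlety to be careful about is to confirm that $F(\Ss)$ is $n$-dimensional so that "interior in $\R^n$" is the right notion throughout; this holds because $\dim F(\Ss)=\dim\Ss=n$, the first equality because $F|_\Ss$ is a semialgebraic homeomorphism and the second because $\Ss$ contains an $n$-dimensional ($m$-brick) set and sits in $\R^n$.
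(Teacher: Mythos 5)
Your proposal is correct, and it is essentially the argument the paper intends: the authors state the corollary with only the hint ``using the fact that $m$-bricks are compact'' and leave the verification to the reader, which is exactly what you carry out by composing the witnessing homotopy $H$ with $F$ and checking the three conditions of Definition \ref{mbrickd}. You correctly identify that the only nontrivial point is condition (iii), i.e.\ $F(\Int(\Ss))\subset\Int(F(\Ss))$, and your appeal to invariance of domain (applied to the continuous injection $F|_{\Int(\Ss)}$ on the open set $\Int(\Ss)\subset\R^n$) settles it; the compactness of $\Ss$ enters only in the auxiliary observations (that $F|_\Ss$ is a homeomorphism onto its image and that $F(\Ss)$ is again a compact $n$-dimensional semialgebraic set), so your write-up is a complete and faithful filling-in of the paper's one-line justification.
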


\subsection{Simplices}

We begin with the case of an {\em $n$-simplex}, that is, a simplex of dimension $n$. 

\begin{lem}[$n$-simplex]\label{tetra}
Let $\Delta_n:=\{0\leq\y_1,\ldots,0\leq\y_n,\y_1+\cdots+\y_n\leq1\}$ be the $n$-simplex of vertices the origin and the points ${\tt e}_i:=(0,\ldots,0,\overset{(i)}{1},0,\ldots,0)$. Then $\Delta_n$ is an $n$-brick.
\end{lem}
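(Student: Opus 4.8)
\textbf{Proof proposal for Lemma \ref{tetra}.}

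The plan is to exhibit an explicit polynomial map $F\colon\R^n\to\R^n$ with $F(\ol{\Bb}_n)=\Delta_n$ and then invoke one of the criteria above (Lemma \ref{propbrick} or Lemma \ref{propbrick2}) to upgrade this to the $n$-brick property. Since $\Delta_n$ is convex, by the remark following Lemma \ref{propbrick} it suffices to produce \emph{any} polynomial surjection $\ol{\Bb}_n\twoheadrightarrow\Delta_n$; the brick structure is then automatic via the straight-line homotopy $H(t,x)=tp+(1-t)F(x)$ contracting onto an interior point $p$ of $\Delta_n$.

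To build $F$, the first step is to handle the one-dimensional model: the squaring map $[-1,1]\to[0,1]$, $s\mapsto s^2$, is a polynomial surjection of $\ol{\Bb}_1$ onto the $1$-simplex $\Delta_1=[0,1]$. The natural idea is to iterate this to get a map onto $\Delta_n$ by first mapping $\ol{\Bb}_n$ onto the unit cube $[0,1]^n$ coordinatewise via $(x_1,\dots,x_n)\mapsto(x_1^2,\dots,x_n^2)$ — but this sends $\ol{\Bb}_n$ only onto the portion of the cube inside the ball, not the whole cube, so a genuinely $n$-dimensional argument is needed rather than a product construction. Instead I would parametrize $\Delta_n$ directly: writing a point of $\Delta_n$ using ``stick-breaking'' coordinates
\[
y_1=u_1,\quad y_2=(1-u_1)u_2,\quad\dots,\quad y_n=(1-u_1)\cdots(1-u_{n-1})u_n,
\]
where $(u_1,\dots,u_n)$ ranges over $[0,1]^n$, expresses $\Delta_n$ as a polynomial image of the cube $[0,1]^n$. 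Composing with the squaring map in each variable, $u_i=x_i^2$ with $x_i\in[-1,1]$, and then with a polynomial surjection of $\ol{\Bb}_n$ onto the cube $[-1,1]^n$ (the existence of such a surjection is exactly the content of the hypercube case, Corollary \ref{bq}, but to keep Lemma \ref{tetra} self-contained I would rather pick $u_i$ directly from a parametrization of $\ol{\Bb}_n$, e.g. using the standard trick that $\ol{\Bb}_n$ maps polynomially onto $[-1,1]^n$ only after allowing one extra variable — so more cleanly one simply checks that the composite map sends $\ol{\Bb}_n$ onto $\Delta_n$ because every fibre over $[0,1]^n$ under $x\mapsto(x_1^2,\dots,x_n^2)$ meets $\ol{\Bb}_n$, as the preimages are sign patterns and the origin-symmetric box points lie in $\ol{\Bb}_n$... this needs care). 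The honest route is: the map $G\colon\ol{\Bb}_n\to\R^n$, $G(x)=(x_1^2,\,(1-x_1^2)x_2^2,\,\dots,\,(1-x_1^2)\cdots(1-x_{n-1}^2)x_n^2)$ has image contained in $\Delta_n$ since the coordinates are nonnegative and telescope to $1-\prod(1-x_i^2)\le 1$; and it is onto $\Delta_n$ because given $y\in\Delta_n$ one recovers $u_i\in[0,1]$ from the stick-breaking formulas and then sets $x_i=\sqrt{u_i}$, which satisfies $\sum x_i^2=\sum u_i\le\sum u_i(\text{weights})\le 1$ after a short estimate, so $x\in\ol{\Bb}_n$.

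Granting the surjectivity of such a polynomial $G$, the proof concludes in two lines: $\Delta_n=G(\ol{\Bb}_n)$ is a polynomial image of $\ol{\Bb}_n$, it is convex, hence strictly radially convex with respect to its barycenter $p=\tfrac1{n+1}({\tt e}_1+\cdots+{\tt e}_n)$ by \cite[Lem.11.2.4]{ber1}, so Lemma \ref{propbrick} applies and $\Delta_n$ is an $n$-brick.

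\textbf{Main obstacle.} The only non-routine point is verifying that the proposed polynomial map is genuinely \emph{onto} $\Delta_n$ and not merely into it — i.e. that for every $y\in\Delta_n$ the stick-breaking preimage $(\sqrt{u_1},\dots,\sqrt{u_n})$ actually lands in $\ol{\Bb}_n$, which requires the inequality $u_1+\cdots+u_n\le 1$ to follow from $y\in\Delta_n$. This is false as stated (one has instead $y_1+\cdots+y_n\le 1$ with $y_i\le u_i$), so the construction must be adjusted: one should either rescale the $x_i$ by the telescoping weights so that $\sum x_i^2=\sum y_i\le 1$ exactly, e.g. take $x_1^2=y_1$, $x_2^2=y_2$, \dots, $x_n^2=y_n$ directly and let $G(x)=(x_1^2,\dots,x_n^2)$ restricted to the part of $\ol{\Bb}_n$ that already surjects onto $\Delta_n$ — indeed $\{x\in\ol{\Bb}_n : x_i\ge 0\,\forall i\}$ maps under coordinatewise squaring onto $\{y\ge0 : \sum\sqrt{y_i}\,^2=\sum y_i\le1\}$... no: $\sum x_i^2\le1$ gives $\sum y_i\le 1$, which is \emph{exactly} $\Delta_n$. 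So in fact the clean map is simply $G(x)=(x_1^2,\dots,x_n^2)$: its image is $\{y_i\ge0,\ \sum y_i\le1\}=\Delta_n$, and it is surjective because any $y\in\Delta_n$ has preimage $(\sqrt{y_1},\dots,\sqrt{y_n})\in\ol{\Bb}_n$. Thus the real content is just this observation, and the brick property then follows from Lemma \ref{propbrick} as above; the delicate-looking stick-breaking detour is unnecessary, and I would present only the one-line map $G(x)=(x_1^2,\dots,x_n^2)$.
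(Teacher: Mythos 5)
Your final construction — the map $G(x)=(x_1^2,\dots,x_n^2)$, whose image of $\ol{\Bb}_n$ is exactly $\{y_i\ge 0,\ \sum y_i\le 1\}=\Delta_n$, combined with convexity and Lemma \ref{propbrick} to get the brick property — is correct and is precisely the paper's proof. The stick-breaking detour and the intermediate worries about surjectivity are unnecessary, as you yourself conclude; only the last paragraph is needed.
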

\begin{proof}
The images of the closed unit ball $\ol{\Bb}_n$ under $f:\R^n\to\R^n,\ (x_1,\ldots,x_n)\mapsto(x_1^2,\ldots,x_n^2)$ is $\Delta_n$, as required.
\end{proof}

We prove next that each $n$-dimensional convex (compact) polyhedron $\pol\subset\R^n$ is an $m$-brick where $m+1$ is the number of vertices on $\pol$. In Theorem \ref{main1-i2} we will prove that $\pol$ is an $n$-brick. 

\begin{cor}
Let $\pol\subset\R^n$ be the $n$-dimensional convex (compact) polyhedron of vertices $v_0,\ldots,v_m$. Then $\pol$ is an $m$-brick.
\end{cor}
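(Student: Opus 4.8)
The plan is to realize the $n$-dimensional convex polyhedron $\pol$ with vertices $v_0,\dots,v_m$ as a polynomial image of $\ol{\Bb}_m$ and then invoke Lemma \ref{propbrick}, since $\pol$, being convex, is strictly radially convex with respect to any of its interior points. So the whole task reduces to producing one polynomial map $g\colon\R^m\to\R^n$ with $g(\ol{\Bb}_m)=\pol$. First I would pass to the standard model: by Lemma \ref{tetra} the $m$-simplex $\Delta_m=\{0\le\y_1,\dots,0\le\y_m,\ \y_1+\cdots+\y_m\le1\}$ is the image of $\ol{\Bb}_m$ under $\sigma\colon\R^m\to\R^m$, $(x_1,\dots,x_m)\mapsto(x_1^2,\dots,x_m^2)$. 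Then the affine (hence polynomial) map $A\colon\R^m\to\R^n$ sending $\y=(\y_1,\dots,\y_m)$ to $v_0+\sum_{i=1}^m\y_i(v_i-v_0)$ carries $\Delta_m$ onto the convex hull of $v_0,\dots,v_m$, because $\Delta_m$ is exactly the set of barycentric-coordinate vectors $(1-\sum\y_i,\y_1,\dots,\y_m)$ of that convex hull. Composing, $g:=A\circ\sigma\colon\R^m\to\R^n$ is polynomial and satisfies $g(\ol{\Bb}_m)=A(\Delta_m)=\operatorname{conv}(v_0,\dots,v_m)=\pol$.

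Having produced $g$, I would conclude as follows: since $\pol$ is an $n$-dimensional convex compact set in $\R^n$, it has nonempty interior; fix $p\in\Int(\pol)$. Convex sets are strictly radially convex with respect to any interior point by \cite[Lem.11.2.4]{ber1}, so $\pol$ is strictly radially convex with respect to $p$ and, by the paragraph preceding this corollary, it is a polynomial image of $\ol{\Bb}_m$ via $g$. Lemma \ref{propbrick} then yields directly that $\pol$ is an $m$-brick, which is the assertion.

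There is essentially no obstacle here: the only thing to be a little careful about is that the $m$ in the statement is tied to the number $m+1$ of vertices (not to the ambient or intrinsic dimension $n$), so one must use the $m$-simplex $\Delta_m$ — not $\Delta_n$ — as the intermediate model, and the affine surjection $\Delta_m\to\pol$ onto the convex hull of the $v_i$ is exactly the content of writing points of $\pol$ in barycentric coordinates with respect to those vertices. One remark worth including: the vertices need not be affinely independent (a polyhedron with $m+1$ marked "vertices" in the loose sense could have fewer genuine vertices), but the argument is insensitive to this — $A$ still maps $\Delta_m$ onto $\operatorname{conv}(v_0,\dots,v_m)$ regardless — so no hypothesis of affine independence is needed. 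Alternatively, and equivalently, one could cite Corollary \ref{homeo} after a change of coordinates, but the direct composition $A\circ\sigma$ followed by Lemma \ref{propbrick} is the cleanest route.
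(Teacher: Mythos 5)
Your proof is correct and follows essentially the same route as the paper: realize $\pol$ as the image of an $m$-simplex under an affine map, use Lemma \ref{tetra} to obtain the simplex as a polynomial image of $\ol{\Bb}_m$, and conclude via convexity and Lemma \ref{propbrick}. The only difference is cosmetic — the paper lifts the vertices to affinely independent points $u_0,\dots,u_m\in\R^m$ and projects the resulting nondegenerate $m$-simplex back to $\R^n$, whereas you parametrize $\pol$ directly by barycentric coordinates from the standard simplex $\Delta_m$, which, as you note, makes the affine-independence bookkeeping unnecessary.
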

\begin{proof}
As $v_0,\ldots,v_m$ are the vertices of an $n$-dimensional convex (compact) polyhedron, $n\leq m$ and we may assume $v_0,\ldots,v_n$ are affinely independent. Let 
$$
\pi:\R^m\to\R^n,\ (x_1,\ldots,x_m)\to(x_1,\ldots,x_n)
$$ 
be the projection onto the first $n$ coordinates. Denote ${\tt e}_j:=(0,\ldots,0,\overset{(j)}{1},0,\ldots,0)\in\R^{m-n}$ for $j=1,\ldots,m-n$. Write 
$$
u_i:=\begin{cases}
(v_i,0)&\text{if $i=0,\ldots,n$},\\
(v_i,{\tt e}_{i-n})&\text{if $i=n+1,\ldots,m$.}
\end{cases}
$$
The points $u_0,\ldots,u_m\in\R^m$ are affinely independent and $\pi(u_i)=v_i$ for $i=0,\ldots,m$. If $\Delta:=\{\sum_{k=0}^m\lambda_iu_i:\,\lambda_i\geq0,\ \sum_{k=0}^m\lambda_i=1\}$ is the $m$-simplex of vertices $u_0,\ldots,u_m$, then $\pi(\Tt)=\pol$. By Lemma \ref{tetra} we conclude $\pol$ is an $m$-brick, as required.
\end{proof}

\subsection{Products of closed balls.}

All the semialgebraic sets $\Ss\subset\R^n$ treated in this subsection are convex and $n$-dimensional, so to prove that they are $m$-bricks, it is enough to prove that they are polynomial images of $\ol{\Bb}_m$. We prove first that $n$-dimensional cylinders and simplicial prisms are $n$-bricks (Figure \ref{rel}). This latter case is particularly interesting for the proofs of Theorems \ref{main1-i}, \ref{main1-i2} and \ref{main2-i}.

\begin{center}
\begin{figure}[ht]
\begin{tikzpicture}[scale=0.5]

\draw[line width=1.5pt] (-1,0) circle (2cm);
\draw[line width=1.5pt] (-3,0) arc (180:360:2cm and 0.5cm);
\draw[dashed,line width=1.5pt] (1,0) arc (0:180:2cm and 0.5cm);

\draw[line width=1.5pt] (6,1.75) -- (6,-1.75) arc (180:360:2cm and 0.5cm) -- (10,1.75) arc (0:180:2cm and 0.5cm);
\draw[line width=1.5pt] (6,1.75) arc (180:360:2cm and 0.5cm);

\draw[fill=gray!100,opacity=0.5,draw=none] (-1,0) circle (2cm);
\draw[fill=gray!100,opacity=0.5,draw=none] (6,1.75) -- (6,-1.75) arc (180:360:2cm and 0.5cm) -- (10,1.75) arc (0:180:2cm and 0.5cm);

\draw[dashed,line width=1.5pt] (10,-1.75) arc (0:180:2cm and 0.5cm);

\draw[->, line width=1pt] (2,0) -- (5,0);
\draw[->, line width=1pt] (11,0) -- (14,3.5);
\draw[->, line width=1pt] (11,0) -- (14,-3.5);

\draw[line width=1.5pt] (15,5.5) -- (15,1.5) -- (19,1) -- (19,5) -- (17.5,6.5) -- (15,5.5);
\draw[line width=1.5pt] (15,5.5) -- (19,5);
\draw[dashed,line width=1.5pt] (15,1.5) -- (17.5,2.5) -- (19,1);
\draw[dashed,line width=1.5pt] (17.5,2.5) -- (17.5,6.5);
\draw[fill=gray!100,opacity=0.5,draw=none] (15,5.5) -- (15,1.5) -- (19,1) -- (19,5) -- (17.5,6.5) -- (15,5.5);

\draw[line width=1.5pt] (15,-5.5) -- (15,-1.5) -- (16,-0.5) -- (20,-0.5) -- (20,-4.5) -- (19,-5.5) -- (15,-5.5);
\draw[fill=gray!100,opacity=0.5,draw=none] (15,-5.5) -- (15,-1.5) -- (16,-0.5) -- (20,-0.5) -- (20,-4.5) -- (19,-5.5) -- (15,-5.5);

\draw[line width=1.5pt] (15,-1.5) -- (19,-1.5) -- (20,-0.5);
\draw[line width=1.5pt] (19,-1.5) -- (19,-5.5);
\draw[dashed,line width=1.5pt] (15,-5.5) -- (16,-4.5) -- (16,-0.5);
\draw[dashed,line width=1.5pt] (16,-4.5) -- (20,-4.5);

\draw (-1,-3) node{Closed ball};
\draw (8,-3) node{Cylinder};
\draw (17,-6.25) node{Hypercube};
\draw (17,0.5) node{Prism};

\draw (3.4,0.6) node{\small Lemma \ref{imagebc}};
\draw (12,2) node[rotate=50]{\small Corollary \ref{tetra1}};
\draw (12.1,-2) node[rotate=-50]{\small Corollary \ref{bq}};

\end{tikzpicture}
\caption{Examples of $n$-bricks and some relations between them\label{rel}}
\end{figure}
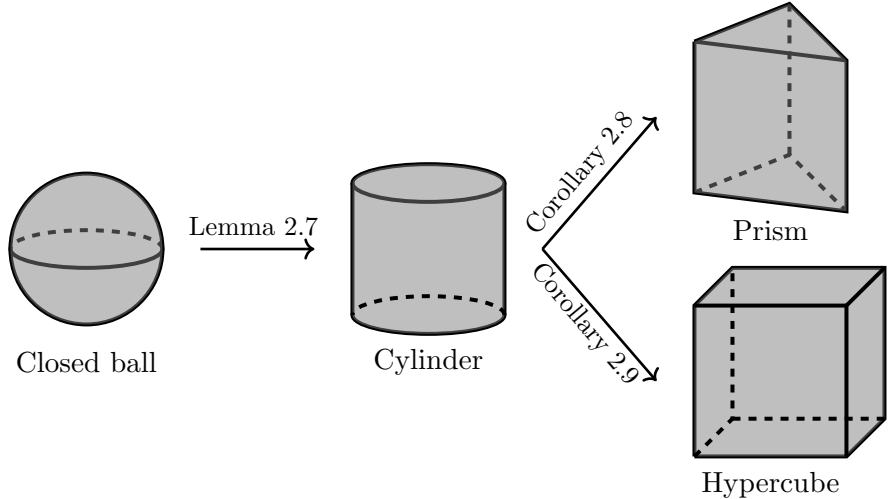
\end{center}

\begin{lem}[Cylinder]\label{imagebc}
Let $\Cc_n:=\ol{\Bb}_{n-1}\times[-1,1]\subset\R^n$. Then $\Cc_n$ is an $n$-brick. 
\end{lem}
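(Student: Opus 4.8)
The plan is to exhibit an explicit polynomial map $F:\R^n\to\R^n$ with $F(\ol{\Bb}_n)=\Cc_n=\ol{\Bb}_{n-1}\times[-1,1]$ and then invoke one of the preliminary lemmas to conclude that $\Cc_n$ is an $n$-brick. Since $\Cc_n$ is convex and $n$-dimensional, by the remark following Lemma \ref{propbrick} it suffices to realize $\Cc_n$ as a polynomial image of $\ol{\Bb}_n$; alternatively, once we have a suitable $F$, we may apply Lemma \ref{propbrick} directly (radial convexity of $\Cc_n$ with respect to the origin is immediate, being convex with the origin in its interior).

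First I would look for $F$ of the form
\[
F(x_1,\dots,x_n)=\bigl(g(x_n)x_1,\;g(x_n)x_2,\;\dots,\;g(x_n)x_{n-1},\;h(x_n)\bigr),
\]
so that $F$ respects the product structure: the last coordinate is a function of $x_n$ alone, and the first $n-1$ coordinates are scaled copies of $(x_1,\dots,x_{n-1})$. The idea is that for a fixed value $x_n=s\in[-1,1]$, the slice $\{x_1^2+\cdots+x_{n-1}^2\le 1-s^2\}$ of $\ol{\Bb}_n$ is a Euclidean ball of radius $\sqrt{1-s^2}$ in the first $n-1$ variables, and multiplying by $g(s)$ rescales it; to cover the full cylinder slice $\ol{\Bb}_{n-1}$ we want $g(s)\sqrt{1-s^2}\ge 1$ for the relevant range of $s$, while $h$ should carry $[-1,1]$ (or a subinterval) onto $[-1,1]$. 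A clean choice is to take $h(x_n)=x_n$ on part of the domain but, because $g$ must be polynomial and $g(s)\sqrt{1-s^2}$ cannot be made constant, one instead only needs $g(s)^2(1-s^2)\ge 1$ on a subinterval $s\in[-\delta,\delta]$ and then reparametrizes: for instance use $g(x_n)$ with $g(0)$ large and $h(x_n)$ a polynomial mapping $[-\delta,\delta]$ onto $[-1,1]$, checking that outside $[-\delta,\delta]$ the image still lands inside $\Cc_n$ (not outside it). Concretely I expect something like $g(x_n)=c$ a large constant together with a rescaling, or $g$ a low-degree even polynomial; one must verify both inclusions $F(\ol{\Bb}_n)\subseteq\Cc_n$ and $F(\ol{\Bb}_n)\supseteq\Cc_n$.

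The inclusion $F(\ol{\Bb}_n)\subseteq\Cc_n$ requires: (a) $|h(x_n)|\le 1$ whenever $|x_n|\le 1$, and (b) $g(x_n)^2(x_1^2+\cdots+x_{n-1}^2)\le 1$ whenever $x_1^2+\cdots+x_n^2\le 1$, i.e.\ $g(s)^2(1-s^2)\le 1$ for all $s\in[-1,1]$ — note this forces $g(s)^2(1-s^2)$ to be bounded by $1$, hence it equals $1$ only at isolated points, which is why the surjectivity argument is delicate. The reverse inclusion $\Cc_n\subseteq F(\ol{\Bb}_n)$ is the crux: given $(y_1,\dots,y_{n-1},t)\in\Cc_n$, I must produce $(x_1,\dots,x_n)\in\ol{\Bb}_n$ with $F(x)=(y_1,\dots,y_{n-1},t)$. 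This is where I would choose $x_n$ to be a preimage of $t$ under $h$ lying in the region where $g(x_n)^2(1-x_n^2)\ge y_1^2+\cdots+y_{n-1}^2$ — since the left side is at most $1$ and $y_1^2+\cdots+y_{n-1}^2\le 1$, equality $g(s)^2(1-s^2)=1$ must be attainable, forcing $h$ to take the value $t$ exactly at such an $s$. So the real design constraint is: pick $g,h$ polynomial with $g(s_0)^2(1-s_0^2)=1$ at the (finitely many) points $s_0$ where $h(s_0)=\pm 1$ attains the endpoints, and $h$ monotone enough between them; a standard trick is $g(x_n)^2(1-x_n^2)=1-(\text{something vanishing at }\pm1\text{ and at the }s_0)$, arranging the max of $g^2(1-s^2)$ to be exactly $1$. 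I expect the main obstacle to be exactly this simultaneous fitting of $g$ and $h$ as honest polynomials: making $\sup_{s\in[-1,1]}g(s)^2(1-s^2)=1$ with the maximum attained over a whole subinterval is impossible for polynomials, so one must instead let $h$ reparametrize a single maximum point, and carefully check that the "bad" part of $\ol{\Bb}_n$ maps into $\Cc_n$ rather than outside it. Once a valid $(g,h)$ is written down and both inclusions are verified, Lemma \ref{propbrick} (or Lemma \ref{propbrick2}, after checking $F^{-1}(\partial\Cc_n)\cap\ol{\Bb}_n\subseteq\partial\ol{\Bb}_n$) immediately gives that $\Cc_n$ is an $n$-brick.
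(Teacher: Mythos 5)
Your ansatz $F(x',x_n)=(g(x_n)x',\,h(x_n))$ cannot be completed, for a reason you half-notice but do not resolve. With this product form the image of $\ol{\Bb}_n$ is $\bigcup_{s\in[-1,1]}\ol{\Bb}_{n-1}\bigl(0,|g(s)|\sqrt{1-s^2}\bigr)\times\{h(s)\}$, so to hit a point $(y',t)$ with $\|y'\|=1$ you need some $s$ with $h(s)=t$ and $g(s)^2(1-s^2)\geq1$; combined with the containment constraint $g(s)^2(1-s^2)\leq1$ on $[-1,1]$ that you correctly derived, this forces $g(s)^2(1-s^2)=1$. But the polynomial function $s\mapsto g(s)^2(1-s^2)-1$ is not identically zero (it equals $-1$ at $s=\pm1$), so it has finitely many zeros $s_1,\dots,s_k$, and $h(\{s_1,\dots,s_k\})$ is a finite set that would have to contain every $t\in[-1,1]$. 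Hence no choice of polynomials $g,h$ works: ``letting $h$ reparametrize a single maximum point'' yields the full-radius slice $\ol{\Bb}_{n-1}\times\{h(s_0)\}$ at one height only, not at all heights. The obstacle you flagged as ``delicate'' is in fact fatal to the proposed form of $F$.

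The paper escapes this by abandoning the product form and composing two maps whose scaling factors depend on \emph{different} variables. First $G(x',x_n)=(x',g(x_n))$ with $g(\t)=\t(3-4\t^2)$ stretches only the last coordinate, so that $G(\ol{\Bb}_n)$ contains the narrower full-height cylinder $\{\|\x'\|^2\leq\tfrac34\}\times[-1,1]$ while staying inside $\Cc_n$. Then $H(x',x_n)=(x'h(\|x'\|),x_n)$ with $h(\t)=\sqrt3(1-\tfrac49\t^2)$ stretches radially in $x'$ --- the factor depends on $\|x'\|$, not on $x_n$ --- mapping that narrower cylinder onto all of $\Cc_n$ and satisfying $H(\Cc_n)\subset\Cc_n$. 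The composition $H\circ G$ achieves full radius at every height precisely because the radial enlargement is performed independently of the height variable. Your concluding appeal to Lemma \ref{propbrick} would be fine once a correct $F$ is in hand, but as written the construction has a genuine gap that cannot be fixed within your ansatz.
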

\begin{proof}
Consider the polynomial functions $g(\t):=\t(3-4\t^2)$ and $h(\t):=\sqrt{3}(1-\frac{4}{9}\t^2)$. Observe that $g(\pm1)=\mp1$, $g(\pm\frac{1}{2})=\pm1$, $g|_{[-1,1]}$ has a global maximum at $t=\frac{1}{2}$, a global minimum at $t=-\frac{1}{2}$ and it is strictly increasing on $[-\tfrac{1}{2},\tfrac{1}{2}]$ (Figure \ref{fig21}). In addition, $h_1:=\t h$ satisfies $h_1(0)=0$, $h_1(\frac{\sqrt{3}}{2})=1$ and $h_1|_{[0,1]}$ has a global maximum at $t=\frac{\sqrt{3}}{2}$ (Figure \ref{fig21}). The polynomial function $g^*:=g(\sqrt{1-\t^2})^2=(1-t^2)(4t^2-1)^2$ satisfies $g^*([0,1])=[0,1]$ (Figure \ref{fig21}).

Denote $x:=(x_1,\ldots,x_n)$ and $x':=(x_1,\ldots,x_{n-1})$. Define the polynomial maps
\begin{align*}
&G:\R^n\to\R^n,\ x:=(x',x_n)\mapsto(x',g(x_n)),\\
&H:\R^n\to\R^n,\ x:=(x',x_n)\mapsto(x'h(\|x'\|),x_n).
\end{align*}
We claim: 
$$
\Ss_n:=G(\ol{\Bb}_n)=\{\|\x'\|^2\leq\tfrac{3}{4},\ |\x_n|\leq1\}\cup\{\tfrac{3}{4}\leq\|\x'\|^2\leq1,\x_n^2\leq g^*(\|\x'\|)\}\subset\Cc_n.
$$

Write $\ol{\Bb}_n=\bigcup_{x'\in\ol{\Bb}_{n-1}}\Ii_{x'}$ where $\Ii_{x'}:=\{x'\}\times\{\x_n^2\leq1-\|\x'\|^2\}$ and observe that 
$$
G(\ol{\Bb}_n)=\bigcup_{x'\in\ol{\Bb}_{n-1}}G(\Ii_{x'})=\bigcup_{x'\in\ol{\Bb}_{n-1}}\{x'\}\times g(\{\x_n^2\leq1-\|\x'\|^2\}).
$$
We distinguish two cases:

\noindent{\sc Case 1.} If $\|x'\|^2\leq\frac{3}{4}$, then $\{x'\}\times[-\frac{1}{2},\frac{1}{2}]\subset\Ii_{x'}\subset\{x'\}\times[-1,1]$, so $g(\Ii_{x'})=\{x'\}\times[-1,1]$. 

\noindent{\sc Case 2.} If $\frac{3}{4}\leq\|x'\|^2\leq1$, then $\Ii_{x'}\subset\{x'\}\times[-\frac{1}{2},\frac{1}{2}]$. As $g$ is odd and strictly increasing on $[-\tfrac{1}{2},\tfrac{1}{2}]$, we have 
$$
G(\Ii_{x'})=\{x'\}\times\{\x_n^2\leq g(\sqrt{1-\|x'\|^2})^2=g^*(\|x'\|)\}
$$
and the claim follows.

\begin{center}
\begin{figure}[ht]
\begin{minipage}{0.48\textwidth}
\begin{center}
{\scriptsize\begin{tikzpicture}[scale=0.9]
\begin{axis}[unit vector ratio=1.25 1.25 1.25,axis x line=middle,axis y line=middle]	
\addplot[domain=-1:1,blue,line width=1.5pt,smooth] {x*(3-4*x^2)};
\draw[dotted] (axis cs:0.5,0) -- (axis cs:0.5,1);
\draw[dotted] (axis cs:-0.5,0) -- (axis cs:-0.5,-1);
\draw[dotted] (axis cs:-1,1) -- (axis cs:1,1);
\draw[dotted] (axis cs:-1,-1) -- (axis cs:1,-1);
\addplot[holdot] coordinates{(0.5,0)(-0.5,0)};
\addplot[soldot] coordinates{(0,0)(0.5,1)(-0.5,-1)};
\end{axis}
\end{tikzpicture}}
\end{center}
\end{minipage}
\hfil
\begin{minipage}{0.48 \textwidth}
\begin{center}
{\scriptsize\begin{tikzpicture}[scale=1]
\begin{axis}[unit vector ratio=1 1 1,axis x line=middle,axis y line=middle]
\addplot[domain=0:1.5,red,line width=1.5pt,smooth] {x*(3^(0.5)-4*3^(0.5)/9*x^2)};
\draw[dotted] (axis cs:0.866025403784439,0) -- (axis cs:0.866025403784439,1);
\draw[dotted] (axis cs:1,0) -- (axis cs:1,0.9622504485);
\draw[dotted] (axis cs:0,1) -- (axis cs:1.5,1);
\addplot[holdot] coordinates{(0.866025403784439,0)(1,0)};
\addplot[soldot] coordinates{(0,0)(0.866025403784439,1)(1,0.9622504485)(1.5,0)};
\end{axis}
\end{tikzpicture}}
\end{center}
\end{minipage}
\begin{center}
{\scriptsize\begin{tikzpicture}[scale=0.9]
\begin{axis}[unit vector ratio=1.25 1.25 1.25, axis x line=middle,axis y line=middle]	
\addplot[domain=0:1.003,black,line width=1.5pt,smooth] {(1-x^2)*(4*x^2-1)^2};
\draw[dotted] (axis cs:0,1) -- (axis cs:1,1);
\draw[dotted] (axis cs:0.866025403784439,0) -- (axis cs:0.866025403784439,1);
\addplot[holdot] coordinates{(0.5,0)(0.866025403784439,0)(1,0)};
\addplot[soldot] coordinates{(0,1)(0.866025403784439,1)};
\end{axis}
\end{tikzpicture}}
\end{center}

\caption{Graphs of $g$, $h_1$ and $g^*$\label{fig21}}
\end{figure}
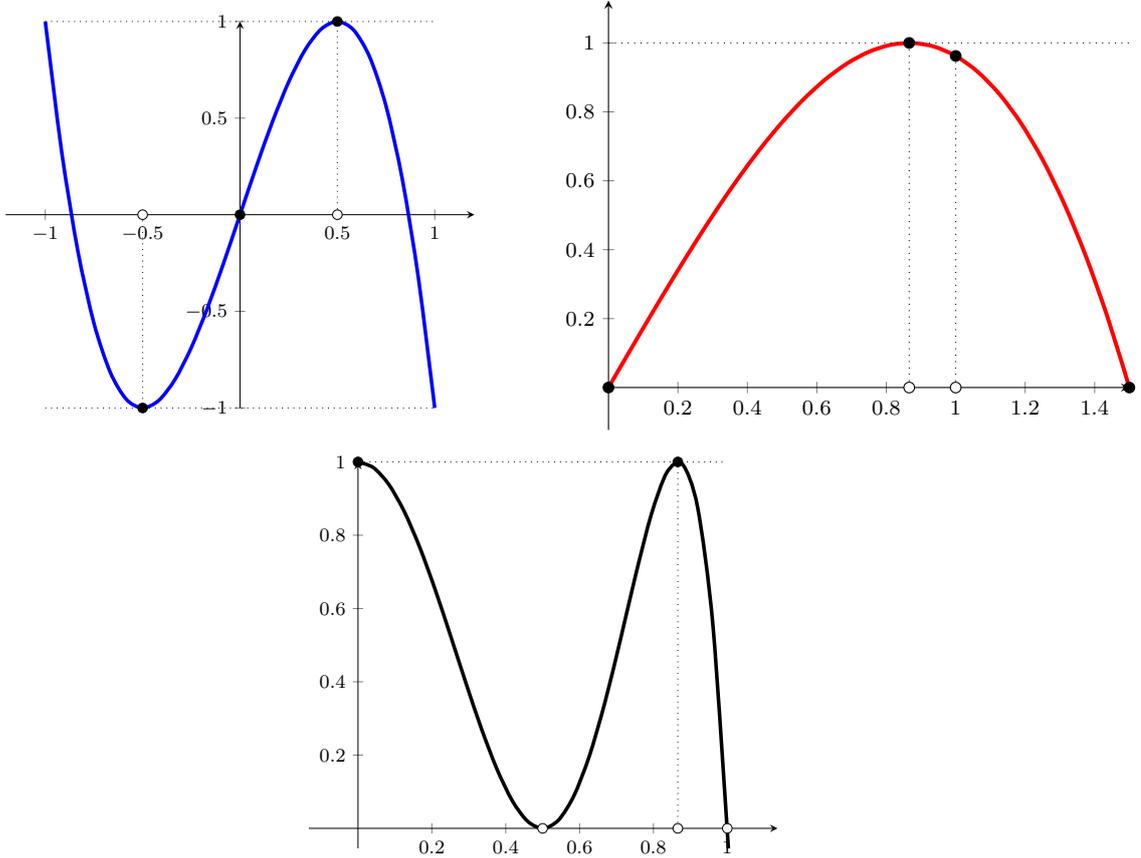
\end{center}

Let us check: {\em $H(\Cc_n)\subset\Cc_n$ and $H(\{\|\x'\|^2\leq\tfrac{3}{4},\ |\x_n|\leq1\})=\Cc_n$}. 

If $(x',x_n)\in\Cc_n$, then $\|x'\|\leq1$, so we deduce $\|x'h(\|x'\|)\|=|h_1(\|\x'\|)|\leq1$ and $H(x',x_n)=(x'h(\|x'\|),x_n)\in\Cc_n$. If $x'\in\partial\ol{\Bb}_{n-1}$, then 
\begin{multline*}
H(\{\lambda x':\ \lambda\in[0,\tfrac{\sqrt{3}}{2}]\}\times[-1,1])\\
=\{h_1(\lambda)x':\ \lambda\in[0,\tfrac{\sqrt{3}}{2}]\}\times[-1,1]=\{\mu x':\ \mu\in[0,1]\}\times[-1,1],
\end{multline*}
so $H(\{\|\x'\|^2\leq\tfrac{3}{4},\ |\x_n|\leq1\})=\Cc_n$.

Consequently, $H(G(\ol{\Bb}_n))=\Cc_n$ and $f:=H\circ G:\R^n\to\R^n$ is a polynomial map such that $f(\ol{\Bb}_n)=\Cc_n$ (Figure \ref{fig22}), as required.
\end{proof}

\begin{center}
\begin{figure}[ht]
\begin{tikzpicture}[scale=0.75]
\draw[line width=1.5pt] (0,0) circle (2cm);
\draw[line width=1.5pt] (-2,0) arc (180:360:2cm and 0.5cm);
\draw[dashed,line width=1.5pt] (2,0) arc (0:180:2cm and 0.5cm);

\draw[line width=1.5pt] (6.1,-2.125) .. controls (5,-1.5) and (5,1.5) .. (6.1,2.125); 
\draw[line width=1.5pt] (6,2) arc (180:0:1.5cm and 0.35cm);
\draw[line width=1.5pt] (8.9,2.125) .. controls (10,1.5) and (10,-1.5) .. (8.9,-2.125); 
\draw[line width=1.5pt] (9,-2) arc (360:180:1.5cm and 0.35cm);
\draw[dashed,line width=1.5pt] (6,-2) -- (6,2);
\draw[dashed,line width=1.5pt] (9,-2) -- (9,2);
\draw[line width=1.5pt] (13,2) -- (13,-2) arc (180:360:2cm and 0.5cm) -- (17,2) arc (0:180:2cm and 0.5cm);
\draw[line width=1.5pt] (13,2) arc (180:360:2cm and 0.5cm);
\draw[dashed,line width=1.5pt] (17,-2) arc (0:180:2cm and 0.5cm);

\draw[line width=1.5pt] (6,2) arc (180:360:1.5cm and 0.35cm);
\draw[line width=1.5pt] (5.275,0) arc (180:360:2.225cm and 0.35cm);
\draw[dashed,line width=1.5pt] (9.725,0) arc (0:180:2.225cm and 0.35cm);
\draw[dashed,line width=1.5pt] (9,-2) arc (0:180:1.5cm and 0.35cm);

\draw[fill=gray!100,opacity=0.5,draw=none] (0,0) circle (2cm);
\draw[fill=gray!100,opacity=0.5,draw=none] (6,-2) .. controls (5,-1.5) and (5,1.5) .. (6,2) arc (180:0:1.5cm and 0.35cm) .. controls (10,1.5) and (10,-1.5) .. (9,-2) arc (360:180:1.5cm and 0.35cm);
\draw[fill=gray!100,opacity=0.5,draw=none] (13,2) -- (13,-2) arc (180:360:2cm and 0.5cm) -- (17,2) arc (0:180:2cm and 0.5cm);

\draw[->, line width=1pt] (2.75,0) -- (4.5,0);
\draw[->, line width=1pt] (10.5,0) -- (12.25,0);

\draw (3.625,0.5) node{$G$};
\draw (11.375,0.5) node{$H$};
\draw (0,0) node{$\ol{\Bb}_n$};
\draw (7.5,0) node{$\Ss_n$};
\draw (15,0) node{\small$\ol{\Bb}_{n-1}\times[-1,1]$};

\end{tikzpicture}
\caption{The cylinder $\ol{\Bb}_{n-1}\times[-1,1]$ as a polynomial image of $\ol{\Bb}_n$\label{fig22}}
\end{figure}
\end{center}

\begin{cor}[Simplicial prism]\label{tetra1}
Let $\Delta_n\subset\R^n$ be an $n$-simplex of vertices $v_0,\ldots,v_n$. Then the product $\Delta_n\times[-1,1]$ is an $(n+1)$-brick. 
\end{cor}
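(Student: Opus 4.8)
Since $\Delta_n\times[-1,1]$ is a product of convex sets, it is a convex $(n+1)$-dimensional semialgebraic subset of $\R^{n+1}$; hence, by the remark following Lemma \ref{propbrick} (a convex full-dimensional semialgebraic set is an $m$-brick exactly when it is a polynomial image of $\ol{\Bb}_m$), it suffices to exhibit a polynomial map $f:\R^{n+1}\to\R^{n+1}$ with $f(\ol{\Bb}_{n+1})=\Delta_n\times[-1,1]$. The plan is to build $f$ as a composition of three elementary pieces: a ``fattening'' map that turns $\ol{\Bb}_{n+1}$ into the cylinder $\ol{\Bb}_n\times[-1,1]$, a coordinatewise squaring that carries $\ol{\Bb}_n$ onto the standard simplex, and an affine bijection that moves the standard simplex onto $\Delta_n$, all three acting compatibly with the product decomposition $\R^{n+1}=\R^n\times\R$.

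First I would apply Lemma \ref{imagebc} with $n+1$ in place of $n$ to obtain a polynomial map $g:\R^{n+1}\to\R^{n+1}$ such that $g(\ol{\Bb}_{n+1})=\ol{\Bb}_n\times[-1,1]$. Next, by Lemma \ref{tetra} the polynomial map $\sigma:\R^n\to\R^n$, $(x_1,\ldots,x_n)\mapsto(x_1^2,\ldots,x_n^2)$, satisfies $\sigma(\ol{\Bb}_n)=\Delta_n^{\mathrm{std}}:=\{0\leq\y_1,\ldots,0\leq\y_n,\ \y_1+\cdots+\y_n\leq1\}$. Finally, as $\Delta_n$ is a genuine $n$-simplex, its vertices $v_0,\ldots,v_n$ are affinely independent, so the affine bijection $L:\R^n\to\R^n$, $x\mapsto v_0+\sum_{i=1}^n x_i(v_i-v_0)$, maps $\Delta_n^{\mathrm{std}}$ onto $\Delta_n$ (affine maps send convex hulls to convex hulls). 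Then $f:=\big((L\circ\sigma)\times\id_\R\big)\circ g$ is polynomial and, since $(L\circ\sigma)\times\id_\R$ respects the splitting $\R^{n+1}=\R^n\times\R$,
$$
f(\ol{\Bb}_{n+1})=\big((L\circ\sigma)\times\id_\R\big)(\ol{\Bb}_n\times[-1,1])=(L\circ\sigma)(\ol{\Bb}_n)\times[-1,1]=\Delta_n\times[-1,1],
$$
which is what we need.

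I do not expect a real obstacle: the argument is a bookkeeping composition. The two points that genuinely need care are the index shift when invoking Lemma \ref{imagebc} (one must use it for the ambient dimension $n+1$, not $n$), and the reason the ``fattening'' step is unavoidable: the naive candidate $(x_1,\ldots,x_{n+1})\mapsto(x_1^2,\ldots,x_n^2,x_{n+1})$ applied directly to $\ol{\Bb}_{n+1}$ has image $\{\y_i\geq0,\ \y_1+\cdots+\y_n\leq 1-\x_{n+1}^2,\ |\x_{n+1}|\leq1\}$, a simplex bundle that degenerates to a point as $|\x_{n+1}|\to1$ rather than the full prism. One could alternatively quote a general ``product of bricks'' principle ($\Delta_n$ is an $n$-brick and $[-1,1]=\ol{\Bb}_1$ a $1$-brick), but I would avoid that here since such a statement (Corollary \ref{pbr}) comes later; the explicit three-step map above depends only on Lemmas \ref{tetra}, \ref{imagebc} and \ref{propbrick} and keeps the dependency chain clean.
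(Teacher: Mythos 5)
Your proof is correct and is essentially the paper's own argument: reduce to the cylinder $\ol{\Bb}_n\times[-1,1]$ via Lemma \ref{imagebc}, send $\ol{\Bb}_n$ onto a simplex via Lemma \ref{tetra}, and handle the arbitrary vertices by an affine change of coordinates (which the paper performs at the start rather than at the end). The remarks on convexity and on why the naive coordinatewise map fails match the paper's setup as well.
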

\begin{proof}
After an affine change of coordinates we assume $\Delta_n$ is the $n$-simplex of vertices $v_0={\tt0}$ and $v_i={\tt e}_i$ for $i=1,\ldots,n$. By Lemma \ref{imagebc} there exists a polynomial map $f_0:\R^{n+1}\to\R^{n+1}$ such that $f_0(\ol{\Bb}_{n+1})=\ol{\Bb}_n\times[-1,1]$. By Lemma \ref{tetra} there exists a polynomial map $f_1:\R^n\to\R^n$ such that $f_1(\ol{\Bb}_n)=\Delta_n$. Thus, the polynomial map $f:=(f_1,\x_{n+1})\circ f_0:\R^{n+1}\to\R^{n+1}$ satisfies $f(\ol{\Bb}_{n+1})=(f_1,\x_{n+1})(f_0(\ol{\Bb}_{n+1}))=(f_1,\x_{n+1})(\ol{\Bb}_n\times[-1,1])=\Delta_n\times[-1,1]$, as required.
\end{proof}

Let us check next that $n$-dimensional hypercubes and more generally $n$-dimensional products of balls are $n$-bricks.

\begin{cor}[Hypercube]\label{bq}
The $n$-dimensional hypercube $\Qq_n:=[-1,1]^n\subset\R^n$ is an $n$-brick.
\end{cor}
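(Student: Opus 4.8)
The plan is to realize $\Qq_n$ as a polynomial image of $\ol{\Bb}_n$ by iterating the cylinder construction of Lemma \ref{imagebc} one coordinate at a time, and then to upgrade ``polynomial image of $\ol{\Bb}_n$'' to ``$n$-brick'' via convexity. Since $\Qq_n=[-1,1]^n$ is convex (hence strictly radially convex with respect to the origin) and $n$-dimensional, Lemma \ref{propbrick} shows that it suffices to produce a polynomial map $f:\R^n\to\R^n$ with $f(\ol{\Bb}_n)=\Qq_n$.

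To build such an $f$ I would argue by induction on $n$. The case $n=1$ is trivial, because $\Qq_1=[-1,1]=\ol{\Bb}_1$ and $f=\id$ works. For the inductive step, assume there is a polynomial map $g:\R^{n-1}\to\R^{n-1}$ with $g(\ol{\Bb}_{n-1})=\Qq_{n-1}=[-1,1]^{n-1}$. By Lemma \ref{imagebc} there is a polynomial map $f_0:\R^n\to\R^n$ with $f_0(\ol{\Bb}_n)=\ol{\Bb}_{n-1}\times[-1,1]$. Consider the polynomial map
$$
(g,\x_n):\R^n\to\R^n,\qquad (x_1,\ldots,x_n)\mapsto(g(x_1,\ldots,x_{n-1}),x_n),
$$
which sends $\ol{\Bb}_{n-1}\times[-1,1]$ onto $g(\ol{\Bb}_{n-1})\times[-1,1]=[-1,1]^{n-1}\times[-1,1]=\Qq_n$. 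Hence $f:=(g,\x_n)\circ f_0:\R^n\to\R^n$ is a polynomial map with $f(\ol{\Bb}_n)=\Qq_n$, which closes the induction; combined with the first paragraph this proves that $\Qq_n$ is an $n$-brick.

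Equivalently, and without induction, one can take $f:=G_2\circ G_3\circ\cdots\circ G_n$, where $F_k:\R^k\to\R^k$ is the cylinder map of Lemma \ref{imagebc} (so $F_k(\ol{\Bb}_k)=\ol{\Bb}_{k-1}\times[-1,1]$) and $G_k:\R^n\to\R^n$ acts as $F_k$ on the first $k$ coordinates and as the identity on the remaining $n-k$; one checks that $G_k\circ\cdots\circ G_n$ maps $\ol{\Bb}_n$ onto $\ol{\Bb}_{k-1}\times[-1,1]^{n-k+1}$, so the full composition lands on $\ol{\Bb}_1\times[-1,1]^{n-1}=\Qq_n$.

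There is no genuine obstacle here: the construction is a bookkeeping exercise. The only points requiring (routine) care are that each application of Lemma \ref{imagebc} be made on the correct block of coordinates so that the already-formed $[-1,1]$-factors are preserved under the later maps, and that the final step, from ``polynomial image of $\ol{\Bb}_n$'' to ``$n$-brick'', be justified — which is precisely Lemma \ref{propbrick}, applicable because the hypercube is convex with the origin in its interior.
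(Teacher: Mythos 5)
Your proof is correct and is essentially the paper's own argument: the same induction on $n$ using the cylinder map of Lemma \ref{imagebc} composed with $(g,\x_n)$, together with the observation (made at the start of that subsection in the paper) that for convex $n$-dimensional sets being a polynomial image of $\ol{\Bb}_n$ suffices, via Lemma \ref{propbrick}. No further comment is needed.
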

\begin{proof}
Write $\Cc_n:=\ol{\Bb}_{n-1}\times[-1,1]$. By Lemma \ref{imagebc} there exists a polynomial map $f_0:\R^n\to\R^n$ such that $f_0(\ol{\Bb}_n)=\ol{\Bb}_{n-1}\times[-1,1]$. In addition, $\ol{\Bb}_1=[-1,1]=\Qq_1$, so the first step of the induction process holds. By induction hypothesis there exists a polynomial map $f_1:\R^{n-1}\to\R^{n-1}$ such that $f_1(\ol{\Bb}_{n-1})=\Qq_{n-1}$. Consider the polynomial map $f:=(f_1,\x_n)\circ f_0:\R^n\to\R^n$, which satisfies 
$$
f(\ol{\Bb}_n)=(f_1,\x_n)\circ f_0(\ol{\Bb}_n)=(f_1,\x_n)(\ol{\Bb}_{n-1}\times[-1,1])=f_1(\ol{\Bb}_{n-1})\times[-1,1]=\Qq_{n-1}\times[-1,1]=\Qq_n,
$$
as required.
\end{proof}

For each $p\in\R^n$ and each $\rho>0$ denote by $\ol{\Bb}_n(p,\rho)$ the closed ball of center $p$ and radius $\rho$ and $\Bb_n(p,\rho)$ the open ball of center $p$ and radius $\rho$.

\begin{lem}\label{inverse}
There exists a polynomial map $f:\R^n\to\R^n$ such that $f([-1,1]^n)=\ol{\Bb}_n$ and $f(\Bb_n(0,1+\veps))=\ol{\Bb}_n$ for each $\veps>0$ small enough.
\end{lem}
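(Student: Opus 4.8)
The plan is to take $f$ \emph{radial}: set $f(\x):=\big(P(\|\x\|^2)\x_1,\dots,P(\|\x\|^2)\x_n\big)$ for a suitable one-variable polynomial $P\in\R[\u]$, so that $f$ is a polynomial map $\R^n\to\R^n$ with $\|f(\x)\|=\big|\Phi(\|\x\|)\big|$, where $\Phi(r):=rP(r^2)$. I must choose $P$ (equivalently, the odd polynomial $\Phi$) so that $\Phi$ is uniformly bounded by $1$ on a long enough interval while already attaining every value of $[-1,1]$ on $[0,1]$. Fix $L:=\sqrt n+1$ and let $T_k$ be the degree-$k$ Chebyshev polynomial of the first kind, so $|T_k|\le1$ on $[-1,1]$; for $k$ odd, $T_k$ is an odd polynomial, hence $T_k(\t)=\t\,\widetilde P(\t^2)$ for some $\widetilde P\in\R[\u]$. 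Put $\Phi(r):=T_k(r/L)$ and $P(\u):=\tfrac1L\widetilde P(\u/L^2)$, so that indeed $\Phi(r)=rP(r^2)$. The extrema of $\Phi$ nearest the origin sit at $r=L\sin\!\big(\tfrac{(2j-1)\pi}{2k}\big)$, $j\ge1$, and carry the values $\pm1$ alternately; choosing $k$ odd and large enough, the first two of them lie in $(0,1]$, so $\Phi$ attains both $+1$ and $-1$ on $[0,1]$ and, being continuous with $\Phi(0)=0$, satisfies $\Phi([0,1])=[-1,1]$.

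With this choice I would check the two required equalities by looking along rays through the origin. For the inclusions $f([-1,1]^n)\subseteq\ol{\Bb}_n$ and $f(\Bb_n(0,1+\veps))\subseteq\ol{\Bb}_n$: if $\x\in[-1,1]^n$ then $\|\x\|\le\sqrt n\le L$, and if $\x\in\Bb_n(0,1+\veps)$ with $\veps\le\sqrt n$ then $\|\x\|<1+\veps\le L$; in either case $\|f(\x)\|=|\Phi(\|\x\|)|\le\sup_{[0,L]}|T_k(\cdot/L)|=1$. For surjectivity onto $\ol{\Bb}_n$: given $y\in\ol{\Bb}_n\setminus\{0\}$, write $y=su$ with $\|u\|=1$ and $s=\|y\|\in(0,1]$; since $\|u\|_\infty\le\|u\|=1$, the point $ru$ lies in $[-1,1]^n$ for every $r\in[0,1]$, and of course $ru\in\Bb_n(0,1+\veps)$ for such $r$ as well. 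As $\Phi([0,1])=[-1,1]\ni s$, pick $r\in[0,1]$ with $\Phi(r)=s$; then $f(ru)=P(r^2)\,r\,u=\Phi(r)u=su=y$. Together with $f(0)=0$ this shows $f([-1,1]^n)=\ol{\Bb}_n$ and $f(\Bb_n(0,1+\veps))=\ol{\Bb}_n$ for every $\veps\in(0,\sqrt n]$, which in particular covers all small enough $\veps>0$.

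The only substantive step is the construction of the profile $\Phi$: we need an odd polynomial bounded by $1$ on the comparatively long segment $[0,\sqrt n]$ — so that the ``corner rays'' of the cube, which reach distance $\sqrt n$ from the origin, are not pushed outside $\ol{\Bb}_n$ — yet already sweeping the whole interval $[-1,1]$ on the short segment $[0,1]$, so that the ``axis rays'', which reach only distance $1$, already cover every radius of $\ol{\Bb}_n$. This tension is exactly what a rescaled odd Chebyshev polynomial of high enough degree resolves (the needed degree growing like $\sqrt n$); any explicit polynomial family with the same boundedness-versus-oscillation profile works just as well, so the argument is self-contained and does not use the earlier hypercube results.
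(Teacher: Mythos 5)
Your proof is correct. It follows the same overall strategy as the paper's proof: both take $f$ radial, $f(\x)=h(\|\x\|^2)\x$, and reduce the lemma to producing a one-variable profile that is bounded by $1$ on a long interval while already sweeping out $[0,1]$ on $[0,1]$. The difference lies entirely in the choice of profile. The paper takes the explicit polynomial $h:=\t^2(\t-n)^{2(n-1)}/(n-1)^{2(n-1)}$, shows $h\le 1$ on $[0,n]$ and $h(1)=1$ by a direct derivative computation, and concludes; you instead take a rescaled odd Chebyshev polynomial $\Phi(r)=T_k(r/L)$ with $L=\sqrt n+1$ and $k$ odd and large. Your route buys one genuine advantage: the Chebyshev bound controls the full radial profile $|\Phi(r)|=\|f(ru)\|$ on all of $[0,L]$, which is exactly the quantity the containment $f(\ol{\Bb}_n(0,L))\subset\ol{\Bb}_n$ requires. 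The paper, by contrast, only bounds the factor $h$ and not the actual radial profile $r\mapsto h(r^2)r$, which can slightly exceed $1$ on $[1,\sqrt n]$ (already for $n=2$, at $r^2=10/9$ one gets $h(10/9)\sqrt{10/9}\approx 1.028$); so your version is not just an alternative but a tighter argument at the one point where care is needed. The costs are symmetric: the paper's polynomial is fully explicit with degree growing linearly in $n$, while yours requires quoting the equioscillation properties of $T_k$ (or any substitute family with the same boundedness-versus-oscillation trade-off) but gets away with degree of order $\sqrt n$.
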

\begin{proof}
If $n=1$, consider the polynomial function $f:\R\to\R,\ t\mapsto\frac{1}{2}t(3-t^2)$. Assume $n\geq2$ and consider the univariate polynomial $h:=\t^2\frac{(\t-n)^{2(n-1)}}{(n-1)^{2(n-1)}}$. Observe that $h(0)=0$, $h(n)=0$, $h(1)=1$ and
$$
h'=\frac{1}{(n-1)^{2(n-1)}}(2\t(\t-n)^{2(n-1)}+2(n-1)\t^2(\t-n)^{2(n-1)-1})=\frac{2n\t(\t-n)^{2(n-1)-1}}{(n-1)^{2(n-1)}}(\t-1).
$$
Thus, $h'$ is positive on $(0,1)$ and it is negative on $(1,n)$. Consequently, $1$ is an absolute maximum of $h$ on the interval $[0,n]$, so $h(\t)\leq 1$ on $[0,n]$.

Recall that $\ol{\Bb}_n\subset[-1,1]^n\subset\ol{\Bb}_n(0,\sqrt{n})$ and consider the polynomial map
$$
f:\R^n\to\R^n,\ x\mapsto h(\|x\|^2)x.
$$
Observe that $f(\ol{\Bb}_n)=\ol{\Bb}_n$ and $f(\ol{\Bb}_n(0,\sqrt{n}))=\ol{\Bb}_n$. Thus, $f([-1,1]^n)=\ol{\Bb}_n$ and $f(\Bb_n(0,1+\veps))=\ol{\Bb}_n$ for each $0<\veps<\sqrt{n}-1$, as required.
\end{proof}

\begin{cor}[Products of closed balls]\label{pb}
Let $n_1,\ldots,n_\ell$ be positive integers and denote $n:=\sum_{i=1}^\ell n_i$. Then the product $\Ss:=\prod_{i=1}^\ell\ol{\Bb}_{n_i}\subset\R^n$ has dimension $n$ and it is an $n$-brick.
\end{cor}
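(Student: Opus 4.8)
The plan is to exploit that $\Ss$ is convex and $n$-dimensional. First, the dimension claim is immediate: $\Ss$ is a product of the full-dimensional compact sets $\ol{\Bb}_{n_i}\subset\R^{n_i}$, so $\dim\Ss=\sum_{i=1}^\ell n_i=n$, and in particular $\Int(\Ss)\neq\varnothing$. Being convex, $\Ss$ is strictly radially convex with respect to any of its interior points, so by Lemma \ref{propbrick} it suffices to produce a single polynomial map $F:\R^n\to\R^n$ with $F(\ol{\Bb}_n)=\Ss$; the $m$-brick structure then comes for free.

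To build $F$ I would factor it through the hypercube $\Qq_n=[-1,1]^n$. By Corollary \ref{bq} there is a polynomial map $g:\R^n\to\R^n$ with $g(\ol{\Bb}_n)=[-1,1]^n$. On the other hand, for each $i=1,\dots,\ell$, Lemma \ref{inverse} (applied in dimension $n_i$) furnishes a polynomial map $f_i:\R^{n_i}\to\R^{n_i}$ with $f_i([-1,1]^{n_i})=\ol{\Bb}_{n_i}$. Grouping the coordinates of $\R^n=\R^{n_1}\times\cdots\times\R^{n_\ell}$ accordingly, form the product map $f:=f_1\times\cdots\times f_\ell:\R^n\to\R^n$, which is clearly polynomial. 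Since the image of a product set under a product map is the product of the images, and $[-1,1]^n=\prod_{i=1}^\ell[-1,1]^{n_i}$, we obtain
$$
f([-1,1]^n)=\prod_{i=1}^\ell f_i([-1,1]^{n_i})=\prod_{i=1}^\ell\ol{\Bb}_{n_i}=\Ss.
$$
Consequently $F:=f\circ g:\R^n\to\R^n$ is a polynomial map with $F(\ol{\Bb}_n)=f(g(\ol{\Bb}_n))=f([-1,1]^n)=\Ss$, and Lemma \ref{propbrick} yields that $\Ss$ is an $n$-brick.

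There is no genuine obstacle here: the argument is a two-step composition, and the only point requiring a moment of care is the elementary bookkeeping around the coordinate decomposition $\R^n=\prod_i\R^{n_i}$ together with the identity $(h_1\times\cdots\times h_\ell)\bigl(\prod_i A_i\bigr)=\prod_i h_i(A_i)$ for maps $h_i$ and sets $A_i$. One could bypass Corollary \ref{bq} and instead assemble $F$ by iterating the cylinder construction of Lemma \ref{imagebc}, but routing through the hypercube — already known to be an $n$-brick — keeps the proof shortest.
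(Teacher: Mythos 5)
Your proof is correct and follows essentially the same route as the paper: compose the map from Corollary \ref{bq} onto the hypercube with the product of the maps from Lemma \ref{inverse}, and invoke the convexity reduction (Lemma \ref{propbrick}, which the paper states once at the head of the subsection) to upgrade ``polynomial image of $\ol{\Bb}_n$'' to ``$n$-brick''. No discrepancies to report.
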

\begin{proof}
By Lemma \ref{inverse} there exist polynomial maps $f_i:\R^{n_i}\to\R^{n_i}$ such that $f_i([-1,1]^{n_i})=\ol{\Bb}_{n_i}$. Consequently, the image of $[-1,1]^n=\prod_{i=1}^\ell[-1,1]^{n_i}$ under the polynomial map
$$
f:\prod_{i=1}^\ell\R^{n_i}\to\prod_{i=1}^\ell\R^{n_i},\ (x_1,\ldots,x_\ell)\mapsto(f_1(x_1),\ldots,f_\ell(x_\ell))
$$
is $\Ss$. By Corollary \ref{bq} there exists a polynomial map $h:\R^n\to\R^n$ such that $h(\ol{\Bb}_n)=[-1,1]^n$. The image of $\ol{\Bb}_n$ under the composition $F:=f\circ h$ is $\Ss$, as required.
\end{proof}

\subsection{Products of bricks}

As a straightforward consequence of the previous result, we show that finite products of $m_i$-bricks is an $m$-brick where $m:=\sum_im_i$.

\begin{cor}[Products of bricks]\label{pbr}
Let $m_1,\ldots,m_\ell$ be positive integers and $m:=\sum_{i=1}^\ell m_i$. Let $\Ss_i\subset\R^{n_i}$ be an $n_i$-dimensional $m_i$-brick for $i=1,\ldots,\ell$ and denote $n:=\sum_{i=1}^\ell n_i$. Then the product $\Ss:=\prod_{i=1}^\ell\Ss_i\subset\R^n$ has dimension $n$ and it is an $m$-brick.
\end{cor}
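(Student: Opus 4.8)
The plan is to reduce the statement to the case of a product of closed balls, which was already settled in Corollary \ref{pb}. For each $i=1,\dots,\ell$ the hypothesis that $\Ss_i$ is an $m_i$-brick supplies a homotopy $H^{(i)}=(H^{(i)}_1,\dots,H^{(i)}_{n_i}):[0,1]\times\ol{\Bb}_{m_i}\to\Ss_i$ whose components lie in $\mathcal{C}^0([0,1])[\x_1,\dots,\x_{m_i}]$, with $H^{(i)}(\{0\}\times\ol{\Bb}_{m_i})=\Ss_i$, $H^{(i)}(1,\cdot)$ constant, and $H^{(i)}(\{t\}\times\ol{\Bb}_{m_i})\subset\Int(\Ss_i)$ for $t\in(0,1)$. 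Next I would apply Corollary \ref{pb}, now to the positive integers $m_1,\dots,m_\ell$ (whose sum is $m$), to obtain a polynomial map $\varphi=(\varphi^1,\dots,\varphi^\ell):\R^m\to\prod_{i=1}^\ell\R^{m_i}=\R^m$ with $\varphi(\ol{\Bb}_m)=\prod_{i=1}^\ell\ol{\Bb}_{m_i}$; composing with the $i$-th coordinate projection shows $\varphi^i(\ol{\Bb}_m)=\ol{\Bb}_{m_i}$.

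Then I would define the candidate homotopy $H:[0,1]\times\ol{\Bb}_m\to\Ss$ by $H(t,x):=\big(H^{(1)}(t,\varphi^1(x)),\dots,H^{(\ell)}(t,\varphi^\ell(x))\big)$, which indeed takes values in $\prod_{i=1}^\ell\Ss_i=\Ss$. The remaining work is to verify the three conditions of Definition \ref{mbrickd}. For (i): each component of $H$ is the image of some $H^{(i)}_k\in\mathcal{C}^0([0,1])[\y_1,\dots,\y_{m_i}]$ under the substitution $\y_j\mapsto\varphi^i_j\in\R[\x_1,\dots,\x_m]$, which is a ring homomorphism $\mathcal{C}^0([0,1])[\y_1,\dots,\y_{m_i}]\to\mathcal{C}^0([0,1])[\x_1,\dots,\x_m]$, so every component of $H$ lies in $\mathcal{C}^0([0,1])[\x_1,\dots,\x_m]$. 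For (ii): as $x$ ranges over $\ol{\Bb}_m$, the tuple $(\varphi^1(x),\dots,\varphi^\ell(x))$ ranges over all of $\prod_i\ol{\Bb}_{m_i}$, hence $H(\{0\}\times\ol{\Bb}_m)=\prod_i H^{(i)}(\{0\}\times\ol{\Bb}_{m_i})=\prod_i\Ss_i=\Ss$, while $H(1,\cdot)$ is constant since each $H^{(i)}(1,\cdot)$ is. For (iii): for $t\in(0,1)$, $H(\{t\}\times\ol{\Bb}_m)=\prod_i H^{(i)}(\{t\}\times\ol{\Bb}_{m_i})\subset\prod_i\Int(\Ss_i)=\Int(\Ss)$, using that the interior of a finite product is the product of the interiors. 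Finally $\dim\Ss=\sum_i\dim\Ss_i=\sum_i n_i=n$.

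There is no genuine obstacle here; this corollary is really a formal consequence of Corollary \ref{pb}. The only points that deserve a moment's care are the equality $\varphi^i(\ol{\Bb}_m)=\ol{\Bb}_{m_i}$ (one gets $\subseteq$ for free and must note the reverse inclusion from surjectivity of $\varphi$ onto the product) and the standard topological fact that taking interiors commutes with finite products, both of which enter in conditions (ii) and (iii). Everything else is the routine substitution bookkeeping sketched above.
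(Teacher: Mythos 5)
Your proof is correct and follows the same route as the paper: form the product homotopy $(t,x)\mapsto(H^{(1)}(t,\cdot),\dots,H^{(\ell)}(t,\cdot))$ on $[0,1]\times\prod_i\ol{\Bb}_{m_i}$ and then use Corollary \ref{pb} to pass from the product of balls to the single ball $\ol{\Bb}_m$; you merely make explicit the composition with $\varphi$ and the verification of Definition \ref{mbrickd} that the paper leaves as ``one can check''. The one point worth noting is that in (ii) and (iii) the relevant fact is the joint surjectivity $\varphi(\ol{\Bb}_m)=\prod_i\ol{\Bb}_{m_i}$ (which you do use), not the componentwise equalities $\varphi^i(\ol{\Bb}_m)=\ol{\Bb}_{m_i}$ alone.
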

\begin{proof}
For each $i=1,\ldots,\ell$ let $H_i:=(H_{i1},\ldots,H_{in_i}):[0,1]\times\ol{\Bb}_{m_i}\to\Ss_i$ be a homotopy such that
\begin{itemize}
\item[(i)] $H_{ij}\in{\mathcal C}^0([0,1])[\x_{i1},\ldots,\x_{im_i}]$ for each $i,j$. 
\item[(ii)] $H_i(\{0\}\times\ol{\Bb}_{m_i})=\Ss_i$ and $H_i(1,\cdot)$ is a constant map.
\item[(iii)] $H_i(\{t\}\times\ol{\Bb}_{m_i})\subset\Int(\Ss_i)$ for each $t\in(0,1)$.
\end{itemize}
As $\Int(\Ss)=\prod_{i=1}^\ell\Int(\Ss_i)$, one can check that the continuous semialgebraic map
\begin{multline*}
H:=(H_1,\ldots,H_\ell):=(H_{ij}, 1\leq i\leq\ell,\ 1\leq j\leq n_i):[0,1]\times\prod_{i=1}^\ell\ol{\Bb}_{m_i}\to\Ss,\\ 
(t,x_1,\ldots,x_\ell)\mapsto (H_1(t,x_1),\ldots,H_\ell(t,x_\ell))
\end{multline*}
satisfies:
\begin{itemize}
\item[(i)] $H_{ij}\in{\mathcal C}^0([0,1])[\x_{i1},\ldots,\x_{im_i}]$ for each $i,j$. 
\item[(ii)] $H(\{0\}\times\prod_{i=1}^\ell\ol{\Bb}_{m_i})=\prod_{i=1}^\ell\Ss_i$ and $H(1,\cdot)$ is a constant map.
\item[(iii)] $H(\{t\}\times\prod_{i=1}^\ell\ol{\Bb}_{m_i})\subset\prod_{i=1}^\ell\Int(\Ss_i)=\Int(\Ss)$ for each $t\in(0,1)$.
\end{itemize}
To finish one applies Corollary \ref{pb}.
\end{proof}

\subsection{Convex hulls of semialgebraic sets}

Let $X\subset\R^n$ and denote by ${\rm conv}(X)$ the \em convex hull of $X$\em, that is, the smallest convex set that contains $X$. We have
$$
{\rm conv}(X):=\Big\{\sum_{k=0}^r\lambda_kx_k:\ r\geq1, x_0,\ldots,x_r\in X,\ \lambda_0,\ldots,\lambda_r\geq0,\ \sum_{k=0}^r\lambda_k=1\Big\}.
$$

Caratheodory's theorem provides a finitary description of the convex hull of a subset of $\R^n$.

\begin{thm}[Caratheodory, {\cite[Theorem 11.1.8.6]{ber1}}]
Let $X\subset\R^n$ be a subset. A point $x\in{\rm conv}(X)$ if and only if there exist $n+1$ points $x_0,\ldots,x_n\in X$ and non-negative real numbers $\lambda_0,\ldots,\lambda_n$ such that $\sum_{k=0}^n\lambda_k=1$ and $\sum_{k=0}^n\lambda_kx_k=x$. Consequently,
$$
{\rm conv}(X)=\Big\{\sum_{k=0}^n\lambda_kx_k:\ x_0,\ldots,x_n\in X,\ \lambda_0,\ldots,\lambda_n\geq0,\ \sum_{k=0}^n\lambda_k=1\Big\}.
$$
\end{thm}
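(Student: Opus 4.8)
The statement to be proved is the classical Caratheodory theorem, and the displayed formula at the end is merely a reformulation of the ``if and only if'' combined with the general description of ${\rm conv}(X)$ recalled just before it; so the plan is to prove the equivalence. One implication is free: if $x=\sum_{k=0}^n\lambda_kx_k$ with the $x_k\in X$, $\lambda_k\geq0$ and $\sum_{k=0}^n\lambda_k=1$, then $x$ is a convex combination of points of $X$ and hence lies in ${\rm conv}(X)$. All the work is in the converse.

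For the converse I would run the standard dimension-reduction argument. Start from an arbitrary representation $x=\sum_{k=0}^r\lambda_kx_k$ with $r\geq1$, $x_k\in X$, $\lambda_k\geq0$ and $\sum_{k=0}^r\lambda_k=1$; discard the terms with vanishing coefficient so that every $\lambda_k>0$. If $r\leq n$ I am done after padding the list with $n-r$ arbitrary points of $X$ carrying coefficient $0$. If $r\geq n+1$, I would show how to rewrite $x$ as a convex combination of a strictly smaller subset of the $x_k$ and then iterate finitely many times. The reduction step goes as follows: since the $r\geq n+1$ vectors $x_1-x_0,\ldots,x_r-x_0$ lie in $\R^n$ they are linearly dependent, so there are reals $\mu_1,\ldots,\mu_r$, not all zero, with $\sum_{k=1}^r\mu_k(x_k-x_0)=0$; setting $\mu_0:=-\sum_{k=1}^r\mu_k$ gives $\sum_{k=0}^r\mu_k=0$ and $\sum_{k=0}^r\mu_kx_k=0$ with the $\mu_k$ not all zero. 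As the $\mu_k$ sum to zero but are not all zero, at least one of them is positive. For every $t\in\R$ one has $\sum_{k=0}^r(\lambda_k-t\mu_k)=1$ and $\sum_{k=0}^r(\lambda_k-t\mu_k)x_k=x$; choosing $t:=\min\{\lambda_k/\mu_k:\ \mu_k>0\}$, attained at some index $j$, makes every coefficient $\lambda_k-t\mu_k$ non-negative while killing the $j$-th one, so $x$ becomes a convex combination of the $r$ points $\{x_k:\ k\neq j\}$. Iterating, I reach a representation of $x$ involving at most $n+1$ points of $X$, which after padding with zero coefficients takes exactly the asserted form.

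I do not expect a genuine obstacle here; this is a textbook argument. The only place needing a moment's care is the bookkeeping in the reduction step: one must check that the set $\{\lambda_k/\mu_k:\ \mu_k>0\}$ is nonempty (this is precisely why one needs some $\mu_k>0$), that the resulting $t$ is positive, and hence that $\lambda_k-t\mu_k\geq0$ also for the indices with $\mu_k\leq0$; all of this is immediate from $\lambda_k>0$ and the minimality in the choice of $t$.
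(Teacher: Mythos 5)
Your argument is the standard and correct dimension-reduction proof of Carath\'eodory's theorem: the easy direction is immediate from the definition of the convex hull, and the converse follows by iterating the affine-dependence reduction, whose bookkeeping (nonemptiness of $\{\lambda_k/\mu_k:\ \mu_k>0\}$, positivity of $t$, and non-negativity of the new coefficients) you handle correctly. Note that the paper offers no proof of this statement at all --- it is quoted as a classical result from Berger \cite[Theorem 11.1.8.6]{ber1} --- so there is no in-paper argument to compare against; your proposal simply supplies the textbook proof that the citation points to.
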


Consider the $n$-simplex 
$$
\Delta_n:=\Big\{(\lambda_1,\ldots,\lambda_n)\in\R^n:\ \lambda_1\geq0,\ldots,\lambda_n\geq0,\sum_{k=1}^n\lambda_k\leq1\Big\}.
$$

\begin{cor}\label{convexhull0}
Let $\Ss\subset\R^n$ be a semialgebraic set. Then ${\rm conv}(\Ss)$ is the image of $\Ss\times\overset{n+1}{\cdots}\times\Ss\times\Delta_n$ under the polynomial map
$$
\varphi:\R^n\times\overset{n+1}{\cdots}\times\R^n\times\R^n\to\R^n,\ (x_0,x_1,\ldots,x_n,\lambda:=(\lambda_1,\ldots,\lambda_n))\mapsto\sum_{k=0}^n\lambda_kx_k
$$
where $\lambda_0:=1-\sum_{k=1}^n\lambda_k$.
\end{cor}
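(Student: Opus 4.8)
The plan is to deduce this immediately from the version of Caratheodory's theorem quoted above, once we check that $\varphi$ is polynomial and that the two inclusions between its image on $\Ss\times\overset{n+1}{\cdots}\times\Ss\times\Delta_n$ and ${\rm conv}(\Ss)$ hold. There is essentially no obstacle: the only point that needs a moment's care is the bookkeeping that identifies the affine parametrization of $\Delta_n$ (which has $n$ free coordinates $\lambda_1,\ldots,\lambda_n$ and a dependent weight $\lambda_0:=1-\sum_{k=1}^n\lambda_k$) with the $(n+1)$-tuple of barycentric weights $(\lambda_0,\ldots,\lambda_n)$ appearing in the statement of Caratheodory's theorem.

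First I would record that $\varphi$ is indeed a polynomial map: writing $x_k=(x_{k1},\ldots,x_{kn})$, the $i$-th component of $\varphi$ equals $\big(1-\sum_{j=1}^n\lambda_j\big)x_{0i}+\sum_{k=1}^n\lambda_kx_{ki}$, which is a polynomial in the variables $x_{ki}$ and $\lambda_j$.

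Next, for the inclusion $\varphi\big(\Ss\times\overset{n+1}{\cdots}\times\Ss\times\Delta_n\big)\subset{\rm conv}(\Ss)$: given $x_0,\ldots,x_n\in\Ss$ and $\lambda:=(\lambda_1,\ldots,\lambda_n)\in\Delta_n$, set $\lambda_0:=1-\sum_{k=1}^n\lambda_k$. The defining inequalities of $\Delta_n$ give $\lambda_1,\ldots,\lambda_n\geq0$ and $\sum_{k=1}^n\lambda_k\leq1$, hence $\lambda_0\geq0$ and $\sum_{k=0}^n\lambda_k=1$; therefore $\varphi(x_0,\ldots,x_n,\lambda)=\sum_{k=0}^n\lambda_kx_k$ is a convex combination of points of $\Ss$ and so lies in ${\rm conv}(\Ss)$. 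For the reverse inclusion, let $x\in{\rm conv}(\Ss)$. By Caratheodory's theorem there exist $x_0,\ldots,x_n\in\Ss$ and $\mu_0,\ldots,\mu_n\geq0$ with $\sum_{k=0}^n\mu_k=1$ and $\sum_{k=0}^n\mu_kx_k=x$. Put $\lambda:=(\mu_1,\ldots,\mu_n)$; then $\mu_1,\ldots,\mu_n\geq0$ and $\sum_{k=1}^n\mu_k=1-\mu_0\leq1$, so $\lambda\in\Delta_n$, while $1-\sum_{k=1}^n\mu_k=\mu_0$, so $\varphi(x_0,\ldots,x_n,\lambda)=\sum_{k=0}^n\mu_kx_k=x$. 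Hence $x$ belongs to the image of $\varphi$, which finishes the argument.

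I expect the only genuinely delicate issue (and it is minor) to be making sure that the semialgebraic set $\Ss\times\overset{n+1}{\cdots}\times\Ss\times\Delta_n$ and the map $\varphi$ are set up consistently with the conventions in the preceding paragraph, so that this corollary can later be combined with the brick constructions (as in Corollary \ref{convexhull}); everything else is a direct transcription of Caratheodory's theorem.
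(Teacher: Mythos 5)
Your proof is correct and follows exactly the route the paper intends: the corollary is stated without proof precisely because it is an immediate transcription of Caratheodory's theorem, and your two inclusions plus the bookkeeping identifying $(\lambda_1,\ldots,\lambda_n)\in\Delta_n$ with the barycentric weights $(\lambda_0,\ldots,\lambda_n)$ are all that is needed.
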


Consequently, the convex hull of a polynomial image of the $m$-dimensional closed unit ball is an $(m(n+1)+n)$-brick.

\begin{cor}\label{convexhull}
Let $\Ss\subset\R^n$ be a polynomial image of the $m$-dimensional closed unit ball. Then ${\rm conv}(\Ss)$ is an $(m(n+1)+n)$-brick.
\end{cor}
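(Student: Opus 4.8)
The plan is to combine Corollary \ref{convexhull0} with the results already established for products of balls and for simplices, together with the radial-convexity criterion Lemma \ref{propbrick}. The key observation is that ${\rm conv}(\Ss)$ is manifestly convex, so by the remark following Lemma \ref{propbrick} it suffices to exhibit ${\rm conv}(\Ss)$ as a polynomial image of $\ol{\Bb}_k$ with $k=m(n+1)+n$; the homotopy contracting it to an interior point is then automatic.

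First I would unwind the hypothesis: let $F:\R^m\to\R^n$ be a polynomial map with $F(\ol{\Bb}_m)=\Ss$. By Corollary \ref{convexhull0}, the set ${\rm conv}(\Ss)$ is the image of $\Ss\times\overset{n+1}{\cdots}\times\Ss\times\Delta_n$ under the polynomial map $\varphi(x_0,\ldots,x_n,\lambda)=\sum_{k=0}^n\lambda_kx_k$ (with $\lambda_0=1-\sum_{k=1}^n\lambda_k$). Now I would precompose: the $(n+1)$ copies of $\Ss$ are each a polynomial image of $\ol{\Bb}_m$ via $F$, so $\Ss\times\overset{n+1}{\cdots}\times\Ss$ is the image of $(\ol{\Bb}_m)^{n+1}$ under $(x_0,\ldots,x_n)\mapsto(F(x_0),\ldots,F(x_n))$, a polynomial map $\R^{m(n+1)}\to\R^{n(n+1)}$. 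For the last factor, the $n$-simplex $\Delta_n$ is an $n$-brick by Lemma \ref{tetra}; in particular there is a polynomial map $\psi:\R^n\to\R^n$ with $\psi(\ol{\Bb}_n)=\Delta_n$ (explicitly $\psi(y_1,\ldots,y_n)=(y_1^2,\ldots,y_n^2)$). Composing everything, the polynomial map
$$
\Phi:\R^{m(n+1)}\times\R^n\to\R^n,\quad (x_0,\ldots,x_n,y)\mapsto\varphi\bigl(F(x_0),\ldots,F(x_n),\psi(y)\bigr)
$$
sends $(\ol{\Bb}_m)^{n+1}\times\ol{\Bb}_n$ onto ${\rm conv}(\Ss)$.

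The remaining step is to replace the domain $(\ol{\Bb}_m)^{n+1}\times\ol{\Bb}_n$ by a single closed ball $\ol{\Bb}_{m(n+1)+n}$. This is exactly what Corollary \ref{pb} provides: a finite product of closed balls $\prod_i\ol{\Bb}_{n_i}$ (here $(n+1)$ copies of $\ol{\Bb}_m$ together with one copy of $\ol{\Bb}_n$, of total dimension $k=m(n+1)+n$) is a polynomial image of $\ol{\Bb}_k$, say via $\Theta:\R^k\to\R^k$ with $\Theta(\ol{\Bb}_k)=(\ol{\Bb}_m)^{n+1}\times\ol{\Bb}_n$. Then $f:=\Phi\circ\Theta:\R^k\to\R^n$ is a polynomial map with $f(\ol{\Bb}_k)={\rm conv}(\Ss)$. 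Since ${\rm conv}(\Ss)$ is convex, hence strictly radially convex with respect to any interior point, Lemma \ref{propbrick} shows it is a $k$-brick with $k=m(n+1)+n$, as claimed.

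I do not anticipate a serious obstacle here: every ingredient (the Caratheodory parametrization of Corollary \ref{convexhull0}, the simplex as a polynomial image of a ball, and the product-of-balls reduction of Corollary \ref{pb}) is already available, and convexity makes the brick structure free via Lemma \ref{propbrick}. The only point requiring a line of care is that ${\rm conv}(\Ss)$ is $n$-dimensional only when $\Ss$ is not contained in a hyperplane — but convexity and the brick characterization do not need full-dimensionality, so the statement holds verbatim; if one does want the interior to be nonempty one simply invokes that $\Ss$ spans $\R^n$ affinely, which is the standing hypothesis in the analogous item (iv) of Examples \ref{bricks}.
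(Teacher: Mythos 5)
Your proof is correct and follows essentially the same route as the paper's: both rest on the Caratheodory parametrization of Corollary \ref{convexhull0} precomposed with $F$ on each of the $n+1$ factors, together with the fact that $\ol{\Bb}_m^{n+1}\times\Delta_n$ is an $(m(n+1)+n)$-brick (the paper cites Corollary \ref{pbr} directly, while you unwind this through Lemma \ref{tetra} and Corollary \ref{pb}). Your explicit closing appeal to convexity and Lemma \ref{propbrick} is the same step the paper leaves implicit.
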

\begin{proof}
Let $f:\R^m\to\R^n$ be a polynomial map such that $f(\ol{\Bb}_m)=\Ss$. Then ${\rm conv}(\Ss)$ is the image of $\ol{\Bb}_m^{n+1}\times\Delta_n$ under the polynomial map
$$
F:\R^{m(n+1)+n}\to\R^n,\ (y_0,y_1\ldots,y_n,\lambda:=(\lambda_1,\ldots,\lambda_n))\mapsto\Big(1-\sum_{k=1}^n\lambda_k\Big)f(y_0)+\sum_{k=1}^n\lambda_kf(y_k).
$$
As $\ol{\Bb}_m^{n+1}\times\Delta_n$ is by Corollary \ref{pbr} an $(m(n+1)+n)$-brick, we deduce $\Ss$ is an $(m(n+1)+n)$-brick, as required.
\end{proof}

\begin{examples}[Toeplitz's spectrahedra and Caratheodory's orbitopes]\label{so}
There are some spectrahedra that are convex hulls of semialgebraic sets that are either polynomial or regular images of some closed unit ball.

(i) The \em Toeplitz spectrahedron \em
$$
\Ss:=\Big\{(x,y,z)\in\R^3:\ \begin{pmatrix}
1&x&y&z\\
x&1&x&y\\
y&x&1&x\\
z&y&x&1
\end{pmatrix}\ \text{is positive semidefinite}
\Big\}
$$
is the convex hull of the \em cosine moment curve\em
$$
\Cc:=\{(\cos(\theta),\cos(2\theta),\cos(3\theta)):\ \theta\in[0,\pi]\}=\{(t,2t^2-1,4t^3-3t):\ t\in[-1,1]\}.
$$
As $\Cc$ is a polynomial image of $[-1,1]$, we deduce by Corollary \ref{convexhull} that $\Ss$ is the image of $\ol{\Bb}_7$ under a polynomial map $f:\R^7\to\R^3$.

(ii) The \em Caratheodory orbitopes $\Cc$ \em are the convex hulls in $\R^{2d}$ of \em trigonometric moment curves \em 
$$
\alpha:[0,2\pi]\to\R^{2d},\ \theta\mapsto(\cos(n_1\theta),\sin(n_1\theta),\ldots,\cos(n_d\theta),\sin(n_d\theta))
$$
where $n_1,\ldots,n_d$ are non-negative integers \cite{sss}. Recall that $\cos(n_k\theta)$ is a polynomial $F_k$ in $\cos(\theta)$ of degree $n_k$, whereas $\sin(n_k\theta)$ equals $\sin(\theta)$ times a polynomial $G_k$ in $\cos(\theta)$ of degree $n_k-1$. Thus, there exists a polynomial map
$$
H:=(F_1,G_1,\ldots,F_d,G_d):\sph^1\to\R^{2d},\ (u,v)\mapsto(F_1(u),G_1(u)v,\ldots,F_d(u),G_d(u)v)
$$
such that $H(\sph^1)=\alpha([0,2\pi])$. Let $\beta:[-1,1]\to\sph^1$ be a surjective regular map (Lemma \ref{nsph}). Then $P:=H\circ\beta:[-1,1]\to\R^{2d}$ is a regular map such that $P([-1,1])=\alpha([0,2\pi])$. By Corollaries \ref{pbr} and \ref{convexhull0} the convex hull $\Cc$ is the image of $\ol{\Bb}_{4d+1}$ under a regular map $f:\R^{4d+1}\to\R^{2d}$.
\end{examples}

\subsection{Spherical stars}\label{tiles}

Let $k_1,\ldots,k_n\geq1$ be positive integers, let $\Ff:=\{i=1,\ldots,n:\ k_i\text{ is even}\}$ and denote $\Ss_0:=\bigcap_{i\in\Ff}\{\x_i\geq0\}$. We assume the convection $x_i^{2/2\ell}:=|x_i|^{1/\ell}$ for each $\ell\geq1$. The image $\sstar{k_1,\ldots,k_n}:=\{\x_1^{2/k_1}+\cdots+\x_n^{2/k_n}\leq1\}\cap\Ss_0$ of $\ol{\Bb}_n$ under $f:\R^n\to\R^n,\ (x_1,\ldots,x_n)\mapsto(x_1^{k_1},\ldots,x_n^{k_n})$ is called the \em spherical star of weights $k_1\ldots,k_n$\em. 

\begin{lem}\label{ss0}
Let $(k_1,\dots,k_n)\in\{1,2\}^n$. Then the spherical star $\sstar{k_1,\ldots,k_n}$ is convex.
\end{lem}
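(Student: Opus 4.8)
The plan is to analyze the spherical star $\sstar{k_1,\dots,k_n}$ directly as a sublevel set of a convex function, case-splitting on how many weights $k_i$ equal $2$. First I would rename coordinates so that $k_1=\dots=k_r=2$ and $k_{r+1}=\dots=k_n=1$ for some $0\le r\le n$; then $\Ss_0=\{\x_1\ge0,\dots,\x_r\ge0\}$ and, using the convention $x_i^{2/2}:=|x_i|$ on that region,
$$
\sstar{k_1,\dots,k_n}=\Big\{(x_1,\dots,x_n)\in\R^n:\ x_1,\dots,x_r\ge0,\ \ x_1+\cdots+x_r+x_{r+1}^2+\cdots+x_n^2\le1\Big\}.
$$
So the claim reduces to: the function $\varphi(x):=x_1+\cdots+x_r+x_{r+1}^2+\cdots+x_n^2$ is convex on the convex set $\{x_1,\dots,x_r\ge0\}$, whence its sublevel set $\{\varphi\le1\}$ intersected with that half-space region is convex. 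This is immediate because $\varphi$ is a sum of linear terms and the strictly convex terms $x_i^2$, so its Hessian is positive semidefinite everywhere, and a sublevel set of a convex function intersected with a convex set is convex.

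The key steps, in order, are: (1) normalize the weights by permuting coordinates; (2) rewrite $\sstar{k_1,\dots,k_n}$ explicitly using the stated convention $x_i^{2/2\ell}=|x_i|^{1/\ell}$ (here $\ell=1$), noting that on $\{\x_i\ge0\}$ one has $x_i^{2/k_i}=x_i$ when $k_i=2$ and $x_i^{2/k_i}=x_i^2$ when $k_i=1$ — the $k_i=1$ variables carry no sign restriction; (3) observe $\{x_1\ge0,\dots,x_r\ge0\}$ is convex as an intersection of half-spaces; (4) check that $\varphi$ is convex there (trivial Hessian computation); (5) conclude that $\sstar{k_1,\dots,k_n}=\{x_1,\dots,x_r\ge0\}\cap\{\varphi\le1\}$ is an intersection of two convex sets, hence convex.

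The main (very mild) obstacle is purely bookkeeping: one must be careful with the exponent convention, since the symbol $\x_i^{2/k_i}$ for $k_i=2$ is \emph{defined} to be $|\x_i|$, not $\x_i$, and it is only after intersecting with $\Ss_0=\bigcap_{k_i\text{ even}}\{\x_i\ge0\}$ that this equals the linear function $\x_i$. Once that identification is made, there is genuinely nothing left — convexity of $\{\varphi\le1\}\cap\Ss_0$ follows from the standard facts that sublevel sets of convex functions are convex and intersections of convex sets are convex (cf.\ \cite[\S11]{ber1}). I do not expect to need anything beyond elementary convexity; in particular the hypothesis $(k_1,\dots,k_n)\in\{1,2\}^n$ is exactly what forces each nonlinear exponent to be $2$ (strictly convex) and each linear exponent to land on a single half-space, so no higher-degree convexity subtleties arise.
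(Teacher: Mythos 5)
Your proof is correct and follows essentially the same route as the paper's: both identify $\sstar{k_1,\dots,k_n}$ as the intersection of the convex set $\Ss_0$ with a sublevel set of a convex function. The only cosmetic difference is that the paper verifies convexity of $\sum_i\x_i^{2/k_i}$ directly from the definition (triangle inequality for the $|\x_i|$ terms and the elementary inequality $(\lambda x+(1-\lambda)y)^2\le\lambda x^2+(1-\lambda)y^2$ for the squared terms), whereas you first restrict to $\Ss_0$ so the absolute values become linear and then invoke the Hessian.
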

\begin{proof}
Let $\Ff:=\{i=1,\ldots,n:\ k_i=2\}$ and denote $\Ss_0:=\bigcap_{i\in\Ff}\{\x_i\geq0\}$, which is a convex set. As $k_i\in\{1,2\}$, also $2/k_i\in\{1,2\}$. Pick two points $x:=(x_1,\dots,x_n)$ and $y:=(y_1,\dots,y_n)$ in $\{\x_1^{2/k_1}+\cdots+\x_n^{2/k_n}\leq1\}$. We claim: \em $\lambda x+(1-\lambda)y\in\sstar{k_1,\ldots,k_n}$ for each $\lambda\in[0,1]$\em. 

Fix $i\in\{1,\dots,n\}$. If $k_i=2$, then
\begin{multline*}
(\lambda x_i+(1-\lambda) y_i)^{2/k_i}=|\lambda x_i+(1-\lambda) y_i|\le \lambda|x_i|+(1-\lambda)|y_i|=\lambda x_i^{2/k_i} +(1-\lambda) y_i^{2/k_i}.
\end{multline*}
If $k_i=1$, then
\begin{multline*}
(\lambda x_i+(1-\lambda) y_i)^{2/k_i}=(\lambda x_i+(1-\lambda) y_i)^2=\lambda^2x_i^2+(1-\lambda)^2 y_i^2+2\lambda(1-\lambda)x_iy_i\\
\le \lambda^2x_i^2+(1-\lambda)^2y_i^2+\lambda(1-\lambda)(x_i^2+y_i^2)=\lambda x_i^2 + (1-\lambda) y_i^2=\lambda x_i^{2/k_i}+(1-\lambda) y_i^{2/k_i}.
\end{multline*}
Consequently,
$$
\sum_{i=1}^n (\lambda x_i+(1-\lambda) y_i)^{2/k_i}\le \lambda \sum_{i=1}^n x_i^{2/k_i}+(1-\lambda)\sum_{i=1}^n y_i^{2/k_i}\le 1.
$$
Thus, $\sstar{k_1,\ldots,k_n}$ is convex as it is an intersection of two convex sets.
\end{proof}

\begin{cor}\label{ss}
Let $k_1,\ldots,k_n$ be positive integers. Then the spherical star $\sstar{k_1,\ldots,k_n}$ is an $n$-brick.
\end{cor}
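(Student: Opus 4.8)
The plan is to exhibit directly a homotopy $H$ satisfying the three conditions of Definition \ref{mbrickd}, which shows that $\Ss:=\sstar{k_1,\dots,k_n}$ is an $n$-brick. Observe that neither Lemma \ref{propbrick} nor Lemma \ref{propbrick2} applies here: $\Ss$ need not be convex (for instance $\sstar{4,1}$ is not), and the map $f:\R^n\to\R^n$, $(x_1,\dots,x_n)\mapsto(x_1^{k_1},\dots,x_n^{k_n})$, whose image is $\Ss$, sends the whole section $\{\x_i=0\}\cap\ol{\Bb}_n$ into $\partial\Ss$ for every $i\in\Ff$, so $f^{-1}(\partial\Ss)$ is not contained in $\partial\ol{\Bb}_n$. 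The only elementary computation needed is the identity $\sum_{i=1}^nf(x)_i^{2/k_i}=\|x\|^2$, which in particular re-proves $f(\ol{\Bb}_n)=\Ss$.

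The first step is to pin down the interior of $\Ss$, namely
$$
\Int(\Ss)=\{\x_1^{2/k_1}+\dots+\x_n^{2/k_n}<1\}\cap\bigcap_{i\in\Ff}\{\x_i>0\}.
$$
The inclusion ``$\supseteq$'' holds because the right-hand side is an open subset of $\Ss$; for ``$\subseteq$'', a point $y\in\Ss$ with $y_i=0$ for some $i\in\Ff$ is a limit of points with negative $i$-th coordinate, which are not in $\Ss$, and if $y\in\Ss$ has $\sum_iy_i^{2/k_i}=1$ then $\sum_i(ty_i)^{2/k_i}=\sum_it^{2/k_i}y_i^{2/k_i}>1$ for every $t>1$, so $ty\notin\Ss$ and $y$ is not interior. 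Now fix an integer $N\ge\max_ik_i$ and a real number $\alpha\in(0,1)$ small enough that $\beta:=\sum_{i\in\Ff}\alpha^{2/k_i}<1$ (possible since each $\alpha^{2/k_i}\to0$ as $\alpha\to0^+$), and set
$$
H_i(t,x):=(1-t)^{k_i}x_i^{k_i}+\delta_i(t),\qquad\delta_i(t):=\begin{cases}0&\text{if }i\notin\Ff,\\\alpha\,t^N&\text{if }i\in\Ff,\end{cases}
$$
for $(t,x)\in[0,1]\times\ol{\Bb}_n$. Each $H_i$ is (the restriction of) an element of $\Cont^0([0,1])[\x_1,\dots,\x_n]$, one has $H(0,\cdot)=f$ so that $H(\{0\}\times\ol{\Bb}_n)=\Ss$, and $H(1,\cdot)$ is the constant point whose $i$-th coordinate equals $\alpha$ for $i\in\Ff$ and $0$ otherwise; this point lies in $\Ss$ because its weighted sum is $\beta\le1$.

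It remains to verify $H(\{t\}\times\ol{\Bb}_n)\subset\Int(\Ss)$ for $t\in(0,1)$ (which, together with the above, also gives $H([0,1]\times\ol{\Bb}_n)\subset\Ss$). For $i\in\Ff$ we have $H_i(t,x)\ge\alpha t^N>0$, so the sign conditions are met; for the weighted sum, using $((1-t)^{k_i}x_i^{k_i})^{2/k_i}=(1-t)^2x_i^2$, the subadditivity $(a+b)^{1/\ell}\le a^{1/\ell}+b^{1/\ell}$ (valid for $a,b\ge0$), and $2N/k_i\ge2$, one obtains
$$
\sum_{i=1}^nH_i(t,x)^{2/k_i}\le(1-t)^2\|x\|^2+t^2\sum_{i\in\Ff}\alpha^{2/k_i}\le(1-t)^2+\beta t^2=1-2t+(1+\beta)t^2,
$$
and this is $<1$ exactly when $0<t<\tfrac{2}{1+\beta}$, hence for all $t\in(0,1)$ since $\beta<1$. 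Therefore $H$ satisfies all the conditions of Definition \ref{mbrickd} and $\Ss$ is an $n$-brick. The two places where I expect the argument to need genuine care are the precise description of $\Int(\Ss)$ (without it condition (iii) cannot even be stated) and the requirement that $\sum_iH_i(t,x)^{2/k_i}<1$ hold \emph{uniformly} in $t\in(0,1)$, not merely for $t$ near $0$: this is exactly why $\alpha$ must be small and $N$ large, so that the outward pushes $\delta_i(t)$ needed to clear the faces $\{\x_i=0\}$, $i\in\Ff$, never drive $H(t,x)$ out of $\Ss$.
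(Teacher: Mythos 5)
Your proof is correct, but it takes a genuinely different route from the paper. The paper never builds the homotopy by hand for a general spherical star: it first shows (Lemma \ref{ss0}) that $\sstar{k_1',\ldots,k_n'}$ with $k_i'\in\{1,2\}$ (matching the parity of $k_i$) is convex, hence an $n$-brick by Lemma \ref{propbrick}, and then transports the brick structure to $\sstar{k_1,\ldots,k_n}$ via the polynomial map $(y_1,\ldots,y_n)\mapsto(y_1^{k_1/k_1'},\ldots,y_n^{k_n/k_n'})$, which is injective on the convex star, invoking Corollary \ref{homeo}. You instead verify Definition \ref{mbrickd} directly: you compute $\Int(\Ss)$ explicitly and exhibit the homotopy $H_i(t,x)=(1-t)^{k_i}x_i^{k_i}+\delta_i(t)$, where the perturbation $\delta_i(t)=\alpha t^N$ (for $i\in\Ff$, with $\alpha$ small and $N\geq\max_ik_i$) clears the faces $\{\x_i=0\}$ while the estimate $\sum_iH_i(t,x)^{2/k_i}\leq(1-t)^2+\beta t^2<1$ keeps the image inside $\Int(\Ss)$ uniformly for $t\in(0,1)$; all the steps (the interior description, the subadditivity of $u\mapsto u^{1/\ell}$, the inequality $t^{2N/k_i}\leq t^2$) check out. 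What the paper's route buys is brevity modulo its toolbox (Lemmas \ref{ss0}, \ref{propbrick} and Corollary \ref{homeo}); what yours buys is a self-contained, fully explicit homotopy of controlled degree that does not pass through any convexity reduction — and your preliminary observation that neither Lemma \ref{propbrick} nor Lemma \ref{propbrick2} applies directly (e.g.\ $\sstar{4,1}$ is not convex, and $f^{-1}(\partial\Ss)$ meets the interior of $\ol{\Bb}_n$ when $\Ff\neq\varnothing$) correctly identifies why some extra work is unavoidable.
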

\begin{proof}
Let $\Ff:=\{i=1,\ldots,n:\ k_i\text{ is even}\}$ and let $\Ss_0:=\bigcap_{i\in\Ff}\{\x_i\geq0\}$. Define
$$
k_i':=\begin{cases}
1&\text{if $k_i$ is odd},\\
2&\text{if $k_i$ is even}.
\end{cases}
$$
We have $\Ff=\{i=1,\ldots,n:\ k_i'=2\}$. By Lemma \ref{ss0} the spherical star $\sstar{k_1',\ldots,k_n'}$ is convex. Consider the polynomial map 
$$
f:\R^n\to\R^n,\ (x_1,\ldots,x_n)\mapsto(x_1^{k_1/k'_1},\ldots,x_n^{k_n/k'_n}).
$$
Then $f|_{\sstar{k_1',\ldots,k_n'}}$ is injective and $f(\sstar{k_1',\ldots,k_n'})=\sstar{k_1,\ldots,k_n}$. By Lemma \ref{homeo} we conclude $\sstar{k_1,\ldots,k_n}$ is an $n$-brick.
\end{proof}

\subsection{Images of revolution}

Let $m,\ell\geq1$ and write $\x':=(\x_1,\ldots,\x_{m-1})$. Denote by $\sph^\ell\subset\R^{\ell+1}$ the sphere of center the origin and radius $1$. Taking advantage of the equality 
\begin{equation}\label{Bb}
\ol{\Bb}_{m+\ell}=\varphi(\ol{\Bb}_m\times\sph^\ell)\quad\text{where $\varphi:\ol{\Bb}_m\times\sph^\ell\to\ol{\Bb}_{m+\ell},\ (x',x_m;u)\mapsto(x',x_mu)$}
\end{equation}
we prove that the revolution of certain polynomial images of $\ol{\Bb}_m$ are polynomial images of $\ol{\Bb}_{m+\ell}$.

\begin{lem}[Images of revolution]\label{revolution}
Let $F_1,\ldots,F_k\in\R[\x',\x_m^2]$ be nonzero polynomials and let $\Ss\subset\R^k$ be the image of $\ol{\Bb}_m$ under the polynomial map $F:=(F_1,\ldots,F_{k-1},\x_mF_k):\R^m\to\R^k$. Denote $\x'':=(\x_m,\ldots,\x_{m+\ell})$ and let $\psi:\Ss\times\sph^\ell\to\R^{k+\ell},\ (x',x_m;u)\mapsto(x',x_mu)$. Then $\Tt:=\psi(\Ss\times\sph^\ell)$ is the image of $\ol{\Bb}_{m+\ell}$ under the polynomial map 
$$
G:=(F_1(\x',\|\x''\|),\ldots,F_{k-1}(\x',\|\x''\|),\x''F_k(\x',\|\x''\|)):\R^{m+\ell}\to\R^{k+\ell}.
$$
\end{lem}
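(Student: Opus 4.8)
The plan is to deduce the statement from the factorization \eqref{Bb} of $\ol{\Bb}_{m+\ell}$ through $\ol{\Bb}_m\times\sph^\ell$, by checking that the square formed by $\varphi$, $G$, $\psi$ and $F\times\id$ commutes. First I would record that $G$ is genuinely a polynomial map: for $i=1,\dots,k-1$ the symbol $F_i(\x',\|\x''\|)$ denotes the polynomial obtained from $F_i\in\R[\x',\x_m^2]$ by substituting $\|\x''\|^2=\x_m^2+\x_{m+1}^2+\cdots+\x_{m+\ell}^2$ for $\x_m^2$, and the last $\ell+1$ components of $G$ are the coordinates of $\x''$ multiplied by the scalar polynomial $F_k(\x',\|\x''\|)$; hence $G\in\R[\x',\x'']^{\,k+\ell}$.

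Next I would compute $G\circ\varphi$ on $\ol{\Bb}_m\times\sph^\ell$, where $\varphi(x',x_m;u)=(x',x_mu)$. Substituting $\x'=x'$ and $\x''=x_mu$ into $G$, and using $\|x_mu\|^2=x_m^2\|u\|^2=x_m^2$ for $u\in\sph^\ell$ together with the fact that each $F_i$ depends on $\x_m$ only through $\x_m^2$, one obtains that the first $k-1$ coordinates of $G(\varphi(x',x_m;u))$ are $F_1(x',x_m),\dots,F_{k-1}(x',x_m)$, while the last $\ell+1$ coordinates are $(x_mu)\,F_k(x',x_m)=u\cdot\bigl(x_mF_k(x',x_m)\bigr)$. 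Writing $F(x',x_m)=(w',w_m)$ with $w'\in\R^{k-1}$ and $w_m:=x_mF_k(x',x_m)\in\R$, this says precisely
$$
G(\varphi(x',x_m;u))=(w',w_mu)=\psi\bigl(F(x',x_m);u\bigr),
$$
that is, $G\circ\varphi=\psi\circ(F\times\id)$ as maps $\ol{\Bb}_m\times\sph^\ell\to\R^{k+\ell}$.

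Finally I would chain images, using \eqref{Bb}, the hypothesis $F(\ol{\Bb}_m)=\Ss$, and the commutativity just established:
$$
G(\ol{\Bb}_{m+\ell})=G\bigl(\varphi(\ol{\Bb}_m\times\sph^\ell)\bigr)=\psi\bigl((F\times\id)(\ol{\Bb}_m\times\sph^\ell)\bigr)=\psi(\Ss\times\sph^\ell)=\Tt,
$$
which is the desired equality. I expect the only delicate point, and the real content of the lemma, to be the bookkeeping in the middle step: it works precisely because the first $k-1$ components of $F$ are even in $\x_m$ (so they are unchanged by the substitution $\x_m^2\mapsto\|\x''\|^2$ after evaluation at $\x''=x_mu$), whereas the last component of $F$ carries exactly one explicit factor $\x_m$, which under $\psi$ becomes the radial coordinate that is spun around by the unit direction $u\in\sph^\ell$. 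Beyond this there is no analytic or geometric obstacle to overcome.
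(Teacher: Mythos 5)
Your proof is correct and follows essentially the same route as the paper: the central computation, namely that $G(x',x_mu)=(F_1(x',x_m),\dots,F_{k-1}(x',x_m),x_mF_k(x',x_m)u)$ for $(x',x_m)\in\ol{\Bb}_m$ and $u\in\sph^\ell$, is identical to the one in the paper's proof. The only difference is organizational: by packaging this as the commutativity $G\circ\varphi=\psi\circ(F\times\id)$ and invoking the surjectivity of $\varphi$ from \eqref{Bb}, you obtain both inclusions at once, whereas the paper proves $\Tt\subset G(\ol{\Bb}_{m+\ell})$ by constructing preimages explicitly, with a case split on whether the last block of coordinates $y^{**}$ vanishes.
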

\begin{proof}
As $F_1,\ldots,F_k\in\R[\x',\x_m^2]$, we have $F_1(\x',\|\x''\|),\ldots,F_k(\x',\|\x''\|)\in\R[\x',\x'']$, so $G$ is a polynomial map. In addition, if $(y^*,y_k)\in\Ss$ where $y^*:=(y_1,\ldots,y_{k-1})$, then $(y^*,-y_k)\in\Ss$.

Let $x:=(x',x_mu)\in\ol{\Bb}_{m+\ell}$ where $(x',x_m)\in\ol{\Bb}_m$ and $u\in\sph^\ell$ (see \eqref{Bb}). As $F_1,\ldots,F_k\in\R[\x',\x_m^2]$,
\begin{multline*}
G(x',x_mu)=(F_1(x',\|x_mu\|),\ldots,F_{k-1}(x',\|x_mu\|),x_muF_k(x',\|x_mu\|))\\
=(F_1(x',x_m),\ldots,F_{k-1}(x',x_m),x_mF_k(x',x_m)u)\in\Tt,
\end{multline*}
so $G(\ol{\Bb}_{m+\ell})\subset\Tt$. Pick $(y^*,y^{**})\in\Tt$ where $y^*:=(y_1,\ldots,y_{k-1})$ and $y^{**}:=(y_k,\ldots,y_{k+\ell})$. If $y^{**}=0$, then $(y^*,0)\in\Ss$ and there exists $(x',x_m)\in\ol{\Bb}_m$ such that $F(x',x_m)=(y^*,0)$. The point $(x',x_m,0,\ldots,0)\in\ol{\Bb}_{m+\ell}$ and 
$$
G(x',x_m,0,\ldots,0)=(F_1(x',x_m),\ldots,F_{k-1}(x',x_m),x_mF_k(x',x_m),0,\ldots,0)=(y^*,0,\ldots,0),
$$
so $(y^*,y^{**})\in G(\ol{\Bb}_{m+\ell})$. Assume next $y^{**}\neq0$. Then $(y^*,\|y^{**}\|)\in\Ss$ and $u:=\frac{y^{**}}{\|y^{**}\|}\in\sph^\ell$. Let $(x',x_m)\in\ol{\Bb}_m$ be such that $F(x',x_m)=(y^*,\|y^{**}\|)$. Then, $(x',x_mu)\in\ol{\Bb}_{m+\ell}$ (see \eqref{Bb}) and 
\begin{multline*}
G(x',x_mu)=(F_1(x',\|x_mu\|),\ldots,F_{k-1}(x',\|x_mu\|),x_muF_k(x',\|x_mu\|))\\
=(F_1(x',x_m),\ldots,F_{k-1}(x',x_m),x_mF_k(x',x_m)u)=(y^*,y^{**})=y.
\end{multline*}
Thus, $\Tt\subset G(\ol{\Bb}_{m+\ell})$, as required.
\end{proof}

\subsection{Bricks with quadratic boundaries}\label{quadratic}

We present next families of semialgebraic sets of $\R^n$ that are $m$-bricks for some $m\geq1$ and whose boundaries are contained in finite unions of hypersurfaces of degrees $\leq2$. To make the exposition smoother we present the main statements here and postpone most of the proofs until Appendix \ref{a2}. A first easy example is the following:

\begin{cor}[Truncated cone]\label{trun}
The truncated cone $\Ss:=\{\x_1^2+\cdots+\x_{n-1}^2\leq\x_n^2,\, a\leq\x_n\leq b\}$ for $a<b$ is an $n$-brick.
\end{cor}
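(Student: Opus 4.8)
The plan is to realize $\Ss$ as a polynomial image of $\ol{\Bb}_n$, after which Lemma \ref{propbrick} finishes the job since $\Ss$ turns out to be convex. First I would reduce to the case $0\le a<b$: if instead $b\le 0$, the linear involution $\x_n\mapsto-\x_n$ is an injective polynomial map carrying $\Ss$ onto the truncated cone obtained by replacing $(a,b)$ with $(-b,-a)$, where now $0\le -b<-a$, so Corollary \ref{homeo} reduces this case to the previous one. Under the assumption $0\le a<b$ we have $\x_n\ge 0$ on $\Ss$, hence $\{\|\x'\|^2\le\x_n^2\}\cap\{\x_n\ge a\}=\{\x_n\ge\|\x'\|\}\cap\{\x_n\ge a\}$; this is the intersection of the convex cone $\{\x_n\ge\|\x'\|\}$ (the epigraph of the Euclidean norm) with two half-spaces, so $\Ss$ is a convex compact $n$-dimensional set (it contains a small ball about $(0,\dots,0,\tfrac{a+b}{2})$), and in particular it is strictly radially convex with respect to any interior point. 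Consequently, by Lemma \ref{propbrick}, it suffices to exhibit a polynomial map $f\colon\R^n\to\R^n$ with $f(\ol{\Bb}_n)=\Ss$.

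To construct such an $f$ I would simply rescale the slices of a cylinder. Write $x=(x',x_n)$ and set $c(x_n):=\tfrac{a+b}{2}+\tfrac{b-a}{2}\,x_n$, so that $c$ maps $[-1,1]$ onto $[a,b]$, and consider the polynomial map
$$
\varphi\colon\R^n\to\R^n,\qquad (x',x_n)\longmapsto\bigl(c(x_n)\,x',\,c(x_n)\bigr).
$$
For a fixed value $x_n=t\in[-1,1]$ one checks that $\varphi(\ol{\Bb}_{n-1}\times\{t\})=\{\|\y'\|\le c(t)\}\times\{c(t)\}$, which is exactly the slice of $\Ss$ at height $c(t)$; letting $t$ range over $[-1,1]$ gives $\varphi(\Cc_n)=\Ss$, where $\Cc_n:=\ol{\Bb}_{n-1}\times[-1,1]$. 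By Lemma \ref{imagebc} there is a polynomial map $f_0\colon\R^n\to\R^n$ with $f_0(\ol{\Bb}_n)=\Cc_n$, and then $f:=\varphi\circ f_0\colon\R^n\to\R^n$ satisfies $f(\ol{\Bb}_n)=\Ss$, which together with Lemma \ref{propbrick} completes the argument.

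There is essentially no hard step in this construction; the one point that needs care is the implicit hypothesis $0\notin(a,b)$. If $a<0<b$, then $\Ss$ is the union of two solid cones meeting only at the origin and, by the tangent-direction argument used for the example set $\Ss$ in the Introduction, it fails to be connected by analytic paths; since every $n$-brick is a polynomial (hence analytic-path-connected) image of $\ol{\Bb}_n$, that degenerate case is genuinely excluded, and the reduction in the first paragraph is needed precisely to cover the remaining range $b\le 0$.
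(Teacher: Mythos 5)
Your main argument is correct and is essentially the paper's: the paper realizes $\Ss$ as the image of the cylinder $\ol{\Bb}_{n-1}\times[a,b]$ under $(x_1,\ldots,x_n)\mapsto(x_1x_n,\ldots,x_{n-1}x_n,x_n)$ and then invokes convexity together with Lemma \ref{propbrick}, which is exactly your slice-rescaling construction up to the affine reparameterization $c$ of the last coordinate (the paper's map handles both sign ranges of $[a,b]$ at once, so your reduction via Corollary \ref{homeo} is not needed there, though it is harmless).

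The justification in your final paragraph, however, is wrong. For $a<0<b$ the double truncated cone \emph{is} connected by analytic paths: it is the image of $\ol{\Bb}_{n-1}\times[a,b]$ under the polynomial map above, hence a polynomial image of $\ol{\Bb}_n$ and therefore connected by analytic paths by \cite[Main Thm.1.4]{f2}; concretely, $t\mapsto(0,\ldots,0,t)$ is an analytic path through the apex, and the tangent-direction obstruction from the Introduction does not apply because the cones $\Cc_i^*$ generated by the two halves both contain the whole $\x_n$-axis (in fact they coincide). What genuinely fails when $a<0<b$ is condition (iii) of Definition \ref{mbrickd}: $\Int(\Ss)$ then has two connected components, each connected set $H(\{t\}\times\ol{\Bb}_n)$ with $t\in(0,1)$ must lie in a single component, yet by continuity of $H$ on the compact set $[0,1]\times\ol{\Bb}_n$ it must, for $t$ close to $0$, contain points near both halves of $\Ss=H(\{0\}\times\ol{\Bb}_n)$ --- a contradiction. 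So the case $a<0<b$ is indeed excluded (the paper signals this by calling $\Ss$ convex), but because the homotopy condition fails, not for lack of connectedness by analytic paths.
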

\begin{proof}
It is enough to observe that the convex semialgebraic set $\Ss$ is the image of the cylinder $\ol{\Bb}_{n-1}\times[a,b]$ under the polynomial map $f:\R^n\to\R^n,\ (x_1,\ldots,x_n)\mapsto(x_1x_n,\ldots,x_{n-1}x_n,x_n)$.
\end{proof}

\subsubsection{Parabolic segments}

Consider the following two types of \em parabolic segments\em:

\begin{itemize}
\item[(i)] $\Pp_n:=\{0\leq x_n\leq2(1-\|x'\|^2)\}$.
\item[(ii)] $\pseg{a}:=\{\x-\y^2\geq0,\ \sqrt{a}\y\geq\x\}\subset\R^2$ for each $a>0$ (Figure \ref{fig:parabolic}).
\end{itemize}

\begin{center}
\begin{figure}[ht]
\begin{tikzpicture}[scale=1]
\begin{axis}[axis x line=center,
axis y line=center, axis equal, no markers, xtick=\empty, ytick=\empty,
xlabel={$x$},
ylabel={$y$},
xlabel style={below right},
ylabel style={above left},
xmin=-0.2,
xmax=1.2,
ymin=-0.2,
ymax=1.2]
\addplot+[domain=0:2,samples=200,name path=A,black] {sqrt(x)};
\addplot+[domain=0:1,samples=100,name path=C,black] {sqrt(x)};
\addplot+[domain=0:1,name path=D,black] {x};
\addplot[domain=0:1,gray!60,opacity=0.5] fill between[of=C and D];
\node at (axis cs:1,-0.05){$a$};
\node at (axis cs:-0.1,1){$\sqrt{a}$};
\node at (axis cs:0.3,0.43){$\pseg{a}$};
\draw[dashed] (axis cs:1,0)--(axis cs:1,1);
\draw[dashed] (axis cs:0,1)--(axis cs:1,1);
\end{axis}
\end{tikzpicture}
\caption{Parabolic segment $\pseg{a}$\label{fig:parabolic}}
\end{figure}
\end{center}

\begin{lem}[Parabolic segments 1]\label{parseg1}
The parabolic segment $\Pp_n$ is an $n$-brick.
\end{lem}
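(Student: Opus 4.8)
The plan is to realize $\Pp_n$ directly as a polynomial image of $\ol{\Bb}_n$ and then quote the remark following Lemma~\ref{propbrick}: a convex $n$-dimensional semialgebraic subset of $\R^n$ is an $n$-brick as soon as it is a polynomial image of $\ol{\Bb}_n$. Thus the verification splits into two independent, elementary checks.

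\emph{Convexity.} Writing $x=(x',x_n)$ with $x'=(x_1,\dots,x_{n-1})$, the inequalities $x_n\geq0$ and $x_n\leq2(1-\|x'\|^2)$ already force $\|x'\|^2\leq1$, so
$$
\Pp_n=\{x_n\geq0\}\cap\{x_n+2\|x'\|^2\leq2\},
$$
which is the intersection of a half-space with a sublevel set of the convex function $x\mapsto x_n+2\|x'\|^2$; hence $\Pp_n$ is convex. It is plainly compact, and it has nonempty interior (for instance a neighbourhood of $(0,\dots,0,1)$), so it is $n$-dimensional.

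\emph{Polynomial parametrization.} The natural candidate is
$$
f:\R^n\to\R^n,\qquad (x',x_n)\mapsto\bigl(x',\,2(1-\|x'\|^2-x_n^2)\bigr).
$$
On $\ol{\Bb}_n$ one has $0\leq1-\|x'\|^2-x_n^2\leq1-\|x'\|^2$, so $f(\ol{\Bb}_n)\subset\Pp_n$. For the reverse inclusion, given $(x',y)\in\Pp_n$ put $x_n:=\sqrt{1-\|x'\|^2-y/2}$ (the radicand is $\geq0$ because $y\leq2(1-\|x'\|^2)$); then $\|x'\|^2+x_n^2=1-y/2\leq1$, so $(x',x_n)\in\ol{\Bb}_n$ and $f(x',x_n)=(x',y)$. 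Equivalently, slicing $\ol{\Bb}_n$ over a fixed $x'$ with $\|x'\|\leq1$, the fibre $\{x'\}\times[-\sqrt{1-\|x'\|^2},\sqrt{1-\|x'\|^2}]$ is carried by $t\mapsto2(1-\|x'\|^2-t^2)$ onto exactly $[0,2(1-\|x'\|^2)]$.

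Combining the two checks and applying the remark after Lemma~\ref{propbrick} (or Lemma~\ref{propbrick} itself, since a convex set is strictly radially convex with respect to any interior point), we conclude that $\Pp_n$ is an $n$-brick. I do not expect any genuine obstacle here: the one point that needs a moment's care is the scaling constant $2$ in the last component of $f$, chosen precisely so that, fibrewise over $\ol{\Bb}_{n-1}$, the image of the symmetric interval under $t\mapsto 2(1-\|x'\|^2-t^2)$ matches the full parabolic cap $[0,2(1-\|x'\|^2)]$ rather than only a proper subinterval of it.
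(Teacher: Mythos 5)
Your proof is correct, and it takes a slightly different (and in fact more economical) route than the paper. The paper also reduces the lemma to exhibiting $\Pp_n$ as a polynomial image of a compact set and then invoking convexity, but it parametrizes $\Pp_n$ from the cylinder $\ol{\Bb}_{n-1}\times[-1,1]$ via $(x',x_n)\mapsto(x',(1-\|x'\|^2)(x_n+1))$, which then requires quoting Lemma \ref{imagebc} (the cylinder itself is a polynomial image of $\ol{\Bb}_n$, via a comparatively involved degree-$3$ construction). You instead map $\ol{\Bb}_n$ directly onto $\Pp_n$ by $(x',x_n)\mapsto(x',2(1-\|x'\|^2-x_n^2))$, exploiting the fact that over each $x'\in\ol{\Bb}_{n-1}$ the fibre of the ball is the symmetric interval $[-\sqrt{1-\|x'\|^2},\sqrt{1-\|x'\|^2}]$, whose image under $t\mapsto2(1-\|x'\|^2-t^2)$ is exactly $[0,2(1-\|x'\|^2)]$. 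Your fibrewise surjectivity check is complete, your convexity argument (half-space intersected with a sublevel set of $x\mapsto x_n+2\|x'\|^2$) is valid, and the appeal to Lemma \ref{propbrick} is exactly what the paper intends. What your approach buys is self-containedness and lower degree; what the paper's buys is uniformity with the neighbouring constructions in \S\ref{quadratic}, which systematically factor through the cylinder.
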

\begin{proof}
The parabolic segment $\Pp_n$, which is a convex set, is the image of the cylinder $\ol{\Bb}_{n-1}\times[-1,1]$ under the polynomial map $f:\R^{n-1}\times\R,\ (x',x_n)\mapsto(x',(1-\|x'\|^2)(x_n+1))$.
\end{proof}

\begin{lem}[Parabolic segments 2]\label{parab}
The parabolic segment $\pseg{a}$ is a $2$-brick.
\end{lem}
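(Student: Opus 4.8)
The plan is to realize $\pseg{a}$ as the image of the standard parabolic segment $\Pp_2=\{0\le\x_2\le2(1-\x_1^2)\}$ under an affine bijection of $\R^2$, and then invoke Lemma \ref{parseg1} together with Corollary \ref{homeo}. Geometrically, both $\Pp_2$ and $\pseg{a}$ are parabolic segments (the region enclosed by an arc of a parabola and the corresponding chord), and any two such are affinely equivalent, so it is enough to write down the equivalence explicitly.

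Concretely, I would consider the affine map
$$
T:\R^2\to\R^2,\quad (x_1,x_2)\longmapsto\Big(\tfrac a2(1+x_1)-\tfrac a8x_2,\ \tfrac{\sqrt a}{2}(1+x_1)\Big),
$$
whose linear part has determinant $\tfrac{a\sqrt a}{16}\neq0$, so $T$ is a bijection of $\R^2$. Writing $(X,Y):=T(x_1,x_2)$ one has $\sqrt a\,Y=\tfrac a2(1+x_1)$, hence $X=\sqrt a\,Y-\tfrac a8x_2$, so that $X\le\sqrt a\,Y\iff x_2\ge0$; and $Y^2=\tfrac a4(1+x_1)^2$, so after clearing denominators $X\ge Y^2\iff x_2\le4(1+x_1)-2(1+x_1)^2=2(1-x_1^2)$. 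Since the condition $0\le x_2\le2(1-x_1^2)$ already forces $x_1\in[-1,1]$, these equivalences give $T^{-1}(\pseg{a})=\Pp_2$, i.e.\ $T(\Pp_2)=\pseg{a}$. Now Lemma \ref{parseg1} says $\Pp_2$ is a $2$-brick, and $T$ is a polynomial (indeed affine) map whose restriction to $\Pp_2$ is injective, so Corollary \ref{homeo} yields that $\pseg{a}=T(\Pp_2)$ is a $2$-brick.

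There is no genuine obstacle here: the whole content is spotting the linear change of coordinates that straightens the chord, after which the verification is the elementary computation above. As an alternative that avoids Corollary \ref{homeo}, one can give a polynomial map directly onto $\ol{\Bb}_2$: the map $f:\R^2\to\R^2,\ (u,v)\mapsto\big(\tfrac a4((1+u)^2+v^2),\tfrac{\sqrt a}{2}(1+u)\big)$ satisfies $f(\ol{\Bb}_2)=\pseg{a}$, because for $u^2+v^2\le1$ one gets $Y:=\tfrac{\sqrt a}{2}(1+u)\in[0,\sqrt a]$ and $X:=Y^2+\tfrac a4v^2$ sweeps out $[Y^2,\sqrt a\,Y]$ as $v^2$ ranges over $[0,1-u^2]$ (using $Y^2+\tfrac a4(1-u^2)=\sqrt a\,Y$); since $\pseg{a}$ is convex and $2$-dimensional, Lemma \ref{propbrick} then applies. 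Either route finishes the proof.
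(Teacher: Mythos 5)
Your argument is correct, and it takes a genuinely different route from the paper. The paper proves Lemma \ref{parab} by exhibiting $\pseg{a}$ as the image of the triangle $\ptri{a}:=\{0\leq\y\leq\x\leq\sqrt{a}\}$ under the quadratic map $\eta(x,y)=(xy,y)$, which collapses the edge $\{\y=0\}$ to the vertex of the parabola; the triangle is handled by Lemma \ref{tetra}, and convexity of $\pseg{a}$ plus Lemma \ref{propbrick} then gives the brick property. You instead observe that $\pseg{a}$ is an \emph{affine} image of the standard parabolic segment $\Pp_2$ of Lemma \ref{parseg1}, and your explicit map $T$ checks out (the computations $\sqrt{a}Y-X=\tfrac{a}{8}x_2$ and $X-Y^2=\tfrac{a}{8}(2(1-x_1^2)-x_2)$ are correct), so Corollary \ref{homeo} finishes; this has the merit of showing that Lemma \ref{parab} is essentially a corollary of Lemma \ref{parseg1}, i.e.\ that the two parabolic-segment lemmas are not independent. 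Your alternative map $f(u,v)=\bigl(\tfrac{a}{4}((1+u)^2+v^2),\tfrac{\sqrt{a}}{2}(1+u)\bigr)$ is also verified correctly (here $X-Y^2=\tfrac{a}{4}v^2$ and $\sqrt{a}Y-X=\tfrac{a}{4}(1-u^2-v^2)$, so $f(\ol{\Bb}_2)=\pseg{a}$ is immediate in both directions), and combined with convexity of $\pseg{a}$ and Lemma \ref{propbrick} it gives a one-step proof comparable in complexity to the paper's; note only the slip of wording: $f$ is a map \emph{from} $\ol{\Bb}_2$ onto $\pseg{a}$, not ``onto $\ol{\Bb}_2$''. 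Either of your routes is a valid replacement for the paper's proof.
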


\subsubsection{Elliptic sectors and segments}

For each $\alpha$ with $0<\alpha\leq\pi$ consider the following semialgebraic subsets of $\R^2$ (Figures \ref{fig:elliptic}) that we describe using polar coordinates, that is, $(x,y)=(\rho\cos(\theta),\rho\sin(\theta))\equiv(\rho,\theta)$ where $(\rho,\theta)\in[0,+\infty)\times(-\pi,\pi]$:
\begin{itemize}
\item The triangle $\tri{\alpha}:=\{-\alpha\leq\theta\leq\alpha,\ 0\le\rho\cos(\theta)\le1\}$.
\item The elliptic sector $\sector{\alpha}:=\{0\leq\rho\leq1,-\alpha\leq\theta\leq\alpha\}$.
\item The elliptic segment $\seg{\alpha}:=\{-\alpha\leq\theta\leq\alpha,\ \cos(\alpha)\le\rho\cos(\theta)\leq\cos(\theta)\}$.
\end{itemize}

\begin{center}
\begin{figure}[ht]
\begin{minipage}{0.328\textwidth}
\begin{center}
\begin{tikzpicture}[scale=1]
\begin{axis}[unit vector ratio=1 1 1, axis x line=middle,
axis y line=middle, no markers, xtick=\empty, ytick=\empty,
xlabel={$x$},
ylabel={$y$},
xlabel style={below right},
ylabel style={above left},
xmin=-0.2,
xmax=1.2,
ymin=-1.2,
ymax=1.2]
\addplot+[domain=0:1,samples=100,name path=A,black] {-sqrt(1-x^2)};
\addplot+[domain=0:1,samples=100,name path=B,black] {sqrt(1-x^2)};
\addplot+[domain=0.7:1,samples=100,name path=C,black] {-sqrt(1-x^2)};
\addplot+[domain=0.7:1,samples=100,name path=D,black] {sqrt(1-x^2)};
\node at (axis cs:0.7,0.3){$\tri{\alpha}$};
\draw[name path=E] (axis cs:1,1.02)--(axis cs:0,0);
\draw[name path=F] (axis cs:1,-1.02)--(axis cs:0,0);
\draw[] (axis cs:1,-1.02)--(axis cs:1,1.02);
\addplot[domain=1:2,gray!60,opacity=0.5] fill between[of=E and F];
\addplot+[domain=0.21:0.3,samples=50,black] {sqrt(0.09-x^2)};
\node at (axis cs:0.35,0.15){$\alpha$};
\end{axis}
\end{tikzpicture}
\end{center}
\end{minipage}
\hfil
\begin{minipage}{0.328\textwidth}
\begin{center}
\begin{tikzpicture}[scale=1]
\begin{axis}[unit vector ratio=1 1 1, axis x line=middle,
axis y line=middle, no markers, xtick=\empty, ytick=\empty,
xlabel={$x$},
ylabel={$y$},
xlabel style={below right},
ylabel style={above left},
xmin=-0.2,
xmax=1.2,
ymin=-1.2,
ymax=1.2]
\addplot+[domain=0:1,samples=100,name path=A,black] {-sqrt(1-x^2)};
\addplot+[domain=0:1,samples=100,name path=B,black] {sqrt(1-x^2)};
\addplot+[domain=0.7:1,samples=100,name path=C,black] {-sqrt(1-x^2)};
\addplot+[domain=0.7:1,samples=100,name path=D,black] {sqrt(1-x^2)};
\node at (axis cs:0.7,0.3){$\sector{\alpha}$};
\draw[name path=E] (axis cs:0.7,0.714)--(axis cs:0,0);
\draw[name path=F] (axis cs:0.7,-0.714)--(axis cs:0,0);
\addplot[domain=1:2,gray!60,opacity=0.5] fill between[of=C and D];
\addplot[domain=1:2,gray!60,opacity=0.5] fill between[of=E and F];
\addplot+[domain=0.21:0.3,samples=50,black] {sqrt(0.09-x^2)};
\node at (axis cs:0.35,0.15){$\alpha$};
\end{axis}
\end{tikzpicture}
\end{center}
\end{minipage}
\hfil
\begin{minipage}{0.328\textwidth}
\begin{center}
\begin{tikzpicture}[scale=1]
\begin{axis}[unit vector ratio=1 1 1, axis x line=center,
axis y line=center, no markers, xtick=\empty, ytick=\empty,
xlabel={$x$},
ylabel={$y$},
xlabel style={below right},
ylabel style={above left},
xmin=-0.2,
xmax=1.2,
ymin=-1.2,
ymax=1.2]
\addplot+[domain=0:1,samples=100,name path=A,black] {-sqrt(1-x^2)};
\addplot+[domain=0:1,samples=100,name path=B,black] {sqrt(1-x^2)};
\addplot+[domain=0.7:1,samples=100,name path=C,black] {-sqrt(1-x^2)};
\addplot+[domain=0.7:1,samples=100,name path=D,black] {sqrt(1-x^2)};
 
\node at (axis cs:0.85,0.2){$\seg{\alpha}$};
\draw[name path=E] (axis cs:0.7,0.714)--(axis cs:0,0);
\draw[name path=F] (axis cs:0.7,-0.714)--(axis cs:0,0);
\draw (axis cs:0.7,-0.714)--(axis cs:0.7,0.714);
\addplot[domain=1:2,gray!60,opacity=0.5] fill between[of=C and D];
\addplot+[domain=0.21:0.3,samples=50,black] {sqrt(0.09-x^2)};
\node at (axis cs:0.35,0.15){$\alpha$};
\end{axis}
\end{tikzpicture}
\end{center}
\end{minipage}
\caption{Triangle $\tri{\alpha}$, elliptic sector $\sector{a}$ and elliptic segment $\seg{a}$\label{fig:elliptic}}
\end{figure}
\end{center}

The previous semialgebraic sets are generalized to the $n$-dimensional case by means of Lemma \ref{revolution} (for $m=k=2$ and $\ell=n-2$). Define
\begin{itemize}
\item The $n$-dimensional cone $\tri{\alpha,n}:=\{(a,bu):\ (a,b)\in\tri{\alpha},\ u\in\sph^{n-2}\}$.
\item The $n$-dimensional \em elliptic sector \em $\sector{\alpha,n}:=\{(a,bu):\ (a,b)\in\sector{\alpha},\ u\in\sph^{n-2}\}$.
\item The $n$-dimensional \em elliptic segment \em $\seg{\alpha,n}:=\{(a,bu):\ (a,b)\in\seg{\alpha},\ u\in\sph^{n-2}\}$.
\end{itemize}

\begin{thm}\label{circdef}
Each elliptic sector $\sector{\alpha,n}$ and each elliptic segment $\seg{\alpha,n}$ (where $0<\alpha\leq\pi$) are $n$-bricks.
\end{thm}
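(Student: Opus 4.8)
The plan is to reduce to the planar case by means of Lemma \ref{revolution}, and then to upgrade ``polynomial image of $\ol{\Bb}_n$'' to ``$n$-brick'' by means of Lemma \ref{propbrick}. Both $\sector{\alpha}$ and $\seg{\alpha}$ are symmetric under $\x_2\mapsto-\x_2$, so the natural move is to construct polynomial maps $F:\R^2\to\R^2$ of the revolution-compatible shape
$$
F=\bigl(F_1(\x_1,\x_2^2),\ \x_2\,F_2(\x_1,\x_2^2)\bigr),\qquad F_1,F_2\in\R[\x_1,\x_2^2],
$$
with $F(\ol{\Bb}_2)=\sector{\alpha}$, respectively $F(\ol{\Bb}_2)=\seg{\alpha}$. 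Once these are available, Lemma \ref{revolution} (applied with $m=k=2$ and $\ell=n-2$, with the planar set in the role of $\Ss$) produces a polynomial map $G:\R^n\to\R^n$ whose image is $\psi(\sector{\alpha}\times\sph^{n-2})=\sector{\alpha,n}$, respectively $\psi(\seg{\alpha}\times\sph^{n-2})=\seg{\alpha,n}$; these equalities are exactly the definitions of the $n$-dimensional elliptic sector and segment recalled just before the statement.

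The construction of the planar maps is where the real work lies, and it is postponed to Appendix \ref{a2}. In polar coordinates a point of $\sector{\alpha}$ is $\rho\,(\cos\theta,\sin\theta)$ with $\rho\in[0,1]$ and $|\theta|\le\alpha$, and on $\sph^1$ the constraint $|\theta|\le\alpha$ is exactly $\x_1\ge\cos(\alpha)$ (for $0<\alpha\le\pi$); so one first parametrizes polynomially the arc $\{\x_1^2+\x_2^2=1,\ \x_1\ge\cos(\alpha)\}$ and then dilates it radially over the interval $[0,1]$ of admissible radii, the radial dilation being produced by auxiliary univariate polynomials in the spirit of the functions appearing in the proof of Lemma \ref{imagebc}; the parity bookkeeping in $\x_2$ is precisely what keeps the first component in $\R[\x_1,\x_2^2]$ and the second in $\x_2\R[\x_1,\x_2^2]$. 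For the elliptic segment one argues the same way, replacing the full radial interval $[0,1]$ by the radii cut out by the chord $\x_1=\cos(\alpha)$; note that $\seg{\alpha}$ is a circular segment $\ol{\Bb}_2\cap\{\x_1\ge\cos(\alpha)\}$, hence convex (in particular bounded, as it must be). I expect the main difficulties to be (a) keeping the two components in the prescribed subrings while controlling their degrees and (b) checking that the image equals the sector (respectively the segment) \emph{exactly}, nothing missing and nothing extra.

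It remains to see that $\sector{\alpha,n}$ and $\seg{\alpha,n}$ are $n$-bricks and not merely polynomial images of $\ol{\Bb}_n$, which I would deduce from Lemma \ref{propbrick}. Since $\seg{\alpha,n}$ is the body of revolution about the $\x_1$-axis of the convex circular segment $\seg{\alpha}$, it is itself convex, hence strictly radially convex about any of its interior points. For $\sector{\alpha,n}$ I would verify strict radial convexity with respect to a point $p=(c,0,\ldots,0)$ on the axis with $c\in(0,1)$: every ray from $p$ lies in a $2$-plane through the $\x_1$-axis, and the trace of $\sector{\alpha,n}$ on such a plane is a copy of $\sector{\alpha}$, so one is reduced to the corresponding elementary planar statement. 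For $\sector{\alpha}$ this follows from a monotonicity argument: along a ray from $(c,0)$ the polar angle is monotone (hence stays $\le\alpha$), and the distance to the origin is a ``U-shaped'' function with positive minimum, so the ray meets $\sector{\alpha}$ in a single segment whose relative interior avoids both the bounding arc and the apex and therefore lies in $\Int(\sector{\alpha})$. Lemma \ref{propbrick} then yields that $\sector{\alpha,n}$ and $\seg{\alpha,n}$ are $n$-bricks. (Alternatively, if the planar maps are arranged so that $F^{-1}(\partial\Ss)\cap\ol{\Bb}_2\subset\partial\ol{\Bb}_2$, a short computation shows that the revolution map $G$ of Lemma \ref{revolution} inherits the analogous property, and then Lemma \ref{propbrick2} applies directly.)
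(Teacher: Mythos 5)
Your reduction scaffolding is sound and matches the paper: Lemma \ref{revolution} (with $m=k=2$, $\ell=n-2$) reduces everything to the planar sets $\sector{\alpha}$ and $\seg{\alpha}$, and Lemma \ref{propbrick} upgrades ``polynomial image of $\ol{\Bb}_n$'' to ``$n$-brick''. (Your care with strict radial convexity of the sector about an interior axis point is actually warranted: for $\alpha>\pi/2$ the sector is not convex, and the paper's own proof glosses over this.) The gap is that the planar construction, which is the entire content of the theorem, is never carried out, and the sketch you give for it cannot work as stated. Your first step is to ``parametrize polynomially the arc $\{\x_1^2+\x_2^2=1,\ \x_1\ge\cos(\alpha)\}$''; this is impossible. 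If $\gamma=(\gamma_1,\gamma_2)$ is a polynomial path with image in $\sph^1$, then $\gamma_1^2+\gamma_2^2=1$ identically, so $(\gamma_1+\sqrt{-1}\,\gamma_2)(\gamma_1-\sqrt{-1}\,\gamma_2)=1$ in $\C[\t]$ and both factors are constant — the same argument the paper uses in \S\ref{am} to show $\sph^n$ is not a polynomial image of a ball. So the bounding arc cannot be produced first and then dilated; it has to arise inside the image of a genuinely two-dimensional map (in the paper it is the image of the level set $\{\rho=1\}$ of the triangle $\tri{\alpha}$ under the radial cubic $\rho\mapsto\tfrac{3\rho-\rho^3}{2}$).

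The second missing ingredient is any mechanism for large amplitudes. A direct radial construction of the above type only covers small sectors: the paper's $\phi_0$ maps $\tri{\alpha}$ onto $\sector{\alpha}$ only for $\alpha\le\arcsin\sqrt{2/3}$, and for $\alpha\ge\pi/2$ the triangle $\tri{\alpha}$ is not even bounded, so no variant of ``dilate over $[0,1]$'' starting from it can reach $\alpha$ near $\pi$. The paper's key ideas, absent from your proposal, are the angle-doubling map $\phi_1(z)=z^2$ (Lemma \ref{circ1}), which turns $\sector{\alpha}$ into $\sector{2\alpha}$ and lets one bootstrap from $\beta=\alpha/4$ up to any $\alpha\le\pi$, and the map $\phi_2$ of Lemma \ref{circ2}, which sends a sector onto a segment by interpolating between the midpoint of the chord and the arc. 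Without an explicit surjection $\ol{\Bb}_2\to\sector{\alpha}$ (resp.\ $\seg{\alpha}$) for the full range $0<\alpha\le\pi$, the proof is not complete.
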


\subsubsection{Hyperbolic sectors and segments}

Consider the hyperbola $\Hh:=\{\x^2-\y^2=1\}$ and recall the relation $\cos(2\theta)=\cos^2(\theta)-\sin^2(\theta)$ and its consequences:
\begin{align*}
&\Big(\frac{\cos(\theta)}{\sqrt{\cos(2\theta)}}\Big)^2-\Big(\frac{\sin(\theta)}{\sqrt{\cos(2\theta)}}\Big)^2=1,\\
&x^2-y^2=(\rho\cos(\theta))^2-(\rho\sin(\theta))^2=\rho^2\cos(2\theta).
\end{align*}
Define for each $0<\alpha<\frac{\pi}{4}$ the following semialgebraic sets (Figures \ref{fig:hyperbolic}), which we describe again using polar coordinates.
\begin{itemize}
\item The triangle $\htri{\alpha}:=\left\{-\alpha\leq\theta\leq\alpha,\ \rho\cos(\theta)\leq\frac{\cos(\alpha)}{\sqrt{\cos(2\alpha)}}\right\}$.
\item The hyperbolic sector $\hsector{\alpha}:=\{-\alpha\leq\theta\leq\alpha,\ \rho^2\cos(2\theta)\le 1\}$.
\item The hyperbolic segment $\hseg{\alpha}:=\left\{-\alpha\leq\theta\leq\alpha,\ \frac{\cos(\theta)}{\sqrt{\cos(2\theta)}}\leq\rho\cos(\theta)\leq\frac{\cos(\alpha)}{\sqrt{\cos(2\alpha)}}\right\}$.
\end{itemize}
\begin{center}
\begin{figure}[ht]
\begin{minipage}{0.328\textwidth}
\begin{center}
\begin{tikzpicture}[scale=1]
\begin{axis}[unit vector ratio=1 1 1, axis x line=center,
axis y line=center, no markers, xtick=\empty, ytick=\empty,
xlabel={$x$},
ylabel={$y$},
xlabel style={below right},
ylabel style={above left},
xmin=-0.2,
xmax=2,
ymin=-2,
ymax=2]
\addplot+[domain=1:2,samples=100,name path=A,black] {-sqrt(x^2-1)};
\addplot+[domain=1:2,samples=100,name path=B,black] {sqrt(x^2-1)};
\addplot+[domain=1:1.5,samples=100,name path=C,black] {-sqrt(x^2-1)};
\addplot+[domain=1:1.5,samples=100,name path=D,black] {sqrt(x^2-1)};
\node at (axis cs:0.8,0.3){$\htri{\alpha}$};
\draw[name path=E] (axis cs:0,0)--(axis cs:1.5,1.12);
\draw[name path=F] (axis cs:0,0)--(axis cs:1.5,-1.12);
\draw[] (axis cs:1.5,1.12)--(axis cs:1.5,-1.12);
\addplot[domain=1:2,gray!60,opacity=0.5] fill between[of=E and F];
\addplot+[domain=0.25:0.3,samples=50,black] {sqrt(0.09-x^2)};
\node at (axis cs:0.4,0.15){$\alpha$};
\end{axis}
\end{tikzpicture}
\end{center}
\end{minipage}
\hfil
\begin{minipage}{0.328\textwidth}
\begin{center}
\begin{tikzpicture}[scale=1]
\begin{axis}[unit vector ratio=1 1 1, axis x line=center,
axis y line=center, no markers, xtick=\empty, ytick=\empty,
xlabel={$x$},
ylabel={$y$},
xlabel style={below right},
ylabel style={above left},
xmin=-0.2,
xmax=2,
ymin=-2,
ymax=2]
\addplot+[domain=1:2,samples=100,name path=A,black] {-sqrt(x^2-1)};
\addplot+[domain=1:2,samples=100,name path=B,black] {sqrt(x^2-1)};
\addplot+[domain=1:1.5,samples=100,name path=C,black] {-sqrt(x^2-1)};
\addplot+[domain=1:1.5,samples=100,name path=D,black] {sqrt(x^2-1)};

\node at (axis cs:0.8,0.3){$\hsector{\alpha}$};
\draw[name path=E] (axis cs:0,0)--(axis cs:1.5,1.12);
\draw[name path=F] (axis cs:0,0)--(axis cs:1.5,-1.12);
\addplot[domain=1:2,gray!60,opacity=0.5] fill between[of=F and C];
\addplot[domain=1:2,gray!60,opacity=0.5] fill between[of=E and D];
\addplot+[domain=0.25:0.3,samples=50,black] {sqrt(0.09-x^2)};
\node at (axis cs:0.4,0.15){$\alpha$};
\end{axis}
\end{tikzpicture}
\end{center}
\end{minipage}
\hfil
\begin{minipage}{0.328\textwidth}
\begin{center}
\begin{tikzpicture}[scale=1]
\begin{axis}[unit vector ratio=1 1 1, axis x line=center,
axis y line=center, no markers, xtick=\empty, ytick=\empty,
xlabel={$x$},
ylabel={$y$},
xlabel style={below right},
ylabel style={above left},
xmin=-0.2,
xmax=2,
ymin=-2,
ymax=2]
\addplot+[domain=1:2,samples=100,name path=A,black] {-sqrt(x^2-1)};
\addplot+[domain=1:2,samples=100,name path=B,black] {sqrt(x^2-1)};
\addplot+[domain=1:1.5,samples=100,name path=C,black] {-sqrt(x^2-1)};
\addplot+[domain=1:1.5,samples=100,name path=D,black] {sqrt(x^2-1)};
\addplot[domain=1:2,gray!60,opacity=0.5] fill between[of=C and D];
\node at (axis cs:1.3,0.3){$\hseg{\alpha}$};
\draw[] (axis cs:0,0)--(axis cs:1.5,1.12);
\draw[] (axis cs:0,0)--(axis cs:1.5,-1.12);
\draw[] (axis cs:1.5,1.12)--(axis cs:1.5,-1.12);
\addplot+[domain=0.25:0.3,samples=50,black] {sqrt(0.09-x^2)};
\node at (axis cs:0.4,0.15){$\alpha$};
\end{axis}
\end{tikzpicture}
\end{center}
\end{minipage}
\caption{Triangle $\htri{\alpha}$, hyperbolic sector $\hsector{\alpha}$ and hyperbolic segment $\hseg{\alpha}$\label{fig:hyperbolic}}
\end{figure}
\end{center}
The previous semialgebraic sets are generalized to the $n$-dimensional case by means of Lemma \ref{revolution} (for $m=k=2$ and $\ell=n-2$). Define
\begin{itemize}
\item The $n$-dimensional cone $\htri{\alpha,n}:=\{(a,bu):\ (a,b)\in\htri{\alpha},\ u\in\sph^{n-2}\}$.
\item The $n$-dimensional \em hyperbolic sector \em $\hsector{\alpha,n}:=\{(a,bu):\ (a,b)\in\hsector{\alpha},\ u\in\sph^{n-2}\}$.
\item The $n$-dimensional \em hyperbolic segment \em $\hseg{\alpha,n}:=\{(a,bu):\ (a,b)\in\hseg{\alpha},\ u\in\sph^{n-2}\}$.
\end{itemize}

\begin{thm}\label{hiperdef}
Each hyperbolic sector $\hsector{\alpha,n}$ and each hyperbolic segment $\hseg{\alpha,n}$ (where $0<\alpha<\frac{\pi}{4}$) are $n$-bricks.
\end{thm}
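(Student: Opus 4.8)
I would reduce everything, via Lemma \ref{revolution}, to the planar case $n=2$, and then obtain the $m$-brick conclusion from Lemma \ref{propbrick} (or Lemma \ref{propbrick2}). By the very definition of the sets, $\hsector{\alpha,n}=\psi(\hsector{\alpha}\times\sph^{n-2})$ and $\hseg{\alpha,n}=\psi(\hseg{\alpha}\times\sph^{n-2})$ for the map $\psi$ of Lemma \ref{revolution} with $m=k=2$ and $\ell=n-2$. Since $\hsector{\alpha}$ and $\hseg{\alpha}$ are symmetric under $\x_2\mapsto-\x_2$, the natural thing is to look for polynomial surjections $F=(F_1,\x_2F_2):\R^2\to\R^2$ onto them with $F_1,F_2\in\R[\x_1,\x_2^2]$; Lemma \ref{revolution} then turns these directly into polynomial surjections $\R^n\to\R^n$ onto $\hsector{\alpha,n}$ and $\hseg{\alpha,n}$. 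So the proof has two parts: (a) realize $\hsector{\alpha}$ and $\hseg{\alpha}$ as polynomial images of $\ol{\Bb}_2$ under maps of that parity; (b) check that the resulting $n$-dimensional sets fall under Lemma \ref{propbrick}.

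\textbf{Part (b).} From $\x_1^2-\x_2^2=\rho^2\cos(2\theta)$ and $\cos2\alpha=(1-\tan^2\alpha)\cos^2\alpha$ one rewrites, for $0<\alpha<\tfrac\pi4$,
\[
\hsector{\alpha}=\{\x_1\ge0,\ \x_1^2-1\le\x_2^2\le\x_1^2\tan^2\alpha\},\qquad \hseg{\alpha}=\Bigl\{1\le\x_1\le\tfrac{\cos\alpha}{\sqrt{\cos2\alpha}},\ \x_2^2\le\x_1^2-1\Bigr\},
\]
the remaining polar inequalities being redundant on these ranges (just as in the elliptic case, Theorem \ref{circdef}). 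As $\x_1\mapsto\sqrt{\x_1^2-1}$ is concave on $[1,\infty)$, the set $\hseg{\alpha}$ — and hence its revolution $\hseg{\alpha,n}=\{1\le\x_1\le\tfrac{\cos\alpha}{\sqrt{\cos2\alpha}},\ \x_2^2+\cdots+\x_n^2\le\x_1^2-1\}$ — is convex, so Lemma \ref{propbrick} applies to it once (a) is done. The set $\hsector{\alpha}$ is not convex, but it is strictly radially convex with respect to a point $p$ on the positive $\x_1$-axis close enough to the origin to lie in the convex circular sector $\{|\theta|\le\alpha,\ \rho\le1\}\subset\hsector{\alpha}$: parametrizing a segment from $p$ to a point $q$ of the bounding hyperbolic arc, $\|\cdot\|^2$ along that segment is a convex function of the parameter, which forces the segment to stay inside $\{\x_1^2-\x_2^2\le1\}$ and inside the cone; the remaining rays from $p$ are checked directly. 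Placing the same point on the $\x_1$-axis of $\R^n$ and using rotational symmetry, $\hsector{\alpha,n}$ is likewise strictly radially convex, so Lemma \ref{propbrick} applies.

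\textbf{Part (a), the crux.} It remains to construct explicit polynomial maps $\ol{\Bb}_2\to\hseg{\alpha}$ and $\ol{\Bb}_2\to\hsector{\alpha}$. This is where the hyperbolic case is genuinely harder than the parabolic one: the hyperbola, unlike the parabola, is the graph of a polynomial in neither coordinate, so the linear-interpolation parametrization used in Lemma \ref{parab} is not available. Instead I would build $F$ by composing with maps from Section \ref{s2} — the surjection $\ol{\Bb}_2\to[-1,1]^2$ of Lemma \ref{imagebc}, the triangle $\htri{\alpha}$ (a $2$-brick) — together with a \emph{folding} polynomial analogous to $g^{*}(\t)=(1-\t^2)(4\t^2-1)^2$ from the proof of Lemma \ref{imagebc} (or to the Chebyshev identity $T_k(\x)^2-(\x^2-1)U_{k-1}(\x)^2=1$), so that a lower-dimensional part of $\ol{\Bb}_2$ is made to sweep out the hyperbolic arc while the bulk of the disc is crushed into the interior; letting that folding act on $\x_2$ only through $\x_2^2$ provides the parity $F_1,F_2\in\R[\x_1,\x_2^2]$. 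For $\hsector{\alpha}$ one could alternatively start from $\htri{\alpha}=\hsector{\alpha}\cup\hseg{\alpha}$ and apply a polynomial map pushing the forbidden wedge $\{\x_2^2<\x_1^2-1\}$ outward onto $\hsector{\alpha}$.

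The main obstacle I anticipate is exactly this explicit construction: tuning the folding polynomials and the intermediate compositions so that the image is \emph{precisely} $\hseg{\alpha}$ (resp. $\hsector{\alpha}$) — no boundary point missing, nothing outside — while keeping the $\x_2\mapsto-\x_2$ symmetry that Lemma \ref{revolution} requires. Once that is settled, the reduction, the convexity and strict-radial-convexity checks, and the final appeal to Lemma \ref{propbrick} are routine.
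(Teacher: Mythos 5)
Your reduction framework is the same as the paper's: both hyperbolic sectors and segments are strictly radially convex, so by Lemma \ref{propbrick} it suffices to exhibit them as polynomial images of $\ol{\Bb}_n$, and by Lemma \ref{revolution} (with maps of the parity $F_1,F_2\in\R[\x_1,\x_2^2]$, which you correctly identify) everything reduces to the planar case. Your convexity check for $\hseg{\alpha}$ and the radial-convexity check for $\hsector{\alpha}$ are fine.

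However, there is a genuine gap: Part (a), which you yourself flag as ``the crux'' and ``the main obstacle I anticipate,'' is precisely the content of the theorem, and you do not carry it out. The folding/Chebyshev idea you sketch is not what makes the proof work, and it is not clear it can be tuned to hit the hyperbolic arc exactly while filling the two-dimensional interior. The paper's actual construction is different and has a structure you do not anticipate. First, a radial ``inflation'' map $\psi_0(x_1,x_2)=\tfrac{3-(x_1^2-x_2^2)}{2}(x_1,x_2)$ sends the triangle $\htri{\alpha}$ (a $2$-brick by Lemma \ref{tetra}) onto $\hsector{\alpha}$, but \emph{only} for $0<\alpha\le\arctan\bigl(\sqrt{2/3}\bigr)$, because the cubic $\rho\mapsto\tfrac{3\rho-\rho^3\cos(2\theta)}{2}$ must stay monotone on the relevant range. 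Second, to reach an arbitrary $\alpha<\tfrac{\pi}{4}$ one applies the map $\psi_1(x_1,x_2)=(x_1^2+x_2^2,2x_1x_2)$, which sends $\hsector{\beta}$ onto $\hsector{\arctan(\sin 2\beta)}$; unlike the elliptic case, where $z\mapsto z^2$ doubles the angle exactly, here the angle map $\beta\mapsto\arctan(\sin 2\beta)$ only increases the aperture strictly below the fixed point $\tfrac{\pi}{4}$, so one needs an iteration (contraction/fixed-point) argument to show that finitely many applications of $\psi_1$ reach any prescribed $\alpha<\tfrac{\pi}{4}$. Third, a separate map $\psi_2$ produces $\hseg{\alpha'}$ from $\hsector{\alpha}$ by sweeping segments from the midpoint of the chord to the hyperbolic arc. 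None of these three constructions, nor the angle-limitation issue that forces the iteration, appears in your proposal, so the proof is incomplete at its essential step.
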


\section{Drawing polynomial paths inside semialgebraic sets}\label{s3}

The main purpose of this section is to prove Lemma \ref{smart}, which has a great influence in the proofs of Theorems \ref{main1-i}, \ref{main1-i2} and \ref{main2-i}.

Let $\Ss_1,\Ss_2\subset\R^n$ be two open semialgebraic sets. A \em (polynomial) bridge between $\Ss_1$ and $\Ss_2$ \em is the image $\Gamma$ of a polynomial arc $\alpha:[-1,1]\to\R^n$ such that $\alpha([-1,0))\subset\Ss_1$ and $\alpha((0,1])\subset\Ss_2$. The point $\alpha(0)$ is called the \em base point \em of $\Gamma$. Recall that a map $f:\Ss\to\R^m$ on a semialgebraic set $\Ss\subset\R^n$ is Nash if there exists an open semialgebraic neighborhood $U\subset\R^n$ of $\Ss$ and a smooth and semialgebraic map $F:U\to\R^m$ that extends $f$. Recall that Nash maps on open semialgebraic sets are analytic maps \cite[Prop.8.1.8]{bcr}.

Let $\alpha:[a,b]\to\R^n$ be a continuous semialgebraic path. By \cite[Prop.2.9.10]{bcr} there exists a minimal finite set $\eta(\alpha)\subset[a,b]$ such that $\alpha|_{[a,b]\setminus\eta(\alpha)}$ is a Nash map. By \cite[Prop.8.1.12]{bcr} and after reparameterizing $\alpha$ locally at $a,b$ (if necessary) we may assume that $\alpha$ is analytic at the points $a,b$, so $\eta(\alpha)\subset(a,b)$. 

\begin{center}
\begin{figure}[ht]
\begin{tikzpicture}[scale=1.25]
\draw[dashed,line width=1.5pt] (0,0) -- (0,2) -- (2,2) -- (0,0);
\draw[dashed,line width=1.5pt] (2,2) -- (4,2) -- (4,0) -- (2,2);
\draw[dashed,line width=1.5pt] (5,1) circle (1cm);
\draw[dashed,line width=1.5pt] (6,1) arc (270:360:1cm) arc (180:270:1cm) arc (90:180:1cm) arc (0:90:1cm);
\draw[dashed,line width=1.5pt] (8,1) -- (9,1) -- (9,2) -- (8,1);
\draw[dashed,line width=1.5pt] (9,1) -- (10,1) -- (10,0) -- (9,1);

\draw[fill=gray!100,opacity=0.5,draw=none] (0,0) -- (0,2) -- (2,2) -- (0,0);
\draw[fill=gray!100,opacity=0.5,draw=none] (2,2) -- (4,2) -- (4,0) -- (2,2);
\draw[fill=gray!100,opacity=0.5,draw=none] (5,1) circle (1cm);
\draw[fill=gray!100,opacity=0.5,draw=none] (6,1) arc (270:360:1cm) arc (180:270:1cm) arc (90:180:1cm) arc (0:90:1cm);
\draw[fill=gray!100,opacity=0.5,draw=none] (8,1) -- (9,1) -- (9,2) -- (8,1);
\draw[fill=gray!100,opacity=0.5,draw=none] (9,1) -- (10,1) -- (10,0) -- (9,1);

\draw[color=red,line width=1.5pt] (1.3,1.875) arc (110:70:2cm);
\draw[color=red,line width=1.5pt] (3.5,1) -- (4.5,1);
\draw[color=red,line width=1.5pt] (5.5,1) -- (6.5,1);
\draw[color=red,line width=1.5pt] (7.25,1.125) arc (250:290:2cm);

\draw[color=red,line width=1.5pt] (8.5,1.25) -- (9.5,0.75);

\draw (2,2) node{$\bullet$};
\draw (4,1) node{$\bullet$};
\draw (6,1) node{$\bullet$};
\draw (8,1) node{$\bullet$};
\draw (9,1) node{$\bullet$};

\draw (2,2.3) node{$q_1$};
\draw (3.6,1.3) node{$q_2$};
\draw (6.2,1.3) node{$q_3$};
\draw (8,1.3) node{$q_4$};
\draw (9,0.7) node{$q_5$};

\draw (0,1) node{$\bullet$};
\draw (4,2) node{$\bullet$};
\draw (5,0) node{$\bullet$};
\draw (7,0) node{$\bullet$};
\draw (9,2) node{$\bullet$};
\draw (10,0) node{$\bullet$};

\draw (-0.3,1) node{$p_1$};
\draw (4.3,2.3) node{$p_2$};
\draw (5,-0.3) node{$p_3$};
\draw (7,-0.3) node{$p_4$};
\draw (9,2.3) node{$p_5$};
\draw (10,-0.3) node{$p_6$};


\draw[dashed] (0,-0.75) -- (10,-0.75);

\end{tikzpicture}

\vspace*{1mm}
\begin{tikzpicture}[scale=1.25]
\draw[dashed,line width=1.5pt] (0,0) -- (0,2) -- (2,2) -- (0,0);
\draw[dashed,line width=1.5pt] (2,2) -- (4,2) -- (4,0) -- (2,2);
\draw[dashed,line width=1.5pt] (5,1) circle (1cm);
\draw[dashed,line width=1.5pt] (6,1) arc (270:360:1cm) arc (180:270:1cm) arc (90:180:1cm) arc (0:90:1cm);
\draw[dashed,line width=1.5pt] (8,1) -- (9,1) -- (9,2) -- (8,1);
\draw[dashed,line width=1.5pt] (9,1) -- (10,1) -- (10,0) -- (9,1);

\draw[fill=gray!100,opacity=0.5,draw=none] (0,0) -- (0,2) -- (2,2) -- (0,0);
\draw[fill=gray!100,opacity=0.5,draw=none] (2,2) -- (4,2) -- (4,0) -- (2,2);
\draw[fill=gray!100,opacity=0.5,draw=none] (5,1) circle (1cm);
\draw[fill=gray!100,opacity=0.5,draw=none] (6,1) arc (270:360:1cm) arc (180:270:1cm) arc (90:180:1cm) arc (0:90:1cm);
\draw[fill=gray!100,opacity=0.5,draw=none] (8,1) -- (9,1) -- (9,2) -- (8,1);
\draw[fill=gray!100,opacity=0.5,draw=none] (9,1) -- (10,1) -- (10,0) -- (9,1);


\draw (2,2) node{$\bullet$};
\draw (4,1) node{$\bullet$};
\draw (6,1) node{$\bullet$};
\draw (8,1) node{$\bullet$};
\draw (9,1) node{$\bullet$};

\draw (2,2.3) node{$q_1$};
\draw (3.6,1.3) node{$q_2$};
\draw (6.2,1.3) node{$q_3$};
\draw (8,1.3) node{$q_4$};
\draw (9,0.7) node{$q_5$};

\draw (0,1) node{$\bullet$};
\draw (4,2) node{$\bullet$};
\draw (5,0) node{$\bullet$};
\draw (7,0) node{$\bullet$};
\draw (9,2) node{$\bullet$};
\draw (10,0) node{$\bullet$};

\draw (-0.3,1) node{$p_1$};
\draw (4.3,2.3) node{$p_2$};
\draw (5,-0.3) node{$p_3$};
\draw (7,-0.3) node{$p_4$};
\draw (9,2.3) node{$p_5$};
\draw (10,-0.3) node{$p_6$};

\draw[color=blue,line width=1.5pt] (0,1) .. controls (0.5,1.5) and (1.75,2) .. (2,2) .. controls (2.5,1.9) and (3.5,1.5) .. (4,2) .. controls (3.75,2) and (3.75,1) .. (4,1) .. controls (4,1) and (5,1) .. (5,0) .. controls (5,1) and (6,1) .. (6,1) .. controls (6,1) and (7,1.1) .. (7,0) .. controls (7,1.1) and (8,1) .. (8,1) .. controls (8,1) and (9,1.25) .. (9,2) .. controls (9,1.5) and (8.5,1.25) .. (9,1) .. controls (9,1) and (10,1) .. (10,0);

\end{tikzpicture}
\caption{Illustration of the statement of Lemma \ref{smart}\label{smartfig}}
\end{figure}
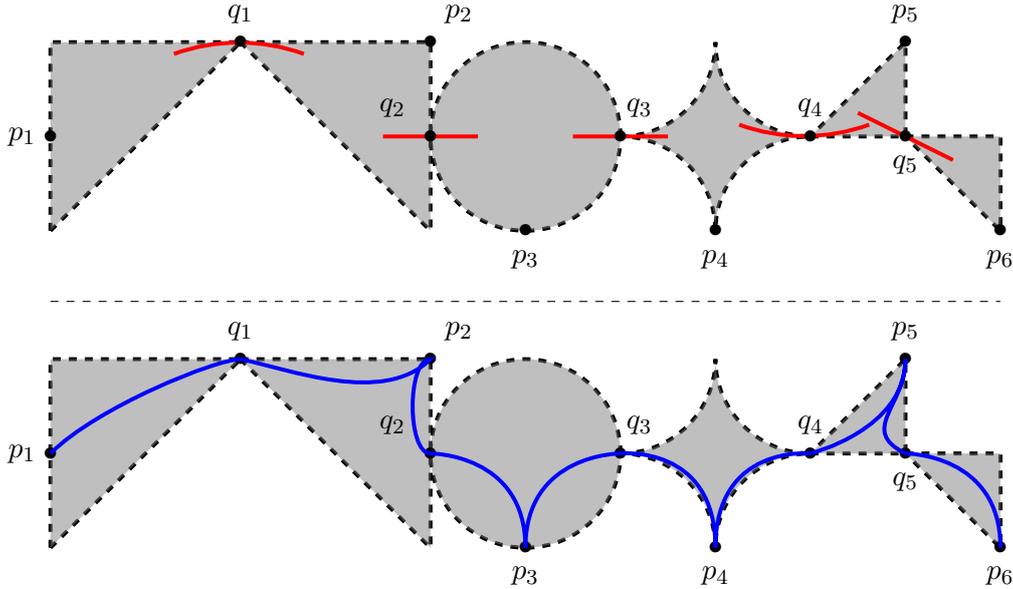
\end{center}

\begin{lem}[Smart polynomial curve]\label{smart}
Let $\Ss_1,\ldots,\Ss_\ell\subset\R^n$ be connected open semialgebraic sets (not necessarily pairwise different) and denote $\Ss:=\bigcup_{i=1}^\ell\Ss_i$. Pick points $p_i\in\cl(\Ss_i)$ and assume there exists a bridge $\Gamma_i$ between $\Ss_i$ and $\Ss_{i+1}$. Denote by $q_i\in\cl(\Ss_i)\cap\cl(\Ss_{i+1})$ the base point of $\Gamma_i$. Fix real values $s_0:=0<t_1<\cdots<t_\ell<1=:s_\ell$ and $s_i\in(t_i,t_{i+1})$ for $i=1,\ldots,\ell-1$. Then there exists a polynomial path $\alpha:\R\to\R^n$ that satisfies: 
\begin{itemize}
\item[(i)] $\alpha([0,1])\subset\Ss\cup\{p_1,\ldots,p_\ell,q_1,\ldots,q_{\ell-1}\}$.
\item[(ii)] $\alpha(t_i)=p_i$ for $i=1,\ldots,\ell$.
\item[(iii)] $\alpha((t_i,s_i))\subset\Ss_i$, $\alpha((s_i,t_{i+1}))\subset\Ss_{i+1}$ and $\alpha(s_i)=q_i$ for each $i$.
\end{itemize}
In addition, if $\veps>0$ and $\beta:[0,1]\to\Ss\cup\{p_1,\ldots,p_\ell,q_1,\ldots,q_{\ell-1}\}$ is a continuous semialgebraic path such that $\eta(\beta)\subset(0,1)\setminus\{t_1,\ldots,t_\ell,s_1,\ldots,s_{\ell-1}\}$, $\beta(\eta(\beta))\subset\Ss$ and satisfies conditions \em (ii) \em and \em (iii) \em above, we may assume that $\|\alpha-\beta\|<\veps$.
\end{lem}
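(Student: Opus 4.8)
The plan is to build $\alpha$ piece by piece over the intervals determined by the $t_i$ and $s_i$, and then glue the pieces together by a single polynomial interpolation/approximation argument, rather than literally concatenating non-polynomial paths. First I would set up the skeleton: on each interval $[t_i,s_i]$ I want a polynomial arc going from $p_i\in\cl(\Ss_i)$ into $\Ss_i$ and reaching $q_i$, and on $[s_i,t_{i+1}]$ a polynomial arc from $q_i$ into $\Ss_{i+1}$ reaching $p_{i+1}$. For the portion near $q_i$ one uses the given bridge $\Gamma_i$, suitably reparameterized, which already supplies a polynomial arc with $\alpha(s_i)=q_i$, $\alpha((s_i-\delta,s_i))\subset\Ss_i$, $\alpha((s_i,s_i+\delta))\subset\Ss_{i+1}$. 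For the portion joining $p_i$ to a point deep inside $\Ss_i$, since $\Ss_i$ is connected and open (hence semialgebraically path connected and, being open, connected by polynomial arcs — this is where one invokes the curve selection / path connectedness machinery for open semialgebraic sets, cf.\ \cite{f3}), there is a polynomial arc from $p_i$ into $\Ss_i$; similarly a polynomial arc inside $\Ss_i$ linking that point to the incoming endpoint of the bridge $\Gamma_{i-1}$ near $q_{i-1}$. Concatenating \emph{continuously} (not yet polynomially) produces a continuous semialgebraic path $\beta_0:[0,1]\to\Ss\cup\{p_i,q_i\}$ satisfying (i), (ii), (iii), with $\eta(\beta_0)$ a finite subset of $(0,1)$ avoiding the prescribed nodes $\{t_i,s_i\}$ (we may reparameterize to ensure this) and $\beta_0(\eta(\beta_0))\subset\Ss$; note this already handles the "in addition" clause by taking $\beta_0:=\beta$ when a $\beta$ is prescribed.

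The second, and main, step is to replace $\beta_0$ by a genuine polynomial path $\alpha:\R\to\R^n$ that agrees with $\beta_0$ at the nodes $t_1,\dots,t_\ell,s_1,\dots,s_{\ell-1}$, that stays $\veps$-close to $\beta_0$ in sup norm, and crucially that respects the \emph{open} conditions (iii): $\alpha((t_i,s_i))\subset\Ss_i$ and $\alpha((s_i,t_{i+1}))\subset\Ss_{i+1}$. The hard part is exactly this: an arbitrary close polynomial approximation will in general fail to pass through the exact points $p_i,q_i$ and, worse, near those boundary points the path must be forced to exit/enter the open set in the prescribed direction, so a crude Weierstrass approximation is not enough. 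The device is to use the bridge arcs as "templates" in neighbourhoods of each $q_i$ and the polynomial arcs into $\Ss_i$ as templates near each $p_i$: keep $\alpha$ literally equal to (a reparameterization of) those polynomial arcs on small closed subintervals around the nodes — since they are already polynomial this costs nothing — and on the complementary compact subintervals, where $\beta_0$ has image in the \emph{open} set $\Ss$ (a compact subset of the open set, so at positive distance from $\partial\Ss$ and from the relevant $\partial\Ss_i$), apply polynomial approximation (Stone--Weierstrass for each coordinate, then a global Lagrange-type correction to restore the exact nodal values while retaining closeness) to land within the $\veps/2$-tube around $\beta_0$, which by the distance estimate stays inside the correct open piece.

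The third step is the matching/gluing: one must choose the small protected subintervals around the nodes and the approximation on the middle so that the two prescriptions agree at the meeting points. Here I would use a standard trick: fix the nodal values and a few derivatives at the endpoints of each protected subinterval (Hermite data coming from the polynomial template arcs), and on each middle interval solve the interpolation problem exactly while simultaneously approximating $\beta_0$ — this is possible because the space of polynomials of large enough degree is dense and interpolation at finitely many points is a finite-codimension linear constraint, so one can approximate inside the affine subspace of polynomials meeting the Hermite data. Throughout, one checks that all intermediate images remain in $\Ss\cup\{p_1,\dots,p_\ell,q_1,\dots,q_{\ell-1}\}$: on the protected subintervals by construction (the template arcs), on the middle subintervals by the $\veps$-tube argument. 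Finally, reparameterizing affinely so that the nodes sit at the prescribed values $t_i,s_i$ and extending the polynomial $\alpha$ from $[0,1]$ to all of $\R$ (it is already a polynomial, so nothing to do) completes the proof. The only genuinely delicate point is controlling the behaviour at the finitely many base points $q_i$ and endpoints $p_i$, which is precisely why the bridges are assumed to be polynomial arcs: they provide the rigid local model that the approximation on the open part is glued to.
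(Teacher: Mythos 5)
Your first step (concatenating the bridge parameterizations, double curve-selection arcs through the $p_i$, and polynomial connecting paths inside each open $\Ss_i$ to obtain a piecewise-polynomial template $\beta_0$) matches the paper's construction. The gap is in your second and third steps. You propose to keep $\alpha$ \emph{literally equal} to the polynomial template arcs on small protected subintervals around the nodes and to approximate-with-Hermite-matching on the complementary middle intervals; but two polynomials that agree on an interval are identical, so an $\alpha$ assembled this way is a piecewise polynomial (a spline), not a single polynomial path $\alpha:\R\to\R^n$ as the lemma demands. If instead you intend a single global polynomial that matches finitely many derivatives of $\beta_0$ at the nodes and is uniformly close elsewhere, then the decisive step is missing: you must explain why prescribing ``a few derivatives'' at $t_i$ (resp.\ $s_i$), together with closeness, forces $\alpha((t_i,t_i+\delta))\subset\Ss_i$. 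The points $p_i$ and $q_i$ lie in $\cl(\Ss_i)$, typically on the boundary, so no distance-to-the-complement tube argument is available there, and matching the value and an unspecified number of derivatives is not obviously sufficient.

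The paper resolves exactly this point quantitatively (Lemma \ref{clue}): for each node one records basic open sets $\{f_{i1}>0,\ldots,f_{is}>0\}\subset\Ss_i$ adherent to the node and the orders $n_{ij}$ of vanishing of $f_{ij}\circ\beta$ there; one chooses $\nu$ strictly larger than all these orders, approximates in the $\Cont^\nu$ compact-open topology while interpolating the full $\nu$-jet of $\beta$ at every node simultaneously (Lemma \ref{swdp}), and then shows by a Taylor-expansion plus Rolle's-theorem argument that $f_{ij}\circ\alpha$ has the same positive leading coefficient and hence stays positive on a punctured one-sided neighbourhood of the node. This in turn requires smoothing the corners of $\beta_0$ (which lie safely inside the open set $\Ss$, so this is harmless) before the $\Cont^\nu$ approximation can be applied. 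Your proposal correctly identifies the behaviour at the nodes as ``the only genuinely delicate point,'' but the mechanism you offer for it (rigid local polynomial models glued to an approximation on the middle intervals) does not yield a polynomial, and without the order-of-vanishing and jet-matching analysis the assertion that the approximant respects the open conditions (iii) near the nodes remains unproved.
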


The previous result (illustrated in Figure \ref{smartfig}) is improved in \cite{f3} in two directions: it is extended to general semialgebraic sets that are connected by analytic paths (using Nash paths instead polynomial paths) and we provide a constructive version using Bernstein's polynomials. The proof of Lemma \ref{smart} we present here is not constructive, but its presentation is shorter and less demanding than the one in \cite{f3}. We begin with some preliminary results.

\subsection{Polynomial approximation combined with interpolation}

We consider on the space $\Cont^\nu([a,b],\R)$ of differentiable functions of class $\Cont^\nu$ on the interval $[a,b]$ the $\Cont^\nu$ compact-open topology. Recall that a basis of open neighborhoods of $g\in\Cont^\nu([a,b],\R)$ in this topology is constituted by the sets of the type:
$$
{\mathcal U}^\nu_{g,\veps}:=\{f\in\Cont^\nu([a,b],\R):\ \|f^{(\ell)}-g^{(\ell)}\|_{[a,b]}<\veps:\ \ell=0,\ldots,\nu\}
$$
where $\veps>0$ and $\|h\|_{[a,b]}:=\max\{h(x):\ x\in[a,b]\}$. Observe that $\Cont^\nu([a,b],\R^n)=\Cont^\nu([a,b],\R)\times\overset{(n)}{\cdots}\times\Cont^\nu([a,b],\R)$ and we endow this space with the product topology. If $X\subset[a,b]$, one defines analogously the $\Cont^\nu$ compact-open topology of the space $\Cont^\nu(X,\R^n)$. The following result is well-known and its proof follows straightforwardly from \cite[\S2.5. Ex.10, pp. 64-65]{hir3} using standard arguments.
\begin{lem}\label{cont}
Let $U\subset\R^n$ be an open set and let $\varphi:U\to\R^m$ be a $\Cont^\ell$ map for some $0\leq\ell\leq\nu$. Consider the map $\varphi^*:\Cont^\nu([a,b],U)\to\Cont^\ell([a,b],\R^m),\ f\mapsto\varphi\circ f$, where both spaces are endowed with their $\Cont^\ell$ compact-open topologies. Then $\varphi^*$ is continuous. 
\end{lem}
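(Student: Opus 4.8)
The plan is to prove this by induction on $\ell$, reducing the $\Cont^\ell$ statement to the $\Cont^{\ell-1}$ statement applied to the differential $d\varphi$, together with two elementary facts. First, since $[a,b]$ is compact, for every $p,k$ the $\Cont^k$ compact-open topology on $\Cont^k([a,b],\R^p)$ is induced by the norm $\|g\|_{\Cont^k}:=\max_{0\le j\le k}\|g^{(j)}\|_{[a,b]}$, and for $k\ge1$ one has $\|g\|_{\Cont^k}=\max\{\|g\|_{[a,b]},\|g'\|_{\Cont^{k-1}}\}$; hence the map $g\mapsto(g,g')$ is a topological embedding $\Cont^k([a,b],\R^p)\hookrightarrow\Cont^0([a,b],\R^p)\times\Cont^{k-1}([a,b],\R^p)$, so a map $Z\to\Cont^k([a,b],\R^p)$ from a topological space $Z$ is continuous if and only if both $Z\to\Cont^0([a,b],\R^p)$ and $Z\to\Cont^{k-1}([a,b],\R^p)$ (value and derivative) are continuous. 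Second, pointwise matrix-vector multiplication $\mu:\Cont^{k}([a,b],\R^{mn})\times\Cont^{k}([a,b],\R^n)\to\Cont^{k}([a,b],\R^m)$ is bilinear and, by the Leibniz rule, satisfies $\|\mu(g,h)\|_{\Cont^{k}}\le c_k\|g\|_{\Cont^{k}}\|h\|_{\Cont^{k}}$, hence is continuous. Note that $\varphi^*$ indeed lands in $\Cont^\ell([a,b],\R^m)$: this is Fa\`a di Bruno's formula, valid because $f\in\Cont^\nu$ with $\nu\ge\ell$ and $\varphi\in\Cont^\ell$.

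For the base case $\ell=0$, fix $f_0\in\Cont^\nu([a,b],U)$ and $\veps>0$. The set $K:=f_0([a,b])$ is compact and contained in the open set $U$, so there is $\rho>0$ with $L:=\{x\in\R^n:\dist(x,K)\le\rho\}$ compact and contained in $U$. As $\varphi$ is uniformly continuous on $L$, there is $0<\delta\le\rho$ with $\|\varphi(x)-\varphi(y)\|<\veps$ whenever $x,y\in L$, $\|x-y\|<\delta$; then $\|f-f_0\|_{[a,b]}<\delta$ forces $f([a,b])\subset L$ and $\|\varphi\circ f-\varphi\circ f_0\|_{[a,b]}<\veps$. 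For the inductive step, assume the statement holds with $\ell-1$ in place of $\ell$ (for every open set, every $\Cont^{\ell-1}$ map, and every $\nu'\ge\ell-1$), and let $\ell\ge1$, $\varphi\in\Cont^\ell$, $\nu\ge\ell$. Then $d\varphi:U\to\Hom(\R^n,\R^m)\cong\R^{mn}$ is of class $\Cont^{\ell-1}$, and the chain rule gives $(\varphi\circ f)'(t)=d\varphi(f(t))\bigl(f'(t)\bigr)$. Now $f\mapsto d\varphi\circ f$ is continuous $\Cont^\nu([a,b],U)\to\Cont^{\ell-1}([a,b],\R^{mn})$ by the induction hypothesis (using $\nu\ge\ell-1$); $f\mapsto f'$ is a bounded linear, hence continuous, map $\Cont^\nu([a,b],\R^n)\to\Cont^{\nu-1}([a,b],\R^n)\hookrightarrow\Cont^{\ell-1}([a,b],\R^n)$ (using $\nu-1\ge\ell-1$); and composing with the multiplication map $\mu$ above (with $k=\ell-1$) shows $f\mapsto(\varphi\circ f)'=\mu(d\varphi\circ f,f')$ is continuous into $\Cont^{\ell-1}([a,b],\R^m)$. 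Combined with the base case, which gives continuity of $f\mapsto\varphi\circ f$ into $\Cont^0([a,b],\R^m)$, the embedding remark in the first paragraph yields continuity of $\varphi^*$ into $\Cont^\ell([a,b],\R^m)$, completing the induction.

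I do not expect a genuine obstacle: the only points needing care are that $\varphi$ is merely of class $\Cont^\ell$, which is harmless since we never differentiate $\varphi$ more than $\ell$ times; the bookkeeping of the regularity indices ($\nu\ge\ell$, $\nu-1\ge\ell-1$, $\nu\ge\ell-1$) in the inductive step; and the Leibniz estimate behind the continuity of $\mu$. An alternative one-shot proof avoids the induction by expanding $(\varphi\circ f)^{(j)}$ for $0\le j\le\ell$ via Fa\`a di Bruno's formula as a finite sum of products of derivatives $D^\gamma\varphi$ ($|\gamma|\le j$) evaluated along $f$ times products of derivatives of $f$ of total order $j$, and then estimating $(\varphi\circ f)^{(j)}-(\varphi\circ f_0)^{(j)}$ by replacing one factor at a time: the $\varphi$-factors are controlled by the uniform continuity of the $D^\gamma\varphi$ on the compact neighborhood $L$ together with $\|f-f_0\|_{[a,b]}$ small, the $f$-factors by $\|f^{(i)}-f_0^{(i)}\|_{[a,b]}$ small for $i\le j\le\ell\le\nu$, and the remaining factors are bounded on $L$ resp.\ near $f_0$; this is the route indicated in \cite[\S2.5, Ex.10]{hir3}.
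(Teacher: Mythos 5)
Your proof is correct. The paper does not actually write out an argument for this lemma: it declares the result well-known and defers to \cite[\S2.5, Ex.10]{hir3} ``using standard arguments'', so there is nothing in the text to compare against line by line. Your induction on $\ell$ is a clean, complete instantiation of that standard argument: the base case via uniform continuity of $\varphi$ on a compact neighborhood $L$ of $f_0([a,b])$ is right (and correctly uses only $\Cont^0$-closeness, which is all the $\Cont^\ell$ topology on the domain provides when $\ell=0$); the reduction of the $\Cont^\ell$ statement to the value map (base case) plus the derivative map $f\mapsto\mu(d\varphi\circ f,f')$ via the embedding $g\mapsto(g,g')$ is sound, and the bookkeeping $\nu\ge\ell$, $\nu-1\ge\ell-1$ is handled correctly, as is the point that continuity for the coarser $\Cont^{\ell-1}$ topology on the domain (from the inductive hypothesis) implies continuity for the $\Cont^\ell$ topology. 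Your sketched alternative via Fa\`a di Bruno and a one-factor-at-a-time estimate is essentially the route the cited Hirsch exercise has in mind, so either version would serve as the omitted proof.
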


In addition, one has the following.
\begin{lem}\label{cont2}
Let $X\subset[a,b]$ and consider the restriction map
$$
\rho:\Cont^\nu([a,b],\R^n)\to\Cont^\nu(X,\R^n),\ f\mapsto f|_{X},
$$ 
where the spaces are endowed with their respective $\Cont^\nu$ compact-open topologies. Then $\rho$ is continuous and if in addition $X\subset[a,b]$ is closed, then $\rho$ is surjective. 
\end{lem}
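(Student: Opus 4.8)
The plan is to split the statement into its two assertions and, in each, to reduce immediately to the scalar case: by the very definition recalled above, $\Cont^\nu([a,b],\R^n)$ and $\Cont^\nu(X,\R^n)$ are the $n$-fold products of the corresponding spaces of $\R$-valued maps with their product topologies, and $\rho$ acts coordinate by coordinate, so it suffices to treat $\rho:\Cont^\nu([a,b],\R)\to\Cont^\nu(X,\R)$.

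For continuity I would argue directly. Fix $f_0\in\Cont^\nu([a,b],\R)$ and a basic neighbourhood $V:=\{g:\ \|g^{(\ell)}-(\rho f_0)^{(\ell)}\|_K<\veps\ \text{for}\ \ell=0,\ldots,\nu\}$ of $\rho(f_0)$, where $K\subset X$ is compact and $\veps>0$. Since $K\subset X\subset[a,b]$, any $f\in{\mathcal U}^\nu_{f_0,\veps}$, i.e. with $\|f^{(\ell)}-f_0^{(\ell)}\|_{[a,b]}<\veps$ for $\ell=0,\ldots,\nu$, satisfies $\|(\rho f)^{(\ell)}-(\rho f_0)^{(\ell)}\|_K=\|(f^{(\ell)}-f_0^{(\ell)})|_K\|_K\leq\|f^{(\ell)}-f_0^{(\ell)}\|_{[a,b]}<\veps$, so $\rho({\mathcal U}^\nu_{f_0,\veps})\subset V$. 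As ${\mathcal U}^\nu_{f_0,\veps}$ is a basic neighbourhood of $f_0$, this gives continuity at $f_0$, hence everywhere (equivalently, $\rho$ is linear and the same estimate shows it is continuous at $0$).

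For surjectivity, assume $X$ is closed in $[a,b]$; then $X$ is compact and, being closed in the closed set $[a,b]\subset\R$, it is closed in $\R$. Given $g\in\Cont^\nu(X,\R^n)$, I would regard $g$ as a $\Cont^\nu$ Whitney jet of order $\nu$ on $X$ — which, by the definition of $\Cont^\nu$ on $X$ (either as the restriction of a $\Cont^\nu$ map defined on an open neighbourhood of $X$ in $\R$, or directly as a Whitney jet), satisfies the hypotheses of the Whitney extension theorem — and invoke that theorem to obtain $G\in\Cont^\nu(\R,\R^n)$ whose $\nu$-jet along $X$ equals $g$; then $f:=G|_{[a,b]}$ lies in $\Cont^\nu([a,b],\R^n)$ and $\rho(f)=f|_X=g$. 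To avoid Whitney's theorem one can argue as follows under the `restriction from an open neighbourhood' reading: write $g=G_0|_X$ with $G_0\in\Cont^\nu(U,\R^n)$, $U\subset\R$ open, $X\subset U$; choose a $\Cont^\infty$ function $\theta:\R\to[0,1]$ equal to $1$ on a neighbourhood of the compact set $X$ and with $\supp(\theta)\subset U$, and set $f:=\theta G_0$ (extended by $0$ outside $\supp(\theta)$ and then restricted to $[a,b]$), which is $\Cont^\nu$ on $[a,b]$ and satisfies $f|_X=g$.

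The continuity part is routine; the only step with real content is surjectivity, where everything hinges on the Whitney extension theorem — equivalently, in dimension one, on Hermite interpolation on the intervals complementary to $X$ in $[a,b]$ together with a $\Cont^\nu$-gluing estimate at the points of $X$. The point that needs care is pinning down the precise meaning of $\Cont^\nu(X,\R^n)$ when $X$ is not an interval, but under either of the standard conventions one of the two constructions above yields the desired extension.
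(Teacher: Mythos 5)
Your proof is correct. Note that the paper states this lemma without any proof, treating it (like Lemma \ref{cont}) as a routine fact, so there is no argument of the authors' to compare against; your write-up simply supplies the standard details. The continuity half — restriction over a compact $K\subset X\subset[a,b]$ can only decrease the sup-norms of $f^{(\ell)}-f_0^{(\ell)}$, so ${\mathcal U}^\nu_{f_0,\veps}$ maps into the given basic neighbourhood — is exactly right, and it is the only half actually used later (in the proof of Lemma \ref{clue}, where $X$ is always a compact subinterval $I_i$ or $J_k$, so the definitional subtlety you flag about $\Cont^\nu(X,\R^n)$ for general $X$ never arises there). For surjectivity your two routes (Whitney extension for the jet reading, or a smooth cutoff $\theta\equiv1$ near the compact set $X$ for the `restriction from an open neighbourhood' reading) are both valid, and your care in distinguishing the two conventions is a point the paper itself leaves implicit.
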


We borrow the following result from \cite{b} that combines polynomial approximation with interpolation on a finite set. We include full details for the sake of completeness.

\begin{lem}\label{swdp}
Let $a<t_1<\cdots<t_r<b$ be real numbers and let $f:[a,b]\to\R$ be a $\Cont^\nu$ function. Write $a_{ik}:=f^{(k)}(t_i)$ for $i=1,\ldots,r$ and $0\leq k\leq\nu$. Fix $\veps>0$. Then there exists a polynomial $g\in\R[\t]$ such that: 
\begin{itemize}
\item[(i)] $\|f^{(k)}-g^{(k)}\|_{[a,b]}<\veps$ for $k=0,\ldots,\nu$.
\item[(ii)] $g^{(k)}(t_i)=a_{ik}$ for $i=1,\ldots,r$ and $0\leq k\leq\nu$. 
\end{itemize}
\end{lem}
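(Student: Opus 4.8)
The plan is to prove the statement by combining two classical facts: the Weierstrass approximation theorem applied to the $\nu$-th derivative, and repeated integration to recover control on lower derivatives, with a Hermite interpolation correction to fix the prescribed jet values at the points $t_1,\dots,t_r$. First I would approximate $f^{(\nu)}$ by a polynomial: by the Weierstrass theorem there is $p_0\in\R[\t]$ with $\|f^{(\nu)}-p_0\|_{[a,b]}$ as small as we like. Then I would define $p_\nu(\t):=\int_a^\t\int_a^{s_1}\cdots\int_a^{s_{\nu-1}}p_0(s_\nu)\,ds_\nu\cdots ds_1$, a polynomial whose $\nu$-th derivative is $p_0$; choosing the initial conditions of these iterated integrals so that $p_\nu^{(k)}(a)=f^{(k)}(a)$ for $0\le k\le\nu-1$ (this just adds a fixed polynomial of degree $<\nu$), the estimate $\|f^{(\nu)}-p_\nu^{(\nu)}\|_{[a,b]}<\delta$ together with the fundamental theorem of calculus gives, inductively on $k=\nu-1,\nu-2,\dots,0$, a bound $\|f^{(k)}-p_\nu^{(k)}\|_{[a,b]}<C_k\delta$ where $C_k$ depends only on $\nu$ and $b-a$. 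Hence for $\delta$ small enough, $p_\nu$ already satisfies condition (i) with $\veps$ replaced by $\veps/2$.

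Next I would correct the interpolation data. The polynomial $p_\nu$ does not a priori satisfy $p_\nu^{(k)}(t_i)=a_{ik}$, but the defects $b_{ik}:=a_{ik}-p_\nu^{(k)}(t_i)=f^{(k)}(t_i)-p_\nu^{(k)}(t_i)$ satisfy $|b_{ik}|<C_k\delta$ for all $i,k$. By standard Hermite interpolation there is a polynomial $q\in\R[\t]$ (of degree at most $r(\nu+1)-1$) with $q^{(k)}(t_i)=b_{ik}$ for $i=1,\dots,r$ and $0\le k\le\nu$. The key point is that the linear map sending the Hermite data $(b_{ik})$ to the coefficients of $q$ is fixed (depending only on $a<t_1<\cdots<t_r<b$ and $\nu$), so $q$ and all its derivatives up to order $\nu$ are bounded on $[a,b]$ by $M\max_{i,k}|b_{ik}|\le M'\delta$ for a constant $M'$ independent of $\delta$. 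Setting $g:=p_\nu+q$, condition (ii) holds exactly, and $\|f^{(k)}-g^{(k)}\|_{[a,b]}\le\|f^{(k)}-p_\nu^{(k)}\|_{[a,b]}+\|q^{(k)}\|_{[a,b]}<\tfrac\veps2+M'\delta<\veps$ for $\delta$ chosen small enough at the start, which is condition (i).

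The only mildly delicate point — and the one I would be most careful about — is making the constants genuinely independent of $\delta$: both the integration step (controlling lower derivatives from the $\nu$-th) and the Hermite correction introduce multiplicative constants, and one must fix $\delta$ only \emph{after} all these constants (which depend on $\nu$, $b-a$, and the configuration $t_1<\cdots<t_r$, but not on $\delta$) are in hand. There is no real obstacle beyond bookkeeping: Weierstrass gives the first approximation, iterated integration is explicit, and Hermite interpolation is a standard linear-algebra construction. If one prefers to avoid writing out iterated integrals, an alternative is to invoke a $\Cont^\nu$-version of Weierstrass (simultaneous approximation of $f$ and its derivatives up to order $\nu$ by a single polynomial, e.g. via Bernstein polynomials or Jackson-type theorems) to get $p_\nu$ directly with $\|f^{(k)}-p_\nu^{(k)}\|_{[a,b]}<\veps/2$ for all $k\le\nu$ at once, and then apply the Hermite correction exactly as above.
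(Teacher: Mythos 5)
Your proposal is correct and follows essentially the same route as the paper's proof: a simultaneous $\Cont^\nu$ polynomial approximation obtained by applying Weierstrass to a top derivative and integrating (the paper does this by induction on $\nu$, one integration per step, rather than a $\nu$-fold iterated integral, but this is the same idea), followed by a small Hermite-interpolation correction whose size is controlled linearly by the defects $b_{ik}$, with $\delta$ fixed only after the interpolation constants are known. The paper simply makes the Hermite step explicit by writing down basis polynomials $P_{ik}$ with $P_{ik}^{(\ell)}(t_j)=\delta_{ij}\delta_{k\ell}$ and bounding their derivatives, which is exactly the boundedness of the linear interpolation operator you invoke.
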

\begin{proof}
The proof is conducted in two steps:

\noindent{\sc Step 1}. \em There exists a polynomial $h\in\R[\t]$ such that $\|h^{(k)}-f^{(k)}\|_{[a,b]}<\veps$ for $k=0,\ldots,\nu$ \em (Condition (i) in the statement).

We proceed by induction on the integer $\nu\geq0$. If $\nu=0$, the result is classical Stone-Weierstrass' polynomial approximation theorem. Assume the result is true for $\nu-1\geq0$ and let us check that it is also true for $\nu$.

Consider the $\Cont^{\nu-1}$ function $f'$ on the interval $[a,b]$ and extend it as a $\Cont^{\nu-1}$ function to a bigger interval $[a',b']$ that contains $[a,b]$ in its interior. By induction hypotheses there exists a polynomial $h_0\in\R[\t]$ such that $|f^{(k+1)}-h_0^{(k)}|<\frac{\veps}{1+(b-a)}$ on $[a',b']$ for $k=0,\ldots,\nu-1$. By Barrow's rule
$$
f(t)=f(a)+\int_a^tf'(s)ds.
$$
Define 
$$
h(t):=f(a)+\int_a^th_0(s)ds,
$$
which is a polynomial of $\R[\t]$. Observe that $h'=h_0$, so 
$$
\|h^{(k)}-f^{(k)}\|_{[a,b]}=\|h_0^{(k-1)}-f^{(k)}\|_{[a,b]}<\tfrac{\veps}{1+(b-a)}<\veps
$$
for $k=1,\ldots,\nu$. In addition,
\begin{multline*}
\|h-f\|_{[a,b]}=\Big\|\int_a^th_0(s)ds-\int_a^tf'(s)ds\Big\|_{[a,b]}=\max_{[a,b]}\Big\{\Big|\int_a^t(h_0(s)-f'(s))ds\Big|\Big\}\\
\leq\max_{[a,b]}\Big\{\int_a^t|h_0(s)-f'(s)|ds\Big\}<(b-a)\frac{\veps}{1+(b-a)}<\veps.
\end{multline*}

\noindent{\sc Step 2}. We show how to modify $h$ in order to have also condition (ii). 

Take polynomials $P_{ik}$ such that 
$$
P_{ik}^{(\ell)}(t_j)=\begin{cases}
0&\text{if $i\neq j$ or $k\neq\ell$},\\
1&\text{if $i=j$ and $k=\ell$,}
\end{cases}
$$ 
for $i=1,\ldots,r$ and $0\leq k,\ell\leq\nu$. For instance, {\em we may take
$$
P_{ik}:=c_{ik}(\t-\t_i)^k\prod_{j\neq i}((\t-t_i)^{\nu+1}-(t_j-t_i)^{\nu+1})^{\nu+1}
$$ 
for $c_{ik}:=\frac{1}{k!}\frac{(-1)^{\nu+1}}{\prod_{j\neq i}(t_j-t_i)^{(\nu+1)^2}}$.} 

The Taylor expansion of $P_{ik}$ at $t_i$ has the form
$$
P_{ik}=\frac{1}{k!}(\t-t_i)^k+d_{ik}(\t-t_i)^{\nu+1}+\cdots
$$
for some $d_{ik}\in\R$, whereas the Taylor expansion of $P_{ik}$ at $t_j$ (for $j\neq i$) has the form
\begin{multline*}
P_{ik}=e_{ik}(\t-t_j)^{\nu+1}+\cdots\\
\text{where } e_{ik}:=\Big(c_{ik}(t_j-t_i)^k((\nu+1)(t_j-t_i)^{\nu})^{\nu+1}\prod_{m\neq i,j}((t_j-t_i)^{\nu+1}-(t_m-t_i)^{\nu+1})^{\nu+1}\Big).
\end{multline*}
In both cases above the symbol $+\cdots$ means `plus terms of higher degree' with respect to either $\t-t_i$ or $\t-t_j$ depending on each case.

Define
$$
M:=\max\{\|P_{ik}^{(\ell)}\|_{[a,b]}:\ 1\leq i\leq r,\ 0\leq k,\ell\leq\nu\}\quad\text{and}\quad\delta:=\frac{\veps}{1+r(\nu+1)M}.
$$
Let $h\in\R[\t]$ be a polynomial such that $\|h^{(k)}-f^{(k)}\|_{[a,b]}<\delta$ for $k=0,\ldots,\nu$. Define
$$
g:=h+\sum_{i=1}^r\sum_{k=0}^{\nu}b_{ik}P_{ik}
$$
where $b_{ik}:=a_{ik}-h^{(k)}(t_i)=f^{(k)}(t_i)-h^{(k)}(t_i)$ for $i=1,\ldots,r$ and $k=0,\ldots,\nu$. Thus, 
$$
g^{(\ell)}(t_j)=h^{(\ell)}(t_j)+\sum_{i=1}^r\sum_{k=0}^{\nu}b_{ik}P_{ik}^{(\ell)}(t_j)=h^{(\ell)}(t_j)+b_{j\ell}=a_{j\ell}=f^{(\ell)}(t_j)
$$
for $j=1,\ldots,r$ and $\ell=0,\ldots,\nu$.

Observe that $|b_{ik}|=|f^{(k)}(t_i)-h^{(k)}(t_i)|<\delta$, so
$$
\|g^{(\ell)}-f^{(\ell)}\|_{[a,b]}\leq\|h^{(\ell)}-f^{(\ell)}\|_{[a,b]}+\sum_{i=1}^r\sum_{k=0}^{\nu}|b_{ik}|\|P_{ik}^{(\ell)}\|_{[a,b]}<\delta+r(\nu+1)M\delta=\veps, 
$$
for each $\ell=0,\ldots,\nu$, as required.
\end{proof}

\subsection{Polynomial paths with prescribed behavior at points and intervals}

We prove next (as a consequence of Lemma \ref{swdp}) a key result to prove Lemma \ref{smart}. When we write a series in the form $h:=a_k\t^k+\cdots$, we mean that the lowest order term is $a_k\t^k$ (with $a_k\neq0$) and the remaining terms have higher order and are not relevant for our computation. 

\begin{lem}\label{clue}
Let $\Ss_0,\ldots,\Ss_r\subset\R^n$ be connected open semialgebraic sets (not necessarily pairwise different) and pick points $z_i\in\cl(\Ss_{i-1})\cap\cl(\Ss_i)$ for $i=1,\ldots,r$. Assume that there exist a $\Cont^\nu$ path $\beta:[a,b]\to\bigcup_{k=0}^r\Ss_k\cup\{z_1,\ldots,z_r\}$ (for some $\nu\geq0$) and values $a:=t_0<t_1<\cdots<t_r<t_{r+1}:=b$ satisfying the following properties:
\begin{itemize}
\item[(i)] $\beta((t_k,t_{k+1}))\subset\Ss_k$ for $k=0,\ldots,r$,
\item[(ii)] $\beta(t_i)=z_i$ and $\beta$ is an analytic path in a neighborhood of $t_i$ for $i=1,\ldots,r$,
\item[(iii)] there exist polynomials $f_{ij}\in\R[\x]$ such that $\{f_{i1}>0,\ldots,f_{is}>0\}\subset\Ss_{i-1}$ is adherent to $z_i$ and the analytic series $(f_{ij}\circ\beta)(t_i-\t)$ has the form $a_{ij}\t^{n_{ij}}+\cdots$ where $a_{ij}>0$,
\item[(iv)] there exist polynomials $g_{ij}\in\R[\x]$ such that $\{g_{i1}>0,\ldots,g_{is}>0\}\subset\Ss_i$ is adherent to $z_i$ and the analytic series $(g_{ij}\circ\beta)(t_i+\t)$ has the form $b_{ij}\t^{p_{ij}}+\cdots$ where $b_{ij}>0$,
\item[(v)] $n_{ij},p_{ij}<\nu$ for each $i,j$.
\end{itemize}
We have: 
\begin{itemize}
\item[(1)] There exists an open neighborhood ${\mathcal U}$ of $\beta$ in the $\Cont^\nu$-topology such that if $\alpha\in{\mathcal U}$ and $\alpha^{(m)}(t_i)=\beta^{(m)}(t_i)$ for $i=1,\ldots,r$ and $m=0,\ldots,\nu-1$, then $\alpha((t_k,t_{k+1}))\subset\Ss_k$ for $k=0,\ldots,r$. 
\item[(2)] There exists a polynomial path $\alpha:[a,b]\to\bigcup_{k=0}^r\Ss_k\cup\{z_1,\ldots,z_r\}$ close to $\beta$ in the $\Cont^\nu$-topology such that $\alpha(t_i)=z_i$ for $i=1,\ldots,r$ and $\alpha((t_k,t_{k+1}))\subset\Ss_k$ for $i=0,\ldots,r$.
\end{itemize}
\end{lem}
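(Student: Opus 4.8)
The plan is to derive both parts of Lemma \ref{clue} from the polynomial approximation-with-interpolation result of Lemma \ref{swdp}. Throughout, I work with $\beta=(\beta_1,\ldots,\beta_n):[a,b]\to\R^n$ of class $\Cont^\nu$, its prescribed values and derivatives $a_{ik}:=\beta^{(k)}(t_i)$ at the nodes $t_1,\ldots,t_r$, and the $\Cont^\nu$ compact-open topology on $\Cont^\nu([a,b],\R^n)$.

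\textbf{Part (1): openness.} First I would argue that the condition ``$\alpha((t_k,t_{k+1}))\subset\Ss_k$ for all $k$'' holds on an explicit $\Cont^\nu$-neighborhood of $\beta$, provided $\alpha$ agrees with $\beta$ to order $\nu-1$ at each node. Fix $k$ and an index $j$; I need that $\alpha((t_k,t_{k+1}))$ stays inside the open set $\{g_{k+1,1}>0,\ldots\}$ near $t_{k+1}$ and inside $\{f_{k,1}>0,\ldots\}$ near $t_k$ (these semialgebraic neighborhoods of $z_{k+1}$, resp.\ $z_k$, are adherent to $z_{k+1}$, resp.\ $z_k$, by (iii)--(iv)), and inside the compact part of $\Ss_k$ bounded away from the endpoints. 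The latter (an interior subinterval $[t_k+\delta,t_{k+1}-\delta]$) is handled by plain $\Cont^0$-closeness, since $\beta$ maps it into the open set $\Ss_k$ and uniform closeness keeps $\alpha$ there. For the endpoint behavior: by (iii) the series $(f_{k+1,j}\circ\beta)(t_{k+1}-\t)=a_{k+1,j}\t^{n_{k+1,j}}+\cdots$ has $a_{k+1,j}>0$ and $n_{k+1,j}<\nu$; since $f_{k+1,j}\circ\alpha$ and $f_{k+1,j}\circ\beta$ have the same Taylor polynomial of degree $\le\nu-1$ at $t_{k+1}$ (because $\alpha,\beta$ agree to order $\nu-1$ there and $f_{k+1,j}$ is polynomial), the first $\nu$ Taylor coefficients of $f_{k+1,j}\circ\alpha$ at $t_{k+1}$ coincide with those of $f_{k+1,j}\circ\beta$; in particular the lowest non-vanishing one is still $a_{k+1,j}\t^{n_{k+1,j}}$ with $a_{k+1,j}>0$, so $f_{k+1,j}\circ\alpha>0$ on a punctured one-sided neighborhood of $t_{k+1}$ whose length can be taken uniform over a $\Cont^\nu$-neighborhood of $\beta$ (the remainder term is controlled by $\|\alpha^{(\nu)}-\beta^{(\nu)}\|$ via Taylor's theorem). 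The analogous argument with $g_{k,j}$ handles the right endpoint $t_k$. Intersecting finitely many such neighborhoods over all $k$ and $j$ gives the desired $\mathcal{U}$; I would phrase this using Lemma \ref{cont} applied to $\varphi:=f_{ij}$ (resp.\ $g_{ij}$) to know that $\alpha\mapsto f_{ij}\circ\alpha$ is $\Cont^\nu$-continuous.

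\textbf{Part (2): existence of the polynomial path.} Given Part (1), I apply Lemma \ref{swdp} coordinatewise: for each component $\beta_s$ of $\beta$, with nodes $t_1,\ldots,t_r$ and prescribed derivative data $\beta_s^{(k)}(t_i)$ for $0\le k\le\nu$, Lemma \ref{swdp} produces a polynomial $\alpha_s\in\R[\t]$ with $\|\alpha_s^{(k)}-\beta_s^{(k)}\|_{[a,b]}<\veps'$ for $k=0,\ldots,\nu$ and $\alpha_s^{(k)}(t_i)=\beta_s^{(k)}(t_i)$. Setting $\alpha:=(\alpha_1,\ldots,\alpha_n)$, for $\veps'$ small enough $\alpha$ lies in the neighborhood $\mathcal{U}$ of Part (1), and by construction $\alpha^{(m)}(t_i)=\beta^{(m)}(t_i)$ for all $m\le\nu$ (in particular $m\le\nu-1$ as required, and $\alpha(t_i)=z_i$). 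Hence $\alpha((t_k,t_{k+1}))\subset\Ss_k$ for every $k$, and $\alpha([a,b])\subset\bigcup_{k=0}^r\Ss_k\cup\{z_1,\ldots,z_r\}$ since each $t_i$ is mapped to $z_i$. This $\alpha$ is the required polynomial path, and it is $\Cont^\nu$-close to $\beta$ by the estimate from Lemma \ref{swdp}.

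\textbf{Main obstacle.} The delicate point is Part (1): making precise that the open one-sided neighborhoods of the nodes on which $f_{ij}\circ\alpha>0$ (resp.\ $g_{ij}\circ\alpha>0$) can be chosen \emph{uniformly} as $\alpha$ ranges over a $\Cont^\nu$-neighborhood of $\beta$. The hypothesis $n_{ij},p_{ij}<\nu$ in (v) is exactly what guarantees that the sign of $f_{ij}\circ\alpha$ near $t_i$ is governed by the (fixed, positive) Taylor coefficient of order $<\nu$, with the tail uniformly dominated by $\|\alpha^{(\nu)}-\beta^{(\nu)}\|_{[a,b]}$ through the integral form of the Taylor remainder; without it one could not separate the controlling term from the remainder. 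Keeping track of the dependence of all the thresholds (the interior $\delta$, the one-sided neighborhood lengths, the ensuing $\veps'$) so that a single $\mathcal{U}$ works simultaneously for the finitely many indices $i,j,k$ is the bookkeeping that needs care, but it is routine once the role of (v) is isolated.
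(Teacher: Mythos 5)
Your proposal is correct and follows essentially the same route as the paper: reduce everything to Lemma \ref{swdp} (approximation with Hermite-type interpolation at the nodes), handle the interior subintervals by $\Cont^0$-closeness to the open sets $\Ss_k$, and use hypothesis (v) to force the sign of $f_{ij}\circ\alpha$ and $g_{ij}\circ\alpha$ near each $t_i$ from the frozen leading Taylor coefficient. The only deviation is the technical device at the endpoint step: you bound the degree-$\nu$ Taylor remainder uniformly over the $\Cont^\nu$-neighborhood and shrink the one-sided interval, whereas the paper keeps the interval $[t_i-\delta,t_i+\delta]$ fixed, forces $(f_{ij}\circ\alpha)^{(n_{ij})}>0$ on all of it by the neighborhood condition, and concludes by iterated Rolle; both arguments are valid and use (v) in the same essential way.
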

\begin{proof}
We prove this result as an application of Lemma \ref{swdp}. Observe that $(f_{ij}\circ\beta)^{(n_{ij})}(t_i)>0$ and $(g_{ij}\circ\beta)^{(p_{ij})}(t_i)>0$. Thus, there exists $\delta>0$ such that the compact interval $I_i:=[t_i-\delta,t_i+\delta]\subset[a,b]$, $(f_{ij}\circ\beta|_{I_i})^{(n_{ij})}>0$ and $(g_{ij}\circ\beta|_{I_i})^{(p_{ij})}>0$ for $i=1,\ldots,r$ and $j=1,\ldots,s$. Denote $J_0:=[a,t_1-\delta]$, $J_k:=[t_k+\delta,t_{k+1}-\delta]$ for $k=1,\ldots,r-1$ and $J_r:=[t_r+\delta,b]$. By Lemmas \ref{cont} and \ref{cont2} the maps
\begin{align*}
&\varphi_{ij}:\Cont^\nu([a,b],\R^n)\to\Cont^\nu(I_i,\R),\ \gamma\mapsto f_{ij}\circ\gamma|_{I_i},\\
&\phi_{ij}:\Cont^\nu([a,b],\R^n)\to\Cont^\nu(I_i,\R),\ \gamma\mapsto g_{ij}\circ\gamma|_{I_i},\\
&\psi_k:\Cont^\nu([a,b],\R^n)\to\Cont^0(J_k,\R),\ \gamma\mapsto\dist(\gamma|_{J_k}(\t),\R^n\setminus \Ss_k)
\end{align*}
are continuous. In addition, each function $\psi_k(\beta)$ is strictly positive. Define 
$$
\veps:=\min_{i,j,k}\{\min\{(f_{ij}\circ\beta|_{I_i})^{(n_{ij})}\},\min\{(g_{ij}\circ\beta|_{I_i})^{(p_{ij})}\},\min\{\psi_k(\beta)\}\}>0
$$ 
and consider
\begin{equation*}
\begin{split}
{\mathcal U}_0&:=\bigcap_{i=1}^r\bigcap_{j=1}^s\{\gamma\in\Cont^\nu([a,b],\R^n):\ |\varphi_{ij}(\gamma|_{I_i})^{(n_{ij})}-\varphi_{ij}(\beta|_{I_i})^{(n_{ij})}|<\veps\}\\
&\cap\bigcap_{i=1}^r\bigcap_{j=1}^s\{\gamma\in\Cont^\nu([a,b],\R^n):\ |\phi_{ij}(\gamma|_{I_i})^{(p_{ij})}-\phi_{ij}(\beta|_{I_i})^{(p_{ij})}|<\veps\}\\ 
&\cap\bigcap_{k=0}^r\{\gamma\in\Cont^\nu([a,b],\R^n):\ |\psi_k(\gamma|_{J_k})-\psi_k(\beta|_{J_k})|<\veps\},
\end{split}
\end{equation*}
which is an open subset of $\Cont^\nu([a,b],\R^n)$. Then there exists $\delta>0$ such that 
$$
{\mathcal U}:=\{\gamma\in\Cont^\nu([a,b],\R^n):\ \|\gamma^{(m)}-\beta^{(m)}\|_{[a,b]}<\delta,\ m=0,\ldots,\nu\}\subset{\mathcal U}_0.
$$

We are ready to prove the assertions in the statement:

(1) We claim: \em If $\alpha\in{\mathcal U}$ and $\alpha^{(m)}(t_i)=\beta^{(m)}(t_i)$ for $i=1,\ldots,r$ and $m=0,\ldots,\nu$, then $\alpha((t_k,t_{k+1}))\subset\Ss_k$ for $k=0,\ldots,r$\em.

It holds $\alpha(J_k)\subset\Ss_k$, because $\alpha\in\{\gamma\in\Cont^\nu([a,b]):\ |\psi_k(\gamma|_{J_k})-\psi_k(\beta|_{J_k})|<\veps\}$. Thus, to prove the claim it is enough to check:
\begin{align}
\label{2161}&\alpha([t_i-\delta,t_i))\subset\{f_{i1}>0,\ldots,f_{is}>0\}\subset\Ss_{i-1},\\
\label{2162}&\alpha((t_i,t_i+\delta])\subset\{g_{i1}>0,\ldots,g_{is}>0\}\subset\Ss_i
\end{align}
for $i=1,\ldots,r$. We show only \eqref{2161} because the proof of \eqref{2162} is analogous.

Using Taylor's expansion, we know that $\alpha$ around $t_i$ has the form
\begin{multline*}
\alpha(\t)=\sum_{m=0}^{\nu-1}\frac{1}{m!}\alpha^{(m)}(t_i)(\t-t_i)^m+(\t-t_i)^\nu\theta(\t-t_i)\\
=\sum_{m=0}^{\nu-1}\frac{1}{m!}\beta^{(m)}(t_i)(\t-t_i)^m+(\t-t_i)^\nu\theta(\t-t_i)
\end{multline*}
where $\theta$ is a continuous map defined on an interval around $0$. As $\beta$ is analytic in a neighborhood of $t_i$, there exists a tuple of analytic series $\tau\in\R\{\t\}^n$ such that
$$
\beta(\t)=\sum_{m=0}^{\nu-1}\frac{1}{m!}\beta^{(m)}(t_i)(\t-t_i)^m+(\t-t_i)^\nu\tau(\t-t_i).
$$
Thus, if $\zeta:=\theta-\tau$, which is a continuous function around $0$, we deduce 
$$
\alpha(\t)-\beta(\t)=(\t-t_i)^\nu\zeta(\t-t_i)\quad\leadsto\quad\alpha(t_i-\t)-\beta(t_i-\t)=(-\t)^\nu\zeta(-\t).
$$
Write $\x:=(\x_1,\ldots,\x_n)$, $\y:=(\y_1,\ldots,\y_n)$ and let $\z$ be a single variable. As the polynomial $f_{ij}(\x+\z\y)-f_{ij}(\x)$ vanishes on the real algebraic set $\{\z=0\}$, there exists a polynomial $h_{ij}\in\R[\x,\y,\z]$ such that
$$
f_{ij}(\x+\z\y)=f_{ij}(\x)+\z h_{ij}(\x,\y,\z).
$$
As $\nu>n_{ij}$, we deduce
\begin{multline*}
f_{ij}(\alpha(t_i-\t))=f_{ij}(\beta(t_i-\t)+\alpha(t_i-\t)-\beta(t_i-\t))\\
=f_{ij}(\beta(t_i-\t))+(-1)^\nu\t^\nu h_{ij}(\beta(t_i-\t),\zeta(-\t),(-1)^\nu\t^\nu)
=a_{ij}\t^{n_{ij}}+\cdots.
\end{multline*}
Consequently, $(f_{ij}\circ\alpha)^{(m)}(t_i)=0$ for $m=0,\ldots,n_{ij}-1$ and $(f_{ij}\circ\alpha)^{(n_{ij})}(t_i)=n_{ij}!\,a_{ij}>0$. In addition, $\alpha(t_i-t)\in\{f_{i1}>0,\ldots,f_{is}>0\}$ for $t\in(0,\delta)$ close to $0$.

As $(f_{ij}\circ\beta|_{I_i})^{(n_{ij})}(t_i-\t)>\veps>0$ on $[-\delta,\delta]$ and $|(f_{ij}\circ\beta|_{I_i})^{(n_{ij})}-(f_{ij}\circ\alpha|_{I_i})^{(n_{ij})}|<\veps$, we conclude that $(f_{ij}\circ\alpha|_{I_i})^{(n_{ij})}(t_i-\t)>0$ on $[-\delta,\delta]$ for each $j=1,\ldots,s$. Suppose there exists $t^*\in[t_i-\delta,t_i)$ such that $\alpha(t^*)\not\in\{f_{i1}>0,\ldots,f_{is}>0\}$ and assume $(f_{i1}\circ\alpha)(t^*)\leq0$. As $\alpha(t_i-t)\in\{f_{i1}>0,\ldots,f_{is}>0\}$ for $t\in(0,\delta)$ close to $0$, there exists $\xi_0\in(0,\delta)$ such that $(f_{i1}\circ\alpha)(t_i-\xi_0)=0$. Assume by induction on $m\leq n_{i1}-1$ that there exist values $0<\xi_m<\cdots<\xi_1<\xi_0<\delta$ such that $(f_{i1}\circ\alpha)^{(j)}(t_i-\xi_j)=0$ for $j=0,\ldots,m$. As $(f_{i1}\circ\alpha)^{(m)}(t_i)=0$ and $(f_{i1}\circ\alpha)^{(m)}(t_i-\xi_m)=0$, there exists by Rolle's theorem $\xi_{m+1}\in(0,\xi_m)$ such that $(f_{i1}\circ\alpha)^{(m+1)}(t_i-\xi_{m+1})=0$. In particular, $(f_{i1}\circ\alpha)^{(n_{i1})}(t_i-\xi_{n_{i1}})=0$ and $\xi_{n_{i1}}\in(0,\delta)$, which contradicts the fact that $(f_{i1}\circ\alpha|_{I_i})^{(n_{i1})}(t_i-\t)>0$ on $[-\delta,\delta]$. Consequently, $\alpha(t)\in\{f_{i1}>0,\ldots,f_{is}>0\}$ for each $t\in[t_i-\delta,t_i)$.

(2) By Lemma \ref{swdp} there exists a polynomial tuple $\alpha\in\R[\t]^n$ such that $\|\alpha^{(m)}-\beta^{(m)}\|_{[a,b]}<\delta$ for $m=0,\ldots,\nu$ (that is, $\alpha\in{\mathcal U}$) and $\alpha^{(m)}(t_i)=\beta^{(m)}(t_i)$ for $i=1,\ldots,r$ and $m=0,\ldots,\nu$. By (1) we deduce $\alpha((t_k,t_{k+1}))\subset\Ss_k$ for $k=0,\ldots,r$, as required.
\end{proof}

\begin{remark}
If $\Ss_{i-1}=\Ss_i$ for some $i=1,\ldots,r$ in the statement of Lemma \ref{clue}, the condition $z_i\in\cl(\Ss_{i-1})\cap\cl(\Ss_i)$ means $z_i\in\cl(\Ss_i)$ and condition (i) reads as $\beta((t_{i-1},t_{i+1})\setminus\{t_i\})\subset\Ss_i$. The reader has to take this into account when applying Lemma \ref{clue} to prove Lemma \ref{smart}.
\end{remark}

We still need some preliminary results to prove Lemma \ref{smart}:

\begin{cor}[Connexion by polynomial paths]\label{polcon}
Let $\Ss\subset\R^n$ be a connected open semialgebraic set and let $x,y\in\Ss$. Then there exists a polynomial path $\alpha:[0,1]\to\Ss$ such that $\alpha(0)=x$ and $\alpha(1)=y$. 
\end{cor}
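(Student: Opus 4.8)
The plan is to reduce the statement to the already-established Lemma \ref{clue}(2) applied with $r=0$, i.e. with no intermediate interpolation points, so that only the ``interior of a single connected open set'' part of that machinery is used. First I would pick the connected open semialgebraic set $\Ss$ and the two points $x,y\in\Ss$. Since $\Ss$ is semialgebraic, connected and open, it is semialgebraically path connected (see \cite[Prop.2.5.13, Thm.2.5.12 etc.]{bcr}); hence there is a continuous semialgebraic path $\gamma:[0,1]\to\Ss$ with $\gamma(0)=x$ and $\gamma(1)=y$. After reparameterizing locally at the endpoints (using \cite[Prop.8.1.12]{bcr}) we may assume $\gamma$ is Nash, hence analytic, at $0$ and $1$, and that $\eta(\gamma)\subset(0,1)$; in particular $\gamma$ is $\Cont^\nu$ on $[0,1]$ for every $\nu\geq0$.

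Next I would invoke Lemma \ref{clue} in the degenerate case $r=0$: take $\Ss_0:=\Ss$, $[a,b]:=[0,1]$, no points $z_i$, the path $\beta:=\gamma$, and $\nu$ large enough (any $\nu\geq1$ works since conditions (iii)--(v) are vacuous when $r=0$). Hypothesis (i) of Lemma \ref{clue} is exactly $\gamma((0,1))\subset\Ss$, which holds, and (ii) is vacuous. Conclusion (2) of that lemma then produces a polynomial tuple $\alpha\in\R[\t]^n$, close to $\gamma$ in the $\Cont^\nu$-topology, with $\alpha((0,1))\subset\Ss$. However this does not yet guarantee $\alpha(0)=x$ and $\alpha(1)=y$: Lemma \ref{clue}(2) only pins the path at the interior points $z_i$, of which there are none here. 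So I would either enlarge the interpolation set to $\{t_0,t_{r+1}\}=\{0,1\}$ and track through the proof of Lemma \ref{clue} that the endpoint-interpolation is automatic from Lemma \ref{swdp} (whose conclusion (ii) allows interpolating derivatives at any finite set of points of $[a,b]$, endpoints included), or, more cleanly, apply Lemma \ref{swdp} directly: approximate $\gamma$ by a polynomial $g$ in the $\Cont^1$-topology that additionally satisfies $g(0)=\gamma(0)=x$, $g(1)=\gamma(1)=y$, and then use the openness part (conclusion (1) of Lemma \ref{clue}, or a direct distance-to-$\R^n\setminus\Ss$ argument via Lemma \ref{cont}) to ensure $g((0,1))$ stays inside $\Ss$. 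Finally I would set $\alpha:=g$, restricted to $[0,1]$, which is the desired polynomial path.

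The main obstacle I anticipate is bookkeeping rather than substance: making sure the endpoints are handled correctly. The cleanest route is to note that Lemma \ref{swdp} lets us prescribe $g$ and all its derivatives up to order $\nu$ at the finite set $\{0,1\}$ while keeping $g$ $\veps$-close to $\gamma$ in the $\Cont^\nu$-norm on $[0,1]$; choosing the prescribed values to be $\gamma(0)=x$, $\gamma(1)=y$ (say with $\nu=0$ it already suffices for the endpoint condition, but taking $\nu\geq1$ makes the ``stay inside $\Ss$'' step easier) and choosing $\veps$ smaller than $\min_{t\in[\delta,1-\delta]}\dist(\gamma(t),\R^n\setminus\Ss)$ on a compact subinterval, together with analyticity of $\gamma$ near the endpoints to control $g$ on $[0,\delta]\cup[1-\delta,1]$ exactly as in the proof of Lemma \ref{clue}(1), forces $g((0,1))\subset\Ss$. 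Then $\alpha:=g|_{[0,1]}$ works. I would keep the write-up short by explicitly citing Lemma \ref{clue} with $r=0$ (noting its hypotheses become vacuous) and only remarking that the endpoint interpolation comes for free from Lemma \ref{swdp}.
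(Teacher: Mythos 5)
Your approach is essentially the paper's: semialgebraic path connectedness, reparameterization so the path is Nash at the endpoints, then the approximation-with-interpolation machinery. The one place you diverge is the endpoint bookkeeping, and the paper resolves it more cleanly than either of your two options: it extends $\beta$ continuously and semialgebraically to a slightly larger interval $[-\veps,1+\veps]$, so that $0$ and $1$ become \emph{interior} points of the domain and Lemma \ref{clue}(2) (applied with $r=2$, $z_1=x$, $z_2=y$ and all $\Ss_k=\Ss$) pins the path at $x$ and $y$ automatically. This matters because Lemma \ref{swdp} as stated requires $a<t_1<\cdots<t_r<b$, so your preferred plan of interpolating ``at the endpoints'' of $[0,1]$ is not literally covered by its hypotheses (though its proof clearly extends to that case); the extension trick avoids having to reopen that proof. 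One smaller point: your assertion that $\eta(\gamma)\subset(0,1)$ implies $\gamma$ is $\Cont^\nu$ on $[0,1]$ is a non sequitur --- at the points of $\eta(\gamma)$ the path can fail to be even $\Cont^1$, and strictly one should first smooth these corners (they all lie in the open set $\Ss$, so Lemma \ref{smco} applies), a step the paper itself elides in this corollary but carries out explicitly in Step 3 of the proof of Lemma \ref{smart}. With those repairs your argument is sound and coincides with the paper's.
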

\begin{proof}
As $\Ss$ is connected, it is connected by semialgebraic paths \cite[Prop.2.5.13]{bcr}. Thus, there exists a semialgebraic path $\beta:[0,1]\to\Ss$ such that $\beta(0)=x$ and $\beta(1)=y$. By \cite[Prop.8.1.12]{bcr} and after reparameterizing (if necessary) we may assume that $\beta$ is Nash at the points $0,1$ and we extended $\beta$ continuously and semialgebraically to an interval $[-\veps,1+\veps]$. By Lemma \ref{clue}(2) there exists a polynomial path $\alpha:[0,1]\to\Ss$ such that $\alpha(0)=x$ and $\alpha(1)=y$, as required.
\end{proof}

\begin{lem}[Double polynomial curve selection lemma]\label{doublecurve}
Let $\Ss\subset\R^n$ be an open semialgebraic set and let $p\in\cl(\Ss)$. Then there exists a polynomial arc $\alpha:[-1,1]\to\R^n$ such that $\alpha(0)=p$, $\alpha([-1,1]\setminus\{0\})\subset\Ss$ and $\alpha([-1,0))\cap\alpha((0,1])=\varnothing$.
\end{lem}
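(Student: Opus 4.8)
The plan is to first manufacture a polynomial arc $\gamma\colon[0,1]\to\R^n$ with $\gamma(0)=p$, $\gamma((0,1])\subset\Ss$ \emph{and whose first coordinate $\gamma_1$ is strictly increasing on $[0,1]$}, and then to \emph{double} it by the explicit formula
$$
\alpha(t):=\gamma(t^2)+\lambda\,t^{2N+1}e_2\qquad(t\in[-1,1]),
$$
where $e_2$ is the second vector of the standard basis of $\R^n$, the integer $N$ is chosen large and the real $\lambda>0$ is chosen small. I will assume $n\geq 2$ (for $n=1$ the statement is either immediate or vacuous, according to the side on which $\Ss$ lies near $p$), and also $p\in\partial\Ss$, since if $p\in\Ss$ a short segment $t\mapsto p+\rho te_1$ already works.

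To build $\gamma$, I would let $C$ be a connected component of $\Ss$ with $p\in\cl(C)$, write $C$ as a finite union of basic open semialgebraic sets, and pick one of them, $V=\{f_1>0,\dots,f_s>0\}$, with $p\in\cl(V)$. By the curve selection lemma, after the reparametrization of \cite[Prop.8.1.12]{bcr} and, if needed, a substitution $s\mapsto s^M$ to clear fractional exponents, one obtains an analytic arc $\delta\colon[0,1]\to\R^n$ with $\delta(0)=p$ and $\delta((0,1])\subset V$. Since $\delta$ is non‑constant, $\delta(s)-p=s^k(c+\mathrm o(1))$ with $c\neq0$; after an orthogonal change of coordinates making $c$ a positive multiple of $e_1$ one gets $\delta_1'(s)=kc_1s^{k-1}(1+\mathrm o(1))>0$ for small $s>0$, and, restricting $\delta$ to a short initial interval and reparametrizing, I may assume $\delta_1'>0$ on $(0,1]$. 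Then I apply Lemma~\ref{clue}(2) with $r=1$, $t_1=0$, $z_1=p$, $\Ss_0=\Ss_1=C$, defining polynomials $f_1,\dots,f_s$ (note $\{f_1>0,\dots,f_s>0\}=V\subset C$ is adherent to $p$ and each series $(f_j\circ\delta)(t^2)$ has positive leading coefficient), base path $\beta(t):=\delta(t^2)$, and $\nu$ larger than $2k$ and than the orders of vanishing at $0$ of the $f_j\circ\delta$. This yields a polynomial path $\widetilde\gamma\colon[-1,1]\to C\cup\{p\}$, $C^\nu$‑close to $\beta$, with $\widetilde\gamma(0)=p$, $\widetilde\gamma([-1,1]\setminus\{0\})\subset C$ and $\widetilde\gamma^{(m)}(0)=\beta^{(m)}(0)$ for $m\leq\nu$. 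Setting $\gamma:=\widetilde\gamma|_{[0,1]}$: since $\gamma_1$ has the same $\nu$‑jet at $0$ as $\beta_1(t)=\delta_1(t^2)=p_1+c_1t^{2k}+\cdots$ (so $\gamma_1'>0$ near $0$) and is $C^1$‑close to $\beta_1$ on $[0,1]$ (so $\gamma_1'>0$ away from $0$), the function $\gamma_1$ is strictly increasing on $[0,1]$.

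Then I would verify the two properties of $\alpha$. Clearly $\alpha(0)=\gamma(0)=p$. The map $s\mapsto\dist(\gamma(s),\R^n\setminus\Ss)$ is a positive semialgebraic function on $(0,1]$ tending to $0$ as $s\to0^+$, hence bounded below by $c_0s^{N_0}$ for suitable $c_0>0$, $N_0\in\N$ and all small $s>0$; choosing $N\geq N_0$ gives $\|\alpha(t)-\gamma(t^2)\|=\lambda|t|^{2N+1}<\dist(\gamma(t^2),\R^n\setminus\Ss)$ for $|t|$ small, so $\alpha(t)\in\Ss$ there, whereas for $|t|$ bounded away from $0$ the point $\gamma(t^2)$ lies in a compact subset of the open set $\Ss$ and a small enough $\lambda$ keeps $\alpha(t)\in\Ss$; thus $\alpha([-1,1]\setminus\{0\})\subset\Ss$. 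For disjointness, writing $a=t^2$ the positive branch is $\{\gamma(a)+\lambda a^{N+1/2}e_2:a\in(0,1]\}$ and the negative branch is $\{\gamma(a)-\lambda a^{N+1/2}e_2:a\in(0,1]\}$; a common point would force, on comparing first coordinates (which the $e_2$‑perturbation leaves untouched), $\gamma_1(a)=\gamma_1(b)$, hence $a=b$ by strict monotonicity of $\gamma_1$, and then $2\lambda a^{N+1/2}e_2=0$, i.e.\ $a=0$, which is impossible. Hence $\alpha([-1,0))\cap\alpha((0,1])=\varnothing$.

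The hard part will be the disjointness of the two branches, and the device I expect to rely on is exactly forcing $\gamma_1$ to be strictly monotone: once that holds, an arbitrarily small transverse perturbation in the $e_2$‑direction separates the branches automatically. Securing strict monotonicity is the technical heart — it needs a coordinate choice in which the arc leaves $p$ transversally to the hyperplanes $\{x_1=\mathrm{const}\}$ (obtained by rotating the leading tangent vector of $\delta$ onto the positive $x_1$‑axis and then shortening $\delta$) together with enough control of the $\nu$‑jet at $0$ of the polynomial approximant produced by Lemma~\ref{clue}. The remaining issue, presenting the germ of $\Ss$ at $p$ by finitely many polynomial strict inequalities lying inside a single connected piece so that Lemma~\ref{clue} is applicable, is routine semialgebraic bookkeeping.
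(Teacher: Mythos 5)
Your proposal is correct, but it takes a genuinely different route from the paper's. The paper starts from a Nash arc $\eta$ whose components are algebraic power series, truncates them at a sufficiently high order to obtain a polynomial tuple $\xi$ for which the series $f_j(\xi)$ keep their positive leading coefficients, substitutes $\t\mapsto\t^2$ and adds a single odd monomial $\t^q(1,0,\ldots,0)$ of degree exceeding all others; the disjointness of the two branches is then deduced from the parity of the exponents. You instead manufacture the one-sided polynomial arc through the approximation-plus-interpolation machinery of Lemma~\ref{clue}(2), keep the doubled arc inside $\Ss$ via a {\L}ojasiewicz lower bound on $\dist(\gamma(s),\R^n\setminus\Ss)$, and -- this is the essential difference -- first normalize coordinates so that $\gamma_1$ is strictly increasing and then perturb in the \emph{transverse} direction $e_2$, so that a common point of the two branches is ruled out by injectivity of $\gamma_1$ alone. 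What your version buys is a disjointness step that is completely mechanical and, in fact, more robust: a parity argument with the perturbation placed in the fixed direction $e_1$ needs extra care when the arc's tangent at $p$ is itself parallel to $e_1$ (compare $\xi(\t^2)=(\t^2,0)$, $\gamma(t)=(t^2+t^3,0)$, whose two branches do meet), whereas your monotone-coordinate device is insensitive to this. What it costs is the coordinate rotation, the jet control needed to propagate the strict monotonicity of $\beta_1(t)=\delta_1(t^2)$ to the polynomial approximant -- here you should make the ``near $0$'' regime uniform by running the Rolle-type argument from the proof of Lemma~\ref{clue}(1) on $\gamma_1'$, since the naive $O(t^\nu)$ estimate gives a neighbourhood of $0$ depending on the approximant -- and the restriction $n\geq2$. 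One caveat about your aside on $n=1$: when $p\in\partial\Ss$ and $\Ss$ lies on one side of $p$ only (e.g.\ $\Ss=(0,1)$, $p=0$), every admissible arc has both branches containing an interval $(p,p')$, so the statement there is not ``vacuous'' -- it fails; this edge case is avoided in the lemma's applications, but it deserves an explicit remark rather than a wave of the hand.
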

\begin{proof}
By \cite[Prop.8.1.13]{bcr} there exists a Nash arc $\eta:=(\eta_1,\ldots,\eta_n):[-1,1]\to\R^n$ such that $\eta(0)=p$ and $\eta((0,1])\subset\Ss$. After shrinking the domain of $\beta$ we may assume that each $\eta_i\in\R[[\t]]_{\rm alg}$ is an algebraic analytic series. Let $f_1,\ldots,f_r\in\R[\x]$ be polynomials such that 
$$
\eta((0,\veps])\subset\{f_1>0,\ldots,f_r>0\}\subset\Ss
$$
for some $0<\veps<1$. The algebraic series $f_j(\eta)\in\R[[\t]]_{\rm alg}$ satisfies $f_j(\eta)=a_j\t^{k_j}+\cdots$ for some $a_j>0$ and $k_j\geq1$. Define $\ell:=\max\{ k_j:\ j=1,\ldots,r\}+1$. Let $\x:=(\x_1,\ldots,\x_n)$, $\y:=(\y_1,\ldots,\y_n)$ and let $\z$ be a single variable. Write $f_j(\x+\z\y)=f_j(\x)+\z h_j(\x,\y,\z)$ where $h_j\in\R[\x,\y,\z]$. Let $\zeta_j\in\R[[\t^*]]_{\rm alg}$ be an algebraic series such that $\xi_j:=\eta_j+\t^\ell\zeta_j\in\R[\t]$ is a univariate polynomial. Denote $\xi:=(\xi_1,\ldots,\xi_n)$ and $\zeta:=(\zeta_1,\ldots,\zeta_n)$. Then
$$
f_j(\xi(\t^2))=f_j(\eta(\t^2)+\t^{2\ell}\zeta(\t^2))=f_j(\eta(\t^2))+\t^{2\ell}h_j(\eta(\t^2),\zeta(\t^2),\t^{2\ell})=a_j\t^{2k_j}+\cdots,
$$
which is strictly positive for non-zero small values of $\t$. Let $q>\ell$ be an odd positive integer strictly bigger than the degrees of all the polynomials $\xi_j(\t^2)$. Define $\gamma:=\xi(\t^2)+\t^q(1,0,\ldots,0)\in\R[\t]^n$. As the exponent $q$ is odd and all the exponents of the non-zero monomials of the polynomials $\xi_j(\t^2)$ are even, $\gamma([-\veps,0))\cap\gamma((0,\veps])=\varnothing$ if $\veps>0$ is small enough. Observe that
$$
f_j(\gamma)=f_j(\xi(\t^2)+\t^q(1,0,\ldots,0))=f_j(\xi(\t^2))+\t^qh_j(\eta(\t^2),\zeta(\t^2),\t^q)=a_j\t^{2k_j}+\cdots>0,
$$
so for $\veps>0$ small enough $\gamma:[-\veps,\veps]\to\R^n$ is a polynomial arc such that $\gamma([-\veps,\veps]\setminus\{0\})\subset\{f_1>0,\ldots,f_r>0\}\subset\Ss$ and $\gamma(0)=0=p$. Thus, after a linear reparameterization in order to have $[-1,1]$ as the domain of $\gamma$, we deduce $\gamma$ is the searched polynomial path.
\end{proof}

There are many ways to smooth corners of a continuous semialgebraic path. The strategy we propose here is far from being the one with less complexity, but it is quick to be presented as it only uses Hermite's interpolation (and a control of the images of the paths via Nash diffeomorphisms).

\begin{lem}[Smoothing a corner]\label{smco}
Let $a<\tau<t_0<\theta<b$ and let $\veps>0$. Pick a point $x_0\in\R^n$ and let $\beta:[a,b]\to\Bb_n(x_0,\veps)$ be a continuous semialgebraic path. Assume $\eta(\beta)=\{t_0\}$ and $\beta(t_0)=x_0$. For each $\nu\geq0$ there exists a $\Cont^\nu$-semialgebraic path $\gamma:[a,b]\to\Bb_n(x_0,\veps)$ such that $\gamma|_{[a,b]\setminus(\tau,\theta)}=\beta|_{[a,b]\setminus(\tau,\theta)}$.
\end{lem}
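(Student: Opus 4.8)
The plan is to get rid of the only obstruction to a direct Hermite interpolation, namely the requirement that the smoothed path stay inside the open ball $\Bb_n(x_0,\veps)$. To this end I would transport the problem to $\R^n$, where there is no constraint at all, smooth the corner there, and pull the result back. Recall that $\Bb_n(x_0,\veps)$ is Nash diffeomorphic to $\R^n$, for instance through
$$
\Phi:\Bb_n(x_0,\veps)\to\R^n,\quad x\mapsto\frac{x-x_0}{\sqrt{\veps^2-\|x-x_0\|^2}},
$$
whose inverse $\Phi^{-1}(y)=x_0+\veps\,y/\sqrt{1+\|y\|^2}$ is also Nash and automatically takes values in $\Bb_n(x_0,\veps)$, since $\|\Phi^{-1}(y)-x_0\|=\veps\|y\|/\sqrt{1+\|y\|^2}<\veps$. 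Then I would set $\beta^*:=\Phi\circ\beta:[a,b]\to\R^n$, a continuous semialgebraic path with $\eta(\beta^*)\subset\eta(\beta)=\{t_0\}$ (because $\Phi$ is Nash), so that, since $t_0\in(\tau,\theta)$, the path $\beta^*$ is of class $\Cont^\infty$ on $[a,\tau]$ and on $[\theta,b]$.

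Next I would smooth $\beta^*$ on $[\tau,\theta]$ by Hermite interpolation, exactly as with the interpolating polynomials $P_{ik}$ constructed in the proof of Lemma~\ref{swdp}: there is a polynomial tuple $P\in\R[\t]^n$ with $P^{(k)}(\tau)=(\beta^*)^{(k)}(\tau)$ and $P^{(k)}(\theta)=(\beta^*)^{(k)}(\theta)$ for $k=0,\ldots,\nu$. Then I would define $\delta:[a,b]\to\R^n$ by gluing, $\delta:=\beta^*$ on $[a,\tau]$, $\delta:=P$ on $[\tau,\theta]$ and $\delta:=\beta^*$ on $[\theta,b]$. This $\delta$ is semialgebraic (its graph is a finite union of graphs of semialgebraic maps restricted to semialgebraic sets), it equals $\beta^*$ on $[a,b]\setminus(\tau,\theta)$, and it is of class $\Cont^\nu$: each of the three pieces is $\Cont^\infty$ on its own closed interval, and at the junction points $\tau$ and $\theta$ the one-sided derivatives up to order $\nu$ of the two adjacent pieces coincide by construction. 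Here it is essential that $\beta^*$ be $\Cont^\infty$ near $\tau$ and $\theta$, which is precisely why the statement forces $t_0$ to lie strictly between $\tau$ and $\theta$.

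Finally I would take $\gamma:=\Phi^{-1}\circ\delta:[a,b]\to\Bb_n(x_0,\veps)$. It is semialgebraic as a composition of semialgebraic maps, of class $\Cont^\nu$ because $\Phi^{-1}$ is Nash (hence $\Cont^\infty$) and $\delta$ is $\Cont^\nu$, and it lands in $\Bb_n(x_0,\veps)$ by the property of $\Phi^{-1}$ noted above; in particular $\gamma$ is a continuous semialgebraic path. Since $\delta=\beta^*=\Phi\circ\beta$ on $[a,b]\setminus(\tau,\theta)$, we get $\gamma=\Phi^{-1}\circ\Phi\circ\beta=\beta$ on that set, which is the required matching condition $\gamma|_{[a,b]\setminus(\tau,\theta)}=\beta|_{[a,b]\setminus(\tau,\theta)}$.

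I do not expect a serious obstacle: the whole argument rests on the standard device of converting a constrained smoothing problem into an unconstrained one by a Nash diffeomorphism onto $\R^n$, after which everything is routine. The only steps that need care are checking that $\eta(\beta^*)\subset\{t_0\}$ so the Hermite gluing at $\tau$ and $\theta$ genuinely yields a $\Cont^\nu$ map, and the elementary verification that composing with the Nash diffeomorphism preserves semialgebraicity and the differentiability class while forcing the image back into the ball.
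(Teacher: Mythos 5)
Your proposal is correct and follows essentially the same route as the paper: transport the path to $\R^n$ via the Nash diffeomorphism $x\mapsto(x-x_0)/\sqrt{\veps^2-\|x-x_0\|^2}$, replace the piece on $[\tau,\theta]$ by a Hermite interpolant matching derivatives up to order $\nu$ at $\tau$ and $\theta$, and pull back with the inverse Nash map. The only (harmless) difference is that you record $\eta(\beta^*)\subset\{t_0\}$ where the paper states equality; either suffices for the gluing argument.
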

\begin{proof}
Consider the Nash diffeomorphism 
$$
\varphi:\Bb_n(x_0,\veps)\to\R^n,\ x\mapsto\frac{x-x_0}{\sqrt{\veps^2-\|x-x_0\|^2}}
$$
and the continuous semialgebraic path $\beta^*:=\varphi\circ\beta:[a,b]\to\R^n$. We have $\eta(\beta^*)=\{t_0\}$ and $\beta^*(t_0)=0$. By Hermite's interpolation there exists a polynomial map $\lambda:[a,b]\to\R^n$ such that $\lambda^{(k)}(\tau)=(\beta^*)^{(k)}(\tau)$ and $\lambda^{(k)}(\theta)=(\beta^*)^{(k)}(\theta)$ for $k=0,\ldots,\nu$. Consider the $\Cont^\nu$-semialgebraic path
$$
\gamma^*:[a,b]\to\R^n,\ t\mapsto
\begin{cases}
\lambda(t)&\text{if $t\in[\tau,\theta]$,}\\
\beta^*(t)&\text{if $t\in[a,b]\setminus(\tau,\theta)$.}
\end{cases}
$$
The composition $\gamma:=\varphi^{-1}\circ\gamma^*$ is a $\Cont^\nu$-semialgebraic path that satisfies the required conditions.
\end{proof}

\subsection{Proof of Lemma \ref{smart}}

We prove next Lemma \ref{smart} as an application of Lemma \ref{clue}(2).

\begin{proof}[Proof of Lemma \em\ref{smart}]
The proof is conducted in several setps: 

\noindent{\sc Step 1. \em Construction of the suitable continuous semialgebraic path $\beta$.}
Let $\beta_i:[-1,1]\to\Gamma_i\subset\Ss$ be a polynomial parameterization of the bridge $\Gamma_i$ such that $\beta_i(0)=q_i$, $\beta_i([-1,0))\subset\Ss_i\cup\{q_i\}$ and $\beta_i((0,1])\subset\Ss_{i+1}$. By Lemma \ref{doublecurve} there exists a polynomial arc $\alpha_j:[-1,1]\to\Ss_j\cup\{p_j\}$ such that $\alpha_j(0)=p_j$, $\alpha_j([-1,1]\setminus\{0\})\subset\Ss_j$ and $\alpha_j([-1,0))\cap\alpha_j((0,1])=\varnothing$. Denote $\Lambda_j:=\alpha_j([-1,1])$ and after shrinking the domains of $\beta_i$ and $\alpha_j$ we may assume that the collection of semialgebraic sets $\Gamma_i\setminus\{q_i\},\Lambda_j\setminus\{p_j\}$ is a pairwise disjoint family. We reparameterize linearly the domains of $\beta_i$ and $\alpha_j$ and shrink them (if necessary) in such a way that there exist values
\begin{multline*}
\tau_0:=s_0=0<t_1<\zeta_1<\xi_1<s_1<\theta_1<\tau_1<t_2<\zeta_2<\cdots\\
<\tau_{\ell-2}<t_{\ell-1}<\zeta_{\ell-1}<\xi_{\ell-1}<s_{\ell-1}<\theta_{\ell-1}<\tau_{\ell-1}<t_\ell<1=s_\ell=:\zeta_\ell
\end{multline*}
such that:
\begin{itemize}
\item[$\bullet$] $\alpha_i:[\tau_{i-1},\zeta_i]\to\Lambda_i\subset\Ss_i\cup\{p_i\}$ and $\alpha_i(t_i)=p_i$.
\item[$\bullet$] $\beta_i:[\xi_i,\theta_i]\to\Gamma_i$ and $\beta_i(s_i)=q_i$.
\end{itemize}

The points $\alpha_i(\tau_{i-1}),\alpha_i(\zeta_i),\beta_{i-1}(\theta_{i-1}),\beta_i(\xi_i)$ belong to $\Ss_i$, which is a connected open semialgebraic set, and they are pairwise different. By Lemma \ref{polcon} there exist: 
\begin{itemize}
\item a polynomial path $\gamma_i:[\theta_{i-1},\tau_{i-1}]\to\Ss_i$ such that $\gamma_i(\theta_{i-1})=\beta_{i-1}(\theta_{i-1})$ and $\gamma_i(\tau_{i-1})=\alpha_i(\tau_{i-1})$,
\item a polynomial path $\eta_i:[\zeta_i,\xi_i]\to\Ss_i$ such that $\eta_i(\zeta_i)=\alpha_i(\zeta_i)$ and $\eta_i(\xi_i)=\beta_i(\xi_i)$.
\end{itemize}
Denote $Z:=\{\tau_0,\ldots,\tau_{\ell-1},\zeta_1,\ldots,\zeta_\ell,\xi_1,\ldots,\xi_{\ell-1},\theta_1,\ldots,\theta_{\ell-1}\}$. Thus, concatenating all the previous polynomial paths and arcs we construct a piecewise polynomial path $\beta:[0,1]\to\R^n$ such that
\begin{itemize}
\item[(1)] $\beta([0,1])\subset\Ss\cup\{p_1,\ldots,p_\ell,q_1,\ldots,q_{\ell-1}\}$.
\item[(2)] $\beta(t_i)=p_i$ for $i=1,\ldots,\ell$.
\item[(3)] $\beta((t_i,s_i))\subset\Ss_i$, $\beta((s_i,t_{i+1}))\subset\Ss_{i+1}$ and $\beta(s_i)=q_i$.
\item[(4)] $\eta(\beta)\subset(0,1)\setminus\{t_1,\ldots,t_\ell,s_1,\ldots,s_{\ell-1}\}$ (because $\beta|_{[0,1]\setminus Z}$ is a Nash map) and $\beta(\eta(\beta))\subset\Ss$ (because $\eta(\beta)\subset Z$ and $\beta(Z)\subset\Ss$).
\end{itemize}

Thus, we have provided a procedure to construct a continuous semialgebraic path $\beta:[0,1]\to\Ss\cup\{p_1,\ldots,p_\ell,q_1,\ldots,q_{\ell-1}\}$ such that $\eta(\beta)\subset(0,1)\setminus\{t_1,\ldots,t_\ell,s_1,\ldots,s_{\ell-1}\}$, $\beta(\eta(\beta))\subset\Ss$ and satisfies conditions (ii) and (iii) in the statement.

In the following we fix a continuous semialgebraic path $\beta:[0,1]\to\Ss\cup\{p_1,\ldots,p_\ell,q_1,\ldots,q_{\ell-1}\}$ such that $\eta(\beta)\subset(0,1)\setminus\{t_1,\ldots,t_\ell,s_1,\ldots,s_{\ell-1}\}$, $\beta(\eta(\beta))\subset\Ss$ and satisfies conditions (ii) and (iii) in the statement. Note that conditions (1), (2), (3) and (4) hold for such a $\beta$.

\noindent{\sc Step 2.} {\em Computing the order of differentiability.} We need to compute a positive integer $\nu$ in order to apply Lemma \ref{clue}(2). As each $\Ss_i$ is a semialgebraic set and as a consequence of conditions (1), (2), (3) and (4) above, there exist polynomials $f_{ij},g_{ij},h_{ij},m_{ij}\in\R[\x]$ such that: 
\begin{itemize}
\item[$\bullet$] $\{f_{i1}>0,\ldots,f_{is}>0\}\subset\Ss_i$ is adherent to $p_i$ and $(f_{ij}\circ\beta)(t_i-\t)=a_{ij}\t^{e_{ij}}+\cdots$, where $a_{ij}>0$ and $e_{ij}$ is a positive integer.
\item[$\bullet$] $\{g_{i1}>0,\ldots,g_{is}>0\}\subset\Ss_i$ is adherent to $p_i$ and $(g_{ij}\circ\beta)(t_i+\t)=b_{ij}\t^{u_{ij}}+\cdots$, where $b_{ij}>0$ and $u_{ij}$ is a positive integer.
\item[$\bullet$] $\{h_{i1}>0,\ldots,h_{is}>0\}\subset\Ss_i$ is adherent to $q_i$ and $(h_{ij}\circ\beta)(s_i-\t)=c_{ij}\t^{v_{ij}}+\cdots$, where $c_{ij}>0$ and $v_{ij}$ is a positive integer.
\item[$\bullet$] $\{m_{i1}>0,\ldots,m_{is}>0\}\subset\Ss_{i+1}$ is adherent to $q_i$ and $(m_{ij}\circ\beta)(s_i+\t)=d_{ij}\t^{w_{ij}}+\cdots$, where $d_{ij}>0$ and $w_{ij}$ is a positive integer.
\end{itemize}
Define $\nu:=1+\max\{e_{ij},u_{ij},v_{ij},w_{ij}:\ 1\leq i\leq\ell,1\leq j\leq s\}$.

\noindent{\sc Step 3.} {\em `Smoothing' the corners of the (continuous) semialgebraic path $\beta$.} We smooth next the `corners' of the (continuous) semialgebraic path $\beta$, which are contained in the open semialgebraic set $\Ss$. To that end, we use Lemma \ref{smco}. In \cite{f3} we present an alternative construction in terms of Bernstein polynomials, which avoids the `smoothing' of corners and the corresponding increase in the complexity of the construction. 

The mentioned `corners' appear only at the points of the finite set $\eta(\beta):=\{\lambda_1,\ldots,\lambda_r\}\subset(0,1)\setminus\{t_1,\ldots,t_\ell,s_1,\ldots,s_{\ell-1}\}$. Pick $\delta, \varepsilon>0$ small enough so that $[\lambda_k-\delta,\lambda_k+\delta]\subset(0,1)\setminus\{t_1,\ldots,t_\ell,s_1,\ldots,s_{\ell-1}\}$ and 
$$
\beta([\lambda_k-\delta,\lambda_k+\delta])\subset\Bb(\beta(\lambda_k),\tfrac{\veps}{4})\subset\Bb(\beta(\lambda_k),\veps)\subset\Ss
$$
for $k=1,\ldots,r$. Define $T:=\bigcup_{k=1}^r[\lambda_k-\delta,\lambda_k+\delta]$. By Lemma \ref{smco} there exists a $\Cont^\nu$-semialgebraic path $\gamma:[0,1]\to\Ss\cup\{p_1,\ldots,p_\ell,q_1,\ldots,q_{\ell-1}\}$ such that $\gamma|_{[0,1]\setminus T}=\beta|_{[0,1]\setminus T}$ and $\gamma([\lambda_k-\delta,\lambda_k+\delta])\subset\Bb(\beta(\lambda_k),\tfrac{\veps}{4})$ for $k=1,\ldots,r$. Consequently,
$$
\|\beta(t)-\gamma(t)\|\leq
\|\beta(t)-\beta(\lambda_k)\|+\|\beta(\lambda_k)-\gamma(t)\|<\frac{\veps}{2}
$$
if $t\in[\lambda_k-\delta,\lambda_k+\delta]$ for $k=1,\ldots,r$. Thus, $\|\beta-\gamma\|<\frac{\veps}{2}$. Observe that $\gamma$ satisfies conditions (ii) and (iii) in the statement. 

\noindent{\sc Conclusion. \em Final approximation.}
By Lemma \ref{clue}(2) there exists a polynomial map $\alpha:\R\to\R^n$ such that $\|\alpha-\gamma\|<\frac{\veps}{2}$ and satisfies conditions (i), (ii) and (iii) in the statement. We have in addition $\|\alpha-\beta\|\leq\|\alpha-\gamma\|+\|\gamma-\beta\|<\veps$, as required.
\end{proof}

\section{Finite unions of convex polyhedra}\label{s4}

A semialgebraic set $\Ss\subset\R^n$ is a \em PL semialgebraic set \em if there exist finitely many polynomials $f_{ij}\in\R[\x]$ of degree $\leq1$ such that $\Ss=\bigcup_{i=1}^s\{f_{i1}\geq0,\ldots,f_{ir}\geq0\}$. Observe that each {\em basic PL semialgebraic set} $\{f_{i1}\geq0,\ldots,f_{ir}\geq0\}$ is a convex polyhedron, so PL semialgebraic sets are finite unions of convex polyhedra $\pol_i$.

The closure of the difference of two convex polyhedra is a finite union of convex polyhedra. Thus, each PL semialgebraic set $\Ss$ can be written as a finite union of convex polyhedra $\Ss_i$ such that their relative interiors are pairwise disjoint. The boundary $\partial\pol:=\cl(\pol)\setminus\Int(\pol)$ of a convex polyhedron $\pol$ is a finite union of convex polyhedra of smaller dimensions. Each triangulation of $\partial\pol$ and each interior point $p$ of $\pol$ induces a standard triangulation of $\pol$. Using these facts, one can triangulate $\Ss$ by means of a standard induction process based on the dimension of $\Ss$. Thus, there exist finitely many simplices $\sigma_1,\ldots,\sigma_\ell$ such that $\Ss=\bigcup_{i=1}^\ell\sigma_i$ and $\sigma_i\cap\sigma_j$ is either the empty-set or a common face of $\sigma_i$ and $\sigma_j$. Consequently, PL semialgebraic sets coincide with the realizations of finite simplicial complexes. We will use in each case the description of PL semialgebraic sets that fits better each situation.

Let $\Ss\subset\R^p$ be an $n$-dimensional PL semialgebraic set that is connected by analytic paths. By \cite[Lem.7.1]{f2} $\Ss$ is pure dimensional and by \cite[Cor.7.10]{f2} its Zariski closure $\ol{\Ss}^{\zar}$ is irreducible. Let $\Ss_1,\ldots,\Ss_\ell$ be $n$-dimensional convex polyhedra such that $\Ss=\bigcup_{k=1}^\ell\Ss_k$. We have $\ol{\Ss}^{\zar}=\bigcup_{k=1}^\ell\ol{\Ss}^{\zar}_k$. The Zariski closure of an $n$-dimensional convex polyhedron is the affine $n$-dimensional subspace spanned by it. Thus, $\ol{\Ss}^{\zar}=\ol{\Ss}^{\zar}_k$ for each $k=1,\ldots,n$ and $\ol{\Ss}^{\zar}$ is an $n$-dimensional subspace, so we may assume $p=n$. This means that the statements of Theorems \ref{main1-i} and \ref{main1-i2} are not restrictive with respect to the embedding dimension.

Before proving Theorems \ref{main1-i} and \ref{main1-i2} in this section we need some preliminary results.

\subsection{Semialgebraic sets connected by analytic paths}

Our purpose is to establish a result (Lemma \ref{bridges}) that relates connectedness by analytic paths with the existence of suitable bridges. Before that we need the following result.

\begin{lem}\label{freedom}
Let $\Ss\subset\R^n$ be an $n$-dimensional semialgebraic set and let $\alpha:[-1,1]\to\Ss$ be an analytic arc such that $\alpha([-1,1]\setminus\{0\})\subset\Int(\Ss)$. For each $m\geq0$ denote by $\alpha_m$ the jet of degree $m$ of $\alpha$ at the origin. Then there exist $m\geq1$ and $\veps>0$ such that the polynomial map
$$
\varphi:\R\times\R^n\to\R^n,\ (t,x)\mapsto\alpha_m(t)+t^{m+1}x
$$
satisfies $\varphi(([-\veps,\veps]\setminus\{0\})\times\ol{\Bb}_n)\subset\Int(\Ss)$.
\end{lem}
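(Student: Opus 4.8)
The statement says that an analytic arc entering $\Int(\Ss)$ off the origin can be replaced, up to a high-order truncation, by a polynomial family that, for small nonzero $t$, sweeps an entire scaled ball inside $\Int(\Ss)$. The mechanism is that the "distance to the complement of $\Int(\Ss)$" along $\alpha$ vanishes to some finite order at $t=0$, and a sufficiently high-order perturbation of $\alpha$ cannot escape a tube of that radius. I would proceed as follows.

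\emph{Step 1: Control the distance to $\partial\Ss$ along $\alpha$.} Since $\alpha((-1,1)\setminus\{0\})\subset\Int(\Ss)$ and $\Ss$ is semialgebraic, there are polynomials $f_1,\dots,f_r\in\R[\x]$ with $\{f_1>0,\dots,f_r>0\}\subset\Int(\Ss)$ adherent to $\alpha(0)$, so that $\alpha((0,\veps_0])\subset\{f_1>0,\dots,f_r>0\}$ (and similarly, possibly with a different tuple, on $[-\veps_0,0)$; one handles the two one-sided cases separately and intersects the resulting $\veps$'s). Each analytic series $(f_j\circ\alpha)(t)=a_j t^{k_j}+\cdots$ with $a_j>0$ has finite order $k_j$. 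This is exactly the kind of finite-order control already exploited in Lemma \ref{clue}: a polynomial perturbation agreeing with $\alpha$ to sufficiently high order will keep $f_j$ positive.

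\emph{Step 2: Choose $m$ and apply the $f_j(\x+\z\y)=f_j(\x)+\z h_j(\x,\y,\z)$ trick.} Write $\alpha(t)=\alpha_m(t)+t^{m+1}\tau(t)$ where $\tau$ is a tuple of analytic series (Taylor with remainder at the origin). For $(t,x)\in\R\times\ol{\Bb}_n$ we have $\varphi(t,x)=\alpha_m(t)+t^{m+1}x=\alpha(t)+t^{m+1}(x-\tau(t))$. Using $f_j(\u+\z\v)=f_j(\u)+\z h_j(\u,\v,\z)$ with $\u=\alpha(t)$, $\z=t^{m+1}$, $\v=x-\tau(t)$, we get
$$
f_j(\varphi(t,x))=f_j(\alpha(t))+t^{m+1}h_j(\alpha(t),x-\tau(t),t^{m+1})=a_j t^{k_j}+t^{m+1}R_j(t,x)
$$
where $R_j$ is continuous on $[-\veps_0,\veps_0]\times\ol{\Bb}_n$, hence bounded there by some constant $C_j$. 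Choosing $m\geq\max_j k_j$, for $0<|t|\leq\veps$ small enough (uniformly in $x\in\ol{\Bb}_n$, by compactness) the term $a_j t^{k_j}$ dominates: $|t^{m+1}R_j(t,x)|\leq C_j|t|^{m+1}\leq C_j|t|^{k_j+1}<\tfrac{a_j}{2}|t|^{k_j}$ once $|t|<a_j/(2C_j)$. Hence $f_j(\varphi(t,x))>0$ for all $j$ and all $x\in\ol{\Bb}_n$, so $\varphi(t,x)\in\{f_1>0,\dots,f_r>0\}\subset\Int(\Ss)$.

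\emph{Conclusion and the main obstacle.} Intersecting the finitely many thresholds on $|t|$ coming from the $f_j$'s (and from the two sides $t>0$, $t<0$) gives a single $\veps>0$ with $\varphi(([-\veps,\veps]\setminus\{0\})\times\ol{\Bb}_n)\subset\Int(\Ss)$. The one genuinely delicate point is the uniformity over $x\in\ol{\Bb}_n$ in Step 2: one must be sure the remainder $R_j(t,x)$ is bounded \emph{independently} of $x$, which is where compactness of $\ol{\Bb}_n$ and continuity of $h_j$ and $\tau$ on a compact domain are used; a secondary small point is to note that near $0$ the series $\tau$ is well defined and continuous so that $\alpha(t)$ and $x-\tau(t)$ both stay in a compact set on which $h_j$ is bounded. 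Everything else is the now-familiar finite-order bookkeeping from Lemma \ref{clue}.
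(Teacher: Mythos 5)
Your proposal is correct and follows essentially the same route as the paper: one-sided basic open sets $\{f_1>0,\dots,f_r>0\}\subset\Int(\Ss)$ adherent to $\alpha(0)$, the decomposition $f_j(\u+\z\v)=f_j(\u)+\z h_j(\u,\v,\z)$, a choice of $m$ exceeding the vanishing orders $k_j$, and a compactness bound on the remainder uniform in $x\in\ol{\Bb}_n$. The only cosmetic difference is that you expand around $\alpha(t)$ with perturbation $t^{m+1}(x-\tau(t))$ while the paper expands around the jet $\alpha_m(t)$ with perturbation $t^{m+1}x$; both yield the same estimates.
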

\begin{proof}
Let $f_i,g_i\in\R[\x]$ be polynomials such that 
\begin{align*}
\alpha((0,\veps])&\subset\{f_1>0,\ldots,f_r>0\}\subset\Int(\Ss),\\
\alpha([-\veps,0))&\subset\{g_1>0,\ldots,g_r>0\}\subset\Int(\Ss)
\end{align*}
for some $0<\veps<1$. Consider the series $f_i(\alpha(\t)),g_i(\alpha(-\t))\in\R[[\t]]$, which satisfy
\begin{align*}
&f_i(\alpha(\t))=a_i\t^{k_i}+\cdots\\
&g_i(\alpha(-\t))=b_i\t^{\ell_i}+\cdots
\end{align*}
for some $a_i,b_i>0$. Let $m:=\max\{k_i,\ell_i:\ i=1,\ldots,r\}+1$. Let $\x:=(\x_1,\ldots,\x_n)$, $\y:=(\y_1,\ldots,\y_n)$ and let $\z$ be a single variable. Write
\begin{align*}
&f_i(\x+\z\y)=f_i(\x)+\z h_i(\x,\y,\z),\\
&g_i(\x+\z\y)=g_i(\x)+\z h'_i(\x,\y,\z),
\end{align*}
where $h_i,h_i'\in\R[\x,\y,\z]$. Then
\begin{align*}
f_i(\alpha_m(\t)+\t^{m+1}\y)&=f_i(\alpha_m(\t))+\t^{m+1}h_i(\alpha_m(\t),\y,\t^{m+1})=a_i\t^{k_i}+\cdots,\\
g_i(\alpha_m(-\t)+(-\t)^{m+1}\y)&=g_i(\alpha_m(-\t))+(-\t)^{m+1}h_i'(\alpha_m(-\t),\y,(-\t)^{m+1})=b_i\t^{\ell_i}+\cdots.
\end{align*}
Consider the continuous semialgebraic functions
\begin{align*}
H_i:[-\veps,\veps]\times\ol{\Bb}_n\to\R,\ (t,x)\mapsto h_i(\alpha_m(t),x,t^{m+1}),\\
H_i':[-\veps,\veps]\times\ol{\Bb}_n\to\R,\ (t,x)\mapsto h_i'(\alpha_m(t),x,t^{m+1})
\end{align*}
and let $M>0$ be such that $H_i([-\veps,\veps]\times\ol{\Bb}_n),H_i'([-\veps,\veps]\times\ol{\Bb}_n)\subset[-M,M]$ for $i=1,\ldots,r$. As 
\begin{align*}
f_i(\alpha_m(t)+t^{m+1}x)&\ge f_i(\alpha_m(t))-t^{m+1}|h_i(\alpha_m(t),x,t^{m+1})|\ge f_i(\alpha_m(t))-Mt^{m+1},\\
g_i(\alpha_m(-t)+(-t)^{m+1}x)&\ge g_i(\alpha_m(-t))-(-t)^{m+1}|h' _i(\alpha_m(-t),x,(-t)^{m+1})|\\
&\ge g_i(\alpha_m(-t))-M(-t)^{m+1},
\end{align*}
after shrinking $\veps>0$ if necessary, we may assume 
\begin{align*}
&f_i(\alpha_m(t)+t^{m+1}x)>0,\\
&g_i(\alpha_m(-t)+(-t)^{m+1}x)>0
\end{align*}
for each $(t,x)\in(0,\veps]\times\ol{\Bb}_n$ and each $i=1,\ldots,r$. We conclude $\varphi(([-\veps,\veps]\setminus\{0\})\times\ol{\Bb}_n)\subset\Int(\Ss)$, as required. 
\end{proof}

\begin{lem}\label{bridges}
Let $\Ss_1,\ldots,\Ss_\ell\subset\R^n$ be connected open semialgebraic sets. Assume there exists a semialgebraic set $\Tt\subset\R^n$ connected by analytic paths such that $\Ss:=\bigcup_{i=1}^\ell\Ss_i\subset\Tt\subset\cl(\Ss)$. Then we can reorder the indices $i$ in such a way that there exist bridges $\Gamma_i\subset\Tt$ between $\Ss_i$ and $\bigsqcup_{j=1}^{i-1}\Ss_j$ for $i=2,\ldots,\ell$.
\end{lem}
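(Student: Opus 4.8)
The plan is to prove this by induction on $\ell$, at each stage extracting one new open piece that can be bridged to the union of those already selected. For the induction step, assume that $\Ss_1, \dots, \Ss_{k-1}$ have already been reordered and bridged, so that $\Ss_1 \sqcup \cdots \sqcup \Ss_{k-1}$ is nonempty and the bridges $\Gamma_2, \dots, \Gamma_{k-1}$ lie in $\Tt$. Among the remaining pieces $\Ss_k, \dots, \Ss_\ell$, I want to find one that admits a bridge inside $\Tt$ to the part already selected. Let $A := \bigcup_{j=1}^{k-1} \Ss_j$ and $B := \bigcup_{j=k}^{\ell} \Ss_j$. The key observation is that since $\Tt$ is connected by analytic paths and $A \neq \varnothing \neq B$, there is an analytic path $\alpha: [0,1] \to \Tt$ joining a point of $A$ to a point of $B$. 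The bridge should be carved out of a suitable subarc of $\alpha$ near the first crossing.

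The main technical point is turning this analytic path into an honest \emph{polynomial} bridge between two of the open pieces. Here I would use Lemma \ref{freedom} (together with the inclusion $\Tt \subset \cl(\Ss)$, which guarantees that $\Tt$ is contained in the closure of the open pieces, and that $\Int(\Ss_i)$-type information is available along the arc). Concretely: let $t_0 := \sup\{ t : \alpha([0,t]) \subset \cl(A)\}$; since $\alpha(0) \in A \subset \cl(A)$ and $\alpha(1) \in B$ with $\cl(A), \cl(B)$ closed, we get $0 \le t_0 < 1$ and $\alpha(t_0) \in \cl(A)$. Because $\Tt \subset \cl(\Ss) = \bigcup_i \cl(\Ss_i)$ and there are finitely many pieces, a small interval $(t_0, t_0 + \delta]$ on which $\alpha$ leaves $\cl(A)$ must have its image (or a subinterval's image) inside a single $\cl(\Ss_k)$ with $k \ge k$ (after reindexing), and in fact inside $\Int(\Ss_k)$ on the open part, by passing to a subarc where $\alpha$ stays in the interior of whichever piece's closure it belongs to; similarly on $[t_0 - \delta, t_0)$ the image lies in some $\Int(\Ss_j)$ with $j \le k-1$. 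Reparametrizing this subarc to $[-1,1]$ with base point $\alpha(t_0)$, and then invoking Lemma \ref{freedom} to replace the analytic arc by a polynomial one (its jet plus a higher-order correction term) whose punctured image stays inside the relevant interiors, produces the desired polynomial bridge $\Gamma_k \subset \Tt$ between $\Ss_k$ and $\Ss_j \subset \bigsqcup_{j=1}^{k-1}\Ss_j$. Relabel $\Ss_k$ as the new $k$-th set.

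The delicate step — and the one I expect to be the main obstacle — is the bookkeeping at the crossing point $\alpha(t_0)$: ensuring that \emph{both} one-sided germs of the (sub)arc genuinely enter the \emph{interiors} of open semialgebraic pieces, not merely their closures, and that the two sides land in pieces on opposite sides of the $A$/$B$ split. The subtlety is that an analytic arc can be tangent to, or run along, the boundary of several $\Ss_i$ simultaneously. The way to handle this is a curve-selection-style refinement: by analyticity, each function $f \circ \alpha$ (for $f$ in a finite description of the $\Ss_i$) is either identically zero or has finite order vanishing at $t_0$, so after shrinking $\delta$ the sign of each such $f \circ \alpha$ is constant on $(t_0 - \delta, t_0)$ and on $(t_0, t_0 + \delta)$; this pins down a single piece (with the point in its interior, by the $\Tt \subset \cl(\Ss)$ hypothesis applied carefully, perhaps after a further jet-perturbation à la Lemma \ref{freedom} to push slightly into the interior) on each side. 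Choosing $t_0$ as the \emph{first} exit from $\cl(A)$ forces the left germ into $A$'s pieces and the right germ out of them, giving the cross-partition needed. Once this is set up, Lemma \ref{freedom} does the rest mechanically, and the induction terminates after $\ell - 1$ steps since each step removes one piece from $B$.

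\begin{proof}[Proof sketch to be expanded]
We argue by induction on $\ell$, the case $\ell = 1$ being vacuous. Suppose $\Ss_1, \dots, \Ss_{k-1}$ have been reordered so that bridges $\Gamma_2, \dots, \Gamma_{k-1} \subset \Tt$ exist with $\Gamma_i$ joining $\Ss_i$ to $\bigsqcup_{j<i}\Ss_j$. Put $A := \bigcup_{j<k}\Ss_j \neq \varnothing$ and $B := \bigcup_{j \ge k}\Ss_j \neq \varnothing$. Fix $a \in A$, $b \in B$ and an analytic path $\alpha:[0,1]\to\Tt$ with $\alpha(0) = a$, $\alpha(1) = b$. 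Let $t_0 := \sup\{t\in[0,1] : \alpha([0,t])\subset\cl(A)\}$; then $0 \le t_0 < 1$, $\alpha(t_0)\in\cl(A)$, and $\alpha((t_0, t_0+\delta])\not\subset\cl(A)$ for every $\delta>0$. Since $\Tt\subset\cl(\Ss)=\bigcup_{i=1}^\ell\cl(\Ss_i)$ and there are finitely many pieces, analyticity of $\alpha$ lets us choose $\delta>0$ and indices $j<k\le k'$ with $\alpha([t_0-\delta,t_0))\subset\Int(\Ss_j)$ and $\alpha((t_0,t_0+\delta])\subset\Int(\Ss_{k'})$, after possibly a preliminary perturbation of $\alpha$ by Lemma \ref{freedom} to enter interiors and a constancy-of-sign argument on the finitely many defining polynomials. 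Reparametrize $\alpha|_{[t_0-\delta,t_0+\delta]}$ linearly to $[-1,1]$ and apply Lemma \ref{freedom} to produce a polynomial arc $\Gamma_k$ with base point $\alpha(t_0)$, with $\Gamma_k\cap\{t<0\}\subset\Ss_j$ and $\Gamma_k\cap\{t>0\}\subset\Ss_{k'}$, and $\Gamma_k\subset\Tt$. Swap the labels of $\Ss_k$ and $\Ss_{k'}$; then $\Gamma_k$ is a bridge between the new $\Ss_k$ and $\bigsqcup_{j<k}\Ss_j$. This completes the induction step, and after $\ell-1$ steps all the required bridges are obtained.
\end{proof}
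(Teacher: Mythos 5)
Your overall architecture---induction on $\ell$, an analytic path in $\Tt$ from the already-selected union $A$ to the remaining union $B$, a first-crossing parameter $t_0$, finiteness-from-analyticity, then Lemma \ref{freedom} to convert the analytic germ into a polynomial bridge---is exactly the paper's. The gap is in the step you yourself flag as delicate: guaranteeing that \emph{both} one-sided germs of $\alpha$ at $t_0$ land in open pieces $\Ss_j$ ($j<k$) and $\Ss_{k'}$ ($k'\geq k$), and not merely in their closures. Neither of your proposed mechanisms closes it. The sign-constancy argument for the finitely many defining polynomials breaks precisely when some $f\circ\alpha$ vanishes identically, i.e.\ when the arc runs along a boundary for a whole subinterval; this genuinely happens (two open triangles sharing an edge, with an analytic path that travels along the common edge before entering the second triangle: the left germ at your $t_0$ then lies in the common boundary and in no open piece). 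And Lemma \ref{freedom} cannot serve as a ``preliminary perturbation to enter interiors'': its hypothesis is that the arc \emph{already} satisfies $\alpha([-1,1]\setminus\{0\})\subset\Int(\Ss)$, so it preserves that property but cannot create it.

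The missing ingredient is a generic choice of the \emph{endpoints} of the analytic path. Let $\Ee$ be the Zariski closure of $\bigcup_{i}\partial\Ss_i$, which has dimension $\leq n-1$ and therefore contains no $\Ss_i$; pick $x\in(\cl(A)\cap\Tt)\setminus\Ee$ and $y\in(\cl(B)\cap\Tt)\setminus\Ee$. By the identity principle, $\alpha^{-1}(\Ee)$ is then finite (otherwise $\alpha([0,1])\subset\Ee$, contradicting $x\notin\Ee$), so the path meets every boundary $\partial\Ss_i$ at only finitely many parameters; combined with $\Tt\subset\cl(\Ss)$ and connectedness of the punctured one-sided intervals, this pins each germ at $t_0$ inside a single open $\Ss_i$. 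You also need the paper's preliminary case split: if some $\Ss_j$ with $j<k$ already meets some $\Ss_{k'}$ with $k'\geq k$, a straight segment is a bridge and you are done; in the remaining case the disjointness across the partition is what gives $\cl(A)\cap\cl(B)\cap\Tt\subset\Ee$, which is how one sees that the two germs at $t_0$ fall on opposite sides of the split. With these two additions your argument becomes the paper's proof.
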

\begin{proof}
Denote $\Tt_i:=\cl(\Ss_i)\cap\Tt$. We prove the statement by induction on $\ell$. If $\ell=1$, there is nothing to prove. Suppose the result is true for $i-1<\ell$ and let us check that it is also true for $i$.

If $\Ss_j\cap\Ss_k\neq\varnothing$ for some $1\leq j\leq i-1$ and some $i\leq k\leq\ell$, we pick a point $p\in\Ss_j\cap\Ss_k$ and $\veps>0$ such that $\Bb(p,\veps)\subset\Ss_j\cap\Ss_k$. The polynomial arc $\alpha:[-1,1]\to\R^n,\ t\mapsto p+\veps(t,0,\ldots,0)$ is a bridge between $\Ss_j\subset\bigsqcup_{j=1}^{i-1}\Ss_j$ and $\Ss_k$. We interchange the indices $k$ and $i$.

Assume $\Ss_j\cap\Ss_k=\varnothing$ if $1\leq j\leq i-1$ and $i\leq k\leq\ell$. Define $\Tt':=\bigcup_{j=1}^{i-1}\Tt_j$ and $\Tt'':=\bigcup_{j=i}^\ell\Tt_j$. Let $\Ee$ be the Zariski closure of $\bigcup_{i=1}^\ell\partial\Ss_i$ where $\partial\Ss_i:=\cl(\Ss_i)\setminus\Ss_i$. The algebraic set $\Ee$ has dimension $\leq n-1$, so it contains none of the $\Ss_i$. Observe that $\Tt'\cap\Tt''\subset\Ee$. Let $x\in\Tt'\setminus\Ee$ and let $y\in\Tt''\setminus\Ee$. As $\Tt=\Tt'\cup\Tt''$ is connected by analytic paths, there exists an analytic path $\beta:[-1,1]\to\Tt$ such that $\beta(-1)=x$ and $\beta(1)=y$. By the identity principle for analytic maps in a single variable $\Ff:=\beta^{-1}(\Ee)$ is a finite set (because otherwise $x,y\in\beta([-1,1])\subset\Ee$). As $x\in\Tt'\setminus\Ee$, $y\in\Tt''\setminus\Ee$ and $\Tt'\cap\Tt''\subset\Ee$, there exists $t_0\in[-1,1]$ such that $\beta(t)\in\Tt'$ if $t\leq t_0$ and $\beta(t)\in\Tt''\setminus\Ee$ if $t_0<t<t_0+\veps$ for some $\veps>0$. As $\Ff$ is a finite set, we may assume shrinking $\veps$ if necessary that $\beta((t_0-\veps,t_0))\subset\Ss_j$ for some $j=1,\ldots,i-1$ and $\beta((t_0,t_0+\veps))\subset\Ss_k$ for some $k=i,\ldots,\ell$. We interchange indices $k$ and $i$ and after a translation we assume $t_0=0$. By Lemma \ref{freedom} we can construct from the analytic arc $\beta$ a bridge between $\Ss_j$ and $\Ss_i$, as required.
\end{proof}

\begin{cor}\label{graph}
Let $\Ss_1,\ldots,\Ss_\ell\subset\R^n$ be connected open semialgebraic sets. Assume there exists a semialgebraic set $\Tt\subset\R^n$ connected by analytic paths such that $\Ss:=\bigcup_{i=1}^\ell\Ss_i\subset\Tt\subset\cl(\Ss)$. Then there exists a sequence of semialgebraic sets $\Rr_1,\ldots,\Rr_r$ such that $\{\Ss_1,\ldots,\Ss_\ell\}=\{\Rr_1,\ldots,\Rr_r\}$ and for each index $i=1,\ldots,r-1$ there exists a bridge between $\Rr_i$ and $\Rr_{i+1}$. 
\end{cor}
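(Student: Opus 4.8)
The plan is to reduce the statement to a routine combinatorial fact about trees, using Lemma \ref{bridges} as the only substantial input. First I would apply Lemma \ref{bridges} to reorder $\Ss_1,\ldots,\Ss_\ell$ so that for each $i=2,\ldots,\ell$ there is a bridge $\Gamma_i\subset\Tt$ between $\Ss_i$ and $\bigsqcup_{j=1}^{i-1}\Ss_j$. By the definition of a bridge, $\Gamma_i=\alpha_i([-1,1])$ for a polynomial arc $\alpha_i:[-1,1]\to\R^n$ with $\alpha_i([-1,0))\subset\Ss_i$ and $\alpha_i((0,1])\subset\bigcup_{j<i}\Ss_j$. Since $\alpha_i$ is semialgebraic, each $\alpha_i^{-1}(\Ss_j)$ is a finite union of points and open intervals, so its germ at $0^+$ is either empty or a half-interval $(0,\delta)$; as $(0,1]\subset\bigcup_{j<i}\alpha_i^{-1}(\Ss_j)$, not all these germs can be empty, hence $\alpha_i((0,\veps_i))\subset\Ss_{\sigma(i)}$ for some index $\sigma(i)<i$ and some $\veps_i\in(0,1]$. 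Reparameterizing $\alpha_i$ by the polynomial $s\mapsto\tfrac{\veps_i}{6}(s^2+2s)$, which fixes $0$, maps $[-1,0)$ into $[-1,0)$ and maps $(0,1]$into $(0,\veps_i)$, we obtain a polynomial arc whose image $\Gamma_i'$ is contained in $\Gamma_i\subset\Tt$ and is a bridge between $\Ss_i$ and $\Ss_{\sigma(i)}$.

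Consider the graph $T$ on the vertex set $\{1,\ldots,\ell\}$ whose edges are $\{i,\sigma(i)\}$ for $i=2,\ldots,\ell$. Since $\sigma(i)<i$, adding the vertices in the order $1,2,\ldots,\ell$ keeps the graph connected at every step, so $T$ is connected with $\ell$ vertices and $\ell-1$ edges, i.e.\ a tree; moreover no $i<\ell$ satisfies $\sigma(i)=\ell$ (because $\sigma(i)<i<\ell$), so $\ell$ is a leaf of $T$. The statement now follows from the purely combinatorial claim: \emph{a finite tree admits a walk $v_1,\ldots,v_r$ that visits every vertex and in which $v_k$ and $v_{k+1}$ are adjacent for each $k$}. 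I would prove this by induction on the number of vertices: for one vertex take $v_1$ to be that vertex; for $\geq2$ vertices pick a leaf $v$ adjacent to $u$, apply the inductive hypothesis to $T\setminus\{v\}$ to obtain such a walk $v_1,\ldots,v_r$, choose $k$ with $v_k=u$, and replace the single term $v_k$ by the three terms $u,v,u$; the new walk visits every vertex of $T$ and consecutive entries remain adjacent. (In our case one may directly peel off the leaf $\ell$.)

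Finally I would set $\Rr_k:=\Ss_{v_k}$ for the walk produced above; then $\{\Rr_1,\ldots,\Rr_r\}=\{\Ss_1,\ldots,\Ss_\ell\}$ since the walk visits every vertex. For each $k$ the vertices $v_k,v_{k+1}$ are adjacent in $T$, so $\{v_k,v_{k+1}\}=\{i,\sigma(i)\}$ with $i:=\max\{v_k,v_{k+1}\}$; then $\Gamma_i'$ is a bridge between $\Ss_i$ and $\Ss_{\sigma(i)}$, and replacing $\alpha_i$ by $s\mapsto\alpha_i(-s)$ reverses its two halves, so we obtain a bridge (contained in $\Tt$) between $\Rr_k$ and $\Rr_{k+1}$ in whichever of the two orders is required. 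The only point requiring a little care is the extraction of a single index $\sigma(i)$ from the bridge $\Gamma_i$ into $\bigsqcup_{j<i}\Ss_j$, which the germ argument above handles; everything else is bookkeeping once Lemma \ref{bridges} is available, so I expect no genuine obstacle here.
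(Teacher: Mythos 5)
Your proof is correct and takes essentially the same route as the paper's: form a graph whose edges record the existence of bridges, note that Lemma \ref{bridges} makes it connected, and take a walk through all its vertices. Your germ argument extracting a single target $\Ss_{\sigma(i)}$ from the bridge into $\bigsqcup_{j<i}\Ss_j$, and your induction producing the tree walk, merely make explicit two details the paper leaves implicit.
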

\begin{proof}
Consider the graph $\Lambda$ whose vertices are the connected open semialgebraic sets $\Ss_i$ and such that there exists an edge between a pair of vertices $\Ss_i$ and $\Ss_j$ if and only if there exists a bridge $\Gamma$ between the connected open semialgebraic sets $\Ss_i$ and $\Ss_j$. By Lemma \ref{bridges} the graph $\Lambda$ is connected. Thus, there exists a path inside $\Lambda$ through all its vertices. Denote by $\Rr_1,\ldots,\Rr_r$ a sequence of the vertices $\Ss_1,\ldots,\Ss_\ell$ (including repetitions if needed) such that all the vertices appear at least once and there exists an edge (that is, a bridge $\Gamma_k$ inside $\Ss$) between $\Rr_i$ and $\Rr_{i+1}$ for $i=1,\ldots,r-1$, as required.
\end{proof}

\begin{center}
\begin{figure}[ht]
\centerline{
 \resizebox{13cm}{!}{
 \begin{tikzpicture}[
 scale=0.5,
 level/.style={thick},
 virtual/.style={thick,densely dashed},
 trans/.style={thick,<->,shorten >=2pt,shorten <=2pt,>=stealth},
 classical/.style={thin,double,<->,shorten >=4pt,shorten <=4pt,>=stealth}
 ]

 \path[->] (0cm,2cm) node{\includegraphics[width=3cm]{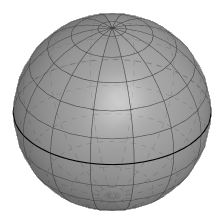}} -- (9cm,2cm) node{\includegraphics[width=2.3cm]{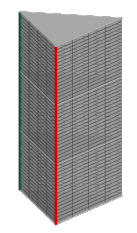}}--(18cm,-2cm) node{\includegraphics[width=4cm]{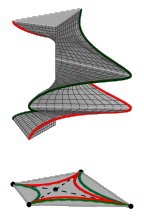}}--(9cm,-8cm) node{\includegraphics[width=3cm]{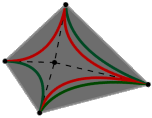}}--(0cm,-8cm) node{\includegraphics[width=3cm]{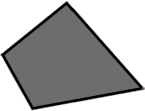}};
 \draw[->,very thick] (4,2)--(6,2);
 \draw[->,very thick] (12,2)--(14,1);
 \draw[->,very thick] (5,-8)--(3,-8);
 \draw[->,very thick] (14,-7)--(12,-8);
 \draw[->,very thick] (0,-2.3)--(0,-4.3);
 \draw[->,very thick] (18,-3)--(18,-4.75);

\draw (5,2.75) node{{\scriptsize Polynomial map}}; 
\draw (12.75,2.75) node{{\scriptsize Polynomial}};
\draw (13.25,2) node{{\scriptsize map}};
\draw (12.8,1) node{{\scriptsize $F$}};
\draw (17.5,4.5) node{{\scriptsize Graph($F$)}};
\draw (19.75,-3.75) node{{\scriptsize Projection}};
\draw (17.5,-8.25) node{{\scriptsize Image($F$)}}; 
\draw (3.75,-7.25) node{{\scriptsize Triangulation}};
 \end{tikzpicture}
 }
}
\caption{Sketch of proof of Theorem \ref{main1-i}\label{figa}}
\end{figure}
\end{center}

\subsection{Proof of Theorem \ref{main1-i}}

We present here the first main result of this article (Figure \ref{figa}).

\begin{proof}[Proof of Theorem \em\ref{main1-i}]
(ii) $\Longrightarrow$ (i) By \cite[Thm.1.5]{f2} there exists a Nash map $g:\R^{n+1}\to\R^{n+1}$ such that $g(\R^{n+1})=\ol{\Bb}_{n+1}$. Consider the Nash map $f\circ g:\R^{n+1}\to\R^n$ whose image is $\Ss$. By \cite[Main Thm.1.4]{f2} the semialgebraic set $\Ss$ is connected by analytic paths.

(i) $\Longrightarrow$ (ii) As $\Ss$ is the realization of a finite simplicial complex, we may assume by Lemma \ref{graph} that $\Ss=\bigcup_{k=1}^\ell\sigma_k$ where each $\sigma_k$ is an $n$-dimensional simplex and for each $k=1,\ldots,\ell-1$ there exists a bridge $\Gamma_k$ between $\Int(\sigma_k)$ and $\Int(\sigma_{k+1})$.

Denote by $q_k$ the base point of $\Gamma_k$. Write $t_k:=\frac{k-1}{\ell-1}$ and $s_k:=\frac{2k-1}{2(\ell-1)}$ (that is, the midpoint between $t_k$ and $t_{k+1}$). Denote by $p_{0k},\ldots,p_{nk}$ the vertices of the simplex $\sigma_k$ for $k=1,\ldots,\ell$. By Lemma \ref{smart} there exists for each $i=0,\ldots,n$ a polynomial path $\alpha_i:\R\to\R^n$ such that
\begin{itemize}
\item[(i)] $\alpha_i([0,1])\subset\Ss$.
\item[(ii)] $\alpha_i(t_k)=p_{ik}$ for $k=1,\ldots,\ell$.
\item[(iii)] $\alpha_i((t_k,s_k))\subset\Int(\sigma_k)$, $\alpha_i((s_k,t_{k+1}))\subset\Int(\sigma_{k+1})$ and $\alpha_i(s_k)=q_k$ for $k=1,\ldots,\ell$.
\end{itemize}

Denote $\Delta_n:=\{\x_1\geq0,\ldots,\x_n\geq0,\sum_{k=1}^n\x_i\leq1\}$ and consider the polynomial map
$$
F:\R^n\times\R\to\R^n,\ (x,t):=(x_1,\ldots,x_n,t)\mapsto\Big(1-\sum_{i=1}^nx_i\Big)\alpha_0(t)+\sum_{i=1}^nx_i\alpha_i(t).
$$
Pick $(x,t)\in\Delta_n\times[0,1]$. As each simplex $\sigma_k$ and its relative interior $\Int(\sigma_k)$ are convex sets,
$$
F(x,t)=\Big(1-\sum_{i=1}^nx_i\Big)\alpha_0(t)+\sum_{i=1}^nx_i\alpha_i(t)\in\begin{cases}
\sigma_k\subset\Ss&\text{if $t=t_k$},\\
\{q_k\}\subset\Ss&\text{if $t=s_k$},\\
\Int(\sigma_k)\subset\Ss&\text{if $t\in(t_k,s_k)$},\\
\Int(\sigma_{k+1})\subset\Ss&\text{if $t\in(s_k,t_{k+1})$},
\end{cases}
$$
so $F(\Delta_n\times[0,1])\subset\Ss$. As
$$
F(\Delta_n\times\{t_k\})=\Big\{\Big(1-\sum_{i=1}^nx_i\Big)p_{0k}+\sum_{i=1}^nx_ip_{ik}:\ x\in\Delta_n\Big\}=\sigma_k
$$ 
for $k=1,\ldots,\ell$, we deduce $\Ss=\bigcup_{k=1}^\ell\sigma_k\subset F(\Delta_n\times[0,1])$. Consequently, $F(\Delta_n\times[0,1])=\Ss$ and by Corollary \ref{tetra1} $\Ss$ is a polynomial image of $\ol{\Bb}_{n+1}$, as required.
\end{proof}

\subsection{Proof of Theorem \ref{main1-i2}}\label{s6b}

Our next purpose is to prove Theorem \ref{main1-i2}. As it is quite technical, we have decided to break its proof into several parts and present some preliminary lemmas to lighten notations and clarify ideas. This may increase the complexity of the construction but make the presentation more intuitive and readable. As usual, we call \em facets \em the faces of dimension $n-1$ of a convex polyhedron of dimension $n$. Given a hyperplane $H:=\{h=0\}$ of $\R^n$ we represent the subspaces determined by $H$ as $H^+:=\{h\geq0\}$ and $H^-:=\{h\leq0\}$. Denote 
$$
\Delta_{n-1}:=\Big\{(\lambda_1,\ldots,\lambda_n)\in\R^n:\ \lambda_1\geq0,\ldots,\lambda_n\geq0,\ \sum_{k=1}^n\lambda_k=1\Big\},
$$
which is an $(n-1)$-dimensional simplex. Write $\Int_r(\cdot)$ to refer to the relative interior of a convex polyhedron. Assume $n\geq2$ for the following three preliminary lemmas. The first one is illustrated in Figure \ref{simplexfig}.

\begin{lem}\label{simplex}
Let $\pol\subset\R^n$ be an $n$-dimensional convex polyhedron. Let $\sigma$ be an $(n-1)$-dimen\-sional simplex contained in one of the facets of $\pol$ and denote by $v_1,\ldots,v_n$ its vertices. Pick $p\in\Int(\pol)$ and consider the simplex $\widehat{\sigma}$ of vertices $\{p,v_1,\ldots,v_n\}$. Let $h_1,\ldots,h_n\in\R[\x]$ be polynomials of degree $1$ such that the hyperplane $H_k:=\{h_k=0\}$ contains the facet of $\widehat{\sigma}$ that contains $p$, but does not contain the vertex $v_k$. Assume $\widehat{\sigma}\subset H_k^+$ for each $k$. Consider the convex polyhedra $\pol_k:=\pol\cap\bigcap_{j\neq k}H_j^-$ and let $\alpha_k:[0,1]\to\pol_k\subset\pol$ be (continuous) semialgebraic paths such that $\alpha_k(0)=v_k$ and $\alpha_k(1)=p$. Consider the (continuous) semialgebraic map $F:\Delta_{n-1}\times[0,1]\to\R^n,\ (\lambda,t)\mapsto\sum_{k=1}^n\lambda_k\alpha_k(t)$. Then $\widehat{\sigma}\subset F(\Delta_{n-1}\times[0,1])\subset\pol$.
\end{lem}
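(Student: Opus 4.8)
The plan is to prove the two inclusions in $\widehat\sigma\subset F(\Delta_{n-1}\times[0,1])\subset\pol$ separately. The second is immediate: for $(\lambda,t)\in\Delta_{n-1}\times[0,1]$ the point $F(\lambda,t)=\sum_{k=1}^n\lambda_k\alpha_k(t)$ is a convex combination of the points $\alpha_k(t)\in\pol_k\subset\pol$, and $\pol$ is convex. The first inclusion is the substantive one, and I would prove it by a Brouwer degree argument after first recording the geometric meaning of the hyperplanes $H_k$.

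Since $\sigma$ spans one facet of $\pol$ and $p\in\Int(\pol)$, the point $p$ lies off the affine hull of $\sigma$; hence $v_1,\dots,v_n,p$ are affinely independent and $\widehat\sigma$ is a full-dimensional $n$-simplex. Let $\mu_0,\mu_1,\dots,\mu_n\colon\R^n\to\R$ be its barycentric coordinate functions, so that each $\mu_i$ is affine, $\sum_{i=0}^n\mu_i\equiv1$, $\widehat\sigma=\{\mu_0\ge0,\dots,\mu_n\ge0\}$ and $\Int(\widehat\sigma)=\{\mu_0>0,\dots,\mu_n>0\}$. For $k=1,\dots,n$ the zero set $\{\mu_k=0\}$ is exactly the affine hull of the facet of $\widehat\sigma$ spanned by $p$ and the $v_j$ with $j\ne k$, that is, it equals $H_k$; thus $h_k=c_k\mu_k$ with $c_k\ne0$, and $\widehat\sigma\subset H_k^+$ forces $c_k>0$, so $H_k^-=\{\mu_k\le0\}$ and therefore $\pol_k\subset\bigcap_{j\ne k}\{\mu_j\le0\}$. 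Likewise $\{\mu_0=0\}$ is the affine hull of the facet of $\pol$ containing $\sigma$, and since $\mu_0(p)=1>0$ with $p\in\Int(\pol)$ we obtain $\pol\subset\{\mu_0\ge0\}$.

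Next I would compare $F$ with the straight-line model. Put $\beta_k(t):=(1-t)v_k+tp$, which lies in $\pol_k$ for all $t\in[0,1]$ by convexity of $\pol_k$, and $F_0(\lambda,t):=\sum_{k=1}^n\lambda_k\beta_k(t)$; then $F$ and $F_0$ are joined by the homotopy
$$
H\colon[0,1]\times\Delta_{n-1}\times[0,1]\to\R^n,\qquad (s,\lambda,t)\longmapsto\sum_{k=1}^n\lambda_k\bigl((1-s)\alpha_k(t)+s\beta_k(t)\bigr),
$$
and for each $s$ and $k$ the path $t\mapsto(1-s)\alpha_k(t)+s\beta_k(t)$ stays in $\pol_k$ and joins $v_k$ to $p$. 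Realising $\Delta_{n-1}\times[0,1]$ as a closed $n$-ball $B$, whose boundary sphere $\partial B$ is the union of the faces $\Delta_{n-1}\times\{0\}$, $\Delta_{n-1}\times\{1\}$ and $\{\lambda_{k_0}=0\}\times[0,1]$ for $k_0=1,\dots,n$, I would check that every $H_s$ maps $\partial B$ off $\Int(\widehat\sigma)$: $\Delta_{n-1}\times\{0\}$ maps onto $\sigma\subset\partial\widehat\sigma$, $\Delta_{n-1}\times\{1\}$ maps to the vertex $p$, and $\{\lambda_{k_0}=0\}\times[0,1]$ maps into $\{\mu_{k_0}\le0\}$ because $(1-s)\alpha_k(t)+s\beta_k(t)\in\pol_k\subset\{\mu_{k_0}\le0\}$ whenever $k\ne k_0$. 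Hence, for any $z_0\in\Int(\widehat\sigma)$, the Brouwer degrees $\deg(F,\Int(B),z_0)$ and $\deg(F_0,\Int(B),z_0)$ are defined and equal by homotopy invariance. A short computation identifies $F_0^{-1}(z_0)$ with the single point $(\lambda^*,t^*)$ given by $t^*=\mu_0(z_0)$ and $\lambda_k^*=\mu_k(z_0)/(1-\mu_0(z_0))$, at which the differential of $F_0$ is an isomorphism (the $\lambda$-directions span the linear subspace parallel to the affine hull of $\sigma$, while $\partial_tF_0=p-\sum_k\lambda_k^*v_k$ is transverse to it), so this common degree equals $\pm1\ne0$ and $z_0\in F(\Int(B))\subset F(\Delta_{n-1}\times[0,1])$. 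This gives $\Int(\widehat\sigma)\subset F(\Delta_{n-1}\times[0,1])$, and since $F(\Delta_{n-1}\times[0,1])$ is compact, hence closed, and $\widehat\sigma=\cl(\Int(\widehat\sigma))$, we conclude $\widehat\sigma\subset F(\Delta_{n-1}\times[0,1])$.

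The main obstacle is the degree-theoretic bookkeeping: recognizing the $H_k$ as the barycentric hyperplanes of $\widehat\sigma$, and using the convexity of each $\pol_k$ to keep $\partial B$ out of $\Int(\widehat\sigma)$ along the entire homotopy $H$; presenting $\Delta_{n-1}\times[0,1]$ as a closed $n$-ball so that Brouwer degree applies is a harmless technicality, and everything else is routine.
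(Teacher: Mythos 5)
Your proof is correct, and it implements the topological step differently from the paper. Both arguments reduce the substantive inclusion $\Int(\widehat{\sigma})\subset F(\Delta_{n-1}\times[0,1])$ to a degree computation resting on the same boundary bookkeeping (namely that $F$ sends $\Delta_{n-1}\times\{0\}$ into $\sigma$, $\Delta_{n-1}\times\{1\}$ to $p$, and each face $\{\lambda_{k_0}=0\}\times(0,1)$ into $H_{k_0}^{-}$, because $\pol_k\subset H_{k_0}^{-}$ for $k\neq k_0$). The paper argues by contradiction: assuming a point $z\in\Int(\widehat{\sigma})$ is missed, it builds the explicit radial retraction $\rho:\pol\setminus\{z\}\to\partial\widehat{\sigma}$, checks $\rho^{-1}(\Int_r(\sigma))\subset\Int(\widehat{\sigma})\cup\Int_r(\sigma)$, counts preimages of a point of $\Int_r(\sigma)$ under $\rho\circ F$ on the boundary of the prism to get a degree-one sphere map, and contradicts the fact that a boundary map extending over the ball has degree zero. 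You instead compute the Brouwer degree of $F$ itself at a target $z_0\in\Int(\widehat{\sigma})$, deforming $F$ to the straight-line cone map $F_0$ through a homotopy whose boundary values avoid $\Int(\widehat{\sigma})$ at every stage (this is where you use convexity of each $\pol_k$, which the paper's version of the argument does not need), and then evaluate the degree as $\pm1$ at the unique regular preimage of $z_0$ under $F_0$. Your route trades the construction and analysis of the retraction $\rho$ for the verification of the boundary condition along the whole homotopy; it is arguably cleaner in that the nontrivial degree is exhibited by an explicit regular-value computation rather than extracted from the homeomorphism $F|_{\Delta_{n-1}\times\{0\}}:\Delta_{n-1}\times\{0\}\to\sigma$, and it avoids the contradiction format. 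All the individual verifications you list (affine independence of $p,v_1,\ldots,v_n$; the identification $h_k=c_k\mu_k$ with $c_k>0$; $\beta_k(t)\in\pol_k$ because $v_k,p\in\pol_k$ and $\pol_k$ is convex; uniqueness and regularity of $F_0^{-1}(z_0)$) are sound, so the argument goes through.
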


\begin{center}
\begin{figure}[ht]
\begin{tikzpicture}[scale=0.85]

\draw[fill=gray!50,opacity=0.5,draw=none] (2,0) -- (4.5,0) -- (6,2) -- (5,4) -- (2.25,4.25) -- (0,2) -- (2,0);
\draw[fill=gray!90,opacity=0.5,draw=none] (2,0) -- (4.5,0) -- (3,2) -- (2,0);

\draw[fill=blue!25,opacity=0.5,draw=none] (2,0) -- (3,2) -- (4.05,4.1) -- (2.25,4.25) -- (0,2) -- (2,0);
\draw[fill=red!25,opacity=0.5,draw=none] (4.5,0) -- (6,2) -- (5,4) -- (2.25,4.25) -- (1.7,3.7) -- (3,2) -- (4.5,0);

\draw[color=blue,line width=1.5pt] (2,0) .. controls (1.5,1) and (2.5,1.75) .. (3,2);
\draw[color=red,line width=1.5pt] (4.5,0) .. controls (4.5,1) and (3.5,1.75) .. (3,2);

\draw[line width=1.5pt] (2,0) -- (4.5,0) -- (6,2) -- (5,4) -- (2.25,4.25) -- (0,2) -- (2,0);

\draw[dashed,line width=1.5pt] (2,0) -- (3,2);
\draw[dashed,line width=1.5pt] (4.5,0) -- (3,2);
\draw[dashed,line width=1.5pt] (6,2) -- (3,2);
\draw[dashed,line width=1.5pt] (5,4) -- (3,2);
\draw[dashed,line width=1.5pt] (2.25,4.25) -- (3,2);
\draw[dashed,line width=1.5pt] (0,2) -- (3,2);

\draw (3,2) node{$\bullet$};
\draw (2,-0.375) node{{\small $v_1$}};
\draw (4.5,-0.375) node{{\small $v_2$}};
\draw (3.25,-0.375) node{{\small $\sigma$}};
\draw (3.25,0.75) node{{\small $\widehat{\sigma}$}};
\draw (3.1,2.4) node{{\small $p$}};
\draw (1,4) node{{\small $\pol$}};
\draw (1.5,2.4) node{{\small $\pol_1$}};
\draw (4.5,2.4) node{{\small $\pol_2$}};
\draw (1.75,1.25) node{{\small $\alpha_1$}};
\draw (4.35,1.25) node{{\small $\alpha_2$}};

\end{tikzpicture}
\caption{Sketch of proof of Lemma \ref{simplex}\label{simplexfig}}
\end{figure}
\end{center}

\begin{proof}
As $\pol$ is convex, $F(\Delta_{n-1}\times[0,1])\subset\pol$. Let us prove: $\widehat{\sigma}\subset F(\Delta_{n-1}\times[0,1])$. As $\Delta_{n-1}\times[0,1]$ is compact and $\widehat{\sigma}=\cl(\Int(\widehat{\sigma}))$, it is enough to check: $\Int(\widehat{\sigma})\subset F(\Delta_{n-1}\times[0,1])$.

Let $H_0:=\{h_0=0\}$ be the hyperplane generated by $\sigma$. We claim: $F(\partial(\Delta_{n-1}\times[0,1]))\cap\Int(\widehat{\sigma})=\varnothing$ and $F(\partial\Delta_{n-1}\times(0,1))\cap\Int_r(\sigma)=\varnothing$.

Recall that $\Int(\widehat{\sigma})\cup\Int_r(\sigma)=\{h_0\geq0,h_1>0,\ldots,h_n>0\}$. We have $F(\Delta_{n-1}\times\{0\})=\sigma$ and $F(\Delta_{n-1}\times\{1\})=\{p\}$. Let $\tau_j$ be the facet of $\Delta_{n-1}$ that does not contain the vertex ${\tt e_j}:=(0,\ldots,0,\overset{(j)}{1},0,\ldots,0)$. If $k\neq j$, then $\pol_k\subset\bigcap_{i\neq k}H_i^-\subset H_j^-$. If $t\in(0,1)$ and $\lambda:=(\lambda_1,\ldots,\lambda_n)\in\tau_j$, then $\lambda_j=0$ and
$$
F(\lambda,t)=\sum_{k\neq j}\lambda_k\alpha_k(t)\in H_j^-=\{h_j\leq0\}.
$$
Thus, $F(\tau_j\times(0,1))\subset H_j^-$, so $F(\tau_j\times(0,1))\cap(\Int(\widehat{\sigma})\cup\Int_r(\sigma))\subset\{h_j>0,h_j\leq0\}=\varnothing$ and the claim follows.

Suppose there exists $z\in\Int(\widehat{\sigma})\setminus F(\Delta_{n-1}\times[0,1])$. Let us construct a (continuous) semialgebraic retraction $\rho:\pol\setminus\{z\}\to\partial\widehat{\sigma}$ such that $\rho^{-1}(\Int_r(\sigma))\subset\Int(\widehat{\sigma})\cup\Int_r(\sigma)$. 

For each $x\in\R^n\setminus\{z\}$ let $\ell_x$ be the ray $\{z+t\vec{zx}:\ t\in[0,+\infty)\}$, where $\vec{zx}$ denotes the vector of initial point $z$ and terminal point $x$. By \cite[11.1.2.3 \& 11.1.2.7]{ber1} $\ell_x\cap\partial\widehat{\sigma}=\{\rho(x)\}$ is a singleton and if $x\in\partial\widehat{\sigma}$, then $\rho(x)=x$. Define $\rho:\pol\setminus\{z\}\to\partial\widehat{\sigma},\ x\mapsto\rho(x)$. Note that $\rho(x)=z+\lambda\vec{zx}$, where $\lambda$ is the smallest value $\mu>0$ such that $h_i(z+\mu\vec{zx})=0$ for some $i=0,\ldots,n$. As $z\in\Int(\widehat{\sigma})$, we have $h_i(z)>0$ for $i=0,\ldots,n$. Thus, 
$$
\frac{1}{\lambda}=\max\Big\{\frac{h_i(z)-h_i(x)}{h_i(z)}:\ i=0,\ldots,n\Big\}>0.
$$
Consequently,
$$
\rho(x)=z+\frac{1}{\max\{\frac{h_i(z)-h_i(x)}{h_i(z)}:\ i=0,\ldots,n\}}\vec{zx},
$$
so $\rho:\pol\setminus\{z\}\to\partial\widehat{\sigma}$ is a continuous map such that $\rho|_{\partial\widehat{\sigma}}=\id_{\partial\widehat{\sigma}}$, that is, $\rho$ is a retraction. Observe that
$$
\max\Big\{\frac{h_i(z)-h_i(x)}{h_i(z)}:\ i=0,\ldots,n\Big\}=\frac{h_j(z)-h_j(x)}{h_j(z)}
$$
for some $j=0,\ldots,n$ if and only if $\rho(x)\in\{h_j=0\}$. In addition, if $x\in\pol\setminus(\Int(\widehat{\sigma})\cup\Int_r(\sigma))$, then $h_0(x)\geq0$ and $h_{i_0}(x)\leq 0$ for some $i_0=1,\ldots,n$. Thus, 
$$
\frac{h_0(z)-h_0(x)}{h_0(z)}\leq1\leq\frac{h_{i_0}(z)-h_{i_0}(x)}{h_{i_0}(z)}\leq\max\Big\{\frac{h_i(z)-h_i(x)}{h_i(z)}:\ i=0,\ldots,n\Big\},
$$
so $\rho(x)\not\in\{h_0=0,h_1>0,\ldots,h_n>0\}=\Int_r(\sigma)$. Consequently, $\rho^{-1}(\Int_r(\sigma))\subset\Int(\widehat{\sigma})\cup\Int_r(\sigma)$.

Consider the continuous semialgebraic map $F^*:=\rho\circ F:\Delta_{n-1}\times[0,1]\to\partial\widehat{\sigma}$. Let us prove: \em the restriction map $F^*|_{\partial(\Delta_{n-1}\times[0,1])}:\partial(\Delta_{n-1}\times[0,1])\to\partial\widehat{\sigma}$ has degree $1$ \em (as a continuous map between spheres of dimension $n-1$).

Pick a point $x\in\Int_r(\sigma)$. Then $(F^*)^{-1}(x)=F^{-1}(\rho^{-1}(x))\subset F^{-1}(\Int(\widehat{\sigma}))\cup F^{-1}(\Int_r(\sigma))$. As $F(\partial(\Delta_{n-1}\times[0,1]))\cap\Int(\widehat{\sigma})=\varnothing$, $F(\partial\Delta_{n-1}\times(0,1))\cap\Int_r(\sigma)=\varnothing$ and $F(\Delta_{n-1}\times\{1\})=\{p\}$, we have 
\begin{align*}
&F^{-1}(\Int(\widehat{\sigma}))\cap\partial(\Delta_{n-1}\times[0,1])=\varnothing,\\
&F^{-1}(\Int_r(\sigma))\cap\partial(\Delta_{n-1}\times[0,1])\subset\Delta_{n-1}\times\{0\}.
\end{align*}
Consequently, the preimage 
$$
(F^*)^{-1}(x)\cap\partial(\Delta_{n-1}\times[0,1])=(F^*)^{-1}(x)\cap(\Delta_{n-1}\times\{0\}).
$$ 
As $\alpha_k(0)=v_k$ for each $k$ and $\rho|_\sigma=\id_\sigma$, we deduce $F^*|_{\Delta_{n-1}\times\{0\}}=F|_{\Delta_{n-1}\times\{0\}}:\Delta_{n-1}\times\{0\}\to\sigma$ is a homeomorphism, so $(F^*)^{-1}(x)\cap\partial(\Delta_{n-1}\times[0,1])=(F|_{\Delta_{n-1}\times\{0\}})^{-1}(x)$ is a singleton and the restriction map $F^*|_{\partial(\Delta_{n-1}\times[0,1])}$ has degree $1$.

As $F^*|_{\partial(\Delta_{n-1}\times[0,1])}$ extends continuously to $\Delta_{n-1}\times[0,1]$, we deduce by \cite[Thm.5.1.6(b)]{hir3} that $F^*|_{\partial(\Delta_{n-1}\times[0,1])}$ has degree $0$, which is a contradiction.

Consequently, $\Int(\widehat{\sigma})\subset F(\Delta_{n-1}\times[0,1])$, as required.
\end{proof}

\begin{lem}\label{point}
Let $\pol\subset\R^n$ be an $n$-dimensional convex polyhedron and let $p\in\pol$. Let $v,w\in\R^n$ be linearly independent vectors such that $\Int(\pol)\cap\{p+tv:\ t>0\}\neq\varnothing$. Then, there exists $\veps>0$ such that polynomial arc $\alpha:[-\veps,\veps]\to\pol,\ t\mapsto p+t^2v+t^3w$ satisfies $\alpha([-\veps,\veps]\setminus\{0\})\subset\Int(\pol)$, $\alpha(0)=p$ and $\alpha([-\veps,0))\cap\alpha((0,\veps])=\varnothing$.
\end{lem}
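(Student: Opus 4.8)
The plan is to describe $\pol$ by linear inequalities and to read off the sign of each defining polynomial along the arc $\alpha$ from the first two terms of its Taylor expansion in $t$. Fix a presentation $\pol=\{h_1\geq0,\ldots,h_r\geq0\}$ with each $h_i\in\R[\x]$ of degree $1$; discarding constant $h_i$ (which, $\pol$ being nonempty and $n$-dimensional, are positive constants and hence irrelevant) we may assume every $h_i$ is non-constant, and then $\Int(\pol)=\{h_1>0,\ldots,h_r>0\}$. Denote by $\ell_i$ the homogeneous linear part of $h_i$, so that $h_i(x+y)=h_i(x)+\ell_i(y)$ for all $x,y\in\R^n$; hence
\[
h_i(\alpha(t))=h_i(p+t^2v+t^3w)=h_i(p)+t^2\ell_i(v)+t^3\ell_i(w).
\]
The key use of the hypothesis is the following: choosing $t_0>0$ with $p+t_0v\in\Int(\pol)$, for every index $i$ with $h_i(p)=0$ we get $0<h_i(p+t_0v)=t_0\ell_i(v)$, so $\ell_i(v)>0$.

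Next I would fix $\veps>0$ small and check $\alpha([-\veps,\veps]\setminus\{0\})\subset\Int(\pol)$, i.e. that $h_i(\alpha(t))>0$ for $0<|t|<\veps$ and all $i$. As $p\in\pol$, each $h_i(p)\geq0$, so there are two cases. If $h_i(p)>0$, then $h_i(\alpha(t))\to h_i(p)>0$ as $t\to0$, so $h_i(\alpha(t))>0$ for $|t|$ smaller than some $\veps_i>0$. If $h_i(p)=0$, then $h_i(\alpha(t))=t^2(\ell_i(v)+t\,\ell_i(w))$, and since $\ell_i(v)>0$ the second factor stays positive for $|t|$ smaller than some $\veps_i>0$; hence $h_i(\alpha(t))>0$ for $0<|t|<\veps_i$. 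Taking $\veps:=\min_i\veps_i$ yields $\alpha([-\veps,\veps]\setminus\{0\})\subset\{h_1>0,\ldots,h_r>0\}=\Int(\pol)$, while $h_i(\alpha(0))=h_i(p)\geq0$ gives $\alpha([-\veps,\veps])\subset\pol$; and $\alpha(0)=p$ is immediate.

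Finally I would prove injectivity of $\alpha$ on $[-\veps,\veps]$, which in particular gives $\alpha([-\veps,0))\cap\alpha((0,\veps])=\varnothing$: if $\alpha(t)=\alpha(s)$ then $(t^2-s^2)v+(t^3-s^3)w=0$, and linear independence of $v,w$ forces $t^2=s^2$ and $t^3=s^3$; from $t=\pm s$, the possibility $t=-s\neq0$ contradicts $t^3=s^3$, so $t=s$. I do not expect any genuine obstacle here; the only delicate point is applying the hypothesis precisely at the facets of $\pol$ through $p$ (those $h_i$ with $h_i(p)=0$) so that the leading $t^2$-term of $h_i\circ\alpha$ is strictly positive there — the cubic term $t^3w$ plays no role in the interior condition and is present only to make $\alpha$ injective.
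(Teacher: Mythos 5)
Your proposal is correct and follows essentially the same route as the paper's proof: write $\pol=\{h_1\geq0,\ldots,h_r\geq0\}$ with affine $h_i$, expand $h_i(\alpha(t))=h_i(p)+t^2\ell_i(v)+t^3\ell_i(w)$, use the hypothesis $\Int(\pol)\cap\{p+tv:\ t>0\}\neq\varnothing$ to get $\ell_i(v)>0$ whenever $h_i(p)=0$, and deduce positivity for $0<|t|\leq\veps$; the disjointness of the two branches follows from linear independence of $v,w$ exactly as you argue. Your write-up is in fact slightly more detailed than the paper's (the explicit injectivity computation and the remark that $\Int(\pol)=\{h_1>0,\ldots,h_r>0\}$), but there is no difference in substance.
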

\begin{proof}
Let $h_1,\ldots,h_s$ be equations of degree $1$ of the facets of $\pol$ and denote by $\vec{h}_i$ the linear part of $h_i$. Assume $\pol=\{h_1\geq0,\ldots,h_s\geq0\}$. The polynomial map
$$
\alpha:\R\to\pol,\ t\mapsto p+t^2v+t^3w
$$
satisfies $\alpha((-\infty,0))\cap\alpha((0,+\infty))=\varnothing$, because $v,w$ are linearly independent vectors. We have 
$$
h_i(p+t^2v+t^3w)=h_i(p)+t^2\vec{h}_i(v)+t^3\vec{h}_i(w)
$$
for each $i$. As $p\in\pol$, it holds $h_i(p)\geq0$ for each $i$. As
$$
h_i(p+tv)=h_i(p)+t\vec{h}_i(v)
$$
and $\Int(\pol)\cap\{p+tv:\ t>0\}\neq\varnothing$, we have $\vec{h}_i(v)>0$ if $h_i(p)=0$. Pick $\veps>0$ such that $h_i(p+t^2v+t^3w)>0$ if $0<|t|\leq\veps$ and $i=1,\ldots,s$. Thus, the polynomial arc $\alpha:[-\veps,\veps]\to\pol$ satisfies the required properties if $\veps>0$ is small enough.
\end{proof}

\begin{lem}\label{polyhedron}
Let $\pol\subset\R^n$ be an $n$-dimensional convex polyhedron. There exist polynomial paths $\alpha_1,\ldots,\alpha_n:[0,1]\to\pol$ and values $0:=t_0<t_1<\cdots<t_m=1$ and $s_j\in(t_j,t_{j+1})$ for $j=0,\ldots,m-1$ such that: 
\begin{itemize}
\item[(i)] $\alpha_k(0)=\alpha_k(1)=:p\in\Int(\pol)$ for $k=1,\ldots,n$.
\item[(ii)] If $\beta_1,\ldots,\beta_n:[0,1]\to\pol$ are polynomial paths satisfying $\beta_k$ is close to $\alpha_k$ in the $\Cont^\nu$-topology for $\nu$ large enough and the Taylor expansions of $\beta_k$ and $\alpha_k$ coincide at the values $t_j$ and $s_j$ up to order large enough for each pair $(k,j)$, then $\pol$ is the image of the polynomial map $F_\beta:\Delta_{n-1}\times[0,1]\to\pol,\ (\lambda,t)\mapsto\sum_{k=1}^n\lambda_k\beta_k(t)$.
\end{itemize}
\end{lem}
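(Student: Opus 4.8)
The approach is to combine a star triangulation of $\pol$ with repeated use of Lemma \ref{simplex}, one application per simplex, patched along consecutive subintervals of $[0,1]$ by the approximation machinery of Section \ref{s3}.

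\emph{Strategy and setup.} Fix $p\in\Int(\pol)$ and triangulate $\partial\pol$, compatibly with its facets, into $(n-1)$-simplices $\sigma_1,\ldots,\sigma_m$; coning from $p$ we obtain $n$-simplices $\widehat{\sigma}_j:={\rm conv}(\{p\}\cup\sigma_j)$ with $\pol=\bigcup_{j=1}^m\widehat{\sigma}_j$. For each $j$ fix a labelling $v_1^{(j)},\ldots,v_n^{(j)}$ of the vertices of $\sigma_j$, let $H_l^{(j)}$ be the affine hull of $\{p\}\cup\{v_i^{(j)}:i\neq l\}$ (a facet of $\widehat{\sigma}_j$ through $p$ missing $v_l^{(j)}$), with defining equation $h_l^{(j)}$ of degree $1$ normalised so that $\widehat{\sigma}_j\subset(H_l^{(j)})^+$, and put $\pol_k^{(j)}:=\pol\cap\bigcap_{l\neq k}(H_l^{(j)})^-$. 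Choose nodes $0=t_0<s_0<t_1<s_1<\cdots<s_{m-1}<t_m=1$ and write $F_\beta(\lambda,t):=\sum_{k=1}^n\lambda_k\beta_k(t)$. The plan is that on $[s_{j-1},t_j]$ the path $\alpha_l$ runs from $v_l^{(j)}$ to $p$ \emph{inside $\pol_l^{(j)}$}, so that Lemma \ref{simplex} applied to $\sigma_j$, $p$, the $H_l^{(j)}$ and the $\alpha_l|_{[s_{j-1},t_j]}$ gives $\widehat{\sigma}_j\subset F_\alpha(\Delta_{n-1}\times[s_{j-1},t_j])\subset\pol$; on the remaining subintervals $\alpha_l$ merely travels inside $\pol$. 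Taking unions over $j$ then forces $F_\alpha(\Delta_{n-1}\times[0,1])=\pol$, and the same will hold for all admissible perturbations $\beta$.

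\emph{Model paths.} Since $p,v_1^{(j)},\ldots,v_n^{(j)}$ are affinely independent, $\bigcap_{l\neq k}H_l^{(j)}$ is the line through $p$ and $v_k^{(j)}$, so $\bigcap_{l\neq k}(H_l^{(j)})^-$ is a full-dimensional cone with that line as edge; hence $\pol_k^{(j)}$ is full-dimensional (near $p$ it coincides with that cone), $\Int(\pol_k^{(j)})=\Int(\pol)\cap\bigcap_{l\neq k}\{h_l^{(j)}<0\}$ accumulates at $p$, and it also accumulates at $v_k^{(j)}$ because $\overrightarrow{v_k^{(j)}p}$ lies in the interior of the tangent cone of $\pol$ at the vertex $v_k^{(j)}$ while, as $\vec{h}_l^{(j)}(\overrightarrow{v_k^{(j)}p})=h_l^{(j)}(p)-h_l^{(j)}(v_k^{(j)})=0$, it lies on the boundary of the open cone $\{w:\vec{h}_l^{(j)}(w)<0,\ l\neq k\}$, so these two open cones meet. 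Thus for each $k,j$ there are vectors pointing from $v_k^{(j)}$ (resp. from $p$) into $\Int(\pol_k^{(j)})$. Using such vectors we build, for every $k$, a continuous semialgebraic path $\gamma_k:[0,1]\to\pol$: near each node $s_{j-1}$ a Nash arc $t\mapsto v_k^{(j)}+(t-s_{j-1})^2w+(t-s_{j-1})^3w'$ which off $s_{j-1}$ lies in $\Int(\pol_k^{(j)})$; near each node $t_j$ with $1\leq j\leq m$ a Nash arc $t\mapsto p+(t-t_j)^2w+(t-t_j)^3w'$ which off $t_j$ lies in $\Int(\pol_k^{(j)})$; near $t_0=0$ a polynomial arc leaving $p$ into $\Int(\pol)$; and between consecutive pieces polynomial paths provided by Corollary \ref{polcon}, taken inside $\Int(\pol_k^{(j)})$ over $(s_{j-1},t_j)$ and inside $\Int(\pol)$ over $(0,s_0)$ and the $(t_j,s_j)$, the junction corners being smoothed (Lemma \ref{smco}) so that $\gamma_k$ is of class $\Cont^\nu$ for a fixed $\nu\geq 3$. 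Then $\gamma_k(0)=\gamma_k(1)=p$, $\gamma_k$ is analytic near every node, $\eta(\gamma_k)$ lies in those open sets away from the nodes, and at each node the one-sided series of $g\circ\gamma_k$, for the finitely many degree-$1$ polynomials $g$ defining $\pol$ and the $\pol_k^{(j)}$, have positive leading coefficients of order $\leq 2$. After slightly enlarging the domain, Lemma \ref{swdp} (through Lemma \ref{clue}(2)) furnishes polynomial paths $\alpha_k$ that are $\Cont^\nu$-close to $\gamma_k$ and agree with $\gamma_k$ to order $\nu$ at every node $t_0,s_0,\ldots,s_{m-1},t_m$; in particular $\alpha_k(0)=\alpha_k(1)=p$, which is (i), and $\alpha_k$ inherits the interval-membership of $\gamma_k$ as well as its positive-leading one-sided series at the nodes.

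\emph{Robustness and conclusion.} Let $\beta_1,\ldots,\beta_n:[0,1]\to\pol$ be polynomial paths $\Cont^\nu$-close to $\alpha_1,\ldots,\alpha_n$ and agreeing with them to order $\nu$ at each $t_j$ and $s_j$. Each $\alpha_k$ satisfies the hypotheses of Lemma \ref{clue} with the connected open sets $\Int(\pol)$ on $(0,s_0)$ and on each $(t_j,s_j)$, the sets $\Int(\pol_k^{(j)})$ on each $(s_{j-1},t_j)$, and transition points $v_k^{(j)}\in\cl(\Int(\pol))\cap\cl(\Int(\pol_k^{(j)}))$ at $s_{j-1}$ and $p\in\cl(\Int(\pol_k^{(j)}))\cap\cl(\Int(\pol))$ at $t_j$; so Lemma \ref{clue}(1) gives $\beta_k((s_{j-1},t_j))\subset\Int(\pol_k^{(j)})$ and that $\beta_k$ maps the other open subintervals into $\Int(\pol)$. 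Since $\beta_k(s_{j-1})=v_k^{(j)}$ and $\beta_k(t_j)=p$ (matched $0$-jets) and these points lie in $\pol_k^{(j)}$, we obtain $\beta_k([s_{j-1},t_j])\subset\pol_k^{(j)}$ and $\beta_k([0,1])\subset\pol$; convexity of $\pol$ yields $F_\beta(\Delta_{n-1}\times[0,1])\subset\pol$. For each $j$, Lemma \ref{simplex} applied to $\sigma_j$, the point $p$, the hyperplanes $H_l^{(j)}$ and the paths $\beta_l|_{[s_{j-1},t_j]}$ (affinely reparametrised on $[0,1]$) gives $\widehat{\sigma}_j\subset F_\beta(\Delta_{n-1}\times[s_{j-1},t_j])$. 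Therefore $\pol=\bigcup_{j=1}^m\widehat{\sigma}_j\subset F_\beta(\Delta_{n-1}\times[0,1])\subset\pol$, so $F_\beta(\Delta_{n-1}\times[0,1])=\pol$; this is (ii), and with $\beta=\alpha$ it proves in particular that $F_\alpha$ is onto $\pol$.

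\emph{Main obstacle.} The delicate point is robustness: a perturbation of $\alpha_k$ that is merely $\Cont^0$-close may leave $\pol$ or some $\pol_k^{(j)}$ precisely near the nodes, where $\alpha_k$ touches the boundaries of these polyhedra (at the vertices $v_k^{(j)}$ and at the centre $p$, which lies on every $H_l^{(j)}$). This forces the model paths to reach those boundary points transversally with a controlled first nonzero jet — hence the geometric verification that $\Int(\pol_k^{(j)})$ accumulates at both $v_k^{(j)}$ and $p$ — and it forces the matching of Taylor expansions at the nodes to sufficiently high order, so that Lemma \ref{clue}(1) can propagate the transversality across the whole subintervals. The rest is the bookkeeping of the star triangulation and a routine application of Section \ref{s3}.
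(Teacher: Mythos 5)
Your proof is correct and follows essentially the same route as the paper's: a star triangulation from an interior point $p$, paths $\alpha_k$ visiting each vertex $v_k^{(j)}$ and returning to $p$ through $\Int(\pol_k^{(j)})$, robustness of perturbations via Lemma \ref{clue}(1), and one application of Lemma \ref{simplex} per cone simplex. The only difference is presentational: you unpack the construction of the model paths by hand (explicit bridges at the vertices and at $p$, Corollary \ref{polcon}, Lemma \ref{smco}, Lemma \ref{clue}(2)), whereas the paper packages exactly this construction into a single invocation of Lemma \ref{smart}.
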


\begin{proof}
Let $\sigma_1,\ldots,\sigma_\ell$ be the simplices of dimension $n-1$ of a triangulation of $\partial\pol$ and let $p\in\Int(\pol)$. Denote by $v_{i1},\ldots,v_{in}$ the vertices of $\sigma_i$ and let $\widehat{\sigma}_i$ be the $n$-dimensional simplex of vertices $\{p,v_{i1},\ldots,v_{in}\}$. It holds $\pol=\bigcup_{i=1}^\ell\widehat{\sigma}_i$. Let $h_{i1},\ldots,h_{in}\in\R[\x]$ be polynomials of degree $1$ such that $H_{ik}:=\{h_{ik}=0\}$ is the hyperplane generated by the facet of $\widehat{\sigma}_i$ that contains $p$ but does not contain $v_{ik}$. Assume $\widehat{\sigma}_i\subset H_{ik}^+$ for each $k$ and consider the convex polyhedra $\pol_{ik}:=\pol\cap\bigcap_{j\neq k}H_{ij}^-$. As $p\in\Int(\pol)$ and the $H_{ik}$ are independent hyperplanes through $p$, we have $\Int(\pol_{ik})\neq\varnothing$. Observe that $p,v_{ik}\in\partial\pol_{ik}$. We will take advantage of Lemmas \ref{smart}, \ref{clue}(1) and \ref{simplex} to prove the statement. 

Fix values $0<t_1<\cdots<t_{2\ell+1}<1$ and $s_i\in(t_i,t_{i+1})$ for $i=1,\ldots,2\ell$. Fix $k=1,\ldots,n$ and $i=1,\ldots,\ell$. Let us construct: \em a bridge between $\Int(\pol)$ and $\Int(\pol_{ik})$ with base point $v_{ik}$ and a bridge between $\Int(\pol_{ik})$ and $\Int(\pol)$ with base point $p$.\em

\begin{center}
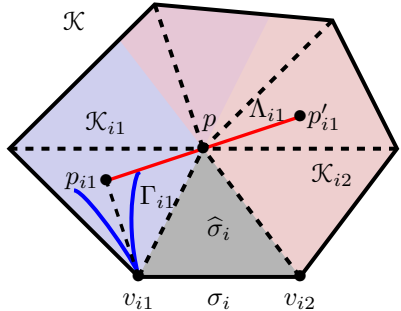
\begin{figure}[ht]
\begin{tikzpicture}[scale=0.85]

\draw[fill=gray!50,opacity=0.5,draw=none] (2,0) -- (4.5,0) -- (6,2) -- (5,4) -- (2.25,4.25) -- (0,2) -- (2,0);
\draw[fill=gray!90,opacity=0.5,draw=none] (2,0) -- (4.5,0) -- (3,2) -- (2,0);

\draw[fill=blue!25,opacity=0.5,draw=none] (2,0) -- (3,2) -- (4.05,4.1) -- (2.25,4.25) -- (0,2) -- (2,0);
\draw[fill=red!25,opacity=0.5,draw=none] (4.5,0) -- (6,2) -- (5,4) -- (2.25,4.25) -- (1.7,3.7) -- (3,2) -- (4.5,0);

\draw[color=blue,line width=1.5pt] (2,0) .. controls (1.75,1) and (2,1.75) .. (2,1.6);
\draw[color=blue,line width=1.5pt] (1,1.35) .. controls (1.2,1.35) and (2.1,0) .. (2,0);

\draw[line width=1.5pt] (2,0) -- (4.5,0) -- (6,2) -- (5,4) -- (2.25,4.25) -- (0,2) -- (2,0);

\draw[line width=1.25pt,dashed] (2,0) -- (1.5,1.5);
\draw[color=red,line width=1.25pt] (1.5,1.5) -- (3,2) -- (4.5,2.5);

\draw[dashed,line width=1.5pt] (2,0) -- (3,2);
\draw[dashed,line width=1.5pt] (4.5,0) -- (3,2);
\draw[dashed,line width=1.5pt] (6,2) -- (3,2);
\draw[dashed,line width=1.5pt] (5,4) -- (3,2);
\draw[dashed,line width=1.5pt] (2.25,4.25) -- (3,2);
\draw[dashed,line width=1.5pt] (0,2) -- (3,2);

\draw (3,2) node{$\bullet$};
\draw (1.5,1.5) node{$\bullet$};
\draw (4.5,2.5) node{$\bullet$};
\draw (2,0) node{$\bullet$};
\draw (4.5,0) node{$\bullet$};
\draw (2.3,1.25) node{\small $\Gamma_{i1}$};
\draw (4,2.6) node{\small $\Lambda_{i1}$};
\draw (1.125,1.5) node{\small $p_{i1}$};
\draw (4.875,2.5) node{\small $p_{i1}'$};
\draw (2,-0.375) node{{\small $v_{i1}$}};
\draw (4.5,-0.375) node{{\small $v_{i2}$}};
\draw (3.25,-0.375) node{{\small $\sigma_i$}};
\draw (3.25,0.75) node{{\small $\widehat{\sigma}_i$}};
\draw (3.1,2.4) node{{\small $p$}};
\draw (1,4) node{{\small $\pol$}};
\draw (1.5,2.4) node{{\small $\pol_{i1}$}};
\draw (5,1.6) node{{\small $\pol_{i2}$}};

\end{tikzpicture}
\caption{Construction of the bridges $\Gamma_{ik}$ and $\Lambda_{ik}$\label{bridges1}}
\end{figure}
\end{center}

Let $p_{ik}\in\Int(\pol_{ik})$ and let $\ell_{ik}$ be the ray with origin $v_{ik}$ that passes through $p_{ik}$. As $\Int(\pol_{ik})\subset\Int(\pol)$, there exists by Lemma \ref{point} a bridge $\Gamma_{ik}$ between $\Int(\pol)$ and $\Int(\pol_{ik})$ with base point $v_{ik}$ (see Figure \ref{bridges1}). Let $\ell_{ik}'$ be the ray with origin $p_{ik}$ that passes through $p$. By \cite[Lem.11.2.4]{ber1} the relative interior of the segment $\ell_{ik}'\cap\pol_{ik}$ is contained in $\Int(\pol_{ik})$. As $p\in\partial\pol_{ik}$, the difference $\ell_{ik}'\setminus[p_{ik},p]\subset\R^n\setminus\pol_{ik}$. As $p\in\Int(\pol)$, we can pick a point $p_{ik}'\in(\ell_{ik}'\setminus[p_{ik},p])\cap\Int(\pol)$. The segment $\Lambda_{ik}:=[p_{ik},p_{ik}']$ provides a bridge between $\Int(\pol_{ik})$ and $\Int(\pol)$ with base point $p$ (see Figure \ref{bridges1}).

Consider the family of connected open semialgebraic sets:
$$
\Ss_a:=\begin{cases}
\Int(\pol)&\text{if $a=2i-1$ for $i=1,\ldots,\ell+1$,}\\
\Int(\pol_{ik})&\text{if $a=2i$ for $i=1,\ldots,\ell$.}
\end{cases}
$$

Define the points
\begin{equation}
\begin{split}
&p_a:=\begin{cases}
p&\text{if $a=1$,}\\
p_{ik}&\text{if $a=2i$ for $i=1,\ldots,\ell$,}\\
p_{ik}'&\text{if $a=2i+1$ for $i=1,\ldots,\ell$.}
\end{cases}\\
&q_a:=\begin{cases}
v_{ik}&\text{if $a=2i-1$ for $i=1,\ldots,\ell$,}\\
p&\text{if $a=2i$ for $i=1,\ldots,\ell$.}
\end{cases}
\end{split}
\end{equation}

Observe that $p_a\in\Ss_a$ for $1\leq a\leq 2\ell+1$ and $q_a\in\cl(\Ss_a)\cap\cl(\Ss_{a+1})$ for $a=1,\ldots,2\ell$. As we have commented above, there exists a bridge between $\Ss_a$ and $\Ss_{a+1}$ with base point $q_a$ for $a=1,\ldots,2\ell$. By Lemma \ref{smart} there exist polynomial paths $\alpha_k:\R\to\R^n$ that satisfies:
\begin{itemize}
\item[(i)] $\alpha_k([0,1])\subset\Int(\pol)\cup\{v_{1k},\ldots,v_{\ell k}\}$.
\item[(ii)] $\alpha_k(t_a)=p_a$ for $a=1,\ldots,2\ell+1$.
\item[(iii)] $\alpha_k((t_a,s_a))\subset\Ss_a$, $\alpha_k((s_a,t_{a+1}))\subset\Ss_{a+1}$ and $\alpha_k(s_a)=q_a$. 
\end{itemize}

\begin{center}
\begin{figure}[ht]
\centerline{
 \resizebox{13cm}{!}{
 \begin{tikzpicture}[
 scale=0.5,
 level/.style={thick},
 virtual/.style={thick,densely dashed},
 trans/.style={thick,<->,shorten >=2pt,shorten <=2pt,>=stealth},
 classical/.style={thin,double,<->,shorten >=4pt,shorten <=4pt,>=stealth}
 ]

 \path[->] (0cm,2cm) node{\includegraphics[width=3cm]{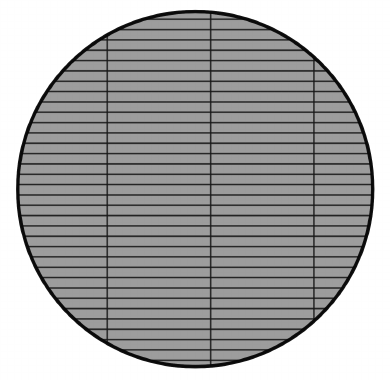}} -- (9cm,2cm) node{\includegraphics[width=2cm]{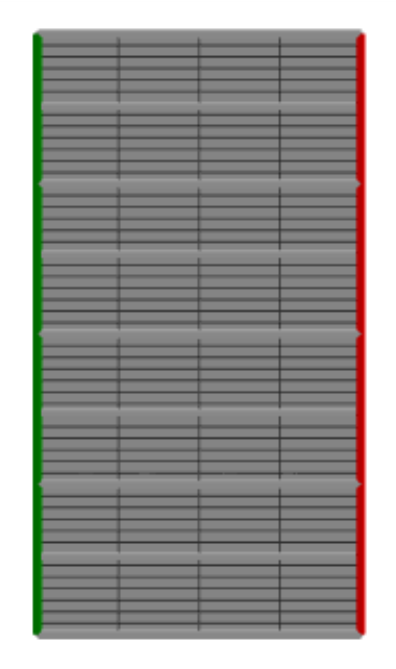}}--(18cm,-2cm) node{\includegraphics[width=3cm]{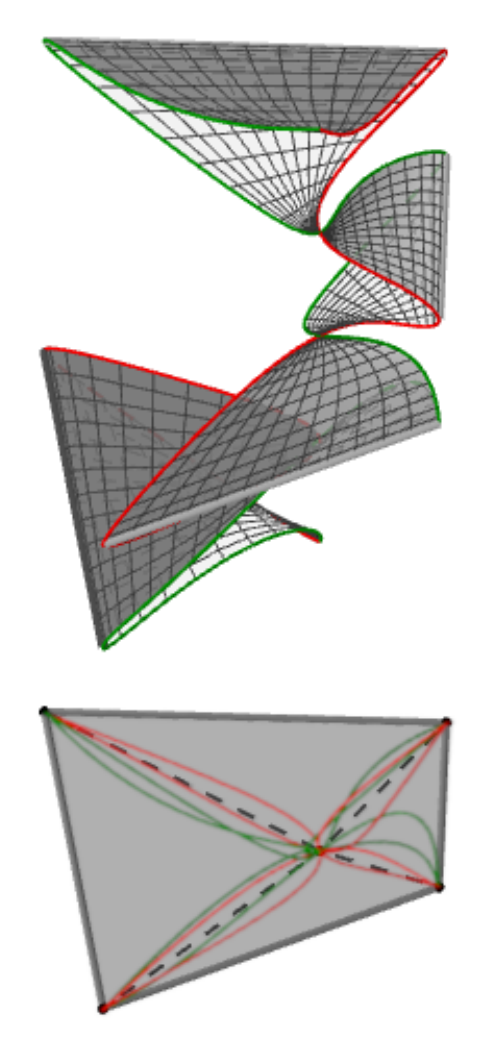}}--(9cm,-8cm) node{\includegraphics[width=3cm]{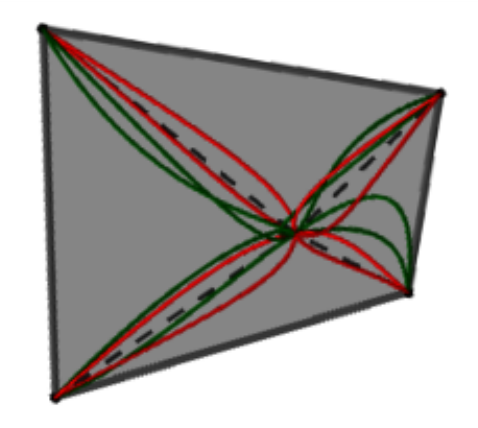}}--(0cm,-8cm) node{\includegraphics[width=3cm]{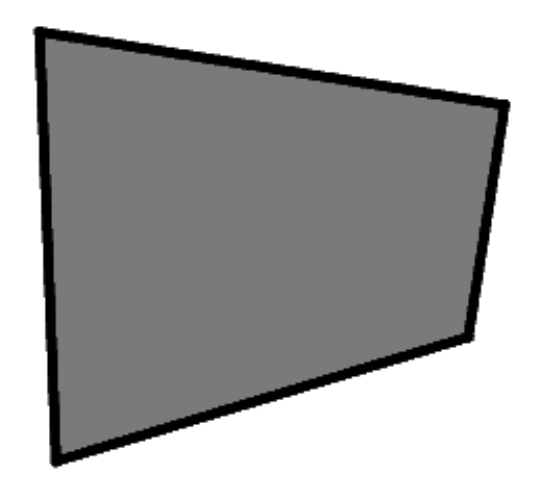}};
 \draw[->,very thick] (4,2)--(6,2);

 \draw[->,very thick] (12,2)--(14,1);
 \draw[->,very thick] (5.5,-8)--(3.5,-8);
 \draw[->,very thick] (14.75,-7)--(12.75,-8);
 \draw[->,very thick] (0,-2.3)--(0,-4.3);
 \draw[->,very thick] (18.5,-2.75)--(18.5,-4);

\draw (5,2.75) node{{\scriptsize Polynomial map}}; 
\draw (13.5,2.75) node{{\scriptsize Polynomial}};
\draw (13.5,2) node{{\scriptsize map}};
\draw (12.8,1) node{{\scriptsize $F_\beta$}};
\draw (19,4.5) node{{\scriptsize Graph($F_\beta$)}};
\draw (20.25,-3.25) node{{\scriptsize Projection}};
\draw (19,-8.25) node{{\scriptsize Image($F_\beta$)}}; 
\draw (4.5,-7.25) node{{\scriptsize Triangulation}};
\end{tikzpicture}
}
}
\caption{Sketch of proof of Lemma \ref{polyhedron}\label{figb}}
\end{figure}
\end{center}

By Lemma \ref{clue}(1) each polynomial path $\beta_k:\R\to\R^n$ close to $\alpha_k$ in the $\Cont^\nu$-topology for $\nu$ large enough and such that the Taylor expansions of $\beta_k$ and $\alpha_k$ coincide at the values $t_j$ and $s_j$ up to order large enough for each pair $(k,j)$ satisfies:
\begin{itemize}
\item[(i)] $\beta_k([0,1])\subset\Int(\pol)\cup\{v_{1k},\ldots,v_{\ell k}\}$.
\item[(ii)] $\beta_k(t_a)=p_a$ for $a=0,\ldots,2\ell$.
\item[(iii)] $\beta_k((t_a,s_a))\subset\Ss_a$, $\beta_k((s_a,t_{a+1}))\subset\Ss_{a+1}$ and $\beta_k(s_a)=q_a$. 
\end{itemize}

As $p_a\in\Ss_a$ for $a=1,\ldots,2a+1$, we have $\beta_k((s_a,s_{a+1}))\subset\Ss_{a+1}$ for $a=1,\ldots,2\ell$. Let us check: \em $\pol$ is the image of the polynomial map
$$
F_\beta:\Delta_{n-1}\times[0,1]\to\pol,\ (\lambda,t)\mapsto\sum_{k=1}^n\lambda_k\beta_k(t).
$$\em
 
As $\pol$ is convex and $\beta_k([0,1])\subset\pol$ for $k=1,\ldots,n$, we deduce $F_\beta(\Delta_{n-1}\times[0,1])\subset\pol$. In addition, $\beta_k(s_{2i-1})=v_{ik}$, $\beta_k(s_{2i})=p$ and $\beta_k((s_{2i-1},s_{2i}))\subset\Ss_{2i}=\Int(\pol_{ik})$ for each pair $(i,k)$. By Lemma \ref{simplex} we have
$$
\widehat{\sigma_i}\subset F_\beta(\Delta_{n-1}\times[s_{2i-1},s_{2i}])\subset\pol
$$
for $i=1,\ldots,\ell$, so $\pol=\bigcup_{i=1}^\ell\widehat{\sigma_i}\subset F_\beta(\Delta_{n-1}\times[0,1])$ and $F_\beta(\Delta_{n-1}\times[0,1])=\pol$, as required.
\end{proof}

We are ready to prove Theorem \ref{main1-i2}. The reader can compare Figures \ref{figa} and \ref{figb} (even if they are not completely faithful with the true constructions) to get an idea of the different complexity between the constructions provided by the proofs of Theorems \ref{main1-i} and \ref{main1-i2}.

\begin{proof}[Proof of Theorem \em\ref{main1-i2}]
(ii) $\Longrightarrow$ (i) By \cite[Thm.1.5]{f2} there exists a Nash map $g:\R^{n}\to\R^{n}$ such that $g(\R^{n})=\ol{\Bb}_{n}$. Consider the Nash map $f\circ g:\R^{n}\to\R^n$ whose image is $\Ss$. By \cite[Main Thm.1.4]{f2} the semialgebraic set $\Ss$ is connected by analytic paths.

(i) $\Longrightarrow$ (ii) If $n=1$, then $\Ss$ is a compact interval, which is affinely equivalent to $[-1,1]=\ol{\Bb}_1$. So we suppose $n\geq2$. By Lemma \ref{graph} we may assume $\Ss=\bigcup_{j=1}^r\pol_i\subset\R^n$, where each $\pol_j$ is an $n$-dimensional convex polyhedron whose interior is denoted by $\Ss_j$ and there exists a bridge $\Gamma_j$ inside $\Ss$ with base point $q_j$ between $\Ss_j$ and $\Ss_{j+1}$ for $j=1,\ldots,r-1$. 

For each $j=1,\ldots,r$ consider the polynomial paths $\alpha_{jk}:[0,1]\to\cl(\Ss_j)=\pol_j$ in the statement of Lemma \ref{polyhedron} and denote $p_j:=\alpha_{jk}(0)=\alpha_{jk}(1)$. By Lemma \ref{smart} there exists a polynomial path $\gamma_j:[-1,1]\to\Ss_j\cup\Ss_{j+1}\cup\{q_j\}$ such that $\gamma_j(-1)=p_j$, $\gamma_j(0)=q_j$, $\gamma_j(1)=p_{j+1}$, $\gamma_j([-1,0))\subset\Ss_j$ and $\gamma_j((0,1])\subset\Ss_{j+1}$ for $j=1,\ldots,r-1$. For each $k=1,\ldots,n$ consider the concatenated (continuous) semialgebraic path 
$$
\alpha_k:=\alpha_{1k}*\gamma_1*\alpha_{2k}*\cdots*\alpha_{r-1,k}*\gamma_{r-1}*\alpha_{rk}:[0,N]\to\R^n
$$
where $N:=3(r-1)+1$. Observe that $\eta(\alpha_k)\subset\{3j+1,3(j+1):\ j=0,\ldots,r-2\}$ and $\alpha_k(\eta(\alpha_k))\subset\{p_1,\ldots,p_r\}\subset\bigcup_{j=1}^r\Ss_j$ for $k=1,\ldots,n$. Consider the semialgebraic map
$$
F_\alpha:\Delta_{n-1}\times[0,N]\to\R^n,\ (\lambda,t)\mapsto\sum_{k=1}^n\lambda_k\alpha_k(t),
$$
which satisfies $F_\alpha(\Delta_{n-1}\times[0,N])=\Ss$. Let us modify $\alpha$ to achieve the statement.

By Lemma \ref{polyhedron} we find $\nu\geq0$ large enough and a finite set of values $0=:t_0<t_1<\cdots<t_m:=N$ with the following property: \em if $\beta_k:[0,N]\to\R^n$ are polynomial paths such that $\alpha_k$ and $\beta_k$ are close in the $\Cont^\nu$-topology of some of the intervals $[t_i,t_{i+1}]$ \em (determined by Lemma \ref{polyhedron}) \em and the Taylor expansions of $\alpha_k$ and $\beta_k$ coincide at some of the values $t_j$ \em (determined by Lemma \ref{polyhedron}) \em up to order large enough, then $\Ss=\bigcup_{j=1}^\ell\pol_j$ is contained in the image of the polynomial map
$$
F_\beta:\Delta_{n-1}\times[0,N]\to\R^n,\ (\lambda,t)\mapsto\sum_{k=1}^n\lambda_k\beta_k(t).
$$
\em

For each $i=0,\ldots,m-1$ we may assume in addition by Lemma \ref{clue}(1) that $\beta_k((t_i,t_{i+1}))\subset\Ss_{j_i}$ for some index $1\leq j_i\leq r-1$ that depends only on $i$ (and not on $k$). As each $\Ss_{j_i}$ is convex, we deduce 
$$
F_\beta(\Delta_{n-1}\times(t_i,t_{i+1}))\subset\Ss_{j_i}\subset\Ss 
$$
for each $i=0,\ldots,m-1$. As $\Ss$ is closed, $F_\beta(\Delta_{n-1}\times[0,N])\subset\Ss$, so $F_\beta(\Delta_{n-1}\times[0,N])=\Ss$. By Lemma \ref{imagebc} we conclude that $\Ss$ is a polynomial image of $\ol{\Bb}_n$, as required.
\end{proof}

\section{Finite unions of $m$-bricks}\label{s5}

Denote by $\psd_d(\R^m,\R^n)$ the space of polynomial maps $f:\R^m\to\R^n$ whose components have degree $\leq d$. This set admits the structure of the affine space $\R^N$ where
$$
N:=\binom{n+d}{d}m.
$$
This $N$ is the number of coefficients necessary to define each polynomial map of $\psd_d(\R^m,\R^n)$. The Euclidean topology of $\R^N$ induces on $\psd_d(\R^m,\R^n)$ the compact-open topology. 

If $\Ss$ is an $m$-brick, there exists a homotopy $H:=(H_1,\ldots,H_n):[0,1]\times\ol{\Bb}_m\to\Ss$ such that:
\begin{itemize}
\item[(i)] $H_i\in{\mathcal C}^0([0,1])[\x_1,\ldots,\x_m]$ for $i=1,\ldots,n$.
\item[(ii)] $H(\{0\}\times\ol{\Bb}_m)=\Ss$ and $H(1,\cdot)$ is a constant map.
\item[(iii)] $H(\{t\}\times\ol{\Bb}_m)\subset\Int(\Ss)$ for each $t\in(0,1)$.
\end{itemize}
When we want to stress the associate homotopy $H$, we write $(\Ss,H)$. Define 
$$
\deg(H):=\max\{\deg(H_i):\ i=1,\ldots,n\}.
$$
If $d:=\deg(H)$, then $H_t:=H(t,\cdot)\in\psd_d(\R^m,\R^n)$ for each $t\in[0,1]$. As $\Ss$ is a polynomial image of $\ol{\Bb}_m$ of dimension $n$, it is pure dimensional of dimension $n$, so $\Ss=\cl(\Int(\Ss))$. If $p\in\R^n$, we denote by $F_p:\R^m\to\R^n$ the constant polynomial map that values $p$.

\begin{lem}\label{basico} 
Let $(\Ss,H)$ be an $m$-brick and let $d\geq\deg(H)$. Consider the semialgebraic set
$$
\Omega_\Ss:=\{F\in\psd_d(\R^m,\R^n):\ F(\ol{\Bb}_m)\subset\Int(\Ss)\}.
$$
We have:
\begin{itemize}
\item[(i)] $\Omega_\Ss$ is an open semialgebraic set in $\psd_d(\R^m,\R^n)$.
\item[(ii)] If $F\in\cl(\Omega_\Ss)$, then $F(\ol{\Bb}_m)\subset\Ss$.
\item[(iii)] If $F_{y_0}\in\psd_d(\R^m,\R^n)$ and $y_0\in\Ss$, then $F_{y_0}\in\cl(\Omega_\Ss)$. In addition, if $y_0\in\Int(\Ss)$, then $F_{y_0}\in\Omega_\Ss$.
\item[(iv)] There exists $F\in\cl(\Omega_\Ss)$ such that $F(\ol{\Bb}_m)=\Ss$.
\item[(v)] Both $\Int(\Ss)$ and $\Omega_\Ss$ are connected.
\end{itemize}
\end{lem}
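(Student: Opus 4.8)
The plan is to dispose of (i)--(iv) by routine compactness and Tarski--Seidenberg arguments and to concentrate the work on the connectedness statement (v). For (i), semialgebraicity will follow from the universal evaluation map $\mathrm{ev}\colon\psd_d(\R^m,\R^n)\times\R^m\to\R^n$, $(F,x)\mapsto F(x)$, which is polynomial in the coefficients of $F$ and in $x$: the set $\{(F,x):x\in\ol{\Bb}_m,\ F(x)\notin\Int(\Ss)\}$ is semialgebraic, and $\Omega_\Ss$ is the complement in $\psd_d(\R^m,\R^n)$ of its projection onto the first factor, which is semialgebraic by Tarski--Seidenberg. For openness, $F(\ol{\Bb}_m)$ is a compact subset of the open set $\Int(\Ss)$, hence some closed $\delta$-neighbourhood of it is still contained in $\Int(\Ss)$; since on $\ol{\Bb}_m$ each component of a polynomial is bounded in absolute value by the sum of the absolute values of its coefficients, a $G$ with coefficients close enough to those of $F$ satisfies $\|G-F\|_{\ol{\Bb}_m}<\delta$, and therefore $G(\ol{\Bb}_m)\subset\Int(\Ss)$.

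For (ii), if $F\in\cl(\Omega_\Ss)$ I would take $F_k\to F$ in $\psd_d(\R^m,\R^n)$ with $F_k\in\Omega_\Ss$; coefficientwise convergence gives uniform convergence on $\ol{\Bb}_m$, so $F(x)=\lim_k F_k(x)\in\Ss$ for each $x\in\ol{\Bb}_m$ because $\Ss$ is closed (an $m$-brick is compact). For (iii), if $y_0\in\Int(\Ss)$ then $F_{y_0}(\ol{\Bb}_m)=\{y_0\}\subset\Int(\Ss)$, i.e.\ $F_{y_0}\in\Omega_\Ss$; if only $y_0\in\Ss=\cl(\Int(\Ss))$ (recall $\Ss$ is pure dimensional), choose $y_k\to y_0$ with $y_k\in\Int(\Ss)$, so $F_{y_k}\in\Omega_\Ss$ and $F_{y_k}\to F_{y_0}$, giving $F_{y_0}\in\cl(\Omega_\Ss)$. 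For (iv), I would read off $F$ directly from the homotopy: since each $H_i\in{\mathcal C}^0([0,1])[\x_1,\dots,\x_m]$, the coefficients of $H_t:=H(t,\cdot)$ are continuous in $t$, so $t\mapsto H_t$ is a continuous path into $\psd_d(\R^m,\R^n)$ (here $d\ge\deg(H)$ is used); by condition (iii) of Definition~\ref{mbrickd} one has $H_t\in\Omega_\Ss$ for $t\in(0,1)$, whence $H_0\in\cl(\Omega_\Ss)$, while $H_0(\ol{\Bb}_m)=H(\{0\}\times\ol{\Bb}_m)=\Ss$ by condition (ii). Thus $F:=H_0$ does the job.

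For (v) I would first prove $\Int(\Ss)$ path-connected. Given $p,q\in\Int(\Ss)\subset\Ss=H(\{0\}\times\ol{\Bb}_m)$, pick $a,b\in\ol{\Bb}_m$ with $H(0,a)=p$ and $H(0,b)=q$ and fix $\veps\in(0,1)$. The path $[0,\veps]\ni t\mapsto H(t,a)$ stays inside $\Int(\Ss)$ — it equals $p\in\Int(\Ss)$ at $t=0$ and lies in $\Int(\Ss)$ for $t\in(0,\veps]$ by condition (iii) of Definition~\ref{mbrickd} — and similarly for $b$; finally $H(\veps,a)$ and $H(\veps,b)$ both belong to $H(\{\veps\}\times\ol{\Bb}_m)\subset\Int(\Ss)$, which is path-connected as a continuous image of $\ol{\Bb}_m$. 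Concatenating these three arcs joins $p$ to $q$ inside $\Int(\Ss)$. For $\Omega_\Ss$ I would use a scaling trick: for $F\in\Omega_\Ss$, the path $s\mapsto F^{(s)}$ with $F^{(s)}(x):=F(sx)$, $s\in[0,1]$, is continuous in $\psd_d(\R^m,\R^n)$ and lies entirely in $\Omega_\Ss$, since $sx\in\ol{\Bb}_m$ forces $F(sx)\in F(\ol{\Bb}_m)\subset\Int(\Ss)$; it connects $F=F^{(1)}$ to the constant map $F_{F(0)}$ with $F(0)\in\Int(\Ss)$. Because $\{F_q:q\in\Int(\Ss)\}$ is a path-connected subset of $\Omega_\Ss$ (by the part of (v) already proved), any $F,G\in\Omega_\Ss$ are joined inside $\Omega_\Ss$ by running $F\leadsto F_{F(0)}\leadsto F_{G(0)}\leadsto G$, so $\Omega_\Ss$ is connected.

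The main obstacle is (v): there is no formal reason for the interior of a connected compact set to be connected, so the argument must genuinely exploit the brick homotopy (the fact that $\Ss$ can be deformed polynomially into $\Int(\Ss)$), and only then can connectedness of $\Int(\Ss)$ be transferred to $\Omega_\Ss$ by the soft scaling argument; everything else is a standard compactness/Tarski--Seidenberg exercise.
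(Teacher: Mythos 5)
Your proposal is correct and follows essentially the same route as the paper: items (i)--(iv) are handled by the same compactness/limit arguments (including reading $F:=H(0,\cdot)$ off the homotopy for (iv)), and (v) for $\Omega_\Ss$ uses the identical scaling path $s\mapsto F(s\,\cdot)$ down to the constant maps indexed by $\Int(\Ss)$. The only variation is in proving $\Int(\Ss)$ connected: you build explicit paths through $H(\{\veps\}\times\ol{\Bb}_m)$, whereas the paper observes that any connected component other than the one containing $H((0,1)\times\ol{\Bb}_m)$ would have a nonempty open preimage contained in $\{0,1\}\times\ol{\Bb}_m$, which is impossible; both arguments are valid and exploit the brick homotopy in the same way.
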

\begin{proof}
(i) The semialgebraicity of $\Omega_\Ss$ follows because it is described by a formula in first order language. By definition $\Omega_\Ss$ is an open subset of $\psd_d(\R^m,\R^n)$ endowed with the compact-open topology.

(ii) Let $F\in\cl(\Omega_\Ss)$ and $x_0\in\ol{\Bb}_m$. Write $y_0:=F(x_0)$. As $F\in\cl(\Omega_\Ss)$, there exists a sequence $\{F_m\}_m\subset\Omega_\Ss$ that converges to $F$. As $F_m(x_0)\in\Int(\Ss)$ for each $m\geq1$, we deduce $y_0=F(x_0)=\lim_{m\to\infty}F_m(x_0)\in\cl(\Int(\Ss))=\Ss$.

(iii) As $\cl(\Int(\Ss))=\Ss$, there exists a sequence $\{y_k\}_k\subset\Int(\Ss)$ that converges to $y_0$. The sequence of constant polynomial maps $\{F_{y_k}\}_k$ is contained in $\Omega_\Ss$ and converges to $F_{y_0}$, so $F_{y_0}\in\cl(\Omega_\Ss)$. If $F_{y_0}(\ol{\Bb}_m)=\{y_0\}\subset\Int(\Ss)$, it is clear that $F_{y_0}\in\Omega_\Ss$. 

(iv) Let $F:=H(0,\cdot)$, which satisfies $F(\ol{\Bb}_m)=\Ss$, and let $\{t_k\}_k\subset(0,1)$ be a sequence that converges to $0$. Define $F_k:=H(t_k,\cdot)$ and observe that the sequence $\{F_k\}_k\subset\Omega_\Ss$ converges to $F$, so $F\in\cl(\Omega_\Ss)$.

(v) We prove first: \em $\Int(\Ss)$ is connected\em. 

As $\Tt:=H((0,1)\times\ol{\Bb}_m)\subset\Int(\Ss)$ is connected, it must be contained in one of the components of $\Int(\Ss)$. Let $\Cc$ be another connected component of $\Int(\Ss)$, which is an open semialgebraic subset of $\R^n$. The inverse image $H^{-1}(\Cc)$ is an open subset of $[0,1]\times\ol{\Bb}_m$ that does not meet $(0,1)\times\ol{\Bb}_m$, so $H^{-1}(\Cc)\subset\{0,1\}\times\ol{\Bb}_m$, which is a contradiction.

We show next: \em $\Omega_\Ss$ is connected by (continuous) semialgebraic paths\em.

We claim: \em for each $G\in\Omega_\Ss$ there exists a (continuous) semialgebraic path $\phi:[0,1]\to\Omega_\Ss$ that connects the constant polynomial map $F_{G(0)}$ with $G$\em.

Consider the (continuous) semialgebraic map
$$
\phi:[0,1]\to\psd_d(\R^m,\R^n),\ t\mapsto G_t
$$
where $G_t(\x)=G(t\x)\in\psd_d(\R^m,\R^n)$ for each $t\in[0,1]$. Observe that 
$$
G_t(\ol{\Bb}_m)=G(t\ol{\Bb}_m)=G(\ol{\Bb}_m(0,t))\subset G(\ol{\Bb}_m)\subset\Int(\Ss),
$$ 
so $\im(\phi)\subset\Omega_\Ss$. Observe that $G_0$ is a constant polynomial map such that $G_0(\ol{\Bb}_m)=\{G(0)\}\subset\Int(\Ss)$. 

Thus, to prove that $\Omega_\Ss$ is connected by (continuous) semialgebraic paths it is enough to show: \em Given two constant maps $F_p,F_q\in\Omega_\Ss$ there exists a (continuous) semialgebraic path $F_\alpha:[0,1]\to\Omega_S$ such that $F_\alpha(0)=F_p$ and $F_\alpha(1)=q$\em.

As $F_p,F_q\in\Omega_\Ss$ are constant maps, $p,q\in\Int(\Ss)$. As $\Int(\Ss)$ is connected, there exists a (continuous) semialgebraic path $\alpha:[0,1]\to\Int(\Ss)$ such that $\alpha(0)=p$ and $\alpha(1)=q$. The constant polynomial map $F_{\alpha(t)}\in\Omega_\Ss$ for each $t\in[0,1]$, so $F_\alpha:[0,1]\to\Omega_\Ss$ provides a (continuous) semialgebraic path that connects $F_p$ and $F_q$, as required.
\end{proof}

\begin{cor}\label{freedom2}
Let $\Ss_1$, $\Ss_2$ be two $m$-bricks and assume there exists a bridge $\Gamma$ between $\Int(\Ss_1)$ and $\Int(\Ss_2)$ with base point $q$. Then there exists a bridge (of constant polynomial maps) between $\Omega_{\Ss_1}$ and $\Omega_{\Ss_2}$ with base point the constant map $F_q$. 
\end{cor}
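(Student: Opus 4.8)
The plan is to lift the given polynomial arc on $\R^n$ to an arc of \emph{constant} polynomial maps in the affine space $\psd_d(\R^m,\R^n)$ and to read off the conclusion directly from Lemma \ref{basico}. Fix $d$ large enough that both $\Omega_{\Ss_1}$ and $\Omega_{\Ss_2}$ are defined inside $\psd_d(\R^m,\R^n)$ (that is, $d\geq\deg(H_1),\deg(H_2)$ for the homotopies $H_1,H_2$ associated to $\Ss_1,\Ss_2$). Let $\alpha:=(\alpha_1,\ldots,\alpha_n):[-1,1]\to\R^n$ be a polynomial parameterization of the bridge $\Gamma$, so that $\alpha([-1,0))\subset\Int(\Ss_1)$, $\alpha((0,1])\subset\Int(\Ss_2)$ and $\alpha(0)=q$.

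Consider the map $\widetilde{\alpha}:[-1,1]\to\psd_d(\R^m,\R^n)$, $t\mapsto F_{\alpha(t)}$, the constant polynomial map with value $\alpha(t)$. The first (and essentially only) thing to verify is that $\widetilde{\alpha}$ is a polynomial arc in the ambient affine space $\psd_d(\R^m,\R^n)\cong\R^N$: in the coordinates given by the coefficients of a polynomial map, the $k$-th component of $F_{\alpha(t)}$ has constant term $\alpha_k(t)$ and all its other coefficients equal to $0$, so every coordinate of $\widetilde{\alpha}(t)$ is a polynomial in $t$. Hence $\widetilde{\alpha}$ is a genuine polynomial curve $[-1,1]\to\R^N$.

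It remains to observe that $\widetilde{\alpha}$ meets the two open semialgebraic sets $\Omega_{\Ss_1}$ and $\Omega_{\Ss_2}$ (open by Lemma \ref{basico}(i)) in the required way. For $t\in[-1,0)$ we have $F_{\alpha(t)}(\ol{\Bb}_m)=\{\alpha(t)\}\subset\Int(\Ss_1)$, hence $\widetilde{\alpha}(t)\in\Omega_{\Ss_1}$ by the ``in addition'' part of Lemma \ref{basico}(iii); analogously $\widetilde{\alpha}((0,1])\subset\Omega_{\Ss_2}$. Finally $\widetilde{\alpha}(0)=F_{\alpha(0)}=F_q$. Therefore $\widetilde{\Gamma}:=\widetilde{\alpha}([-1,1])$ is a polynomial bridge between $\Omega_{\Ss_1}$ and $\Omega_{\Ss_2}$ with base point the constant map $F_q$, as required. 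The construction being completely explicit, there is no real obstacle here; the only point requiring a moment's care is the bookkeeping that identifies $t\mapsto F_{\alpha(t)}$ with a polynomial curve in $\psd_d(\R^m,\R^n)$ and that the defining condition of $\Omega_{\Ss_i}$ reduces, for constant maps, exactly to membership of the value in $\Int(\Ss_i)$, which is precisely Lemma \ref{basico}(iii).
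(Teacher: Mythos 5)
Your proof is correct and follows essentially the same route as the paper: parameterize the bridge $\Gamma$ by a polynomial arc $\alpha$, pass to the curve $t\mapsto F_{\alpha(t)}$ of constant maps in $\psd_d(\R^m,\R^n)$, and invoke Lemma \ref{basico}(iii) to see that it lands in $\Omega_{\Ss_1}$ and $\Omega_{\Ss_2}$ as required. The extra remark that this curve is genuinely polynomial in the coefficient coordinates of $\psd_d(\R^m,\R^n)\cong\R^N$ is a correct (and welcome) bit of bookkeeping that the paper leaves implicit.
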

\begin{proof}
Let $\alpha:[-1,1]\to\Ss$ be a polynomial arc such that $\alpha([-1,0))\subset\Int(\Ss_1)$, $\alpha(0)=q$ and $\alpha((0,1])\subset\Int(\Ss_2)$. By Lemma \ref{basico}(iii) $F_{\alpha(t)}\in\Omega_{\Ss_1}$ for each $t\in[-1,0)$, whereas $F_{\alpha(t)}\in\Omega_{\Ss_2}$ for each $t\in(0,1]$. Thus, $F_\alpha:[-1,1]\to\psd(\R^m,\R^n),\ t\mapsto F_{\alpha(t)}$ defines a bridge between $\Omega_{\Ss_1}$ and $\Omega_{\Ss_2}$ with base point $F_q$, as required.
\end{proof}

\subsection{Proof of Theorem \ref{main2-i}}

We are ready to prove Theorem \ref{main2-i}.

\begin{proof}[Proof of Theorem \em\ref{main2-i}]
(ii) $\implies$ (i) By \cite[Thm.1.5]{f2} there exists a Nash map $g:\R^{m+1}\to\R^{m+1}$ such that $g(\R^{m+1})=\ol{\Bb}_{m+1}$. Consider the Nash map $F\circ g:\R^{m+1}\to\R^n$ whose image is $\Ss$. By \cite[Main Thm.1.4]{f2} the semialgebraic set $\Ss$ is connected by analytic paths.

(i) $\implies$ (ii) Let $(\Ss_i,H_i)$ be $m$-bricks for $i=1,\ldots,r$ and let $d:=\max\{\deg(H_i):\ i=1,\ldots,r\}$. The restriction map $H_{i,t}:=H_i(t,\cdot)$ belongs to $\psd_d(\R^m,\R^n)$ for each $t\in[0,1]$ and each $i=1,\ldots,r$. By Lemma \ref{graph} we may assume that there exists a bridge $\Gamma_k$ between $\Int(\Ss_k)$ and $\Int(\Ss_{k+1})$ with base point $q_k$ for $1\leq k\leq\ell-1$. 

By Lemma \ref{basico} each $\Omega_{\Ss_i}$ is a connected open semialgebraic subset of $\psd_d(\R^m,\R^n)$. By Corollary \ref{freedom2} there exist bridges $\Delta_k$ in $\psd_d(\R^m,\R^n)$ connecting $\Omega_{\Ss_k}$ and $\Omega_{\Ss_{k+1}}$ with base point the constant map $F_{q_k}$ for $k=1,\dots,\ell-1$. 

For each $k=1,\ldots,\ell$ we pick a polynomial map $F_k\in\cl(\Omega_{\Ss_k})$ such that $F_k(\ol{\Bb}_m)=\Ss_k$ (use Lemma \ref{basico}(iv)). Fix real values $0=t_1<\cdots<t_\ell=1$ and $s_k\in(t_k,t_{k+1})$ for $k=1,\ldots,\ell-1$. By Lemma \ref{smart} there exists a polynomial path $\phi:\R\to\psd_d(\R^m,\R^n)$ that satisfies: 
\begin{itemize}
\item[(i)] $\phi([0,1])\subset\bigcup_{k=1}^\ell\Omega_{\Ss_k}\cup\{F_1,\ldots,F_\ell,F_{q_1},\ldots,F_{q_{\ell-1}}\}$.
\item[(ii)] $\phi(t_k)=F_k$ for $k=1,\ldots,\ell$.
\item[(iii)] $\phi((t_k,s_k))\subset\Omega_{\Ss_k}$, $\phi((s_k,t_{k+1}))\subset\Omega_{\Ss_{k+1}}$ and $\phi(s_k)=F_{q_k}$.
\end{itemize}
Thus, $\phi(t)(\ol{\Bb}_m)\subset\Int(\Ss_k)$ for each $t\in(t_k,s_k)$, whereas $\phi(t)(\ol{\Bb}_m)\subset\Int(\Ss_{k+1})$ if $t\in(s_k,t_{k+1})$ for each $k$. Consider the polynomial map
$$
\Phi:[0,1]\times\ol{\Bb}_m\to\R^n,\ (t,x)\mapsto\phi(t)(x)
$$ 
and observe that 
$$
\Ss=\bigcup_{k=1}^\ell\Ss_k=\bigcup_{k=1}^\ell F_k(\ol{\Bb}_m)\subset\Phi([0,1]\times\ol{\Bb}_m)\subset\bigcup_{k=1}^\ell\Ss_k\cup\bigcup_{k=1}^\ell\Int(\Ss_k)\cup\bigcup_{k=1}^{\ell-1}\{q_k\}=\Ss.
$$
Thus, $\Phi([0,1]\times\ol{\Bb}_m)=\Ss$ and by Lemma \ref{imagebc} $\Ss$ is a polynomial image of $\ol{\Bb}_{m+1}$, as required.
\end{proof}

\appendix
\section{Alternative models}\label{a1}

A regular map on a semialgebraic set $\Ss\subset\R^m$ is the restriction to $\Ss$ of a rational map $f:=(f_1,\ldots,f_n):\R^m\to\R^n$ such that $f_k=\frac{g_k}{h_k}$ where $g_k,h_k\in\R[\x_1,\ldots,\x_m]$ and $h_k$ does not vanish on $\Ss$. We prove here some comments made in the Introduction (\S\ref{am}). The interval $[-1,1]$ is the image of the regular function $f:\R\to\R,\ t\mapsto\frac{2t}{t^2+1}$. By \cite[Prop.1.4]{f1} there exists no regular function $f:\R\to\R$ such that $f(\R)=(-1,1)$. The situation changes for $n\geq2$.

\begin{lem}\label{ocb}
The open and closed unit balls $\Bb_n$ and $\ol{\Bb}_n$ are regular images of $\R^n$ if $n\geq2$.
\end{lem}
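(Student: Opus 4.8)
The plan is to dispose of the closed ball first and then bootstrap the open case from it by a Cayley‑type inversion. For the closed ball (and in fact for every $n\geq 1$) I would simply exhibit the regular map
$$
G_n:\R^n\to\R^n,\qquad x\mapsto\frac{2x}{1+\|x\|^2}.
$$
Its denominator $1+\|x\|^2$ never vanishes on $\R^n$, so $G_n$ is regular; since $\|G_n(x)\|=\frac{2\|x\|}{1+\|x\|^2}\leq1$ (because $2t\leq1+t^2$), the image is contained in $\ol{\Bb}_n$. Conversely, given $\lambda u$ with $u\in\sph^{n-1}$ and $\lambda\in[0,1]$, the scalar map $[0,+\infty)\to[0,1]$, $t\mapsto\frac{2t}{1+t^2}$, is surjective (it is continuous, sends $0\mapsto0$, attains its maximum $1$ at $t=1$), so picking $t$ with $\frac{2t}{1+t^2}=\lambda$ and setting $x:=tu$ yields $G_n(x)=\lambda u$. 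Hence $G_n(\R^n)=\ol{\Bb}_n$.

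For the open ball with $n\geq2$ I would reduce everything to one two‑dimensional statement: the open half‑plane $\HH:=\{(a,b)\in\R^2:\ b>0\}$ is a regular image of $\R^2$. Granting this, write $\R^n=\R^2\times\R^{n-2}$ and take the product of a regular surjection $\R^2\to\HH$ with the identity of $\R^{n-2}$; its image is the open half‑space $\{x\in\R^n:\ x_2>0\}$. Now the inversion $\iota:x\mapsto c+\frac{2(x-c)}{\|x-c\|^2}$ centered at $c:=(0,-1,0,\dots,0)$ (squared radius $2$) carries the hyperplane $\{x_2=0\}$ onto $\sph^{n-1}$, hence carries the connected open set $\{x_2>0\}$ onto one of the two complementary regions of $\sph^{n-1}$; evaluating $\iota$ at the point $(0,1,0,\dots,0)$ gives $0$, so that region is $\Bb_n$. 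Since $\iota$ is regular away from $c$ and $c\notin\{x_2>0\}$, the composite $\iota\circ(\text{product map}):\R^n\to\R^n$ is regular on all of $\R^n$ and has image $\Bb_n$.

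The only delicate point — and the one I expect to be the main obstacle — is showing that $\HH$ is a regular image of $\R^2$, because here the two target coordinates must be coupled cleverly enough that every fibre $\{b=b_0\}$ still carries the full range of the first coordinate; this is impossible with a single one‑variable parameter, which is exactly why $n\geq2$ is needed. I would propose the explicit regular map
$$
\Psi:\R^2\to\R^2,\qquad (s,t)\mapsto\Big(st,\ \frac{t^2+1}{s^2+1}\Big).
$$
Its second component is strictly positive, so $\Psi(\R^2)\subseteq\HH$. For the reverse inclusion, fix $(a,b)$ with $b>0$ and solve $st=a$ and $\frac{t^2+1}{s^2+1}=b$: eliminating $t=a/s$ gives $b\,u^2+(b-1)u-a^2=0$ with $u:=s^2$, an equation whose discriminant $(b-1)^2+4ba^2$ is nonnegative and whose root‑product $-a^2/b$ is $\le0$, so it has a root $u_0\geq0$; taking $s:=\sqrt{u_0}$ (and $t:=a/s$, with the case $a=0$ settled directly by $s=0$ if $b\geq1$ and by $t=0$ if $0<b<1$) produces a preimage of $(a,b)$. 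Thus $\Psi(\R^2)=\HH$, and combining this with the inversion step above and with $G_n$ completes the proof.
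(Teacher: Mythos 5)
Your argument is correct. For the closed ball you use exactly the same map as the paper — your $G_n(x)=\frac{2x}{1+\|x\|^2}$ is precisely $\pi\circ G$, the inverse stereographic projection followed by the projection that drops the last coordinate — only you verify surjectivity by a direct one–variable computation instead of via the geometry of $\sph^n$. For the open ball the routes genuinely diverge. The paper quotes the known \emph{polynomial} map $F:\R^n\to\R^n$ with image the half-space $\{\x_1>0\}$ from \cite{fg1}, lifts it to the open hemisphere of $\sph^n\subset\R^{n+1}$ via the inverse stereographic projection, and projects back down to get $\Bb_n$. You instead manufacture a new \emph{regular} surjection $\Psi(s,t)=\big(st,\frac{t^2+1}{s^2+1}\big)$ of $\R^2$ onto the open half-plane (the quadratic in $u=s^2$ does have a nonnegative root, and your separate treatment of $a=0$ closes the only degenerate case), extend it to the half-space $\{x_2>0\}$ by a product, and carry that half-space to $\Bb_n$ by an inversion of $\R^n$ itself rather than by passing through $\R^{n+1}$. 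Your version is self-contained (no external citation for the half-space) and stays in ambient dimension $n$, at the price of using a regular rather than polynomial parametrization of the half-space; the paper's version reuses existing machinery. Two small points of rigour you may wish to tighten: the inversion maps $\{x_2=0\}$ onto $\sph^{n-1}\setminus\{c\}$ rather than onto all of $\sph^{n-1}$ (harmless, since you only need that the image of the connected set $\{x_2>0\}$ avoids $\sph^{n-1}$ and contains $0$, and surjectivity onto $\Bb_n$ follows because $\iota$ is an involution of $\R^n\setminus\{c\}$); and the regularity of $\iota\circ(\text{product map})$ on all of $\R^n$ uses that the reciprocal of a nowhere-vanishing regular function is again regular, which is standard but worth saying.
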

\begin{proof}
Consider the polynomial map
$$
F:\R^n\rightarrow\R^n,\ (x_1,\dots,x_n)\mapsto ((x_1x_2-1)^2+x_1^2, x_2(x_1x_2-1),x_3,\dots,x_n),
$$
whose image is $\HH_n:=\{\x_1>0\}$, see \cite[Ex.1.4(iv)]{fg1}. Let ${\tt e}_{n+1}:=(0,\dots,0,1)\in\R^{n+1}$, and let $\rho:\sph^n\setminus\{{\tt e}_{n+1}\}\rightarrow\R^n$ be the stereographic projection from the north pole. Its inverse map $G:=\rho^{-1}$ is the regular map
\begin{equation}\label{sTInv}
G:\R^n\rightarrow\sph^n\setminus\{{\tt e}_{n+1}\}, (x_1,\dots,x_n)\mapsto \Big(\frac{2x_1}{\|x\|^2+1},\ldots,\frac{2x_n}{\|x\|^2+1},\frac{\|x\|^2-1}{\|x\|^2+1}\Big).
\end{equation}
Notice that $G(\HH_n)=\sph^n\cap\HH_{n+1}$, where $\HH_{n+1}:=\{\x_1>0\}$. 

Consider the projection $\pi_1:\R^{n+1}\rightarrow\R^n,\ (x_1,\dots,x_{n+1})\mapsto (x_2,\dots,x_{n+1})$. The composition $f:=\pi_{n+1}\circ G\circ F$ satisfies the equality $f(\R^n)=\Bb_n$.

Next, we proceed with the closed unit ball $\ol{\Bb}_n$. Consider the projection 
$$
\pi:\R^{n+1}\to\R^n,\, (x_1,\dots,x_{n+1})\mapsto (x_1,\dots,x_n).
$$
The composition $\pi\circ G$ satisfies $(\pi\circ G)(\R^n)=\ol{\Bb}_n$, as required.
\end{proof}

We conclude the following result announced in \S\ref{am}.

\begin{cor}\label{images}
The family of regular images of $\ol{\Bb}_n$ is a subfamily of the family of the regular images of $\R^n$.
\end{cor}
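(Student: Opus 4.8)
The plan is to reduce the statement to Lemma \ref{ocb} together with the elementary observation that the class of regular maps is closed under composition whenever the source avoids the indeterminacy locus of the outer map. Let $\Ss$ be a regular image of $\ol{\Bb}_n$; by definition there is a rational map $g:=(g_1,\ldots,g_p):\R^n\to\R^p$, with $g_j=\tfrac{c_j}{d_j}$ and each $d_j\in\R[\x_1,\ldots,\x_n]$ nonvanishing on $\ol{\Bb}_n$, such that $g(\ol{\Bb}_n)=\Ss$.

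The first step is to exhibit a regular map of $\R^n$ onto $\ol{\Bb}_n$. For $n\geq 2$ this is precisely the content of Lemma \ref{ocb}; for $n=1$ one takes the regular function $t\mapsto\tfrac{2t}{t^2+1}$ recalled above, whose image is $[-1,1]=\ol{\Bb}_1$. In either case denote the resulting regular map by $h:=(h_1,\ldots,h_n):\R^n\to\R^n$, with $h_i=\tfrac{a_i}{b_i}$ and each $b_i\in\R[\x_1,\ldots,\x_n]$ nonvanishing on $\R^n$, so that $h(\R^n)=\ol{\Bb}_n$.

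The second step is to verify that $g\circ h:\R^n\to\R^p$ is regular with image $\Ss$. Substituting, $(g\circ h)_j=\tfrac{c_j(h(x))}{d_j(h(x))}$; after clearing the (everywhere nonzero) denominators $b_i$ this becomes a quotient of two polynomials in $x$ whose denominator is a product of powers of suitable $b_i$ with $d_j(h(x))$. The only nontrivial point is that this denominator never vanishes on $\R^n$: the powers of the $b_i$ are fine by construction, and $d_j(h(x))\neq 0$ for all $x\in\R^n$ because $h(x)\in\ol{\Bb}_n$ and $d_j$ does not vanish on $\ol{\Bb}_n$ — this is exactly where the hypothesis on $g$ enters. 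Hence $g\circ h$ is a regular map on $\R^n$, and $(g\circ h)(\R^n)=g(h(\R^n))=g(\ol{\Bb}_n)=\Ss$, which is what we want.

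The only (very mild) obstacle is the bookkeeping in the second step: one must check that plugging the rational map $h$ into the rational map $g$ does not create any new pole, which reduces to the single observation that $\im(h)=\ol{\Bb}_n$ stays away from the zero sets of the $d_j$. No deeper difficulty is involved; the statement is essentially just the remark that regular maps compose, provided the image of the inner map misses the indeterminacy locus of the outer one.
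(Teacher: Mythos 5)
Your argument is correct and is exactly the one the paper intends: Corollary \ref{images} is stated as an immediate consequence of Lemma \ref{ocb} (surjecting $\R^n$ regularly onto $\ol{\Bb}_n$) followed by composition with the given regular map on $\ol{\Bb}_n$, the composite being regular precisely because the image of the inner map is $\ol{\Bb}_n$, where the outer denominators do not vanish. Your explicit treatment of $n=1$ via $t\mapsto\tfrac{2t}{t^2+1}$ and the bookkeeping on denominators only spell out what the paper leaves implicit.
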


We can also compare the family of regular images of the open ball $\Bb_n$ with that of $\R^n$. As we have already commented, if $n=1$ both families are different. For $n\geq2$ they are equal as a consequence of Lemma \ref{ocb} and the following result.

\begin{lem}
The regular map $f:\Bb_n\to\R^n,\ x\mapsto\frac{x}{1-\|x\|^2}$ is surjective.
\end{lem}
\begin{proof}
Pick a point $y\in\R^n$ and let us show that there exists $\lambda\in\R$ such that $f(\lambda y)=y$. If $y=0$, take any $\lambda\in\R$. Assume $y\neq0$ and write
$$
y=f(\lambda y)=\frac{\lambda y}{1-\lambda^2\|y\|^2}\quad\leadsto\quad\|y\|=\frac{\lambda\|y\|}{1-\lambda^2\|y\|^2}\quad\leadsto\quad\lambda^2\|y\|^2+\lambda-1=0.
$$
It is enough to take $\lambda:=\frac{-1+\sqrt{1+4\|y\|^2}}{2\|y\|^2}$.
\end{proof}

The next result (lent by A. Carbone) allows us to compare the regular images of the closed unit ball $\ol{\Bb}_n$ with those of the $n$-sphere $\sph^n$.

\begin{lem}\label{nsph}
The $n$-sphere $\sph^n\subset\R^{n+1}$ is a regular image of $[-1,1]^n$.
\end{lem}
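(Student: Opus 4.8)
The plan is to prove the statement by induction on $n\geq1$, exploiting the trivial identification $[-1,1]^n=[-1,1]\times[-1,1]^{n-1}$ and building $\sph^n$ as a \emph{rational suspension} of $\sph^{n-1}$. Throughout, ``regular image'' is understood in the sense of the definition in \S\ref{a1}: $\sph^n=f([-1,1]^n)$ for a rational map $f:\R^n\to\R^{n+1}$ all of whose denominators are non-vanishing on $[-1,1]^n$; the maps produced below will visibly satisfy this.

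For the base case $n=1$ I would write $\sph^1$ directly as a regular image of $[-1,1]$. The subtle point is that one cannot just precompose a fixed rational parametrization $\R\to\sph^1$ with a regular map $[-1,1]\to\R$, since the latter has compact image and would only produce a proper arc of $\sph^1$; one needs instead a rational map $[-1,1]\to\PP^1(\R)$ that is \emph{already} surjective. Such a map is $r\mapsto[r:1-r^2]$: its base locus would require $r=0$ and $1-r^2=0$ at once, so $r^2+(1-r^2)^2>0$ on $\R$; on $(-1,1)$ the function $r\mapsto r/(1-r^2)$ is an increasing bijection onto $\R$, while $r=\pm1$ both give $[1:0]$, so $r\mapsto[r:1-r^2]$ is onto $\PP^1(\R)$. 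Composing with the Pythagorean bijection $\PP^1(\R)\to\sph^1$, $[a:b]\mapsto\bigl(\tfrac{a^2-b^2}{a^2+b^2},\tfrac{2ab}{a^2+b^2}\bigr)$, yields the regular map
$$
\R\to\R^2,\qquad r\mapsto\Bigl(\tfrac{r^2-(1-r^2)^2}{r^2+(1-r^2)^2},\ \tfrac{2r(1-r^2)}{r^2+(1-r^2)^2}\Bigr),
$$
whose denominator never vanishes and whose image of $[-1,1]$ is $\sph^1$.

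For the inductive step, assume $\Phi_{n-1}:\R^{n-1}\to\R^{n}$ is a regular map with denominators non-vanishing on $[-1,1]^{n-1}$ and $\Phi_{n-1}([-1,1]^{n-1})=\sph^{n-1}\subset\R^{n}$. I would set
$$
\Psi_n:\R\times\R^{n-1}\to\R^{n}\times\R,\qquad (r,y)\mapsto\Bigl(\tfrac{(1-r^2)\,\Phi_{n-1}(y)}{1+r^2},\ \tfrac{2r}{1+r^2}\Bigr).
$$
Then $\Psi_n$ is regular with denominators non-vanishing on $[-1,1]\times[-1,1]^{n-1}=[-1,1]^n$ (they are $1+r^2$ times the denominators of $\Phi_{n-1}$), and a one-line computation using $\|\Phi_{n-1}(y)\|=1$ on $[-1,1]^{n-1}$ gives $\Psi_n([-1,1]^n)\subset\sph^n$. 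For surjectivity one uses that $r\mapsto\tfrac{2r}{1+r^2}$ is an increasing bijection $[-1,1]\to[-1,1]$ and that $\bigl(\tfrac{1-r^2}{1+r^2}\bigr)^2=1-\bigl(\tfrac{2r}{1+r^2}\bigr)^2$ with $\tfrac{1-r^2}{1+r^2}\geq0$ on $[-1,1]$: given $(v,s)\in\sph^n$, if $v\neq0$ choose $r$ with $\tfrac{2r}{1+r^2}=s$, so $\tfrac{1-r^2}{1+r^2}=\|v\|$, write $v=\|v\|w$ with $w\in\sph^{n-1}=\Phi_{n-1}([-1,1]^{n-1})$ and pick $y$ with $\Phi_{n-1}(y)=w$; if $v=0$ (so $s=\pm1$) take $r=\pm1$ and any $y\in[-1,1]^{n-1}$. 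This closes the induction.

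The main obstacle is precisely the base case: the required surjection $[-1,1]\to\sph^1$ must ``wind around'' the circle, so it has to be assembled from an already-surjective rational map $[-1,1]\to\PP^1(\R)$ rather than from a parametrization $\R\to\sph^1$; once that is in place the inductive suspension step is entirely routine. The only bookkeeping to be careful about is checking, at each stage, that the composite denominators stay non-vanishing on the cube, which is immediate since the new factor introduced is $1+r^2$.
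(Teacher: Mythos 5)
Your proof is correct and follows essentially the same route as the paper's: induction on $n$, a base case that surjects $[-1,1]$ onto $\sph^1$ by composing a half-circle rational parametrization with a degree-two winding map, and an inductive suspension step. The only differences are cosmetic choices of formulas (the paper uses the stereographic inverse followed by $z\mapsto z^2$ and reuses that full-circle surjection in the suspension, whereas you pass through $\PP^1(\R)$ for the base case and suspend via the non-negative half-circle parametrization $\bigl(\tfrac{1-r^2}{1+r^2},\tfrac{2r}{1+r^2}\bigr)$).
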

\begin{proof}
We proceed by induction on the dimension $n$. Assume first $n=1$. Consider the inverse 
$$
f_0:\R\to\sph^1\setminus\{(0,1)\},\ t\mapsto\Big(\frac{2t}{t^2+1},\frac{t^2-1}{t^2+1}\Big)
$$
of the stereographic projection. We have $f_0([-1,1])=\sph^1\cap\{\y\leq0\}$. Consider the polynomial map 
$$
f_1:\R^2\equiv\C\to\C\equiv\R^2,\ (x,y)\equiv x+y\sqrt{-1}=:z\mapsto z^2=x^2-y^2+2xy\sqrt{-1}\equiv(x^2-y^2,2xy).
$$
The image of $[-1,1]$ under the regular map $f:=f_1\circ f_0:\R\to\R^2$ is $\sph^1$.

Assume the result is true for dimension $n-1$ and we check that it is true for dimension $n$.

Let $g:\R^{n-1}\to\R^n$ be a regular map such that $g([-1,1]^{n-1})=\sph^{n-1}$ and let $f_1:=(f_{11},f_{12}):\R\to\R^2$ be the regular map described above such that $f_1([-1,1])=\sph^1$. Denote $x':=(x_1,\ldots,x_{n-1})$ and ${\tt e}_{n+1}:=(0,\ldots,0,1)$. Consider the regular map
$$
f:\R^n\to\R^{n+1},\ (x',x_n)\mapsto (g(x'),0)f_{11}(x_n)+{\tt e}_{n+1}f_{12}(x_n).
$$
A straightforward computation shows that $f([-1,1]^n)=\sph^n$.
\end{proof}

By Corollary \ref{bq} and Lemma \ref{nsph} the sphere $\sph^n$ is a regular image of $\ol{\Bb}_n$. Consequently, having in mind that $\ol{\Bb}_n$ is the image of $\sph^n$ under the projection $\pi:\R^{n+1}\to\R^n,\, (x_1,\dots,x_{n+1})\mapsto (x_1,\dots,x_n)$, we deduce the following result announced in \S\ref{am}.

\begin{cor}\label{images2}
The family of regular images of $\ol{\Bb}_n$ and the family of the regular images of $\sph^n$ coincide.
\end{cor}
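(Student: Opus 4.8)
The plan is to establish the equality of the two families by proving the two set-theoretic inclusions separately, relying on the two facts already isolated in the discussion preceding the statement: that $\sph^n$ is a regular image of $\ol{\Bb}_n$ and that $\ol{\Bb}_n$ is a polynomial (hence regular) image of $\sph^n$. The general principle underlying both directions is that a regular image of a regular image is again a regular image; this holds because the composition of two regular maps between semialgebraic sets is regular. Indeed, if $\psi$ is a regular map on a semialgebraic set $\Tt$, its component denominators do not vanish on $\Tt$, so if $\varphi$ is a regular map with $\varphi(\Ss)\subset\Tt$, then after substituting $\varphi$ into $\psi$ and clearing denominators one obtains a rational representation of $\psi\circ\varphi$ whose denominator does not vanish on $\Ss$; see \cite{bcr}.

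First I would check that every regular image of $\sph^n$ is a regular image of $\ol{\Bb}_n$. By Corollary \ref{bq} the hypercube $\Qq_n=[-1,1]^n$ is an $n$-brick, so there is a polynomial map $h:\R^n\to\R^n$ with $h(\ol{\Bb}_n)=\Qq_n$; by Lemma \ref{nsph} there is a regular map $g:\R^n\to\R^{n+1}$ with $g(\Qq_n)=\sph^n$. Hence $g\circ h$ restricts to a regular map $\ol{\Bb}_n\to\sph^n$ onto $\sph^n$, so $\sph^n$ is a regular image of $\ol{\Bb}_n$. Consequently, if $\Ss=\varphi(\sph^n)$ for a regular map $\varphi$, then $\Ss=\varphi(g(h(\ol{\Bb}_n)))$ is a regular image of $\ol{\Bb}_n$ by the composition remark above.

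Conversely, I would show that every regular image of $\ol{\Bb}_n$ is a regular image of $\sph^n$. The projection $\pi:\R^{n+1}\to\R^n,\ (x_1,\ldots,x_{n+1})\mapsto(x_1,\ldots,x_n)$ is polynomial, and its restriction to $\sph^n=\{\x_1^2+\cdots+\x_{n+1}^2=1\}$ has image exactly $\ol{\Bb}_n$: clearly $\pi(\sph^n)\subset\ol{\Bb}_n$, and conversely any $(x_1,\ldots,x_n)$ with $\sum_{i=1}^n x_i^2\leq1$ equals $\pi$ of the point $(x_1,\ldots,x_n,\sqrt{1-\sum_{i=1}^n x_i^2})\in\sph^n$. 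Thus $\ol{\Bb}_n$ is a polynomial (hence regular) image of $\sph^n$, and if $\Ss=\psi(\ol{\Bb}_n)$ for a regular map $\psi$, then $\Ss=\psi(\pi(\sph^n))$ is a regular image of $\sph^n$. Combining the two inclusions yields the statement.

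The argument has no serious obstacle; the only point requiring a modicum of care — which is nonetheless entirely routine — is the closure of the class of regular images under post-composition with a further regular map, and this is immediate once the composition of regular maps is known to be regular, as recalled above.
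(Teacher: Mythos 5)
Your argument is correct and coincides with the paper's own proof: both directions use exactly Corollary \ref{bq} together with Lemma \ref{nsph} to realize $\sph^n$ as a regular image of $\ol{\Bb}_n$, and the coordinate projection $\pi:\R^{n+1}\to\R^n$ to realize $\ol{\Bb}_n$ as a (polynomial) image of $\sph^n$. The only difference is that you make explicit the routine fact that regular maps compose to regular maps, which the paper leaves implicit.
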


\section{Parabolic, elliptic and hyperbolic sectors and segments}\label{a2}

We approach in this appendix the postponed proofs of most of the results of \S\ref{quadratic}.

\subsection{Parabolic segments}

We begin by proving Lemma \ref{parab}.

\begin{proof}[Proof of Lemma \em\ref{parab}] 
Let $\ptri{a}:=\{0\leq\y\leq\x,\x\leq\sqrt{a}\}$ be the triangle of vertices $(0,0)$, $(\sqrt{a},0)$ and $(\sqrt{a},\sqrt{a})$. Consider the polynomial map $\eta:\R^2\to\R^2,\ (x,y)\mapsto(xy,y)$. By Lemma \ref{tetra} $\ptri{a}$ is a polynomial image of $\ol{\Bb}_2$. We claim: $\eta(\ptri{a})=\pseg{a}:=\{\x^2-\y\geq0,\sqrt{a}\y-\x\geq0\}$. 

To show the inclusion $\eta(\ptri{a})\subset\pseg{a}$ pick a point $(x_0,y_0)\in\eta(\ptri{a})$. Thus, there exists $(u_0,v_0)\in\ptri{a}$ such that $\eta(u_0,v_0)=(u_0v_0,v_0)=(x_0,y_0)$. Observe that $0\le v_0\le u_0$ and $u_0\leq\sqrt{a}$. Therefore, $x_0-y_0^2=u_0v_0-v_0^2=v_0(u_0-v_0)\ge 0$ and $\sqrt{a}y_0-x_0=\sqrt{a}v_0-u_0v_0=v_0(\sqrt{a}-u_0)\ge 0$. Consequently, $(x_0,y_0)\in\pseg{a}$.

To show the inclusion $\pseg{a}\subset\eta(\ptri{a})$ pick a point $(x_0,y_0)\in\pseg{a}$. Thus, $x_0-y_0^2\ge0$ and $\sqrt{a}y_0-x_0\ge 0$. If $y_0=0$, then $x_0=0$ and the image of $(0,0)\in\ptri{a}$ under $\eta$ is $(0,0)=(x_0,y_0)$. If $y_0\neq 0$, the pair $(u_0,v_0):=(\frac{x_0}{y_0},y_0)\in\ptri{a}$ satisfies $\eta(u_0,v_0)=(x_0,y_0)$, as required.
\end{proof}

\subsection{Elliptic sectors and segments}

The proof of Theorem \ref{circdef} is done after some preliminary work that we develop next. We prove only the $2$-dimensional case. Due to the nature of the involved polynomial maps, the $n$-dimensional case follows from Lemma \ref{revolution} (for $m=k=2$ and $\ell=n-2$) and the $2$-dimensional case.

\begin{lem}\label{circ0}
The polynomial map $\phi_0:\R^2\to\R^2,\ (x_1,x_2)\mapsto\frac{3-(x_1^2+x_2^2)}{2}(x_1,x_2)$ satisfies $\phi_0(\tri{\alpha})=\sector{\alpha}$ whenever $0<\alpha\le\arcsin\Big({\sqrt{\tfrac{2}{3}}}\Big)$.
\end{lem}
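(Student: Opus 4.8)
The strategy is to analyze $\phi_0$ in polar coordinates, since both $\tri{\alpha}$ and $\sector{\alpha}$ are described angularly by $-\alpha\leq\theta\leq\alpha$. Writing $(x_1,x_2)=(\rho\cos\theta,\rho\sin\theta)$, one has $\phi_0(\rho,\theta)=\big(\frac{3-\rho^2}{2}\rho,\theta\big)$, so $\phi_0$ preserves the angular coordinate $\theta$ and acts on the radial coordinate by the univariate function $g(\rho):=\frac{1}{2}\rho(3-\rho^2)$. Thus the first step is to reduce the claimed equality $\phi_0(\tri{\alpha})=\sector{\alpha}$ to the statement that, for each fixed $\theta\in[-\alpha,\alpha]$, the image under $g$ of the radial segment $\{\rho:\ 0\leq\rho\cos\theta\leq1\}=[0,\frac{1}{\cos\theta}]$ is exactly $[0,1]$.

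Next I would study $g$ on $[0,+\infty)$: one has $g(0)=0$, $g(1)=1$, $g'(\rho)=\frac{1}{2}(3-3\rho^2)=\frac{3}{2}(1-\rho^2)$, so $g$ is strictly increasing on $[0,1]$, attains its absolute maximum value $1$ at $\rho=1$, and is strictly decreasing on $[1,\sqrt{3}]$ with $g(\sqrt{3})=0$. Consequently $g([0,r])=[0,1]$ for every $r\in[1,\sqrt{3}]$, and $g([0,r])=[0,g(r)]\subsetneq[0,1]$ for $r\in[0,1)$. Therefore the reduction in the previous paragraph succeeds precisely when $\frac{1}{\cos\theta}\in[1,\sqrt{3}]$ for all $\theta\in[-\alpha,\alpha]$, i.e.\ when $\cos\alpha\geq\frac{1}{\sqrt{3}}$, equivalently $\sin\alpha\leq\sqrt{1-\frac{1}{3}}=\sqrt{\frac{2}{3}}$, which is exactly the hypothesis $0<\alpha\leq\arcsin\!\big(\sqrt{\tfrac{2}{3}}\big)$. (Note $\frac{1}{\cos\theta}\geq1$ always, so only the upper bound $\frac{1}{\cos\theta}\leq\sqrt3$ is a genuine constraint, and since $\cos\theta$ is minimized on $[-\alpha,\alpha]$ at $\theta=\pm\alpha$, the worst case is $\theta=\pm\alpha$.)

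Finally I would assemble the two inclusions. For $\phi_0(\tri{\alpha})\subseteq\sector{\alpha}$: any point of $\tri{\alpha}$ has the form $(\rho,\theta)$ with $-\alpha\leq\theta\leq\alpha$ and $0\leq\rho\cos\theta\leq1$, so $0\leq\rho\leq\frac{1}{\cos\theta}\leq\sqrt3$, hence $g(\rho)\in[0,1]$ by the monotonicity analysis, and the image point $(g(\rho),\theta)$ lies in $\{0\leq\rho\leq1,\ -\alpha\leq\theta\leq\alpha\}=\sector{\alpha}$. For $\sector{\alpha}\subseteq\phi_0(\tri{\alpha})$: given $(r,\theta)$ with $0\leq r\leq1$ and $-\alpha\leq\theta\leq\alpha$, by the surjectivity of $g|_{[0,1/\cos\theta]}$ onto $[0,1]$ there is $\rho\in[0,\frac{1}{\cos\theta}]$ with $g(\rho)=r$, and then $(\rho,\theta)\in\tri{\alpha}$ maps to $(r,\theta)$. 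I would treat the point $\theta=\pm\pi/2$ (where $\cos\theta=0$) as a degenerate boundary case, but since $\alpha\leq\arcsin\!\big(\sqrt{2/3}\big)<\frac{\pi}{2}$ this does not arise. The only mild subtlety — and the place to be careful — is keeping straight that the triangle $\tri{\alpha}$ is cut out by $\rho\cos\theta\leq1$ (a vertical line $x_1\leq1$) rather than by $\rho\leq1$, so that the radial segment over which $g$ is evaluated has length depending on $\theta$; once this is made explicit the argument is a routine one-variable calculus computation.
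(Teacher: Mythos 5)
Your proof is correct and follows essentially the same route as the paper's: both pass to polar coordinates, reduce the claim to the image of the radial interval $[0,\tfrac{1}{\cos\theta}]$ under the cubic $g(\rho)=\tfrac{1}{2}\rho(3-\rho^2)$, and use its monotonicity on $[0,1]$ and $[1,\sqrt3]$ together with the bound $\tfrac{1}{\cos\alpha}\leq\sqrt3$ forced by the hypothesis on $\alpha$. Your explicit remark that the constraint on $\alpha$ is exactly what makes $\tfrac{1}{\cos\theta}\in[1,\sqrt3]$ is the same observation the paper makes, just stated more transparently.
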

\begin{proof}
We show first the inclusion $\phi_0(\tri{\alpha})\subset\sector{\alpha}$. Pick a point $(r_0,\beta_0)\in\phi_0(\tri{\alpha})$ (in polar coordinates). Then there exists $(\rho_0,\theta_0)\in\tri{\alpha}=\{(\rho,\theta):\ -\alpha\leq\theta\leq\alpha,\ 0\leq\rho\leq\frac{1}{\cos(\theta)}\}$ with $\phi_0(\rho_0,\theta_0)=(r_0,\beta_0)$. In polar coordinates we obtain
\begin{multline*}
\phi_0(\rho_0,\theta_0)\equiv\phi_0(\rho_0\cos(\theta_0),\rho_0\sin(\theta_0))\\
=\frac{3-\rho_0^2}{2}(\rho_0\cos(\theta_0),\rho_0\sin(\theta_0))\equiv\Big(\frac{3\rho_0-\rho_0^3}{2},\theta_0\Big)=(r_0,\beta_0).
\end{multline*}
Thus, $\beta_0=\theta_0$, so $-\alpha\leq\beta_0\leq\alpha$. Next, let us consider the continuous function 
\begin{equation}\label{h0}
h:[0,+\infty)\to\R,\ \rho\mapsto\frac{3\rho-\rho^3}{2}.
\end{equation}
The function $h$ is increasing on the interval $[0,1]$ and decreasing on the interval $[1,+\infty)$. In addition, $h(0)=0$ and $h(1)=1$. Besides, $h$ is nonnegative on the interval $[0,\sqrt{3}]$.
As 
$$
1\leq\frac{1}{\cos(\beta_0)}\leq\frac{1}{\cos(\alpha)}\leq\frac{1}{\cos\Big(\arcsin\Big(\sqrt{\frac{2}{3}}\Big)\Big)}=\sqrt{3},
$$
we deduce $h([0,\frac{1}{\cos(\beta_0)}])=[0,1]$, so 
$$
(r_0,\beta_0)=\phi_0(\rho_0,\beta_0)=(h(\rho_0),\beta_0)\in\sector{\alpha}=\{\ 0\leq\rho\le 1,-\alpha\leq\theta\leq\alpha\}.
$$

To prove the inclusion $\sector{\alpha}\subset\phi_0(\tri{\alpha})$ pick a point $(r_0,\beta_0)\in\sector{\alpha}$ and consider the function $h$ defined in \eqref{h0}. As $h([0,1])=[0,1]$, there exists $\rho_0\in[0,1]$ such that $h(\rho_0)=r_0$. Therefore, $(r_0,\beta_0)=\phi_0(\rho_0,\beta_0)\in\phi_0(\sector{\alpha})\subset\phi_0(\tri{\alpha})$, because $\sector{\alpha}\subset\tri{\alpha}$. Thus, $\sector{\alpha}\subset\phi_0(\tri{\alpha})$, as required.
\end{proof}

The previous lemma sets a limit to the amplitude of the elliptic sectors we work with. This limit can be dealt with by means of the following lemma.

\begin{lem}\label{circ1}
The polynomial map $\phi_1:\R^2\to\R^2, (x_1,x_2)\mapsto (x_1^2-x_2^2, 2x_1x_2)$ satisfies $\phi_1(\sector{\alpha})=\sector{2\alpha}$.
\end{lem}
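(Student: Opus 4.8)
The statement to prove is Lemma~\ref{circ1}: the polynomial map $\phi_1:\R^2\to\R^2$, $(x_1,x_2)\mapsto(x_1^2-x_2^2,2x_1x_2)$ satisfies $\phi_1(\sector{\alpha})=\sector{2\alpha}$.

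The plan is to exploit the complex-analytic identity $\phi_1(z)=z^2$ under the identification $(x_1,x_2)\equiv x_1+x_2\sqrt{-1}$, so that in polar coordinates $\phi_1$ acts by $(\rho,\theta)\mapsto(\rho^2,2\theta)$. First I would record this explicitly: if $(x_1,x_2)=(\rho\cos\theta,\rho\sin\theta)$ then
$$
\phi_1(x_1,x_2)=(\rho^2\cos^2\theta-\rho^2\sin^2\theta,2\rho^2\cos\theta\sin\theta)=(\rho^2\cos(2\theta),\rho^2\sin(2\theta))\equiv(\rho^2,2\theta),
$$
using the double-angle formulas $\cos(2\theta)=\cos^2\theta-\sin^2\theta$ and $\sin(2\theta)=2\sin\theta\cos\theta$. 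Since $\sector{\alpha}=\{0\le\rho\le1,\ -\alpha\le\theta\le\alpha\}$ (with $0<\alpha\le\pi$), its image under $\phi_1$ is the set of points whose polar coordinates can be written $(\rho^2,2\theta)$ with $0\le\rho\le1$ and $-\alpha\le\theta\le\alpha$; as $\rho$ ranges over $[0,1]$ the value $\rho^2$ also ranges over $[0,1]$, and as $\theta$ ranges over $[-\alpha,\alpha]$ the angle $2\theta$ ranges over $[-2\alpha,2\alpha]$. That gives exactly $\sector{2\alpha}=\{0\le\rho\le1,\ -2\alpha\le\theta\le2\alpha\}$.

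For a clean write-up I would prove the two inclusions separately, as in the proofs of Lemmas~\ref{circ0} and~\ref{circ1}'s neighbours. For $\phi_1(\sector{\alpha})\subset\sector{2\alpha}$: pick $(\rho_0,\theta_0)\in\sector{\alpha}$; then $\phi_1(\rho_0,\theta_0)\equiv(\rho_0^2,2\theta_0)$ with $0\le\rho_0^2\le1$ and $-2\alpha\le2\theta_0\le2\alpha$, so it lies in $\sector{2\alpha}$. For the reverse inclusion: pick $(r_0,\beta_0)\in\sector{2\alpha}$, so $0\le r_0\le1$ and $-2\alpha\le\beta_0\le2\alpha$; set $\rho_0:=\sqrt{r_0}\in[0,1]$ and $\theta_0:=\beta_0/2\in[-\alpha,\alpha]$, so that $(\rho_0,\theta_0)\in\sector{\alpha}$ and $\phi_1(\rho_0,\theta_0)\equiv(\rho_0^2,2\theta_0)=(r_0,\beta_0)$. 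One minor point to handle with care is the angular normalization convention $\theta\in(-\pi,\pi]$ used in the paper: when $\alpha=\pi$ the angle $2\theta_0$ may fall outside $(-\pi,\pi]$, but this only reflects that $\sector{\pi}$ is the full closed disk and $\phi_1$ maps the closed disk onto itself — so one either restricts to $\alpha<\pi$ here (which suffices, since Theorem~\ref{circdef} reduces large sectors to small ones via iterated doubling and $\phi_0$) or simply notes that for $\alpha=\pi$ both sides equal $\ol{\Bb}_2$ and $\phi_1(\ol{\Bb}_2)=\ol{\Bb}_2$.

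I do not anticipate a genuine obstacle here: the whole content is the polar-coordinate description of the squaring map, and the only thing requiring attention is bookkeeping of the angular range and the boundary case $\alpha=\pi$. The main "step" is simply to state the double-angle computation precisely and then read off the two inclusions.
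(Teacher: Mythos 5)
Your proposal is correct and takes exactly the same route as the paper, which identifies $\R^2\equiv\C$ and observes that $\phi_1$ is the squaring map $z\mapsto z^2$, acting as $(\rho,\theta)\mapsto(\rho^2,2\theta)$ in polar coordinates; the paper simply states that ``the statement follows readily'' from this, while you spell out the two inclusions and the angular bookkeeping at $\alpha=\pi$.
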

\begin{proof}
We identify $\R^2\equiv\C$ using the standard map $(x_1,x_2)\mapsto x_1+\sqrt{-1}x_2$. We can interpret $\phi_1:\C\to\C,\ z:=x_1+\sqrt{-1}x_2\mapsto z^2=(x_1^2-x_2^2)+\sqrt{-1}(2x_1x_2)$. Using this fact the statement follows readily.
\end{proof}

\begin{lem}\label{circ2}
Fix an angle $0<\alpha<\frac{\pi}{2}$. The polynomial map
$$
\phi_2:\R^2\to\R^2,\ (x_1,x_2)\mapsto (x_1^2-x_2^2+(1-x_1^2-x_2^2)\cos(2\alpha),2x_1x_2)
$$
satisfies $\phi_2(\sector{\alpha})=\seg{2\alpha}$.
\end{lem}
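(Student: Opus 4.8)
The plan is to pass to polar coordinates $(x_1,x_2)=(\rho\cos\theta,\rho\sin\theta)$ and to recognise $\phi_2$ as the map that sweeps each radius of the sector $\sector{\alpha}$ onto a straight chord joining the midpoint of the chord of $\seg{2\alpha}$ to a point of its circular arc. First I would carry out the (immediate) computation, using $x_1^2-x_2^2=\rho^2\cos 2\theta$, $2x_1x_2=\rho^2\sin 2\theta$ and $x_1^2+x_2^2=\rho^2$:
$$
\phi_2(\rho\cos\theta,\rho\sin\theta)=\bigl(\rho^2\cos 2\theta+(1-\rho^2)\cos 2\alpha,\ \rho^2\sin 2\theta\bigr)=(1-\rho^2)P+\rho^2(\cos 2\theta,\sin 2\theta),
$$
where $P:=(\cos 2\alpha,0)$. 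Since $\sector{\alpha}=\bigcup_{|\theta|\le\alpha}\{(\rho\cos\theta,\rho\sin\theta):0\le\rho\le1\}$ and $\rho\mapsto\rho^2$ is a bijection of $[0,1]$, the image under $\phi_2$ of the radius at angle $\theta$ is exactly the segment with endpoints $P$ and $(\cos 2\theta,\sin 2\theta)$; hence, writing $\psi=2\theta$,
$$
\phi_2(\sector{\alpha})=\bigcup_{|\psi|\le 2\alpha}\bigl[P,(\cos\psi,\sin\psi)\bigr].
$$

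Next I would record the Cartesian shape of the target. Unwinding the polar description, $\seg{2\alpha}$ is the circular segment $\{(u,v):u^2+v^2\le1,\ u\ge\cos 2\alpha\}$, which is convex as the intersection of the closed unit disc with a half-plane. It contains $P$ (since $\cos^2 2\alpha\le1$, as $0<2\alpha<\pi$) and every arc point $(\cos\psi,\sin\psi)$ with $|\psi|\le 2\alpha$ (since $\cos\psi\ge\cos 2\alpha$, cosine being decreasing on $[0,\pi]$). Convexity then gives $\bigl[P,(\cos\psi,\sin\psi)\bigr]\subset\seg{2\alpha}$ for all such $\psi$, so by the formula above $\phi_2(\sector{\alpha})\subset\seg{2\alpha}$.

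For the reverse inclusion I would argue geometrically. Take $(u_0,v_0)\in\seg{2\alpha}$; if $(u_0,v_0)=P$ it is $\phi_2$ of the origin. Otherwise set $d:=(u_0,v_0)-P\ne 0$. Since $|P|=|\cos 2\alpha|<1$, the point $P$ lies strictly inside the disc, so the ray $\{P+sd:s\ge0\}$ meets the unit circle in a unique point $Q=P+t_1d$ with $t_1>0$, and $t_1\ge1$ because $(u_0,v_0)$ lies in the closed disc (equivalently, $s\mapsto|P+sd|^2$ is an upward parabola that is $<1$ at $s=0$ and $\le 1$ at $s=1$). The first coordinate of $Q$ equals $\cos 2\alpha+t_1(u_0-\cos 2\alpha)\ge\cos 2\alpha$ since $t_1>0$ and $u_0\ge\cos 2\alpha$, so $Q$ lies on the arc $\{(\cos\psi,\sin\psi):|\psi|\le 2\alpha\}$; and $(u_0,v_0)=P+d\in[P,Q]$ because $1\in[0,t_1]$. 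Hence $(u_0,v_0)\in\phi_2(\sector{\alpha})$, and $\phi_2(\sector{\alpha})=\seg{2\alpha}$ follows. (Equivalently one may note $\phi_2=g\circ\phi_1$ with $g(w_1,w_2)=(w_1+(1-\sqrt{w_1^2+w_2^2})\cos 2\alpha,\ w_2)$, invoke Lemma \ref{circ1} to rewrite $\phi_2(\sector{\alpha})=g(\sector{2\alpha})$, and run the same segment argument on $\sector{2\alpha}$.)

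The main obstacle is not analytic — there is no limiting argument or approximation involved — but rather the plane-geometry bookkeeping: checking that the polar inequalities defining $\seg{2\alpha}$ really cut out the circular segment $\{u^2+v^2\le1,\ u\ge\cos 2\alpha\}$, and that the ray-from-$P$ construction reaches every point of it while never leaving the arc. Once the identity for $\phi_2$ in polar coordinates is established, both inclusions reduce to the elementary facts that a chord of a convex set stays in the set and that a ray from an interior point of the disc exits through the circle.
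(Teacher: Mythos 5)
Your proof follows the same route as the paper's: both rewrite $\phi_2$ in polar coordinates as $(1-\rho^2)P+\rho^2(\cos(2\theta),\sin(2\theta))$ with $P=(\cos(2\alpha),0)$, and identify $\phi_2(\sector{\alpha})$ as the union of the chords joining $P$ to the arc points $(\cos\psi,\sin\psi)$, $|\psi|\le 2\alpha$. The only difference is that you explicitly justify the final identification of this union of chords with the circular segment $\seg{2\alpha}$ (convexity for one inclusion, the ray-from-$P$ argument for the other), a step the paper merely asserts.
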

\begin{proof}
We use again polar coordinates. Observe that
\begin{multline*}
\phi_2(\rho,\theta)\equiv\phi_2(\rho\cos(\theta),\rho\sin(\theta))=(\rho^2\cos(2\theta)+(1-\rho^2)\cos(2\alpha),\rho^2\sin(2\theta))\\
=(1-\rho^2)(\cos(2\alpha),0)+\rho^2(\cos(2\theta),\sin(2\theta)).
\end{multline*}
As $(\rho,\theta)\in\sector{\alpha}$, we have $0\leq\rho\leq1$ and $-\alpha\le\theta\le\alpha$. Fix $\theta\in[-\alpha,\alpha]$. The continuous map
$$
\phi_2(\cdot,\theta):[0,1]\to\R^2, \rho\to(1-\rho^2)(\cos(2\alpha),0)+\rho^2(\cos(2\theta),\sin(2\theta)),
$$
transforms $[0,1]$ onto the segment that connects the midpoint $(\cos(2\alpha),0)$ of the chord of the elliptic segment $\seg{2\alpha}$ with the point $(\cos(2\theta),\sin(2\theta))$ on the arc of $\seg{2\alpha}$. As $-\alpha\le\theta\le\alpha$, we conclude $\phi_2(\sector{\alpha})=\seg{2\alpha}$, as required.
\end{proof}

We are ready to prove Theorem \ref{circdef}.

\begin{proof}[Proof of Theorem \em \ref{circdef}]
Observe first that both elliptic sectors and segments are convex. By Lemma \ref{propbrick} we have to prove that both elliptic sectors and segments are polynomial images of $\ol{\Bb}_n$. By the nature of the polynomial maps proposed in Lemmas \ref{circ0}, \ref{circ1}, \ref{circ2} and Lemma \ref{revolution} it is enough to deal with the $2$-dimensional case. By Lemma \ref{tetra} the triangle $\tri{\alpha}$ is a polynomial image of $\ol{\Bb}_2$ for each $0<\alpha<\pi$. Fix an angle $0<\alpha\leq\pi$ and let $\beta:=\frac{\alpha}{4}<\arcsin\Big(\sqrt{\frac{2}{3}}\Big)$. By Lemma \ref{circ0} the elliptic sector $\sector{\beta}$ is the image under a polynomial map $\phi_0:\R^2\to\R^2$ of $\tri{\beta}$. By Lemma \ref{circ1} the elliptic sector $\sector{\alpha}=\sector{4\beta}$ is the image under a polynomial map $\R^2\to\R^2$ of $\sector{\beta}$, so $\sector{\alpha}$ is the image under a polynomial map $\R^2\to\R^2$ of $\ol{\Bb}_2$ for $0<\alpha\leq\pi$. 

By Lemma \ref{circ2} the elliptic segment $\seg{\alpha}=\seg{4\beta}$ is the image under a polynomial map $\phi_2:\R^2\to\R^2$ of $\sector{2\beta}$. Consequently, $\seg{\alpha}$ is the image under a polynomial map $\R^2\to\R^2$ of $\ol{\Bb}_2$ for $0<\alpha\leq\pi$, as required.
\end{proof}

\subsection{Hyperbolic sectors and segments}

The proof of Theorem \ref{hiperdef} is done after some preliminary work that we develop next. We prove only the $2$-dimensional case. Due to the nature of the involved polynomial maps, the $n$-dimensional case follows from Lemma \ref{revolution} (for $m=k=2$ and $\ell=n-2$) and the $2$-dimensional case.

\begin{lem}\label{hiper0}
The polynomial map $\psi_0:\R^2\to\R^2,\ (x_1,x_2)\mapsto\frac{3-(x_1^2-x_2^2)}{2}(x_1,x_2)$ satisfies $\psi_0(\htri{\alpha})=\hsector{\alpha}$ whenever $0<\alpha\leq\arctan(\sqrt{\frac{2}{3}})$.
\end{lem}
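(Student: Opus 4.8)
The plan is to work in polar coordinates $(x_1,x_2)=(\rho\cos(\theta),\rho\sin(\theta))$, exactly as in the proof of Lemma~\ref{circ0}. Using the relation $x_1^2-x_2^2=\rho^2\cos(2\theta)$ recalled just before Figure~\ref{fig:hyperbolic}, the map $\psi_0$ becomes, for $|\theta|<\tfrac{\pi}{2}$,
$$
\psi_0(\rho\cos(\theta),\rho\sin(\theta))=g_{\cos(2\theta)}(\rho)\,(\cos(\theta),\sin(\theta)),\qquad\text{where } g_c(\rho):=\tfrac{\rho(3-c\rho^2)}{2}.
$$
First I would record the two polar descriptions. Since $0<\alpha<\tfrac{\pi}{4}$, on the wedge $\{-\alpha\le\theta\le\alpha\}$ one has $\cos(2\theta)>0$ and $\cos(\theta)>0$, so
$$
\htri{\alpha}=\Big\{(\rho,\theta):\ -\alpha\le\theta\le\alpha,\ 0\le\rho\le R(\theta)\Big\},\qquad R(\theta):=\frac{\cos(\alpha)}{\cos(\theta)\sqrt{\cos(2\alpha)}},
$$
while $\hsector{\alpha}=\{(\rho,\theta):\ -\alpha\le\theta\le\alpha,\ 0\le\rho\le1/\sqrt{\cos(2\theta)}\}$. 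Thus $\htri{\alpha}$ and $\hsector{\alpha}$ are unions over $\theta\in[-\alpha,\alpha]$ of radial segments issuing from the origin, and since $\psi_0$ preserves the ray through $(\cos(\theta),\sin(\theta))$ (once we know the scalar factor is $\ge0$), it suffices to prove, for each fixed $\theta\in[-\alpha,\alpha]$ with $c:=\cos(2\theta)$, that $g_c\big([0,R(\theta)]\big)=[0,1/\sqrt{c}]$; taking the union over $\theta$ and noting $\psi_0(0,0)=(0,0)$ then yields $\psi_0(\htri{\alpha})=\hsector{\alpha}$.

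Next I would analyze $g_c$ on $[0,+\infty)$: since $g_c'(\rho)=\tfrac32(1-c\rho^2)$, the function $g_c$ is strictly increasing on $[0,1/\sqrt c]$ with $g_c(0)=0$ and $g_c(1/\sqrt c)=1/\sqrt c$; moreover $g_c(\rho)=\tfrac{\rho}{2}(3-c\rho^2)\ge0$ on $[0,\sqrt{3/c}]$, with $g_c(\sqrt{3/c})=0$. Consequently, if one establishes
$$
\frac{1}{\sqrt{c}}\ \le\ R(\theta)\ \le\ \sqrt{\frac{3}{c}}\qquad\text{for all }\theta\in[-\alpha,\alpha],
$$
then on the interval $[0,R(\theta)]$ the continuous function $g_c$ stays $\ge0$, vanishes at $0$, and attains its global maximum $1/\sqrt c$ at the interior point $1/\sqrt c$, so its image is exactly $[0,1/\sqrt c]$. (The right-hand inequality also guarantees $3-c\rho^2\ge0$ on $[0,R(\theta)]$, justifying that $\psi_0$ indeed keeps the correct ray.)

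The core of the argument is therefore this pair of inequalities. Squaring (all quantities are positive) and using $\cos(2t)=2\cos^2(t)-1$, the left inequality $R(\theta)\ge1/\sqrt c$ is equivalent to $\cos^2(\theta)\cos(2\alpha)\le\cos^2(\alpha)\cos(2\theta)$, which reduces to $\cos^2(\theta)\ge\cos^2(\alpha)$ and holds since $|\theta|\le\alpha<\tfrac{\pi}{4}$. The right inequality $R(\theta)\le\sqrt{3/c}$ is equivalent to $\cos^2(\alpha)\cos(2\theta)\le3\cos^2(\theta)\cos(2\alpha)$, which after the same substitution becomes
$$
\cos^2(\theta)\,\big(3-4\cos^2(\alpha)\big)\ \le\ \cos^2(\alpha).
$$
If $3-4\cos^2(\alpha)\le0$ this is immediate; otherwise the left side is maximal at $\theta=0$, and the inequality becomes $3\le5\cos^2(\alpha)$, i.e. $\tan^2(\alpha)\le\tfrac23$, i.e. $\alpha\le\arctan(\sqrt{2/3})$ — precisely the stated hypothesis (and sharp, the obstruction occurring along $\theta=0$). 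I expect that last computation, pinning down exactly why the amplitude bound is $\arctan(\sqrt{2/3})$, to be the only delicate point; the monotonicity bookkeeping for $g_c$ and the radial-slice union are routine and parallel to Lemma~\ref{circ0}. Finally, the $n$-dimensional case of Theorem~\ref{hiperdef} will follow from this $2$-dimensional statement through Lemma~\ref{revolution} (with $m=k=2$, $\ell=n-2$), as already indicated.
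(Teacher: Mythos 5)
Your proposal is correct and follows essentially the same route as the paper: pass to polar coordinates, study the radial cubic $g_c(\rho)=\tfrac{\rho(3-c\rho^2)}{2}$ (the paper's $h_{\theta_0}$), and reduce everything to the two bounds $1/\sqrt{c}\le R(\theta)\le\sqrt{3/c}$, with the hypothesis $\tan^2(\alpha)\le\tfrac{2}{3}$ entering exactly where you say. The only cosmetic difference is that the paper obtains the upper bound via the intermediate estimate $\frac{\cos(\alpha)}{\sqrt{\cos(2\alpha)}}\le\sqrt{3}\le\frac{\sqrt{3}\cos(\theta)}{\sqrt{\cos(2\theta)}}$ rather than by your direct extremization over $\theta$, which amounts to the same computation.
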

\begin{proof}
We show first the inclusion $\psi_2(\htri{\alpha})\subset\hsector{\alpha}$. Pick a point $(r_0,\beta_0)\in\psi_0(\htri{\alpha})$. Then there exists $(\rho_0,\theta_0)$ such that $-\alpha\le\theta_0\le\alpha$, $\rho_0\cos(\theta_0)\leq\frac{\cos(\alpha)}{\sqrt{\cos(2\alpha)}}$ and 
\begin{multline*}
\psi_0(\rho_0,\theta_0)\equiv\psi_0(\rho_0\cos(\theta_0),\rho_0\sin(\theta_0))=\frac{3-\rho_0^2\cos(2\theta_0)}{2}(\rho_0\cos(\theta_0),\rho_0\sin(\theta_0))\\
\equiv\Big(\frac{3\rho_0-\rho_0^3\cos(2\theta_0)}{2},\theta_0\Big)=(r_0,\beta_0).
\end{multline*}
Thus, $\beta_0=\theta_0$, so $-\alpha\leq\beta_0\leq\alpha$. Consider the continuous function
\begin{equation}\label{rb}
h_{\theta_0}:[0,+\infty)\to\R,\ \rho\mapsto\frac{3\rho-\rho^3\cos(2\theta_0)}{2}.
\end{equation}
This function is increasing on the bounded interval $[0,\frac{1}{\sqrt{\cos(2\theta_0)}}]$ and decreasing on the unbounded interval $[\frac{1}{\sqrt{\cos(2\theta_0)}},+\infty)$. In addition, $h_{\theta_0}(0)=0$ and $h_{\theta_0}(\frac{1}{\sqrt{\cos(2\theta_0)}})=\frac{1}{\sqrt{\cos(2\theta_0)}}$. Besides, it is nonnegative on the interval $[0,\sqrt{\frac{3}{\cos(2\theta_0)}}]$. As $0<\alpha\le\arctan(\sqrt{\frac{2}{3}})$ and $1\leq\frac{\cos(\theta_0)}{\sqrt{\cos(2\theta_0)}}$, we have
$$
\rho_0\cos(\theta_0)\le\frac{\cos(\alpha)}{\sqrt{\cos(2\alpha)}}=\frac{1}{\sqrt{1-\tan^2(\alpha)}}\leq\frac{1}{\sqrt{1-\frac{2}{3}}}=\sqrt{3}\leq\frac{\sqrt{3}\cos(\theta_0)}{\sqrt{\cos(2\theta_0)}}.
$$
As $|\theta_0|\leq\alpha\leq\arctan(\sqrt{\frac{2}{3}})$, we deduce
\begin{align*}
&0\leq\rho_0\leq\frac{\cos(\alpha)}{\cos(\theta_0)\sqrt{\cos(2\alpha)}},\\
&\frac{1}{\sqrt{\cos(2\theta_0)}}\leq\frac{\cos(\alpha)}{\cos(\theta_0)\sqrt{\cos(2\alpha)}}\leq\frac{\sqrt{3}}{\sqrt{\cos(2\theta_0)}}.
\end{align*}
Consequently, 
$$
r_0=h_{\theta_0}(\rho_0)\in h_{\theta_0}\Big(\Big[0,\frac{\cos(\alpha)}{\cos(\theta_0)\sqrt{\cos(2\alpha)}}\Big]\Big)=\Big[0, \frac{1}{\sqrt{\cos(2\theta_0)}}\Big]\quad\leadsto\quad r_0^2\cos(2\theta_0)\leq 1.
$$
This means that $(r_0,\beta_0)=\psi_0(\rho_0,\theta_0)\in\hsector{\alpha}$.

For the converse inclusion $\hsector{\alpha}\subset\psi_0(\htri{\alpha})$, pick a point $(r_0,\beta_0)\in\hsector{\alpha}$ and set $\theta_0:=\beta_0$. We have $-\alpha\leq\theta_0\leq\alpha$ and $r_0^2\cos(2\theta_0)$, so $0\leq r_0\leq\frac{1}{\sqrt{\cos(2\theta_0)}}$. We use again the continuous function $h_{\theta_0}$ already introduced in \eqref{rb}. As
$$
h_{\theta_0}\Big(\Big[0,\frac{1}{\sqrt{\cos(2\theta_0)}}\Big]\Big)=\Big[0, \frac{1}{\sqrt{\cos(2\theta_0)}}\Big],
$$
there exists $\rho_0\in[0,\frac{1}{\sqrt{\cos(2\theta_0)}}]$ such that $h_{\theta_0}(\rho_0)=r_0$, so $(\rho_0,\theta_0)\in\hsector{\alpha}$. As $\hsector{\alpha}\subset\htri{\alpha}$, we conclude $(r_0,\beta_0)=(h_{\theta_0}(\rho_0),\beta_0)=\psi_0(\rho_0,\theta_0)\in\psi_0(\hsector{\alpha})\subset\psi_0(\htri{\alpha})$, as required.
\end{proof}

The previous lemma sets a limit to the amplitude of the hyperbolic sectors we can work with. This limit can be dealt with by means of the following lemma.

\begin{lem}\label{hiper1}
The polynomial map $\psi_1:\R^2\to\R^2,\ (x_1,x_2)\mapsto (x_1^2+x_2^2,2x_1x_2)$ satisfies $\psi_1(\hsector{\alpha})=\hsector{\alpha'}$, where $\alpha':=\arctan(\sin(2\alpha))$.
\end{lem}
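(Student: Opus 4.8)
The plan is to work entirely in polar coordinates, exactly as in the proofs of Lemmas \ref{circ0}, \ref{circ1} and \ref{hiper0}. Writing $(x_1,x_2)=(\rho\cos\theta,\rho\sin\theta)\equiv(\rho,\theta)$, I would first record the basic computation
$$
\psi_1(\rho\cos\theta,\rho\sin\theta)=(\rho^2\cos^2\theta+\rho^2\sin^2\theta,\ 2\rho^2\cos\theta\sin\theta)=(\rho^2,\ \rho^2\sin(2\theta)).
$$
When $\rho=0$ this is the origin, which belongs to every $\hsector{\gamma}$, so it may be set aside. For $\rho>0$ the image point, written in polar coordinates as $(r,\beta)$, satisfies $\tan\beta=\sin(2\theta)$ (so $\beta=\arctan(\sin 2\theta)$, legitimate since $\beta\in(-\tfrac\pi2,\tfrac\pi2)$) and $r^2=\rho^4(1+\sin^2(2\theta))$. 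The key identity I would then establish, using $\cos(2\beta)=\frac{1-\tan^2\beta}{1+\tan^2\beta}$ and the positivity $\cos(2\theta)>0$ (which holds because $0<\alpha<\tfrac\pi4$ forces $-\tfrac\pi2<2\theta<\tfrac\pi2$), is
$$
r^2\cos(2\beta)=\rho^4(1+\sin^2(2\theta))\cdot\frac{1-\sin^2(2\theta)}{1+\sin^2(2\theta)}=\rho^4\cos^2(2\theta)=(\rho^2\cos(2\theta))^2 .
$$

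For the inclusion $\psi_1(\hsector{\alpha})\subseteq\hsector{\alpha'}$ I would take $(\rho,\theta)\in\hsector{\alpha}$. Since $\sin$ is increasing on $[-\tfrac\pi2,\tfrac\pi2]$ and $\arctan$ is increasing, $|\theta|\le\alpha$ gives $|\beta|=|\arctan(\sin 2\theta)|\le\arctan(\sin 2\alpha)=\alpha'$. Moreover $0\le\rho^2\cos(2\theta)\le1$, and since $t^2\le t$ for $t\in[0,1]$, the identity above yields $r^2\cos(2\beta)=(\rho^2\cos(2\theta))^2\le\rho^2\cos(2\theta)\le1$. Hence $(r,\beta)\in\hsector{\alpha'}$.

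For the reverse inclusion $\hsector{\alpha'}\subseteq\psi_1(\hsector{\alpha})$ I would take $(r_0,\beta_0)\in\hsector{\alpha'}$, so $|\beta_0|\le\alpha'=\arctan(\sin 2\alpha)$ and $r_0^2\cos(2\beta_0)\le1$. Because $0<\alpha<\tfrac\pi4$ gives $0<\sin2\alpha<1$ and hence $\alpha'<\tfrac\pi4$, we get $\cos\beta_0>0$, $\cos(2\beta_0)>0$, and $|\tan\beta_0|\le\sin(2\alpha)<1$; so I may set $\theta_0:=\tfrac12\arcsin(\tan\beta_0)$ and $\rho_0:=\sqrt{r_0\cos\beta_0}$. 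Then $|\arcsin(\tan\beta_0)|\le\arcsin(\sin 2\alpha)=2\alpha$ gives $|\theta_0|\le\alpha$; from $\cos(2\theta_0)=\sqrt{1-\tan^2\beta_0}=\sqrt{\cos(2\beta_0)}/\cos\beta_0$ one computes $\rho_0^2\cos(2\theta_0)=r_0\cos\beta_0\cdot\frac{\sqrt{\cos(2\beta_0)}}{\cos\beta_0}=\sqrt{r_0^2\cos(2\beta_0)}\le1$, so $(\rho_0,\theta_0)\in\hsector{\alpha}$; and finally $\psi_1(\rho_0\cos\theta_0,\rho_0\sin\theta_0)=(\rho_0^2,\rho_0^2\sin(2\theta_0))=(r_0\cos\beta_0,\ r_0\cos\beta_0\tan\beta_0)=(r_0\cos\beta_0,r_0\sin\beta_0)\equiv(r_0,\beta_0)$. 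Combining the two inclusions gives $\psi_1(\hsector{\alpha})=\hsector{\alpha'}$.

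The calculations are routine trigonometry; the one delicate point — and the main obstacle — is verifying that all the square roots and the division are legitimate, i.e. that $\cos(2\theta)$, $\cos\beta_0$, $\cos(2\beta_0)$, $1-\tan^2\beta_0$ and $1-\sin^2(2\alpha)$ are nonnegative. This is precisely where the hypothesis $0<\alpha<\tfrac\pi4$ (and the consequence $\alpha'<\tfrac\pi4$, which also ensures $\hsector{\alpha'}$ is a genuine hyperbolic sector in the sense of the paper) is used. The second, technical, ingredient is the identity $r^2\cos(2\beta)=(\rho^2\cos(2\theta))^2$ together with the elementary inequality $t^2\le t$ on $[0,1]$, which is what makes the radial bound of a hyperbolic sector survive the squaring.
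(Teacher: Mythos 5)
Your proof is correct. It starts from the same polar-coordinate computation as the paper, namely $\psi_1(\rho\cos\theta,\rho\sin\theta)=(\rho^2,\rho^2\sin(2\theta))$, but then concludes differently. The paper fixes $\theta\in[-\alpha,\alpha]$ and observes that the radial segment of $\hsector{\alpha}$ at angle $\theta$ is mapped onto the straight segment joining the origin to the point $\bigl(\tfrac{1}{\cos(2\theta)},\tfrac{\sin(2\theta)}{\cos(2\theta)}\bigr)$, which lies on the hyperbola $\Hh$; as $\theta$ sweeps $[-\alpha,\alpha]$ these endpoints parameterize exactly the arc bounding $\hsector{\alpha'}$ (the computation $\tan(\alpha')=\sin(2\alpha)$ appears there), so the union of the image segments is the sector. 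Your argument instead proves the two inclusions pointwise: the identity $r^2\cos(2\beta)=(\rho^2\cos(2\theta))^2$ together with $t^2\le t$ on $[0,1]$ gives the forward inclusion, and the explicit preimage $\theta_0=\tfrac12\arcsin(\tan\beta_0)$, $\rho_0=\sqrt{r_0\cos\beta_0}$ gives the reverse one. The paper's sweep argument is shorter and more geometric (though it leaves implicit that the union of the segments from the origin to the arc is the whole sector); yours is longer but fully explicit, makes the well-definedness of $\alpha'\in(0,\tfrac\pi4)$ and all sign conditions transparent, and produces a concrete inverse on the level of points, which the paper's proof does not. Both are valid; the verification that $\cos(2\theta)>0$, $\cos(2\beta_0)>0$ and $|\tan\beta_0|<1$ under the hypothesis $0<\alpha<\tfrac\pi4$ is exactly the delicate point you identify, and you handle it correctly.
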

\begin{proof}
We rewrite the previous map as $\psi_1(\rho,\theta)\equiv\psi_1(\rho\cos(\theta),\rho\sin(\theta))=(\rho^2,\rho^2\sin(2\theta))$. Recall that $\hsector{\alpha}=\{-\alpha\leq\theta\leq\alpha,\ \rho^2\cos(2\theta)\le 1\}$. Thus, $0\leq\rho^2\leq\frac{1}{\cos(2\theta)}$ and $-\alpha\leq\theta\leq\alpha$. The map
$$
\eta:[-\alpha,\alpha]\to\Hh,\ \theta\to\Big(\frac{1}{\cos(2\theta)},\frac{\sin(2\theta)}{\cos(2\theta)}\Big)
$$
provides a parameterization of the arc of hyperbola that defines the hyperbolic sector $\hsector{\alpha'}$ where $\alpha':=\arctan(\sin(2\alpha))$ (because $\tan(\alpha')=\frac{\sin(2\alpha)}{\cos(2\alpha)}/\frac{1}{\cos(2\alpha)}$). Thus, $\psi_1(\hsector{\alpha})=\hsector{\alpha'}$, as required.
\end{proof}

\begin{lem}\label{hiper2}
Fix $0<\alpha<\frac{\pi}{4}$. Then the polynomial map 
$$
\psi_2:\R^2\to\R^2,\ (x_1,x_2)\mapsto\Big(x_1^2+x_2^2+(1-x_1^2+x_2^2)\frac{1}{\cos(2\alpha)},2x_1x_2\Big)
$$
satisfies $\psi_2(\hsector{\alpha})=\hseg{\alpha'}$, where $\alpha':=\arctan(\sin(2\alpha))$.
\end{lem}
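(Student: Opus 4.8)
\textbf{Proof plan for Lemma \ref{hiper2}.} The plan is to mimic the proof of Lemma \ref{circ2} (the elliptic analogue) using polar coordinates. First I would rewrite $\psi_2$ in polar coordinates $(x_1,x_2)=(\rho\cos(\theta),\rho\sin(\theta))$. Using $x_1^2+x_2^2=\rho^2$, $x_1^2-x_2^2=\rho^2\cos(2\theta)$ and $2x_1x_2=\rho^2\sin(2\theta)$, we get
$$
\psi_2(\rho,\theta)=\Big(\rho^2+(1-\rho^2\cos(2\theta))\tfrac{1}{\cos(2\alpha)},\,\rho^2\sin(2\theta)\Big).
$$
The key algebraic observation is that this can be regrouped as a convex combination: the first coordinate equals $\tfrac{1}{\cos(2\alpha)}+\rho^2\big(1-\tfrac{\cos(2\theta)}{\cos(2\alpha)}\big)$, while also $\psi_1(\rho,\theta)=(\rho^2,\rho^2\sin(2\theta))$ as in Lemma \ref{hiper1}. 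So I would write $\psi_2(\rho,\theta)=(1-\rho^2\cos(2\theta))\cdot\big(\tfrac{1}{\cos(2\alpha)},0\big)+\rho^2\cos(2\theta)\cdot\big(\tfrac{1}{\cos(2\theta)},\tfrac{\sin(2\theta)}{\cos(2\theta)}\big)$, which exhibits $\psi_2(\rho,\theta)$ as lying on the segment joining the point $\big(\tfrac{1}{\cos(2\alpha)},0\big)$ (the midpoint of the chord of the hyperbolic segment $\hseg{\alpha'}$, at $\theta=0$, $\rho\cos\theta=\tfrac{\cos\alpha}{\sqrt{\cos 2\alpha}}$) to the point $\eta(\theta):=\big(\tfrac{1}{\cos(2\theta)},\tfrac{\sin(2\theta)}{\cos(2\theta)}\big)$ on the arc of $\Hh$, using the parameterization $\eta$ already introduced in the proof of Lemma \ref{hiper1}.

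Next I would handle the range of parameters. For a point $(\rho,\theta)\in\hsector{\alpha}$ we have $-\alpha\le\theta\le\alpha$ and $0\le\rho^2\cos(2\theta)\le 1$; in particular the convex-combination coefficient $\mu:=\rho^2\cos(2\theta)$ ranges over $[0,1]$ as $\rho$ ranges over $[0,\tfrac{1}{\sqrt{\cos 2\theta}}]$ (here $0<\alpha<\tfrac{\pi}{4}$ guarantees $\cos(2\theta)>0$ on $[-\alpha,\alpha]$). Hence, fixing $\theta\in[-\alpha,\alpha]$, the map $\rho\mapsto\psi_2(\rho,\theta)$ sends $[0,\tfrac{1}{\sqrt{\cos 2\theta}}]$ onto the full segment $[(\tfrac{1}{\cos 2\alpha},0),\eta(\theta)]$. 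As $\theta$ ranges over $[-\alpha,\alpha]$, the endpoint $\eta(\theta)$ sweeps out exactly the arc of $\Hh$ from $\eta(-\alpha)$ to $\eta(\alpha)$, which by Lemma \ref{hiper1} is the arc bounding $\hseg{\alpha'}$ with $\alpha'=\arctan(\sin(2\alpha))$; and the union of the chords from the midpoint $(\tfrac{1}{\cos 2\alpha},0)$ to the points of that arc is precisely $\hseg{\alpha'}$. So $\psi_2(\hsector{\alpha})=\hseg{\alpha'}$. I would include both inclusions explicitly: $\subseteq$ follows from the convex-combination formula and the bound $\mu\in[0,1]$, while $\supseteq$ follows since every point of $\hseg{\alpha'}$ lies on such a chord and is therefore hit by some $(\rho,\theta)\in\hsector{\alpha}$.

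\textbf{Main obstacle.} The only delicate point is checking that the boundary defining $\hseg{\alpha'}$ matches exactly what the construction produces: namely that the midpoint of the chord of $\hseg{\alpha'}$ is $(\tfrac{1}{\cos 2\alpha},0)$ (which requires $\eta(\alpha)$ and $\eta(-\alpha)$ to be symmetric about the $x$-axis with $x$-coordinate $\tfrac{1}{\cos 2\alpha}$ — immediate from the formula for $\eta$), and that the chord of $\hseg{\alpha'}$ really is the vertical segment at $\rho\cos\theta=\tfrac{\cos\alpha'}{\sqrt{\cos 2\alpha'}}$, which one verifies by computing $\tfrac{1}{\cos 2\alpha}=\tfrac{\cos\alpha'}{\sqrt{\cos 2\alpha'}}$ from $\tan\alpha'=\sin 2\alpha$ (using $\sec^2\alpha'=1+\tan^2\alpha'=1+\sin^2 2\alpha$ and $\cos 2\alpha'=\tfrac{1-\tan^2\alpha'}{1+\tan^2\alpha'}=\tfrac{1-\sin^2 2\alpha+ \cdots}{1+\sin^2 2\alpha}$, then simplifying). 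This is a short trigonometric identity verification rather than a conceptual difficulty; everything else is a transcription of the elliptic argument of Lemma \ref{circ2}.
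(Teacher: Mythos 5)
Your proposal is correct and follows essentially the same route as the paper's proof: the same polar-coordinate rewriting of $\psi_2$ as the convex combination $(1-\rho^2\cos(2\theta))\big(\tfrac{1}{\cos(2\alpha)},0\big)+\rho^2\cos(2\theta)\,\eta(\theta)$, fixing $\theta\in[-\alpha,\alpha]$ to map the radial segment of $\hsector{\alpha}$ onto the chord from $\big(\tfrac{1}{\cos(2\alpha)},0\big)$ to $\eta(\theta)$, and then taking the union over $\theta$ to fill out $\hseg{\alpha'}$. Your extra verification that $\tfrac{1}{\cos(2\alpha)}=\tfrac{\cos(\alpha')}{\sqrt{\cos(2\alpha')}}$ is exactly the identification the paper makes (somewhat more tersely) when it declares this point to be the midpoint of the chord of $\hseg{\alpha'}$.
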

\begin{proof}
We use once more polar coordinates. We have 
\begin{multline*}
\psi_2(\rho\cos(\theta),\rho\sin(\theta))=\Big(\rho^2+\frac{1}{\cos(2\alpha)}-\rho^2\frac{\cos(2\theta)}{\cos(2\alpha)},\rho^2\sin(2\theta)\Big)\\
=\Big(\frac{1}{\cos(2\alpha)},0\Big)(1-(\rho\sqrt{\cos(2\theta)})^2)+(\rho\sqrt{\cos(2\theta)})^2\Big(\frac{1}{\cos(2\theta)},\frac{\sin(2\theta)}{\cos(2\theta)}\Big).
\end{multline*}

As $(\rho,\theta)\in\hsector{\alpha}$, it holds $-\alpha\leq\theta\leq\alpha,\ \rho^2\cos(2\theta)\le 1$. Fix $\theta\in[-\alpha,\alpha]$. The interval $[0,\frac{1}{\sqrt{\cos(2\theta)}}]$ for the fixed $\theta$ provides (in polar coordinates) the segment $\Ee_\theta$ that connects the origin $(0,0)$ with the point $(\frac{\cos(\theta)}{\sqrt{\cos(2\theta)}},\frac{\sin(\theta)}{\sqrt{\cos(2\theta)}})\in\Hh:=\{\x^2-\y^2=1\}$. We have $\hsector{\alpha}=\bigcup_{\theta\in[-\alpha,\alpha]}\Ee_\theta$. Consider the continuous map
\begin{multline*}
\psi_2(\cdot,\theta):\Big[0,\frac{1}{\sqrt{\cos(2\theta)}}\Big]\to\R^2,\\ 
\rho\mapsto\Big(\frac{1}{\cos(2\alpha)},0\Big)(1-(\rho\sqrt{\cos(2\theta)})^2)+(\rho\sqrt{\cos(2\theta)})^2\Big(\frac{1}{\cos(2\theta)},\frac{\sin(2\theta)}{\cos(2\theta)}\Big),
\end{multline*} 
which maps the interval $[0,\frac{1}{\sqrt{\cos(2\theta)}}]$ onto the segment $\Ll_\theta$ that connects the point $(\frac{1}{\cos(2\alpha)},0)$ with the point $(\frac{1}{\cos(2\theta)},\frac{\sin(2\theta)}{\cos(2\theta)})\in\Hh$. Consequently, $\psi_2(\Ee_\theta)=\Ll_\theta$. The map
$$
\eta:[-\alpha,\alpha]\to\Hh,\ \theta\to\Big(\frac{1}{\cos(2\theta)},\frac{\sin(2\theta)}{\cos(2\theta)}\Big)
$$
provides a parameterization of the arc of hyperbola that defines the hyperbolic segment $\hseg{\alpha'}$ where $\alpha':=\arctan(\sin(2\alpha))$ (because $\tan(\alpha')=\frac{\sin(2\alpha)}{\cos(2\alpha)}/\frac{1}{\cos(2\alpha)}$). In addition, $(\frac{1}{\cos(2\alpha)},0)$ is the midpoint of the chord of the hyperbolic segment $\hseg{\alpha'}$. We deduce $\hseg{\alpha'}=\bigcup_{\theta\in[-\alpha,\alpha]}\Ll_\theta$ and 
$$
\psi_2(\hsector{\alpha})=\psi_2\Big(\bigcup_{\theta\in[-\alpha,\alpha]}\Ee_\theta\Big)=\bigcup_{\theta\in[-\alpha,\alpha]}\psi_2(\Ee_\theta)=\bigcup_{\theta\in[-\alpha,\alpha]}\Ll_\theta=\hseg{\alpha'},
$$
as required.
\end{proof}

We are ready to prove Theorem \ref{hiperdef}.

\begin{proof}[Proof of Theorem \em \ref{hiperdef}]
Observe first that both hyperbolic sectors and segments are strictly radially convex. By Lemma \ref{propbrick} we have to prove that both hyperbolic sectors and segments are polynomial images of $\ol{\Bb}_n$. By the nature of the polynomial maps proposed in Lemmas \ref{hiper0}, \ref{hiper1}, \ref{hiper2} and Lemma \ref{revolution} it is enough to deal with the $2$-dimensional case. By Lemma \ref{tetra} the triangle $\htri{\alpha}$ is a polynomial image of $\ol{\Bb}_2$ for each $0<\alpha<\frac{\pi}{4}$. By Lemma \ref{hiper0} the hyperbolic sector $\hsector{\alpha}$ is the image under a polynomial map $\psi_0:\R^2\to\R^2$ of $\htri{\alpha}$ if $0<\alpha\leq\arctan(\sqrt{\frac{2}{3}})$.

Suppose $\arctan(\sqrt{\frac{2}{3}})\leq\alpha<\frac{\pi}{4}$. Consider the continuous function $f:[0,\frac{\pi}{4}]\to[0,\frac{\pi}{4}],\ x\mapsto\arctan(\sin(2x))$, which is strictly increasing and satisfies $x<f(x)$ for each $x\in(0,\frac{\pi}{4})$. The restriction $f|_{[\frac{1}{2},\frac{\pi}{4}]}:[\frac{1}{2},\frac{\pi}{4}]\to[\frac{1}{2},\frac{\pi}{4}]$ is contractive. Thus, $f|_{[\frac{1}{2},\frac{\pi}{4}]}$ has a unique fixed point, which is $\frac{\pi}{4}$. In addition, if $x_0\in[\frac{1}{2},\frac{\pi}{4}]$ and $x_m:=f(x_{m-1})$ for each $m\geq1$, then $\{x_m\}_m$ is an increasing sequence converging to $\frac{\pi}{4}$. If we denote $f^m:=f\circ\overset{m}{\cdots}\circ f$, then $x_m=f^m(x_0)$ for each $m\geq1$.

Denote $x_0:=\arctan(\sqrt{\frac{2}{3}})$ and $x_m:=f(x_{m-1})=f^m(x_0)$ for each $m\geq1$. Let $m\geq1$ be such that $\alpha<x_m$. As $f^m$ is a strictly increasing function, there exists $0<\beta<\arctan(\sqrt{\frac{2}{3}})$ such that $f^m(\beta)=\alpha$. By Lemma \ref{hiper1} there exists a polynomial map $\psi_1:\R^2\to\R^2$ such that $\hsector{\alpha}=\hsector{f^m(\beta)}=(\psi_1\circ\overset{m}{\cdots}\circ\psi_1)(\hsector{\beta})$. By Lemma \ref{hiper0} the hyperbolic sector $\hsector{\beta}$ is the image under a polynomial map $\psi_0:\R^2\to\R^2$ of $\htri{\beta}$. Thus, $\hsector{\alpha}$ is the image of $\ol{\Bb}_2$ under a polynomial map $\R^2\to\R^2$ for each $0<\alpha<\frac{\pi}{4}$.

By Lemma \ref{hiper2} the hyperbolic segment $\hseg{\alpha}$ is the image under a polynomial map $\psi_2:\R^2\to\R^2$ of $\hsector{\beta}$ where $0<\beta:=\frac{\arcsin(\tan(\alpha))}{2}<\alpha<\frac{\pi}{4}$. Consequently, $\hseg{\alpha}$ is the image of $\ol{\Bb}_2$ under a polynomial map $\R^2\to\R^2$ for each $0<\alpha<\frac{\pi}{4}$, as required.
\end{proof}

\section{A convex hexagon as a polynomial image of $\ol{\Bb}_3$}\label{a3}

Let us construct explicitly (Problem \ref{probl2}) a polynomial map $F:\R^3\to\R^2$ such that the image of $\ol{\Bb}_3$ under $F$ is the convex hexagon $\Hh$ of vertices $(0,0),(1,0),(2,1),(2,2),(1,2),(0,1)$. This example is inspired by one proposed by Sara Abentin de Gregorio in her Bachelor's Thesis (supervised by the first author). We have found a polynomial parameterization $\alpha:\R\to\R^2,\ t\mapsto(h(t),h(-t))$ of degree $34$ (the explicit expression of $h$ is presented below) such that
\begin{multline*}
\alpha(-3)=\alpha(3)=(0,0),\quad\alpha(-2)=(1,0),\quad\alpha(-1)=(2,1),\\\alpha(0)=(2,2),\quad\alpha(1)=(1,2),\quad\alpha(2)=(0,1),
\end{multline*}
and $\alpha([-3,3])\subset\Hh$ (Figure \ref{hexagon}).

\begin{figure}[ht]
\centering
\includegraphics[scale=0.35]{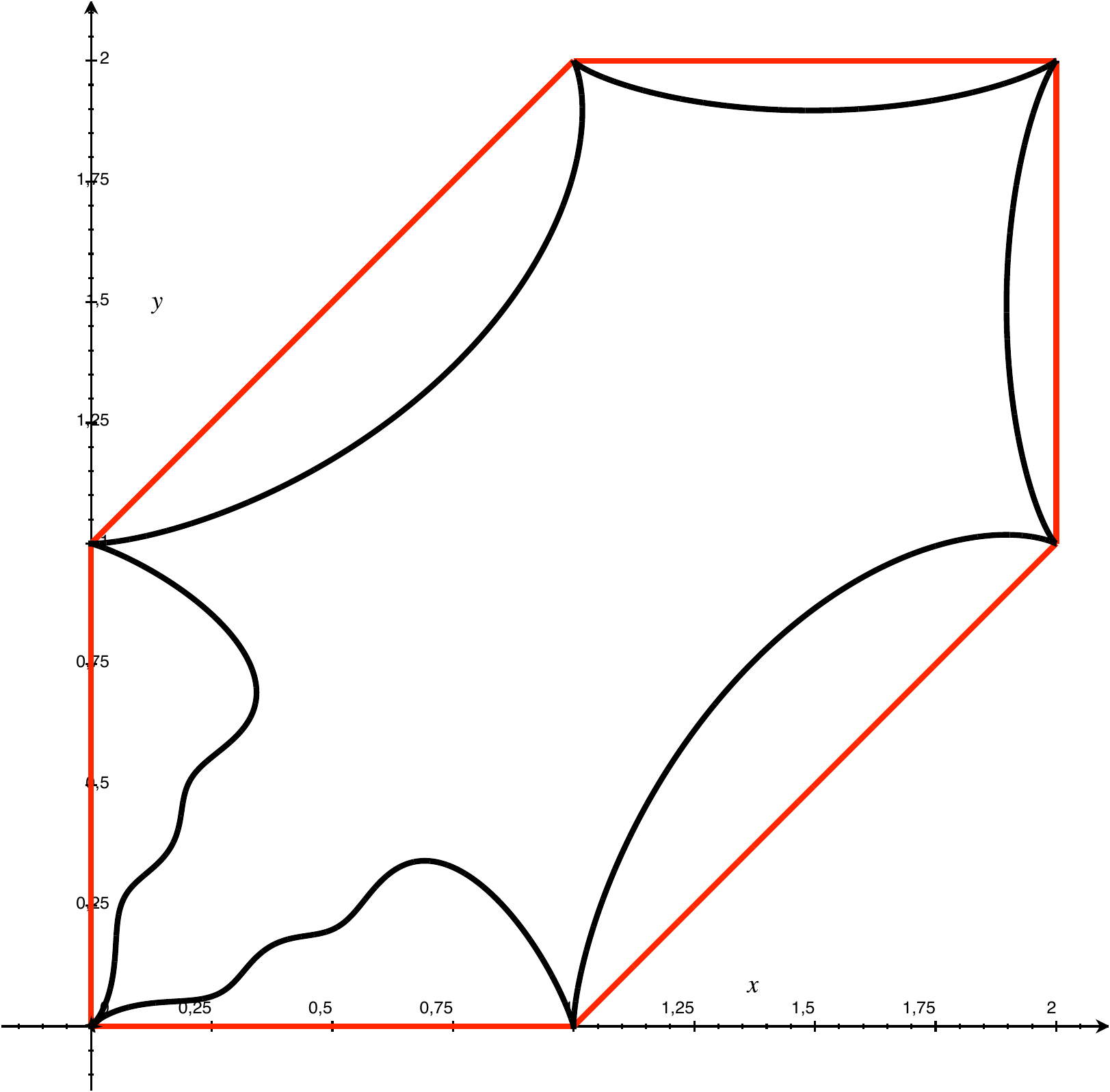}
\caption{Boundary of the convex hexagon $\Hh$ and auxiliary polynomial path $\alpha:[-3,3]\to\R^2$.\label{hexagon}}
\end{figure}

The polynomial $h\in\R[\t]$ is the following:

{\Small\begin{align*} 
h&:=\frac{76664779821250669077010607272790474060504126133999431}{104530224145652815761417086083845114789055289966288790958080000}\t^{34}\\
&-\frac{78717893577241614088318159777360793613982855123}{9304806924249799602963648728748783545276701634902800000}\t^{33}\\
&-\frac{912613527484440059374721830348026508601541166858620900719}{12976165756012073680727638272477324594503415306159987843072000000}\t^{32}\\
&+\frac{83892235220371692608393905913301582885887545485043}{128342164472411029006395154879293566141747608757280000000}\t^{31}\\
&+\frac{22685040173892809529211504901998267843669377577565665759314889}{7526176138487002734822030198036848264811980877572792948981760000000}\t^{30}\\
&-\frac{341651515368750302761136090666887873089921143003186509}{14887691078799679364741837965998053672442722615844480000000}\t^{29}\\
&-\frac{2312987243313117187847123345648030529341762836741812156033242153}{30104704553948010939288120792147393059247923510291171795927040000000}\t^{28}\\
&+\frac{739549879853169825818932859020391314048855175653852243}{1526942674748685063050444919589543966404381806753280000000}\t^{27}\\
&+\frac{2276284563512474920119374134981072455831492763452604504854047611}{1745200263996986141408007002443327133869444841176299814256640000000}\t^{26}\\
&-\frac{1844015505082748933043548683479792565506942192402271751}{269155996904852960266519104470021309332297810003968000000}\t^{25}\\
&-\frac{617678727783923372994726045176856180266061575815960662776038981}{39546409923084415026979469020883274954677075218773296283648000000}\t^{24}\\
&+\frac{191534650791625808889042148881672737990833829426795743}{2793843036134117638234452351113873548663893523968000000}\t^{23}\\
&+\frac{1092968886400506955836901798243093407953206769838766664682946289821}{8027921214386136250476832211239304815799446269410979145580544000000}\t^{22}\\
&-\frac{26872808849517631782827161438579675458460680303908554289}{53831199380970592053303820894004261866459562000793600000}\t^{21}\\
&-\frac{108429710987186801524931157295746273363317657983915070157800658683}{123506480221325173084258957095989304858453019529399679162777600000}\t^{20}\\
&+\frac{21278895451981505723409875956831063512932611500243467814443}{7940101908693162327862313581865628625302785395117056000000}\t^{19}\\
&+\frac{3073703987676894324210366665805746792617837392955601774090905999873}{729811019489648750043348382839936801436313297219179922325504000000}\t^{18}\\ 
&-\frac{15343314371199209768200722187083411564089297923735661841193}{1443654892489665877793147923975568840964142799112192000000}\t^{17}\\ 
&-\frac{10936887264822748056498590600656508762604715166403711626815259269679}{729811019489648750043348382839936801436313297219179922325504000000}\t^{16}\\ 
&+\frac{76038476443389770380096278588852356368626040742215775927}{2460775384925566837147411234049265069825243407577600000}\t^{15}\\ 
&+\frac{6496934190786413999058459578468782136581293765475784303435117586589}{166094921676954543113313769887709754809643715918847844391321600000}\t^{14}\\ 
&-\frac{106279014558443243149270242653074552792653941625109030932479}{1642779705246861171281857982454957646614369392093184000000}\t^{13}\\ 
&-\frac{29977661421428095114360408317401194506069501391456164973924272457861}{408199383782345911041194858198608719447429471325981990453248000000}\t^{12}\\
&+\frac{489624276273750826348881432522588958622036832178716706790257}{5178327331756410213823247988173236059980077431598080000000}\t^{11}\\
&+\frac{5444857799367724341614972148198281098432336302759938935926034370597}{56587790514939870186631806000277054622646472763705210142720000000}\t^{10}\\ 
&-\frac{14424739984071336997741067336445906355369656570801663620093}{157541704537562744600442729798921202882991773712640000000}\t^9\\ 
&-\frac{11695393745625246285991189606549798699384856308791698561139404628187}{139373632194203754348556114778460153052073719955051721277440000000}\t^8\\ 
&+\frac{1631979839386110496201193087227698207675223532405371249967}{30633109215637200338974975238679122782803955999680000000}\t^7\\ 
&+\frac{1135847093359630638538403956445142918501566447380671798346253777}{25470327520870569142645488811853098145481308471317931520000000}\t^6\\ 
&-\frac{7362117782018690715541516858507252889735560321011407953}{510551820260620005649582920644652046380065933328000000}\t^5\\ 
&-\frac{132022313339010089934748996284139958625614088083058661615643}{11558411055895884489273367067925573721788800978176819200000}\t^4-\frac{1}{6}\t^3-\frac{1}{8}\t^2+2.
\end{align*}} 

Consider the six triangules $\Tt_1,\ldots,\Tt_6$ of vertices $(1,1)$ and two consecutive vertices of $\Hh$. Define 
$$
\alpha_1:[-1,1]\to \R^2,\ t\mapsto\alpha\Big(\frac{5}{2}t-\frac{1}{2}\Big)\quad\text{and}\quad
\alpha_2:[-1,1]\to \R^2,\ t\mapsto\alpha\Big(\frac{5}{2}t+\frac{1}{2}\Big), 
$$
see Figures \ref{cyan} and \ref{pink}.

\begin{center}
\begin{figure}[ht]
\begin{minipage}{0.49\textwidth}
\begin{center}
\includegraphics[width=0.75\textwidth]{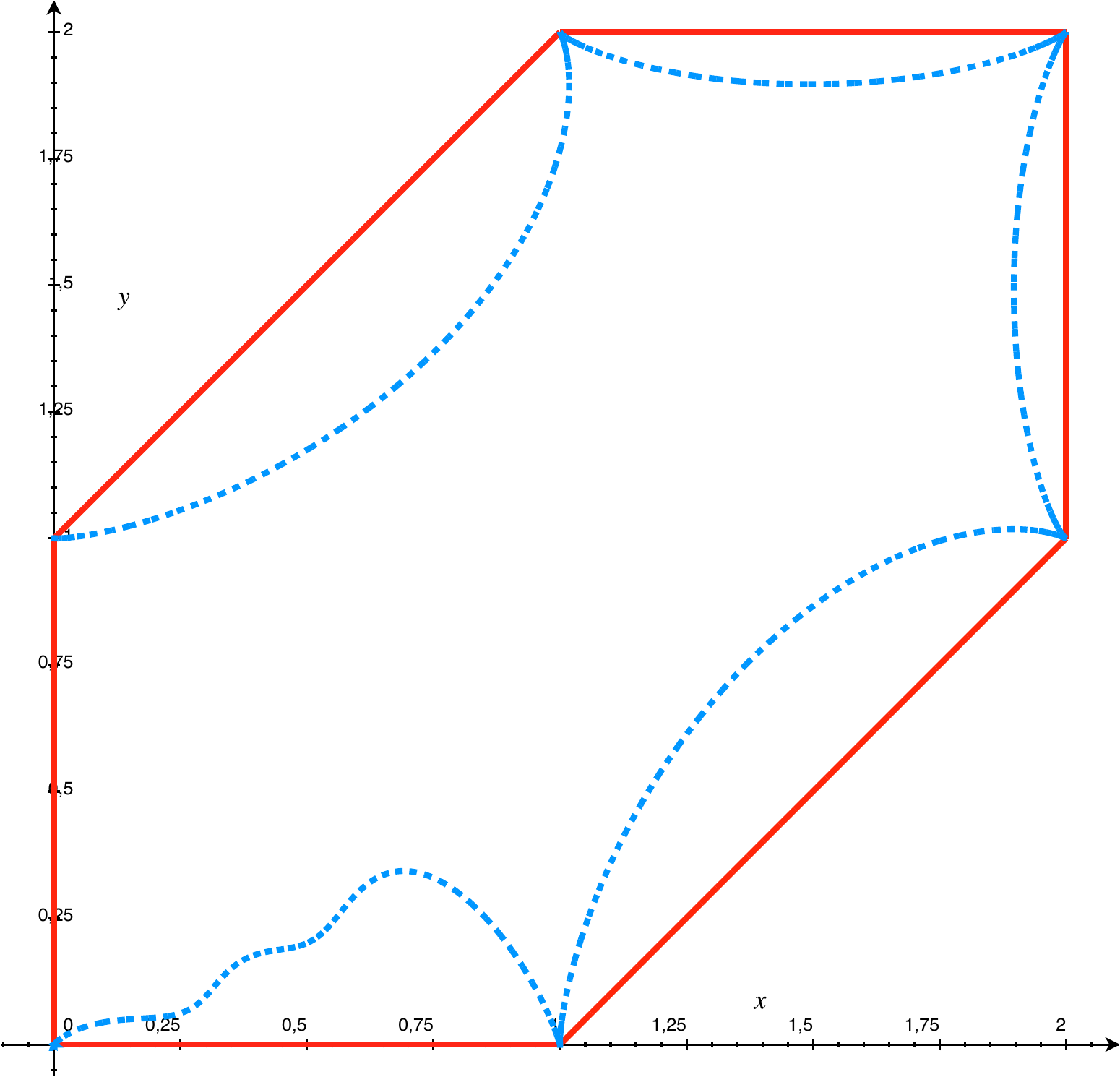}
\caption{Parameterization of $\alpha_1:[-1,1]\to \R^2$\label{cyan}}
\end{center}
\end{minipage}
\hfil
\begin{minipage}{0.49 \textwidth}
\begin{center}
\hspace*{-10mm}\includegraphics[width=0.75\textwidth]{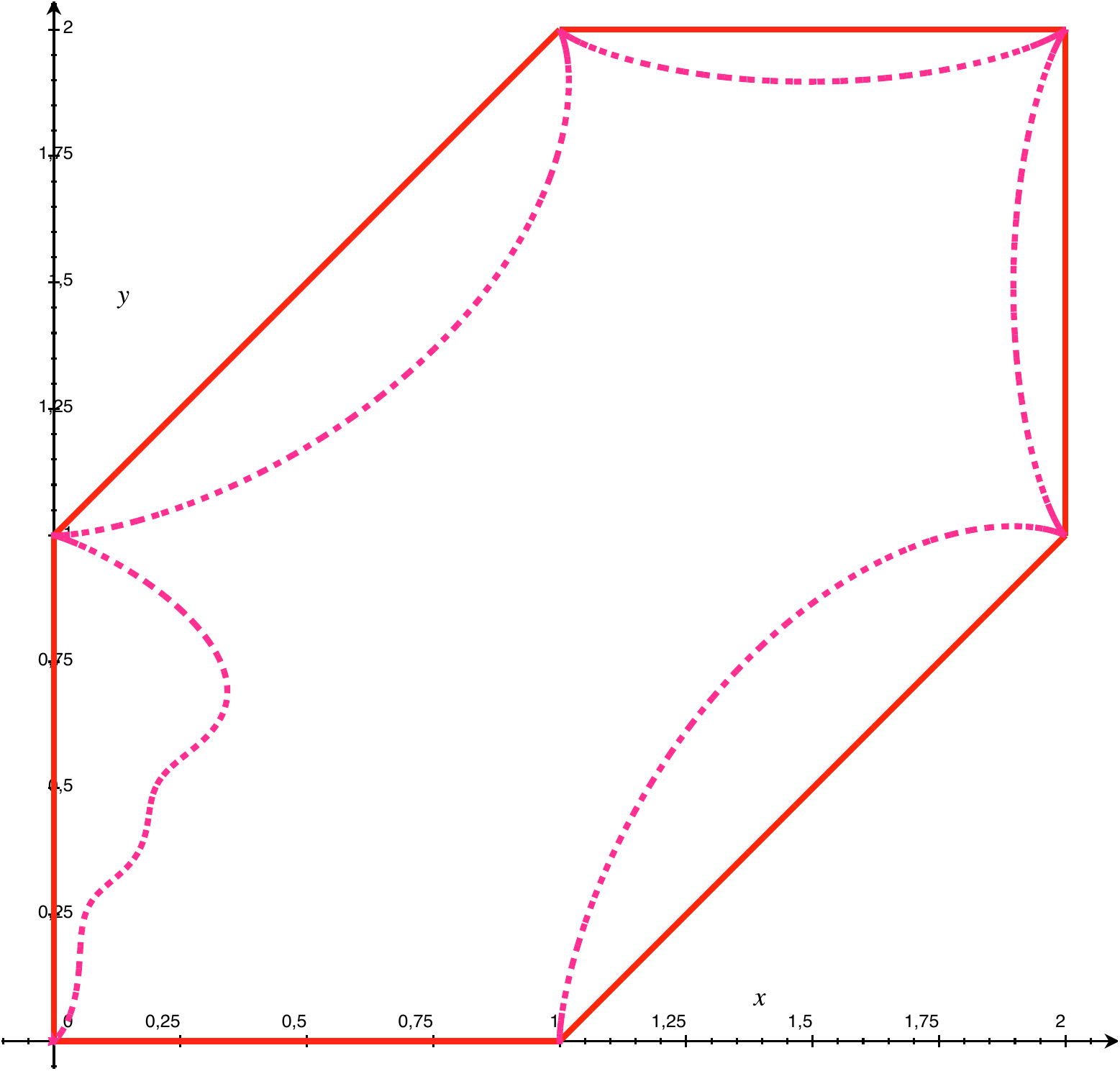}\\
\caption{Parameterization of $\alpha_2:[-1,1]\to \R^2$\label{pink}}
\end{center}
\end{minipage}
\end{figure}
\end{center}

The parameterization $\alpha_1$ begins on vertex $(0,0)$ and goes all over the remaining vertices in counterclockwise direction (Figure \ref{cyan}). The parameterization $\alpha_2$ begins on vertex $(1,0)$ and goes all over the remaining vertices in counterclockwise direction (Figure \ref{pink}). Denote $\Delta_2:=\{(x,y)\in\R^2:\ x\geq0,y\geq0,1-x-y\geq0\}$. We construct the polynomial map
\begin{equation*}
G:\R^3\to \R^2,\ (\lambda,\mu,t)\mapsto\lambda\alpha_1(t)+\mu\alpha_2(t)+(1-\lambda-\mu)(1,1)
\end{equation*}
that maps the triangular prism $\Delta_2\times[-1,1]$ into the convex hexagon $\Hh$. Observe that $\Hh=\bigcup_{j=1}^6\Tt_j=\bigcup_{k=1}^6G(\Delta_2\times\{t_k\})$ where $t_k\in\{-1,-\frac{3}{5},-\frac{1}{5},\frac{1}{5},\frac{3}{5},1\}$ (because $G(\Delta_2\times{t_k})$ is one of the six triangules $\Tt_j$ considered above for each $k=1,\ldots,6$), so $G(\Delta_2\times[-1,1])=\Hh$. It only remains to compose the previous polynomial map with the polynomial map provided in Corollary \ref{tetra1} 
$$
H:\R^3\to \R^3,\ (x,y,z)\mapsto\Big(3\Big(1-\frac{4}{9}(x^2+y^2)\Big)^2x^2,3\Big(1-\frac{4}{9}(x^2+y^2)\Big)^2y^2,3z-4z^3\Big)
$$
that transforms $\overline{\Bb}_{3}$ onto $\Delta_2\times[-1,1]$. Observe that the polynomial paths $\alpha_1$, $\alpha_2$ and $\alpha_3$ are not the ones suggested in the proof of Theorem \ref{main1-i}, but there we analyzed a general case and here we take advantage of the convexity and symmetry of $\Hh$. 

\bibliographystyle{amsalpha}

\end{document}